\newtheorem{thm}{Theorem}
\newtheorem{lem}{Lemma}
\newtheorem{prop}{Proposition}
\newtheorem{cor}{Corollary}
\theoremstyle{definition}
\newtheorem{rem}{Remark}
\newtheorem{defi}{Definition}
\newcommand{\ensembles}[1]{\mathbf{#1}}
	\newcommand{\N}{\ensembles{N}}
	\newcommand{\Z}{\ensembles{Z}}
	\newcommand{\R}{\ensembles{R}}
	\newcommand{\U}{\ensembles{U}}
\newcommand{\ind}[1]{\ensembles{1}_{\{#1\}}}
	\renewcommand{\P}{\ensembles{P}}
	\newcommand{\E}{\ensembles{E}}
\renewcommand{\Pr}[1]{\P\left(#1\right)}
\newcommand{\Prc}[2]{\P\left(#1 \;\middle|\; #2\right)}
\newcommand{\Es}[1]{\E\left[#1\right]}
\newcommand{\Esc}[2]{\E\left[#1 \;\middle|\; #2\right]}
\newcommand{\Map}{\mathbf{M}}
\newcommand{\barMap}{\Map^\star}
\newcommand{\map}{\mathcal{M}}
\newcommand{\Mapb}{\Map}
\newcommand{\tree}{\mathbf{T}}
\newcommand{\bartree}{\mathbf{LT}}
\newcommand{\T}{\mathcal{T}}
\newcommand{\n}{\mathbf{n}}
\newcommand{\m}{\mathbf{m}}
\newcommand{\q}{\mathbf{q}}
\newcommand{\dgr}{d_{\mathrm{gr}}}
\newcommand{\exc}{\mathbf{e}}
\renewcommand{\d}{\mathrm{d}}
\newcommand{\e}{\mathrm{e}}
\newcommand{\CRT}{\mathscr{T}}
\newcommand{\BDG}{\mathsf{BDG}}
\newcommand{\JS}{\mathsf{JS}}
\newcommand{\Good}{\mathsf{Good}}
\newcommand{\Bin}{\mathsf{Bin}}
\newcommand{\Cont}{\mathsf{Cont}}
\newcommand{\LR}{\mathsf{LR}}
\newcommand{\GW}{\mathbf{GW}}
\newcommand{\SG}{\mathbf{SG}}
\newcommand{\cv}[1][n]{\enskip\mathop{\longrightarrow}^{}_{#1 \to \infty}\enskip}
\newcommand{\cvloi}[1][n]{\enskip\mathop{\longrightarrow}^{(d)}_{#1 \to \infty}\enskip}
\newcommand{\cvproba}[1][n]{\enskip\mathop{\longrightarrow}^{\P}_{#1 \to \infty}\enskip}
\newcommand{\eqloi}{\enskip\mathop{=}^{(d)}\enskip}
\title{Scaling limits of random bipartite planar maps with a prescribed degree sequence}
\author{Cyril \textsc{Marzouk}
\thanks{Laboratoire de Math\'ematiques, 
Univ. Paris-Sud, Universit\'e Paris-Saclay.
\hfill
\href{mailto:cyril.marzouk@math.u-psud.fr}{\texttt{cyril.marzouk@math.u-psud.fr}}
\newline
This worked was supported by the Fondation Sciences Mathématiques de Paris and the Univ. Pierre et Marie Curie for a major part, and then the Fondation Mathématique Jacques Hadamard; partial support also from the grant \texttt{ANR-14-CE25-0014} (ANR GRAAL).
}
}
\begin{document}

\maketitle

\begin{abstract}
We study the asymptotic behaviour of uniform random maps with a prescribed face-degree sequence, in the bipartite case, as the number of faces tends to infinity. Under mild assumptions, we show that, properly rescaled, such maps converge in distribution towards the Brownian map in the Gromov--Hausdorff sense. This result encompasses a previous one of Le Gall for uniform random $q$-angulations where $q$ is an even integer. It applies also to random maps sampled from a Boltzmann distribution, under a second moment assumption only, conditioned to be large in either of the sense of the number of edges, vertices, or faces. The proof relies on the convergence of so-called ``discrete snakes'' obtained by adding spatial positions to the nodes of uniform random plane trees with a prescribed child sequence recently studied by Broutin \& Marckert. This paper can alternatively be seen as a contribution to the study of the geometry of such trees.
\end{abstract}

\section{Introduction}

\subsection{Random planar maps as metric spaces}

The study of scaling limits of large random maps, viewed as metric spaces, towards a universal object called the \emph{Brownian map} has seen numerous developments over the last decade. This paper is another step towards this universality as we show that the Brownian map appears as limit of maps with a prescribed face-degree sequence. This particular model is introduced in the next subsection, let us the first discuss the general idea of such studies and recall some previous results.

Recall that a (planar) map is an embedding of a finite connected graph into the two-dimensional sphere, viewed up to orientation-preserving homeomorphisms. For technical reasons, the maps we consider will always be \emph{rooted}, which means that an oriented edge is distinguished. Maps have been widely studied in combinatorics and \emph{random} maps are of interest in theoretical physics, for which they are a natural discretised version of random geometry, in particular in the theory of quantum gravity (see e.g. \cite{Ambjorn-Durhuus-Jonsson:Quantum_geometry_a_statistical_field_theory_approach}). One can view a map as a (finite) metric space by endowing the set of vertices with the graph distance: the distance between two vertices is the minimal number of edges of a path going from one to the other; throughout this paper, if $M$ is a map, we shall denote the associated metric space, with a slight abuse of notation, by $(M, \dgr)$. The set of all compact metric spaces, considered up to isometry, can be equipped with a metric, called the Gromov--Hausdorff distance, which makes it separable and complete \cite{Gromov:Metric_structures_for_Riemannian_and_non_Riemannian_spaces,Burago-Burago-Ivanov:A_course_in_metric_geometry}; we can then study the convergence in distribution of random maps viewed as metric spaces. 

The first and fondamental result in this direction has been obtained simultaneously by Le Gall \cite{Le_Gall:Uniqueness_and_universality_of_the_Brownian_map} and Miermont \cite{Miermont:The_Brownian_map_is_the_scaling_limit_of_uniform_random_plane_quadrangulations} using different approaches. We call \emph{faces} of a map the connected components of the complement of the edges; the \emph{degree} of a face is then the number of edges incident to it, with the convention that if both sides of an edge are incident to the same face, then it is counted twice. A \emph{quadrangulation} is a map in which all faces have degree $4$. In \cite{Le_Gall:Uniqueness_and_universality_of_the_Brownian_map} and \cite{Miermont:The_Brownian_map_is_the_scaling_limit_of_uniform_random_plane_quadrangulations}, it is shown that if $\mathcal{Q}_n$ is a uniform random rooted quadrangulation with $n$ faces, then the convergence in distribution
\[\left(\mathcal{Q}_n, \left(\frac{9}{8 n}\right)^{1/4} \dgr\right)
\cvloi
(\mathscr{M}, \mathscr{D}),\]
holds in the sense of Gromov--Hausdorff, where the limit $(\mathscr{M}, \mathscr{D})$, called the \emph{Brownian map}, is a random compact metric space, which is almost surely homeomorphic to the $2$-sphere (Le Gall \& Paulin \cite{Le_Gall-Paulin:Scaling_limits_of_bipartite_planar_maps_are_homeomorphic_to_the_2_sphere}, Miermont \cite{Miermont:On_the_sphericity_of_scaling_limits_of_random_planar_quadrangulations}) and has Hausdorff dimension $4$ (Le Gall \cite{Le_Gall:The_topological_structure_of_scaling_limits_of_large_planar_maps}). Let us mention that the Brownian map first appeared in the work of Marckert \& Mokkadem \cite{Marckert-Mokkadem:Limit_of_normalized_random_quadrangulations_the_Brownian_map} as a limit of rescaled quadrangulations for a distance different than the Gromov--Hausdorff distance.

Le Gall \cite{Le_Gall:Uniqueness_and_universality_of_the_Brownian_map} designs also a general method to prove such a limit theorem for other classes of random maps, using the above convergence of quadrangulations. Indeed, the main result in \cite{Le_Gall:Uniqueness_and_universality_of_the_Brownian_map} is stated for \emph{$q$-angulations} (which are maps in which each face has degree $q$) with $n$ faces, for any $q \in \{3, 4, 6, 8, \dots\}$ fixed. The limit is always the Brownian map as well as the scaling factor $n^{-1/4}$, only the multiplicative constant $(9/8)^{1/4}$ above depends on $q$ (see the precise statement below). Note that apart from the case $q=3$ of \emph{triangulations}, \cite{Le_Gall:Uniqueness_and_universality_of_the_Brownian_map} only deals with maps with even face-degrees, which corresponds in the planar case to \emph{bipartite} maps. The non-bipartite case is technically more involved and we henceforth restrict ourselves to bipartite maps as well. In this paper, we consider a large class of maps which enables us to recover and extend previous results, but we stress that it does \emph{not} recover the one above on quadrangulations; as a matter of fact, as in \cite{Le_Gall:Uniqueness_and_universality_of_the_Brownian_map}, we use the latter in our proof.

\subsection{Main result and notation}
\label{sec:resultat}

We generalise $q$-angulations by considering maps with possibly faces of different degrees. For every integer $n \ge 2$, we are given a sequence $\n = (n_i ; i \ge 1)$ of non-negative integers satisfying
\[\sum_{i \ge 1} n_i = n,\]
and we denote by $\Map(\n)$ the finite\footnote{Its cardinal was first calculated by Tutte \cite{Tutte:A_census_of_slicings} who considered the dual maps, i.e. Eulerian maps with a prescribed vertex-degree sequence.} set of rooted planar maps with $n_i$ faces of degree $2i$ for every $i \ge 1$. Let us introduce the notation that we shall use throughout this paper. Set
\begin{equation}\label{eq:nombre_aretes_sites_carte}
N_\n = \sum_{i \ge 1} i n_i
\qquad\text{and}\qquad
n_0 = 1 + N_\n - n.
\end{equation}
It is easy to see that every map in $\Map(\n)$ contains $n$ faces and $N_\n$ edges so, according to Euler's formula, it has $2 + N_\n - n = n_0+1$ vertices (this shift by one will simplify some statements later). We next define a probability measure and its variance by
\[p_\n(i) = \frac{n_i}{N_\n+1}
\quad\text{for}\quad i \ge 0
\qquad\text{and}\qquad
\sigma^2_\n = \sum_{i \ge 1} i^2 p_\n(i) - \left(\frac{N_\n}{N_\n+1}\right)^2.\]
The probability $p_\n$ is (up to the fact that there are $n_0+1$ vertices) the empirical half face-degree distribution of a map in $\Map(\n)$ if one sees the vertices as faces of degree $0$. Last, let us denote by
\[\Delta_\n = \max\{i \ge 0 : n_i > 0\}\]
the right edge of the support of $p_\n$. 

Our main assumption is the following: there exists a probability measure $p = (p(i) ; i \ge 0)$ with mean $1$ and variance $\sigma_p^2 = \sum_{i \ge 1} i^2 p(i) - 1 \in (0, \infty)$ such that, as $|\n| = n \to \infty$,
\begin{equation}\label{eq:H}\tag{\textbf{H}}
p_\n \Rightarrow p,
\qquad
\sigma^2_\n \to \sigma_p^2
\qquad\text{and}\qquad
n^{-1/2} \Delta_\n \to 0,
\end{equation}
where ``$\Rightarrow$'' denotes the weak convergence of probability measures, which is here equivalent to $p_\n(i) \to p(i)$ for every $i \ge 0$.

\begin{thm}\label{thm:cv_carte}
Under \eqref{eq:H}, if $\map_n$ is sampled uniformly at random in $\Map(\n)$ for every $n \ge 2$, then the following convergence in distribution holds in the sense of Gromov--Hausdorff:
\[\left(\map_n, \left(\frac{9}{4} \frac{1-p(0)}{\sigma_p^2} \frac{1}{n}\right)^{1/4} \dgr\right)
\cvloi
(\mathscr{M}, \mathscr{D}).\]
\end{thm}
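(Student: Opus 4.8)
The plan is to follow the now-classical strategy pioneered by Le Gall and Miermont, passing through a bijective encoding of bipartite maps by labelled trees, then analysing the scaling limit of the associated discrete snakes. The first step is to invoke the Bouttier--Di Francesco--Guitter bijection, which puts $\Map(\n)$ in correspondence with pairs consisting of a plane tree with a prescribed structure — here a tree whose white vertices have a child sequence governed by $\n$ (the so-called ``mobiles'' or ``labelled trees'') — together with an admissible labelling of the vertices with increments in $\{-1,0,1,2,\dots\}$. Under this bijection the graph distance from the root in $\map_n$ is controlled by the labels: the distance from the distinguished vertex to a vertex $v$ equals the label of $v$ minus the minimal label plus one, and more refined two-point and bound estimates of the type used in \cite{Le_Gall:Uniqueness_and_universality_of_the_Brownian_map} relate $\dgr$ to the label process along the contour. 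Because the marginal on the tree is exactly a uniform plane tree with a prescribed child sequence — the object studied by Broutin \& Marckert — one can transfer their scaling-limit results for the contour and height functions of such trees.

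Next I would establish the convergence of the encoding processes. Broutin \& Marckert show that, under an assumption on the child sequence equivalent to \eqref{eq:H}, the Łukasiewicz path, height function and contour function of the uniform tree with child sequence $\n$, suitably rescaled by $\sqrt n$ (with the constant involving $\sigma_p^2$), converge to Brownian-type excursions encoding the CRT $\CRT$. Conditionally on the tree, the labels form a centred random walk indexed by the tree whose step distribution is, on each white vertex with given degree, uniform among admissible increments; the key computation is that the variance of a single label increment converges to a constant expressible through $p$ — this is where the factor $(1-p(0))$ enters, since a positive fraction $p(0)$ of the vertices (the leaves playing the role of faces of degree $0$) carry no label variation. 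One then proves joint convergence of the pair (contour function, label process) towards $(\exc, Z)$, where $Z$ is the head of the Brownian snake driven by $\exc$, with the appropriate diffusion constant. The standard route is via a second-moment / tightness argument for the label process: tightness in the space of continuous functions using the Brownian bridge increments and a Kolmogorov-type moment bound (where the assumption $n^{-1/2}\Delta_\n \to 0$ is used precisely to control the contribution of high-degree white vertices, whose label increments have large variance), plus identification of finite-dimensional marginals using the convergence of the variance of label increments.

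From the joint convergence of the encoding processes, the passage to the Gromov--Hausdorff convergence of the maps follows the scheme of \cite{Le_Gall:Uniqueness_and_universality_of_the_Brownian_map}: one shows that the rescaled map distance $\dgr$ is uniformly close to the functional $D^\circ$ of the label process that, in the limit, defines the Brownian map, using the comparison inequalities (lower bound from the label/cactus bound, upper bound via well-chosen paths in the tree) together with a re-rooting argument at a uniform vertex and, crucially, the known convergence of uniform quadrangulations to $(\mathscr{M},\mathscr{D})$ — which is used, exactly as in Le Gall's paper, to pin down that the sub-sequential limit obtained from the snake really is the Brownian map and not merely some quotient of the CRT. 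The multiplicative constant in the statement is then the composition of the tree-scaling constant (from $\sigma_p^2$), the snake diffusion constant (from $1-p(0)$), and the universal $9/8$ coming from quadrangulations, rearranged into $\bigl(\tfrac94\,\tfrac{1-p(0)}{\sigma_p^2}\,\tfrac1n\bigr)^{1/4}$.

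The main obstacle, I expect, is the tightness and finite-dimensional convergence of the label process under the weak hypothesis \eqref{eq:H}: unlike the $q$-angulation case where all white vertices have the same degree and the label increments are i.i.d.\ with fixed law, here the conditional law of the labels is an inhomogeneous function of the (random) child sequence along the spine, so one must control a mixture over widely varying increment variances and ensure that rare high-degree faces — permitted as long as $\Delta_\n = o(\sqrt n)$ — do not create macroscopic label jumps; handling this requires a careful moment estimate along the lines of Broutin \& Marckert's analysis of the height process, combined with an exchangeability / spinal decomposition argument to reduce the label process to a conditioned random walk whose variance is asymptotically $\sigma_p^2 \cdot c(p)$ for the right constant $c(p)$ involving $1-p(0)$.
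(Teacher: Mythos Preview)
Your overall architecture is right --- BDG encoding, snake convergence $(\text{tree process}, \text{label process}) \to (\exc, Z)$, then Le Gall's re-rooting/comparison machinery to identify the limit as the Brownian map --- and this is exactly the paper's route. But there is a genuine gap in the middle step.

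You write that under BDG ``the marginal on the tree is exactly a uniform plane tree with a prescribed child sequence --- the object studied by Broutin \& Marckert''. This is false as stated. The BDG bijection produces a \emph{two-type} tree (white vertices at even generations, black at odd), in which the \emph{black} vertices have degree sequence $\n$ while the white vertices have an unconstrained offspring distribution. This object is not a one-type tree with a prescribed offspring sequence, and Broutin--Marckert's results do not apply to it directly. Historically the two-type tree was analysed head-on (Marckert--Miermont, Le Gall) but only under much stronger hypotheses (exponential moments), precisely because controlling the white offsprings is delicate.

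The paper's key additional device --- which your outline is missing --- is the Janson--Stef{\'a}nsson bijection. It sends the two-type BDG tree to a one-type tree $T_n$ in which the vertex with $i$ children appears exactly $n_i$ times for $i\ge 1$ (and there are $n_0$ leaves); \emph{that} tree is uniform in $\tree(\n)$ and is what Broutin--Marckert handle. The paper then shows (Lemma~\ref{lem:processus_bijection_JS}) that under JS the white label process equals the one-type label process $L_n$, and that the white contour process equals a modified height process $\widetilde H_n$ of $T_n$, which is close to $p_0 H_n$; this is where the spinal/multinomial comparison you allude to actually lives. Without this reduction, your sentence ``transfer their scaling-limit results'' has no content.

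A smaller point: your explanation of the factor $1-p(0)$ is off. It does not arise from ``leaves carrying no label variation''; it is purely the change of parametrisation $n \leftrightarrow N_\n$, since $n/N_\n \to 1-p(0)$ under \eqref{eq:H}. The snake scaling constant in front of $L_n$ is $\bigl(9/(4\sigma_p^2 N_\n)\bigr)^{1/4}$, with no $p(0)$ in it.
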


Since the graph distance is defined in terms of edges, it would be natural to make the rescaling depend on $N_\n$ rather than $n$. Under \eqref{eq:H}, we have $n/N_\n \to 1-p(0)$ as $n \to \infty$ so the previous convergence is equivalent to
\[\left(\map_n, \left(\frac{9}{4 \sigma_p^2} \frac{1}{N_\n}\right)^{1/4} \dgr\right)
\cvloi
(\mathscr{M}, \mathscr{D}).\]

This result recovers the aforementioned one of Le Gall \cite{Le_Gall:Uniqueness_and_universality_of_the_Brownian_map} for $2\kappa$-angulations for $\kappa \ge 2$. Indeed, these correspond to $\Map(\n)$ where $n_i = n$ if $i = \kappa$ and $n_i=0$ otherwise. In this case $N_\n = n\kappa$ and \eqref{eq:H} is fulfilled with
\[p(\kappa) = 1 - p(0) = \kappa^{-1}
\qquad\text{and so}\qquad
\sigma_p^2 = \kappa-1.\]
Theorem \ref{thm:cv_carte} therefore immediately yields:

\begin{cor}[Le Gall \cite{Le_Gall:Uniqueness_and_universality_of_the_Brownian_map}]
\label{cor:cv_angulations}
Fix $\kappa \ge 2$ and for every $n \ge 2$, let $\map_n^{(\kappa)}$ be a uniform random $2\kappa$-angulation with $n$ faces. The following convergence in distribution holds in the sense of Gromov--Hausdorff:
\[\left(\map_n^{(\kappa)}, \left(\frac{9}{4 \kappa(\kappa-1)} \frac{1}{n}\right)^{1/4} \dgr\right)
\cvloi
(\mathscr{M}, \mathscr{D}).\]
\end{cor}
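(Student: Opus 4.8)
The plan is to apply Theorem~\ref{thm:cv_carte} to the particular degree sequence $\n$ given by $n_\kappa = n$ and $n_i = 0$ for all $i \ne \kappa$. For this choice, $\Map(\n)$ is exactly the set of rooted $2\kappa$-angulations with $n$ faces, so a uniform element of $\Map(\n)$ has the law of $\map_n^{(\kappa)}$; it then only remains to check that \eqref{eq:H} holds and to simplify the multiplicative constant appearing in Theorem~\ref{thm:cv_carte}.

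First I would record the auxiliary quantities. Here $N_\n = \kappa n$ and hence $n_0 = 1 + N_\n - n = 1 + (\kappa-1)n$, so that
\[p_\n(\kappa) = \frac{n}{\kappa n + 1} \cv \frac{1}{\kappa},
\qquad
p_\n(0) = \frac{1 + (\kappa-1) n}{\kappa n + 1} \cv 1 - \frac{1}{\kappa},\]
and $p_\n(i) = 0$ for $i \notin \{0,\kappa\}$. Thus $p_\n \Rightarrow p$ with $p(\kappa) = 1 - p(0) = \kappa^{-1}$; this limit $p$ has mean $\kappa \cdot \kappa^{-1} = 1$ and variance $\sigma_p^2 = \kappa^2 \cdot \kappa^{-1} - 1 = \kappa - 1$, which lies in $(0,\infty)$ precisely because $\kappa \ge 2$. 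Next,
\[\sigma^2_\n = \kappa^2 \, \frac{n}{\kappa n + 1} - \left(\frac{\kappa n}{\kappa n + 1}\right)^2 \cv \kappa - 1 = \sigma_p^2,\]
and finally $\Delta_\n = \kappa$ is constant, so $n^{-1/2} \Delta_\n \to 0$. Hence all three requirements in \eqref{eq:H} are met.

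It then suffices to substitute $1 - p(0) = \kappa^{-1}$ and $\sigma_p^2 = \kappa - 1$ into the scaling factor of Theorem~\ref{thm:cv_carte}, which gives
\[\frac{9}{4} \, \frac{1 - p(0)}{\sigma_p^2} \, \frac{1}{n} = \frac{9}{4\kappa(\kappa-1)} \, \frac{1}{n},\]
exactly the constant in the statement, so the announced Gromov--Hausdorff convergence follows at once. There is no genuine obstacle in this argument; the only step requiring a little care is the verification of \eqref{eq:H}, and in particular the fact that the limiting variance equals $\kappa - 1$, which is finite and strictly positive thanks to $\kappa \ge 2$ (for $\kappa = 1$ the model of $2$-angulations is degenerate, with $\sigma_p^2 = 0$, and Theorem~\ref{thm:cv_carte} does not apply).
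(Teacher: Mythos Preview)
Your proof is correct and follows exactly the same approach as the paper: the paper derives the corollary in the paragraph immediately preceding its statement by noting that $2\kappa$-angulations correspond to the degree sequence $n_i = n\,\mathbf{1}_{\{i=\kappa\}}$, that \eqref{eq:H} is fulfilled with $p(\kappa)=1-p(0)=\kappa^{-1}$ and $\sigma_p^2=\kappa-1$, and that Theorem~\ref{thm:cv_carte} then immediately yields the result. Your write-up is in fact more detailed than the paper's, explicitly checking all three conditions of \eqref{eq:H} and the convergence of $\sigma_\n^2$.
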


\subsection{Boltzmann random maps}
\label{subsec:intro_Boltzmann}

Theorem \ref{thm:cv_carte} also applies to random maps sampled from a \emph{Boltzmann distribution}. Given a sequence $\q = (q_k ; k \ge 1)$ of non-negative real numbers, we define a measure $W^\q$ on the set $\Mapb$ of rooted bipartite maps by the formula
\[W^\q(\map) = \prod_{f \in \mathrm{Faces}(\map)} q_{\mathrm{deg}(f)/2},
\qquad \map \in \Mapb,\]
where $\mathrm{Faces}(\map)$ is the set of faces of $\map$ and $\mathrm{deg}(f)$ is the degree of such a face $f$. Set $Z_\q = W^\q(\Mapb)$; whenever it is finite, the formula
\[\P^\q(\cdot) = \frac{1}{Z_\q} W^\q(\cdot)\]
defines a probability measure on $\Mapb$. We consider next such random maps conditioned to have a large size for several notions of size. For every integer $n \ge 1$, let $\Mapb_{E=n}$, $\Mapb_{V=n}$ and $\Mapb_{F=n}$ be the subsets of $\Mapb$ of those maps with respectively $n$ edges, $n$ vertices and $n$ faces. For every $S = \{E, V, F\}$ and every $n \ge 1$, we define
\[\P^\q_{S=n}(\map) = \P^\q(\map \mid \map \in \Mapb_{S=n}),
\qquad \map \in \Mapb_{S=n},\]
the law of a Boltzmann map conditioned to have size $n$; here and later, we shall always, if necessary, implicitly restrict ourselves to those values of $n$ for which $W^\q(\Mapb_{S=n}) \ne 0$, and limits shall be understood along this subsequence.

Under mild integrability conditions on $\q$, we prove in Section \ref{sec:Boltzmann} that for every $S \in \{E, V, F\}$, there exists a constant $K^\q_S > 0$ such that if $\map_n$ is sampled from $\P^\q_{S=n}$ for every $n \ge 1$, then the convergence in distribution
\[\left(\map_n, \left(\frac{K^\q_S}{n}\right)^{1/4} \dgr\right)
\cvloi
(\mathscr{M}, \mathscr{D}),\]
holds in the sense of Gromov--Hausdorff. We refer to Theorem \ref{thm:cartes_Boltzmann} for a precise statement. Observe that for any choice $S \in \{E, V, F\}$, if $\map_n$ is sampled from $\P^\q_{S=n}$ then, conditional on its degree sequence, say, $\nu_{\map_n} = (\nu_{\map_n}(i) ; i \ge 1)$, it has the uniform distribution in $\Map(\nu_{\map_n})$. The proof of the above convergence consists in showing that $\nu_{\map_n}$ satisfies \eqref{eq:H} in probability for some deterministic limit law $p_\q$. Indeed, by Skorohod's representation Theorem, there exists then a probability space where versions of $\nu_{\map_n}$ under $\P^\q_{S=n}$ satisfy \eqref{eq:H} almost surely so we may apply Theorem \ref{thm:cv_carte} and conclude the convergence in law of the rescaled maps.

The case $S=V$ was obtained by Le Gall \cite[Theorem 9.1]{Le_Gall:Uniqueness_and_universality_of_the_Brownian_map}, relying on results of Marckert \& Miermont \cite{Marckert-Miermont:Invariance_principles_for_random_bipartite_planar_maps}, when $\q$ is \emph{regular critical}, meaning that the distribution $p_\q$ (which is roughly that of the half-degree of a typical face when we see vertices as faces of degree $0$) admits small exponential moments. Here, we generalise this result (and consider other conditionings) to all \emph{generic critical} sequences $\q$, i.e. those for which $p_\q$ admits a second moment.

Let us mention that Le Gall \& Miermont \cite{Le_Gall-Miermont:Scaling_limits_of_random_planar_maps_with_large_faces} have also considered Boltzmann random maps with $n$ vertices in which the distribution of the degree of a typical face is in the domain of attraction of a stable distribution with index $\alpha \in (1, 2)$ and obtained different objects at the limit (after extraction of a subsequence). Also, Janson \& Stef\'{a}nsson \cite{Janson-Stefansson:Scaling_limits_of_random_planar_maps_with_a_unique_large_face} have studied maps with $n$ edges which exhibit a condensation phenomenon and converge, after rescaling, towards the Brownian tree: a unique giant face emerges and its boundary collapses into a tree.

The conditioning $S=E$ by the number of edges is somewhat different since the set $\Mapb_{E=n}$ is finite\footnote{See Walsh \cite[Equation 7]{Walsh:Hypermaps_versus_bipartite_maps} for an expression of its cardinal.} so the distribution $\P^\q_{E=n}(\cdot) = W^\q(\cdot)/W^\q(\Mapb_{E=n})$ on $\Mapb_{E=n}$ makes sense even if $W^\q(\Map)$ is infinite; we shall see that the above convergence still holds in this case (Theorem \ref{thm:cartes_Boltzmann_n_aretes}). The simplest example is the constant sequence $q_k=1$ for every $k \ge 1$, in which case $\P^\q_{E=n}$ corresponds to the uniform distribution in $\Mapb_{E=n}$; in this case, we calculate $K^\q_E = 1/2$, which recovers a result first due to Abraham \cite{Abraham:Rescaled_bipartite_planar_maps_converge_to_the_Brownian_map}:

\begin{cor}[Abraham \cite{Abraham:Rescaled_bipartite_planar_maps_converge_to_the_Brownian_map}]
\label{cor:cv_cartes_biparties}
For every $n \ge 1$, let $\mathcal{B}_n$ be a uniform random bipartite map with $n$ edges. The following convergence in distribution holds in the sense of Gromov--Hausdorff:
\[\left(\mathcal{B}_n, \left(\frac{1}{2 n}\right)^{1/4} \dgr\right)
\cvloi
(\mathscr{M}, \mathscr{D}).\]
\end{cor}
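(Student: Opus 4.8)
The plan is to obtain Corollary~\ref{cor:cv_cartes_biparties} from the general convergence for Boltzmann maps conditioned by the number of edges (Theorem~\ref{thm:cartes_Boltzmann_n_aretes}), specialised to the constant weight sequence $q_k \equiv 1$. With these weights $W^\q(\map) = 1$ for every $\map$, so $\P^\q_{E=n}$ is exactly the uniform law on $\Mapb_{E=n}$ and $\mathcal{B}_n$ is a Boltzmann map with weights $\q$ conditioned to have $n$ edges. Two things then remain: to check that this (non-admissible) constant sequence is within the scope of Theorem~\ref{thm:cartes_Boltzmann_n_aretes}, and to evaluate the constant $K^\q_E$.

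For the first point I would use the tilting invariance that is specific to the edge-conditioning. For any $\beta > 0$, the tilted sequence $\bar\q = (\beta^k q_k ; k \ge 1)$ satisfies $W^{\bar\q}(\map) = \beta^{E(\map)} W^\q(\map)$, since every face $f$ contributes $\mathrm{deg}(f)/2$ to the sum $\sum_{f}\mathrm{deg}(f)/2 = E(\map)$, the number of edges of $\map$; hence $W^{\bar\q}$ and $W^\q$ differ only by the constant factor $\beta^{n}$ on $\Mapb_{E=n}$, so $\P^{\bar\q}_{E=n} = \P^\q_{E=n}$. It therefore suffices to choose $\beta$ so that $\bar q_k = \beta^k$ is a generic critical weight sequence, and to apply the theorem to $\bar\q$. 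Writing the associated generating function $g_{\bar\q}(z) = \sum_{k\ge1}\binom{2k-1}{k-1}\beta^k z^k = \tfrac12\bigl((1-4\beta z)^{-1/2}-1\bigr)$, the conditions defining generic criticality --- that the admissibility equation $\zeta = 1 + g_{\bar\q}(\zeta)$ has a solution inside the disc of convergence at which moreover $g_{\bar\q}'(\zeta)=1$ --- reduce to the single equation $2\beta + 3\beta^{2/3} = 1$, whose unique positive solution is $\beta = 1/8$, with $\zeta = 3/2 < 2$. Thus $\bar q_k = 8^{-k}$ is admissible and generic critical, and Theorem~\ref{thm:cartes_Boltzmann_n_aretes} applies.

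For the constant, the limit law $p = p_{\bar\q}$ appearing in \eqref{eq:H} --- the in-probability limit of the empirical half-degree distributions produced along the proof of Theorem~\ref{thm:cartes_Boltzmann_n_aretes} --- is the standard Boltzmann half-degree law $p(0) = 1/\zeta = 2/3$ and $p(i) = \binom{2i-1}{i-1}\bar q_i \zeta^{i-1} = \tfrac13\binom{2i}{i}(3/16)^i$ for $i \ge 1$; one checks directly that it has mean $g_{\bar\q}'(\zeta) = 1$ and variance $\sigma_p^2 = \zeta\, g_{\bar\q}''(\zeta) = \tfrac32\cdot 3 = \tfrac92$. Since for the edge-conditioning the number of edges $N_\n$ equals $n$ exactly, the remark following Theorem~\ref{thm:cv_carte} gives the scaling factor $\bigl(9/(4\sigma_p^2\, n)\bigr)^{1/4}$, that is $K^\q_E = 9/(4\sigma_p^2) = 1/2$, which is the announced statement.

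No step is a genuine obstacle here: the analytic difficulty lies entirely in Theorem~\ref{thm:cv_carte} and in the reduction of the Boltzmann statements to it, whereas the present corollary only requires the tilting bookkeeping and the short generating-function computation of $\beta$ and $\sigma_p^2$. A self-contained alternative avoiding Theorem~\ref{thm:cartes_Boltzmann_n_aretes} would be to combine the conditional uniformity of $\mathcal{B}_n$ in $\Map(\nu_{\mathcal{B}_n})$ with the explicit product formula for $|\Map(\n)|$ (Tutte, Walsh) to show directly, via a local-limit or concentration estimate, that $\nu_{\mathcal{B}_n}$ satisfies \eqref{eq:H} in probability with the law $p$ above, and then to apply Theorem~\ref{thm:cv_carte}.
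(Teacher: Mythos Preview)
Your approach is essentially the paper's: deduce the corollary from Theorem~\ref{thm:cartes_Boltzmann_n_aretes} applied to the constant sequence $q_k\equiv 1$, and compute the scaling constant. The paper does this in one line by working with $\q$ itself: it writes $g_\q(x)=\tfrac{1+\sqrt{1-4x}}{2\sqrt{1-4x}}$, checks that $x=3/16$ is the unique solution of $x\,g_\q'(x)=g_\q(x)$ with $g_\q''(x)<\infty$, and reads off $\tfrac{9}{4}\cdot\tfrac{g_\q(x)}{x^2 g_\q''(x)}=\tfrac12$.

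Your tilting step to $\bar q_k=\beta^k$ is correct but unnecessary: Theorem~\ref{thm:cartes_Boltzmann_n_aretes} is designed precisely to bypass admissibility, and the tilting is already built into its proof (via the probability $\mu_\q$ of~\eqref{eq:loi_GW_carte_Boltzmann_n_aretes}). In fact your computation and the paper's are the same computation in different coordinates, since $g_{\bar\q}(z)=g_\q(\beta z)$ and your $(\beta,\zeta)=(1/8,3/2)$ gives exactly $x=\beta\zeta=3/16$. Two minor points: your $g_{\bar\q}$ omits the $k=0$ term, so it differs from the paper's convention by $+1$ (your admissibility equation $\zeta=1+g_{\bar\q}(\zeta)$ is then equivalent to the paper's fixed-point condition, so no harm done); and rather than invoking ``the remark following Theorem~\ref{thm:cv_carte}'', it is cleaner to read the constant directly from the statement of Theorem~\ref{thm:cartes_Boltzmann_n_aretes} (or Theorem~\ref{thm:cartes_Boltzmann} with $S=E$ once $\bar\q$ is generic critical).
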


\subsection{Approach and organisation of the paper}

Our approach to proving Theorem \ref{thm:cv_carte} follows closely the robust one of Le Gall \cite{Le_Gall:Uniqueness_and_universality_of_the_Brownian_map}. Specifically, we code our map $\map_n$ by a certain \emph{labelled} (or \emph{spatial}) \emph{two-type tree} $(\T_n, \ell_n)$ via a bijection due to Bouttier, Di Francesco \& Guitter \cite{Bouttier-Di_Francesco-Guitter:Planar_maps_as_labeled_mobiles}: $\T_n$ is a plane tree and $\ell_n$ is a function which associates with each vertex of $\T_n$ a label (or a spatial position) in $\Z$. Such a labelled tree is itself encoded by a pair of discrete paths $(\mathcal{C}^\circ_n, \mathcal{L}^\circ_n)$; we show that under \eqref{eq:H}, this pair, suitably rescaled, converges in distribution towards a pair $(\exc, Z)$ called in the literature the ``head of the Brownian snake'' (e.g. \cite{Marckert-Mokkadem:States_spaces_of_the_snake_and_its_tour_convergence_of_the_discrete_snake, Janson-Marckert:Convergence_of_discrete_snakes, Marckert:The_lineage_process_in_Galton_Watson_trees_and_globally_centered_discrete_snakes}). The construction of the Brownian map from $(\exc, Z)$ is analogous to the Bouttier--Di Francesco--Guitter bijection; as it was shown by Le Gall \cite{Le_Gall:Uniqueness_and_universality_of_the_Brownian_map}, Theorem 1 follows from this functional limit theorem as well as a certain``invariance under re-rooting'' of our maps.

To prove such an invariance principle for $(\T_n, \ell_n)$, we further rely on a more recent bijection due to Janson \& Stef{\'a}nsson \cite{Janson-Stefansson:Scaling_limits_of_random_planar_maps_with_a_unique_large_face} which maps two-type trees to \emph{one-type} trees which are easier to control. As a matter of fact, if $\map_n$ is uniformly distributed in $\Map(\n)$ and $(T_n, l_n)$ is its corresponding labelled one-type tree, then the unlabelled tree $T_n$ is a uniform random tree with a prescribed degree (in the sense of offspring) sequence as studied by Broutin \& Marckert \cite{Broutin-Marckert:Asymptotics_of_trees_with_a_prescribed_degree_sequence_and_applications}. The labelled tree $(T_n, l_n)$ is again encoded by a pair of functions $(H_n, L_n)$ and the main result of \cite{Broutin-Marckert:Asymptotics_of_trees_with_a_prescribed_degree_sequence_and_applications} is, under the very same assumption \eqref{eq:H}, the convergence of $H_n$ suitably rescaled towards $\exc$. Our main contribution, see Theorem \ref{thm:cv_fonctions_serpent}, consists in strengthening this result by adding the labels to show that the pair $(H_n, L_n)$, suitably rescaled, converges towards $(\exc, Z)$, and then transporting this invariance principle back to the two-type tree $(\T_n, \ell_n)$.

The previous works on the convergence of large random labelled trees focus on the case when the tree is a size-conditioned (one or multi-type) Galton--Watson tree and a lot of effort has been put to reduce the assumptions of the labels as much as possible, maintaining quite strong assumption on the tree itself; a common assumption is indeed to consider a Galton--Watson tree whose offspring distribution admits small exponential moments; in order to reduce the assumption on the labels, Marckert \cite{Marckert:The_lineage_process_in_Galton_Watson_trees_and_globally_centered_discrete_snakes} even supposes the offsprings to be uniformly bounded. In this paper, we take the opposite direction: we focus only on the labels given by the bijection with planar maps, which satisfy rather strong assumptions, and work under weak assumptions on the tree (essentially a second moment condition). Furthermore, we consider trees with a prescribed degree sequence, which are more general than Galton--Watson trees and on which the literature is limited, which explains the length of this work.

Let us mention that other convergences towards the Brownian map similar to Theorem \ref{thm:cv_carte} have been obtained using also other bijections with labelled trees: Beltran \& Le Gall \cite{Beltran_Le_Gall:Quadrangulations_with_no_pendant_vertices} studied random quadrangulations without vertices of degree one, Addario-Berry \& Albenque \cite{Addario_Berry-Albenque:The_scaling_limit_of_random_simple_triangulations_and_random_simple_triangulations} considered random triangulations and quadrangulations without loops or multiple edges and
Bettinelli, Jacob \& Miermont \cite{Bettinelli-Jacob-Miermont:The_scaling_limit_of_uniform_random_plane_maps_via_the_Ambjorn_Budd_bijection} uniform random maps with $n$ edges.

This work leaves open two questions that we plan to investigate in the future. First, one can consider non-bipartite maps with a prescribed degree sequence; we restricted ourselves here to bipartite maps because (except in the notable case of triangulations), in the non-bipartite case, the Bouttier--Di Francesco--Guitter bijection yields a more complicated labelled three-type tree which is harder to analyse; moreover, the Janson--Stef{\'a}nsson bijection does not apply to such trees so the method of proof should be different. A second direction of future work would be to relax the assumption \eqref{eq:H}, in particular to consider maps with large faces. A first step would be to extend the work of Broutin \& Marckert \cite{Broutin-Marckert:Asymptotics_of_trees_with_a_prescribed_degree_sequence_and_applications} on plane trees; we believe that the family of so-called inhomogeneous continuum random trees introduced in \cite{Aldous-Pitman:Inhomogeneous_continuum_random_trees_and_the_entrance_boundary_of_the_additive_coalescent, Camarri-Pitman:Limit_distributions_and_random_trees_derived_from_the_birthday_problem_with_unequal_probabilities} appears at the limit; one would then construct a family of random maps from these trees, replacing the Brownian excursion $\exc$ by their ``exploration process'' studied in \cite{Aldous-Miermont-Pitman:The_exploration_process_of_inhomogeneous_continuum_random_trees_and_an_extension_of_Jeulin_s_local_time_identity}.

This paper is organised as follows. In Section \ref{sec:abres_codages_bijections}, we first introduce the notion of labelled one-type and two-type trees and their encoding by functions, then we describe the Bouttier--Di Francesco--Guitter and Janson--Stef{\'a}nsson bijections. In Section \ref{sec:serpent_et_enonces}, we define the pair $(\exc, Z)$ and the Brownian map and we state our main results on the convergence of discrete paths. Section \ref{sec:epine} is a technical section in which we extend a ``backbone decomposition'' of Broutin \& Marckert \cite{Broutin-Marckert:Asymptotics_of_trees_with_a_prescribed_degree_sequence_and_applications}, the results are stated there and proved in Appendix \ref{sec:appendice_epine}. We prove the convergence of the pairs $(\mathcal{C}^\circ_n, \mathcal{L}^\circ_n)$ and $(H_n, L_n)$, which encode the labelled trees $(\T_n, \ell_n)$ and $(T_n, l_n)$ respectively, in Section \ref{sec:convergences_fonctions}. Then we prove Theorem \ref{thm:cv_carte} in section \ref{sec:carte_brownienne}. Finally, we apply our results to Boltzmann random maps in Section \ref{sec:Boltzmann}.

\paragraph{Acknowledgments}
I am deeply indebted to Gr\'egory Miermont for suggesting me to study this model, for several discussions during the preparation of this work and for pointing me out some inaccuracies in a first draft. Many thanks also to all the persons I have asked about Remark \ref{rem:moments_records_pont_echangeable}, and in particular Olivier H\'enard who then asked me every morning if I succeeded proving it until I tried another way (Corollary~\ref{cor:bon_evenement_tension_labels}). Finally, I am grateful to the two anonymous referees for their constructive remarks.

\section{Maps and trees}
\label{sec:abres_codages_bijections}

\subsection{Plane trees and their encoding with paths}

Let $\N = \{1, 2, \dots\}$ be the set of all positive integers, set $\N^0 = \{\varnothing\}$ and consider the set of words
\begin{equation*}
\U = \bigcup_{n \ge 0} \N^n.
\end{equation*}
For every $u = (u_1, \dots, u_n) \in \U$, we denote by $|u| = n$ the length of $u$; if $n \ge 1$, we define its \emph{prefix} $pr(u) = (u_1, \dots, u_{n-1})$ and for $v = (v_1, \dots, v_m) \in \U$, we let $uv = (u_1, \dots, u_n, v_1, \dots, v_m) \in \U$ be the concatenation of $u$ and $v$. We endow $\U$ with the \emph{lexicographical order}: given $u, v \in \U$, let $w \in \U$ be their longest common prefix, that is $u = w(u_1, \dots, u_n)$, $v = w(v_1, \dots, v_m)$ and $u_1 \ne v_1$, then $u < v$ if $u_1 < v_1$.

A \emph{plane tree} is a non-empty, finite subset $\tau \subset \U$ such that:
\begin{enumerate}
\item $\varnothing \in \tau$;
\item if $u \in \tau$ with $|u| \ge 1$, then $pr(u) \in \tau$;
\item if $u \in \tau$, then there exists an integer $k_u \ge 0$ such that $ui \in \tau$ if and only if $1 \le i \le k_u$.
\end{enumerate}

We shall denote the set of plane trees by $\tree$. We will view each vertex $u$ of a tree $\tau$ as an individual of a population for which $\tau$ is the genealogical tree. The vertex $\varnothing$ is called the \emph{root} of the tree and for every $u \in \tau$, $k_u$ is the number of \emph{children} of $u$ (if $k_u = 0$, then $u$ is called a \emph{leaf}, otherwise, $u$ is called an \emph{internal vertex}) and $u1, \dots, uk_u$ are these children from left to right, $|u|$ is its \emph{generation}, $pr(u)$ is its \emph{parent} and more generally, the vertices $u, pr(u), pr \circ pr (u), \dots, pr^{|u|}(u) = \varnothing$ are its \emph{ancestors}; the longest common prefix of two elements is their \emph{last common ancestor}. We shall denote by $\llbracket u , v \rrbracket$ the unique non-crossing path between $u$ and $v$.

Fix a tree $\tau$ with $N$ edges and let $\varnothing = u_0 < u_1 < \dots < u_N$ be its vertices, listed in lexicographical order. We describe three discrete paths which each encode $\tau$. First, its \emph{{\L}ukasiewicz path} $W = (W(j) ; 0 \le j \le N+1)$ is defined by $W(0) = 0$ and for every $0 \le j \le N$,
\[W(j+1) = W(j) + k_{u_j}-1.\]
One easily checks that $W(j) \ge 0$ for every $0 \le j \le N$ but $W(N+1)=-1$. Next, we define the \emph{height process} $H = (H(j); 0 \le j \le N)$ by setting for every $0 \le j \le N$,
\[H(j) = |u_j|.\]
Finally, define the \emph{contour sequence} $(c_0, c_1, \dots ,c_{2N})$ of $\tau$ as follows: $c_0 = \varnothing$ and for each $i \in \{0, \dots, 2N-1\}$, $c_{i+1}$ is either the first child of $c_i$ which does not appear in the sequence $(c_0, \dots, c_i)$, or the parent of $c_i$ if all its children already appear in this sequence. The lexicographical order on the tree corresponds to the depth-first search order, whereas the contour order corresponds to ``moving around the tree in clockwise order''. The \emph{contour process} $C = (C(j) ; 0 \le j \le 2N)$ is defined by setting for every $0 \le j \le 2N$,
\[C(j) = |c_j|.\]

Without further notice, throughout this work, every discrete path shall also be viewed as a continuous function after interpolating linearly between integer times.

\subsection{Labelled plane trees and label processes}
\label{subsec:arbres_etiquetes}

\paragraph{Two-type trees}
We will use the expression ``two-type tree'' for a plane tree in which we distinguish vertices at even and odd generation; call the former \emph{white} and the latter \emph{black}, we denote by $\circ(\T)$ and $\bullet(\T)$ the sets of white and black vertices of a two-type tree $\T$. We denote by $\tree_{\circ, \bullet}$ the set of two-type trees. Let $N$ be the number of edges of such a tree $\T$, denote by $(c_0, \dots, c_{2N})$ its contour sequence and $\mathcal{C} = (\mathcal{C}(k) ; 0 \le k \le 2N)$ its contour process; for every $0 \le k \le N$, set $c^\circ_k = c_{2k}$, the sequence $(c^\circ_0, \dots, c^\circ_N)$ is called the \emph{white contour sequence} of $\T$ and we define its \emph{white contour process} $\mathcal{C}^\circ = (\mathcal{C}^\circ(k) ; 0 \le k \le N)$ by $\mathcal{C}^\circ(k) = |c^\circ_k|/2$ for every $0 \le k \le N$. One easily sees that $\sup_{t \in [0,1]} |\mathcal{C}(2Nt) - 2\mathcal{C}^\circ(Nt)| = 1$ so $\mathcal{C}^\circ$ encodes the geometry of the tree up to a small error.

A \emph{labelling} $\ell$ of a two-type tree $\T$ is a function defined on the set $\circ(\T)$ of its white vertices to $\Z$ such that
\begin{itemize}
\item the root of $\T$ is labelled $0$,
\item for every black vertex, the increments of the labels of its white neighbours in clockwise order are greater than or equal to $-1$.
\end{itemize}
We define the \emph{white label process} $\mathcal{L}^\circ = (\mathcal{L}^\circ(k) ; 0 \le k \le N)$ of $\T$ by $\mathcal{L}^\circ(k) = \ell(c^\circ_k)$ for every $0 \le k \le N$. The labelled tree $(\T, \ell)$ is, up to a small error, encoded by the pair $(\mathcal{C}^\circ, \mathcal{L}^\circ)$, see Figure \ref{fig:arbre_deux_types}.

\begin{figure}[!ht]\centering
\def\r{.6}
\begin{tikzpicture}[scale=.7]
\coordinate (1) at (0,0);
	\coordinate (2) at (-1.5,1);
		\coordinate (3) at (-2.5,2);
		\coordinate (4) at (-1.5,2);
		\coordinate (5) at (-.5,2);
			\coordinate (6) at (-.5,3);
				\coordinate (7) at (-.5,4);
					\coordinate (8) at (-1.25,5);
					\coordinate (9) at (.25,5);
						\coordinate (10) at (-.75,6);
						\coordinate (11) at (.25,6);
						\coordinate (12) at (1.25,6);
	\coordinate (13) at (2,1);
		\coordinate (14) at (2,2);
			\coordinate (15) at (2,3);
				\coordinate (16) at (1.25,4);
				\coordinate (17) at (2.75,4);

\draw
	(1) -- (2)	(1) -- (13)
	(2) -- (3)	(2) -- (4)	(2) -- (5)
	(5) -- (6) -- (7)
	(7) -- (8)	(7) -- (9)
	(9) -- (10)	(9) -- (11)	(9) -- (12)
	(13) -- (14) -- (15)
	(15) -- (16)	(15) -- (17)
;

\draw[fill=black]
	(2) circle (3pt)
	(6) circle (3pt)
	(8) circle (3pt)
	(9) circle (3pt)
	(13) circle (3pt)
	(15) circle (3pt)
;

\begin{scriptsize}
\node[circle, minimum size=\r cm, fill=white, draw] at (1) {$0$};
\node[circle, minimum size=\r cm, fill=white, draw] at (3) {$-1$};
\node[circle, minimum size=\r cm, fill=white, draw] at (4) {$-2$};
\node[circle, minimum size=\r cm, fill=white, draw] at (5) {$1$};
\node[circle, minimum size=\r cm, fill=white, draw] at (7) {$0$};
\node[circle, minimum size=\r cm, fill=white, draw] at (10) {$-1$};
\node[circle, minimum size=\r cm, fill=white, draw] at (11) {$-2$};
\node[circle, minimum size=\r cm, fill=white, draw] at (12) {$-1$};
\node[circle, minimum size=\r cm, fill=white, draw] at (14) {$-1$};
\node[circle, minimum size=\r cm, fill=white, draw] at (16) {$-2$};
\node[circle, minimum size=\r cm, fill=white, draw] at (17) {$0$};

\end{scriptsize}
\end{tikzpicture}
\qquad
\begin{scriptsize}
\begin{tikzpicture}[scale=.55]
\draw[thin, ->]	(0,0) -- (16.5,0);
\draw[thin, ->]	(0,0) -- (0,3.5);
\foreach \x in {1, 2, ..., 3}
	\draw[dotted]	(0,\x) -- (16.5,\x);
\foreach \x in {0, 1, ..., 3}
	\draw (.1,\x)--(-.1,\x)	(0,\x) node[left] {$\x$};
\foreach \x in {2, 4, ..., 16}
	\draw (\x,.1)--(\x,-.1)	(\x,0) node[below] {$\x$};
\coordinate (0) at (0,0);
\coordinate (1) at (1, 1);
\coordinate (2) at (2, 1);
\coordinate (3) at (3, 1);
\coordinate (4) at (4, 2);
\coordinate (5) at (5, 2);
\coordinate (6) at (6, 3);
\coordinate (7) at (7, 3);
\coordinate (8) at (8, 3);
\coordinate (9) at (9, 2);
\coordinate (10) at (10, 1);
\coordinate (11) at (11, 0);
\coordinate (12) at (12, 1);
\coordinate (13) at (13, 2);
\coordinate (14) at (14, 2);
\coordinate (15) at (15, 1);
\coordinate (16) at (16, 0);
\newcommand{\lastx}{0}
\foreach \x [remember=\x as \lastx] in {1, 2, 3, ..., 16} \draw (\lastx) -- (\x);
\foreach \x in {0, 1, ..., 16}	\draw[fill=black] (\x)	circle (2pt);
\begin{scope}[shift={(0,-3)}]
\draw[thin, ->]	(0,0) -- (16.5,0);
\draw[thin, ->]	(0,-2) -- (0,1.5);
\foreach \x in {-2, -1, 1}
	\draw[dotted]	(0,\x) -- (16.5,\x);
\foreach \x in {-2, -1, 0, 1}
	\draw (.1,\x)--(-.1,\x)	(0,\x) node[left] {$\x$};
%
\coordinate (0) at (0, 0);
\coordinate (1) at (1, -1);
\coordinate (2) at (2, -2);
\coordinate (3) at (3, 1);
\coordinate (4) at (4, 0);
\coordinate (5) at (5, 0);
\coordinate (6) at (6, -1);
\coordinate (7) at (7, -2);
\coordinate (8) at (8, -1);
\coordinate (9) at (9, 0);
\coordinate (10) at (10, 1);
\coordinate (11) at (11, 0);
\coordinate (12) at (12, -1);
\coordinate (13) at (13, -2);
\coordinate (14) at (14, 0);
\coordinate (15) at (15, -1);
\coordinate (16) at (16, 0);
\renewcommand{\lastx}{0}
\foreach \x [remember=\x as \lastx] in {1, 2, 3, ..., 16} \draw (\lastx) -- (\x);

\foreach \x in {0, 1, 2, 3, ..., 16} \draw [fill=black] (\x)	circle (2pt);
\end{scope}
\end{tikzpicture}
\end{scriptsize}
\caption{A two-type labelled tree, its white contour process on top and its white label process below.}
\label{fig:arbre_deux_types}
\end{figure}
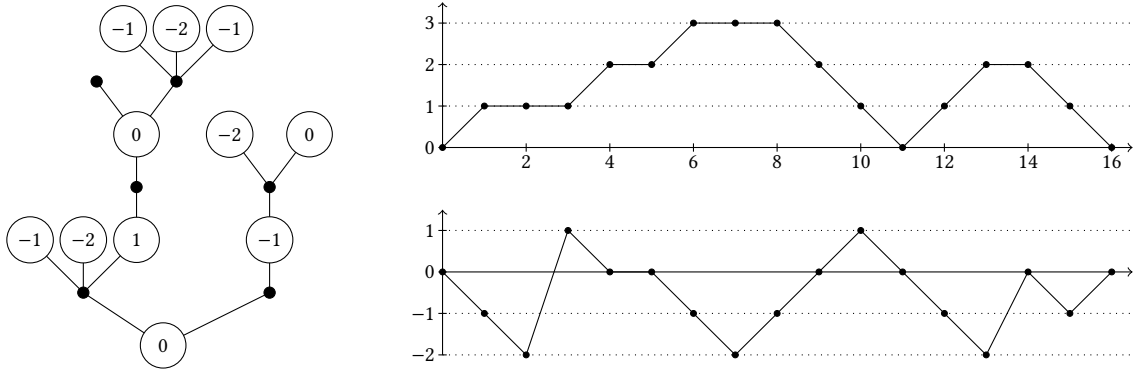

\paragraph{One-type trees}
As opposed to two-type trees, plane trees in which vertices at even and odd generation play the same role will be called ``one-type trees'' and not just ``trees'' to emphasise the difference. Recall that the geometry of a one-type tree $T$ is encoded by its height process $H$. A \emph{labelling} $l$ of such a tree is a function defined on the set of vertices to $\Z$ such that
\begin{itemize}
\item the root of $T$ is labelled $0$,
\item for every internal vertex, its right-most child carries the same label as itself,
\item for every internal vertex, the label increment between itself and its first child is greater than or equal to $-1$, and so are the increments between every two consecutive children from left to right.
\end{itemize}
Define the \emph{label process} $L(k) = l(u_k)$, where $(u_0, \dots, u_N)$ is the sequence of vertices of $T$ in lexicographical order; the labelled tree is (exactly) encoded by the pair $(H, L)$, see Figure \ref{fig:arbre_un_type}.

\begin{figure}[!ht]\centering
\def\r{.6}
\def\longueur{1.8}
\begin{tikzpicture}[scale=.6]
\coordinate (1) at (0,0*\longueur);
	\coordinate (2) at (-4.25,1*\longueur);
	\coordinate (3) at (-1.5,1*\longueur);
	\coordinate (4) at (1.5,1*\longueur);
		\coordinate (5) at (.75,2*\longueur);
			\coordinate (6) at (.75,3*\longueur);
				\coordinate (7) at (-.75,4*\longueur);
				\coordinate (8) at (.25,4*\longueur);
				\coordinate (9) at (1.25,4*\longueur);
				\coordinate (10) at (2.25,4*\longueur);
		\coordinate (11) at (2.25,2*\longueur);
	\coordinate (12) at (4.25,1*\longueur);
		\coordinate (13) at (3.5,2*\longueur);
			\coordinate (14) at (2.5,3*\longueur);
			\coordinate (15) at (3.5,3*\longueur);
			\coordinate (16) at (4.5,3*\longueur);
		\coordinate (17) at (5,2*\longueur);

\draw
	(1) -- (2)	(1) -- (3)	(1) -- (4)	(1) -- (12)
	(4) -- (5)	(4) -- (11)
	(5) -- (6)
	(6) -- (7)	(6) -- (8)	(6) -- (9)	(6) -- (10)
	(12) -- (13)	(12) -- (17)
	(13) -- (14)	(13) -- (15)	(13) -- (16)
;

\begin{scriptsize}
\node[circle, minimum size=\r cm, fill=white, draw] at (2) {$-1$};
\node[circle, minimum size=\r cm, fill=white, draw] at (3) {$-2$};
\node[circle, minimum size=\r cm, fill=white, draw] at (7) {$-1$};
\node[circle, minimum size=\r cm, fill=white, draw] at (8) {$-2$};
\node[circle, minimum size=\r cm, fill=white, draw] at (9) {$-1$};
\node[circle, minimum size=\r cm, fill=white, draw] at (10) {$0$};
\node[circle, minimum size=\r cm, fill=white, draw] at (11) {$1$};
\node[circle, minimum size=\r cm, fill=white, draw] at (14) {$-2$};
\node[circle, minimum size=\r cm, fill=white, draw] at (15) {$0$};
\node[circle, minimum size=\r cm, fill=white, draw] at (16) {$-1$};
\node[circle, minimum size=\r cm, fill=white, draw] at (17) {$0$};

\node[circle, minimum size=\r cm, fill=white, draw] at (1) {$0$};
\node[circle, minimum size=\r cm, fill=white, draw] at (4) {$1$};
\node[circle, minimum size=\r cm, fill=white, draw] at (5) {$0$};
\node[circle, minimum size=\r cm, fill=white, draw] at (6) {$0$};
\node[circle, minimum size=\r cm, fill=white, draw] at (12) {$0$};
\node[circle, minimum size=\r cm, fill=white, draw] at (13) {$-1$};
\end{scriptsize}
\end{tikzpicture}
\begin{scriptsize}
\begin{tikzpicture}[scale=.55]
\draw[thin, ->]	(0,0) -- (16.5,0);
\draw[thin, ->]	(0,0) -- (0,4.5);
\foreach \x in {1, 2, ..., 4}
	\draw[dotted]	(0,\x) -- (16.5,\x);
\foreach \x in {0, 1, ..., 4}
	\draw (.1,\x)--(-.1,\x)	(0,\x) node[left] {$\x$};
\foreach \x in {2, 4, ..., 16}
	\draw (\x,.1)--(\x,-.1)	(\x,0) node[below] {$\x$};
\coordinate (0) at (0, 0);
\coordinate (1) at (1, 1);
\coordinate (2) at (2, 1);
\coordinate (3) at (3, 1);
\coordinate (4) at (4, 2);
\coordinate (5) at (5, 3);
\coordinate (6) at (6, 4);
\coordinate (7) at (7, 4);
\coordinate (8) at (8, 4);
\coordinate (9) at (9, 4);
\coordinate (10) at (10, 2);
\coordinate (11) at (11, 1);
\coordinate (12) at (12, 2);
\coordinate (13) at (13, 3);
\coordinate (14) at (14, 3);
\coordinate (15) at (15, 3);
\coordinate (16) at (16, 2);
\newcommand{\lastx}{0}
\foreach \x [remember=\x as \lastx] in {1, 2, 3, ..., 16} \draw (\lastx) -- (\x);

\foreach \x in {0, 1, 2, 3, ..., 16} \draw [fill=black] (\x)	circle (2pt);
\begin{scope}[shift={(0,-2.5)}]
\draw[thin, ->]	(0,0) -- (16.5,0);
\draw[thin, ->]	(0,-2) -- (0,1.5);
\foreach \x in {-2, -1, 1}
	\draw[dotted]	(0,\x) -- (16.5,\x);
\foreach \x in {-2, -1, 0, 1}
	\draw (.1,\x)--(-.1,\x)	(0,\x) node[left] {$\x$};
%
\coordinate (0) at (0, 0);
\coordinate (1) at (1, -1);
\coordinate (2) at (2, -2);
\coordinate (3) at (3, 1);
\coordinate (4) at (4, 0);
\coordinate (5) at (5, 0);
\coordinate (6) at (6, -1);
\coordinate (7) at (7, -2);
\coordinate (8) at (8, -1);
\coordinate (9) at (9, 0);
\coordinate (10) at (10, 1);
\coordinate (11) at (11, 0);
\coordinate (12) at (12, -1);
\coordinate (13) at (13, -2);
\coordinate (14) at (14, 0);
\coordinate (15) at (15, -1);
\coordinate (16) at (16, 0);
\renewcommand{\lastx}{0}
\foreach \x [remember=\x as \lastx] in {1, 2, 3, ..., 16} \draw (\lastx) -- (\x);

\foreach \x in {0, 1, 2, 3, ..., 16} \draw [fill=black] (\x)	circle (2pt);
\end{scope}
\end{tikzpicture}
\end{scriptsize}
\caption{A one-type labelled tree, its height process on top and its label process below.}
\label{fig:arbre_un_type}
\end{figure}
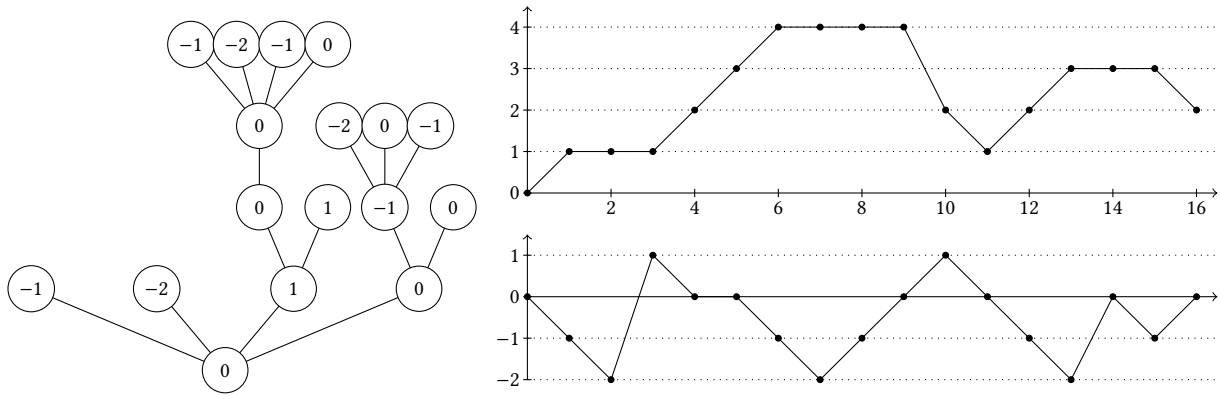

\paragraph{Notational remark}
We use roman letters $T$, $l$, $H$, $L$ for one-type trees and calligraphic letters $\T$, $\ell$, $\mathcal{C}$, $\mathcal{L}$ for two-type trees. We stress also that we consider the \emph{contour} order for two-type trees and the \emph{lexicographical} order for one-type trees.

\subsection{The Bouttier--Di Francesco--Guitter bijection}
\label{subsec:BDG}

A map is said to be \emph{pointed} if a vertex is distinguished. Given a sequence $\n$ of non-negative integers, we denote by $\barMap(\n)$ the set of rooted and pointed planar maps with $n_i$ faces with degree $2i$ for every $i \ge 1$. Let $\tree_{\circ, \bullet}(\n)$ denote the set of two-type trees with $n_i$ black vertices with degree $i$ for every $i \ge 1$; note that such a tree has $n_0$ white vertices and $N_\n$ edges, which are both defined in \eqref{eq:nombre_aretes_sites_carte}. Let further $\bartree_{\circ, \bullet}(\n)$ be the set of such labelled two-type trees.

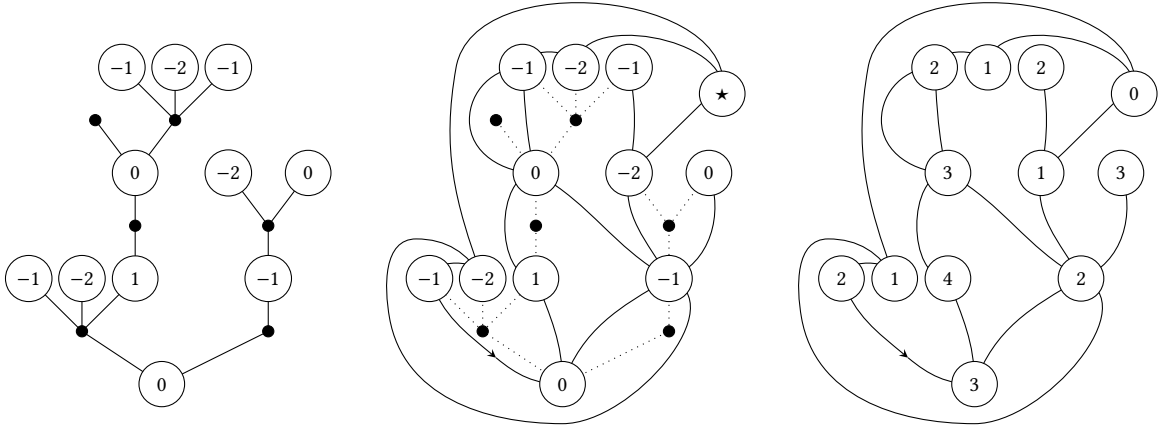
\begin{figure}[!ht] \centering
\def\r{.6}
\def\longueur{1}
\begin{scriptsize}
\begin{tikzpicture}[scale=.7]
\coordinate (1) at (0,0*\longueur);
	\coordinate (2) at (-1.5,1*\longueur);
		\coordinate (3) at (-2.5,2*\longueur);
		\coordinate (4) at (-1.5,2*\longueur);
		\coordinate (5) at (-.5,2*\longueur);
			\coordinate (6) at (-.5,3*\longueur);
				\coordinate (7) at (-.5,4*\longueur);
					\coordinate (8) at (-1.25,5*\longueur);
					\coordinate (9) at (.25,5*\longueur);
						\coordinate (10) at (-.75,6*\longueur);
						\coordinate (11) at (.25,6*\longueur);
						\coordinate (12) at (1.25,6*\longueur);
	\coordinate (13) at (2,1*\longueur);
		\coordinate (14) at (2,2*\longueur);
			\coordinate (15) at (2,3*\longueur);
				\coordinate (16) at (1.25,4*\longueur);
				\coordinate (17) at (2.75,4*\longueur);

\draw
	(1) -- (2)	(1) -- (13)
	(2) -- (3)	(2) -- (4)	(2) -- (5)
	(5) -- (6) -- (7)
	(7) -- (8)	(7) -- (9)
	(9) -- (10)	(9) -- (11)	(9) -- (12)
	(13) -- (14) -- (15)
	(15) -- (16)	(15) -- (17)
;

\draw[white] 	(14) to [out=-20,in=0] (0,-.75*\longueur) to [out=180,in=-80] (-3.25,1.25*\longueur) to [out=100,in=180] (-2.75,2.75*\longueur) to [out=0,in=120] (4);

\draw[fill=black]
	(2) circle (3pt)
	(6) circle (3pt)
	(8) circle (3pt)
	(9) circle (3pt)
	(13) circle (3pt)
	(15) circle (3pt)
;

\node[circle, minimum size=\r cm, fill=white, draw] at (1) {$0$};
\node[circle, minimum size=\r cm, fill=white, draw] at (3) {$-1$};
\node[circle, minimum size=\r cm, fill=white, draw] at (4) {$-2$};
\node[circle, minimum size=\r cm, fill=white, draw] at (5) {$1$};
\node[circle, minimum size=\r cm, fill=white, draw] at (7) {$0$};
\node[circle, minimum size=\r cm, fill=white, draw] at (10) {$-1$};
\node[circle, minimum size=\r cm, fill=white, draw] at (11) {$-2$};
\node[circle, minimum size=\r cm, fill=white, draw] at (12) {$-1$};
\node[circle, minimum size=\r cm, fill=white, draw] at (14) {$-1$};
\node[circle, minimum size=\r cm, fill=white, draw] at (16) {$-2$};
\node[circle, minimum size=\r cm, fill=white, draw] at (17) {$0$};

\end{tikzpicture}
\qquad
\begin{tikzpicture}[scale=.7]
\coordinate (0) at (3,5.5*\longueur);

\coordinate (1) at (0,0*\longueur);
	\coordinate (2) at (-1.5,1*\longueur);
		\coordinate (3) at (-2.5,2*\longueur);
		\coordinate (4) at (-1.5,2*\longueur);
		\coordinate (5) at (-.5,2*\longueur);
			\coordinate (6) at (-.5,3*\longueur);
				\coordinate (7) at (-.5,4*\longueur);
					\coordinate (8) at (-1.25,5*\longueur);
					\coordinate (9) at (.25,5*\longueur);
						\coordinate (10) at (-.75,6*\longueur);
						\coordinate (11) at (.25,6*\longueur);
						\coordinate (12) at (1.25,6*\longueur);
	\coordinate (13) at (2,1*\longueur);
		\coordinate (14) at (2,2*\longueur);
			\coordinate (15) at (2,3*\longueur);
				\coordinate (16) at (1.25,4*\longueur);
				\coordinate (17) at (2.75,4*\longueur);

\draw
	(1) to [out=180,in=-40] (-1.25,.5*\longueur)
	(3) to [out=90,in=90] (4) 
	(4) to [out=110,in=260] (-2,6*\longueur) to [out=80,in=80] (0)
	(5) to [out=180,in=180] (7)
	(7) to [out=180,in=-90] (-1.75,5*\longueur) to [out=90,in=180] (10)
	(7) to [out=110,in=90] (10)
	(10) to [out=90,in=90] (11)
	(11) to [out=90,in=80] (0)
	(12) to [out=-80,in=80] (16)
	(7) to [out=-30,in=150] (14)
	(5) to [out=-70,in=90] (1)
	(1) to [out=70,in=-150] (14)
	(14) to [out=120,in=-100] (16)
	(16) to [out=40,in=60] (0)
	(17) to [out=-70,in=20] (14)
	(14) to [out=-20,in=0] (0,-.75*\longueur) to [out=180,in=-80] (-3.25,1.25*\longueur) to [out=100,in=180] (-2.75,2.75*\longueur) to [out=0,in=120] (4)
;

\draw[<-, >=stealth]
	(-1.25,.5*\longueur) to [out=140,in=-70] (3);

\draw[dotted]
	(1) -- (2)	(1) -- (13)
	(2) -- (3)	(2) -- (4)	(2) -- (5)
	(5) -- (6) -- (7)
	(7) -- (8)	(7) -- (9)
	(9) -- (10)	(9) -- (11)	(9) -- (12)
	(13) -- (14) -- (15)
	(15) -- (16)	(15) -- (17)
;

\draw[fill=black]
	(2) circle (3pt)
	(6) circle (3pt)
	(8) circle (3pt)
	(9) circle (3pt)
	(13) circle (3pt)
	(15) circle (3pt)
;

\node[circle, minimum size=\r cm, fill=white, draw] at (0) {$\star$};
\node[circle, minimum size=\r cm, fill=white, draw] at (1) {$0$};
\node[circle, minimum size=\r cm, fill=white, draw] at (3) {$-1$};
\node[circle, minimum size=\r cm, fill=white, draw] at (4) {$-2$};
\node[circle, minimum size=\r cm, fill=white, draw] at (5) {$1$};
\node[circle, minimum size=\r cm, fill=white, draw] at (7) {$0$};
\node[circle, minimum size=\r cm, fill=white, draw] at (10) {$-1$};
\node[circle, minimum size=\r cm, fill=white, draw] at (11) {$-2$};
\node[circle, minimum size=\r cm, fill=white, draw] at (12) {$-1$};
\node[circle, minimum size=\r cm, fill=white, draw] at (14) {$-1$};
\node[circle, minimum size=\r cm, fill=white, draw] at (16) {$-2$};
\node[circle, minimum size=\r cm, fill=white, draw] at (17) {$0$};

\end{tikzpicture}
\qquad
\begin{tikzpicture}[scale=.7]
\coordinate (0) at (3,5.5*\longueur);

\coordinate (1) at (0,0*\longueur);
	\coordinate (2) at (-1.5,1*\longueur);
		\coordinate (3) at (-2.5,2*\longueur);
		\coordinate (4) at (-1.5,2*\longueur);
		\coordinate (5) at (-.5,2*\longueur);
			\coordinate (6) at (-.5,3*\longueur);
				\coordinate (7) at (-.5,4*\longueur);
					\coordinate (8) at (-1.25,5*\longueur);
					\coordinate (9) at (.25,5*\longueur);
						\coordinate (10) at (-.75,6*\longueur);
						\coordinate (11) at (.25,6*\longueur);
						\coordinate (12) at (1.25,6*\longueur);
	\coordinate (13) at (2,1*\longueur);
		\coordinate (14) at (2,2*\longueur);
			\coordinate (15) at (2,3*\longueur);
				\coordinate (16) at (1.25,4*\longueur);
				\coordinate (17) at (2.75,4*\longueur);

\draw
	(1) to [out=180,in=-40] (-1.25,.5*\longueur)
	(3) to [out=90,in=90] (4) 
	(4) to [out=110,in=260] (-2,6*\longueur) to [out=80,in=80] (0)
	(5) to [out=180,in=180] (7)
	(7) to [out=180,in=-90] (-1.75,5*\longueur) to [out=90,in=180] (10)
	(7) to [out=110,in=90] (10)
	(10) to [out=90,in=90] (11)
	(11) to [out=90,in=80] (0)
	(12) to [out=-80,in=80] (16)
	(7) to [out=-30,in=150] (14)
	(5) to [out=-70,in=90] (1)
	(1) to [out=70,in=-150] (14)
	(14) to [out=120,in=-100] (16)
	(16) to [out=40,in=60] (0)
	(17) to [out=-70,in=20] (14)
	(14) to [out=-20,in=0] (0,-.75*\longueur) to [out=180,in=-80] (-3.25,1.25*\longueur) to [out=100,in=180] (-2.75,2.75*\longueur) to [out=0,in=120] (4)
;

\draw[<-, >=stealth]
	(-1.25,.5*\longueur) to [out=140,in=-70] (3);

\node[circle, minimum size=\r cm, fill=white, draw] at (0) {$0$};
\node[circle, minimum size=\r cm, fill=white, draw] at (1) {$3$};
\node[circle, minimum size=\r cm, fill=white, draw] at (3) {$2$};
\node[circle, minimum size=\r cm, fill=white, draw] at (4) {$1$};
\node[circle, minimum size=\r cm, fill=white, draw] at (5) {$4$};
\node[circle, minimum size=\r cm, fill=white, draw] at (7) {$3$};
\node[circle, minimum size=\r cm, fill=white, draw] at (10) {$2$};
\node[circle, minimum size=\r cm, fill=white, draw] at (11) {$1$};
\node[circle, minimum size=\r cm, fill=white, draw] at (12) {$2$};
\node[circle, minimum size=\r cm, fill=white, draw] at (14) {$2$};
\node[circle, minimum size=\r cm, fill=white, draw] at (16) {$1$};
\node[circle, minimum size=\r cm, fill=white, draw] at (17) {$3$};
\end{tikzpicture}
\caption{The Bouttier--Di Francesco--Guitter bijection.}
\label{fig:BDG}
\end{scriptsize}
\end{figure}

Bouttier, Di Francesco \& Guitter \cite{Bouttier-Di_Francesco-Guitter:Planar_maps_as_labeled_mobiles} show that $\barMap(\n)$ and $\{-1, +1\} \times \bartree_{\circ, \bullet}(\n)$ are in bijection, we shall refer to it as the $\BDG$ bijection. Let us only recall how a map is constructed from a labelled two-type tree $(\T, \ell)$, as depicted by Figure \ref{fig:BDG}. Let $N$ be the number of edges of $\T$, we write $(c^\circ_0, \dots, c^\circ_N)$ for its white contour sequence and we adopt the convention that $c^\circ_{N+i} = c^\circ_i$ for every $0 \le i \le N$. A white \emph{corner} is a sector around a white vertex delimited by two consecutive edges; there are $N$ white corners, corresponding to the vertices $c^\circ_0, \dots, c^\circ_{N-1}$; for every $0 \le i \le 2N$ we denote by $e_i$ the corner corresponding to $c^\circ_i$. We add an extra vertex $\star$ outside the tree $\T$ and construct a map on the vertex-set of $\T$ and $\star$ by drawing edges as follows: for every $0 \le i \le N-1$,
\begin{itemize}
\item if $\ell(c^\circ_i) > \min_{0 \le k \le N-1} \ell(c^\circ_k)$, then we draw an edge between $e_i$ and $e_j$ where $j = \min\{k > i: \ell(c^\circ_k) = \ell(c^\circ_i)-1\}$,
\item if $\ell(c^\circ_i) = \min_{0 \le k \le N-1} \ell(c^\circ_k)$, then we draw an edge between $e_i$ and $\star$.
\end{itemize}
It is shown in \cite{Bouttier-Di_Francesco-Guitter:Planar_maps_as_labeled_mobiles} that this procedure indeed produces a planar map $\map$, pointed at $\star$, and rooted at the first edge that we drew, for $i=0$, oriented according to an external choice $\epsilon \in \{-1, +1\}$ and, further, that this operation is invertible. Observe that $\map$ has $N$ edges, as many as $\T$, and that the faces of $\map$ correspond to the black vertices of $\T$; one can check that the degree of a face is twice that of the corresponding black vertex, we conclude that the above procedure indeed realises a bijection between $\barMap(\n)$ and $\{-1, +1\} \times \bartree_{\circ, \bullet}(\n)$. One may be concerned with the fact that the vertices of $\map$ different from $\star$ are labelled, which seems at first sight to be an extra information; shift these labels by adding to each the quantity $1-\min_{c^\circ \in \circ(\T)} \ell(c^\circ)$ and label $0$ the vertex $\star$, then the label of each vertex corresponds to its graph distance in $\map$ to the origin $\star$.

\subsection{The Janson--Stef{\'a}nsson bijection}

Let $\tree(\n)$ denote the set of one-type trees possessing $n_i$ vertices with $i$ children for every $i \ge 0$; note that such a tree has $N_\n$ edges and that $p_\n$ defined in Section \ref{sec:resultat} is its empirical offspring distribution. Uniform random trees in $\tree(\n)$ have been studied by Addario-Berry \cite{Addario_Berry:Tail_bounds_for_the_height_and_width_of_a_random_tree_with_a_given_degree_sequence} who obtained uniform sub-Gaussian tail bounds for their height and width and Broutin \& Marckert \cite{Broutin-Marckert:Asymptotics_of_trees_with_a_prescribed_degree_sequence_and_applications} who showed that, properly rescaled, under our assumption \eqref{eq:H}, they converge in distribution in the sense of Gromov--Hausdorff, towards the celebrated Brownian tree, see \eqref{eq:Broutin_Marckert} below.

Janson \& Stef{\'a}nsson \cite{Janson-Stefansson:Scaling_limits_of_random_planar_maps_with_a_unique_large_face} show that $\tree(\n)$ and $\tree_{\circ, \bullet}(\n)$ are in bijection, we shall refer to it as the $\JS$ bijection. In this bijection, the white vertices of the tree in $\tree_{\circ, \bullet}(\n)$ are mapped onto the leaves of the tree in $\tree(\n)$ and the black vertices in the former, with degree $k \ge 1$, are mapped onto (internal) vertices of the latter with $k$ children. Let us recall the construction of this bijection in the two directions.

Let us start with a two-type tree $\T$; we construct a one-type tree $T$ with the same vertex-set as follows. First, if $\T = \{\varnothing\}$ is a singleton, then set $T = \{\varnothing\}$; otherwise, for every white vertex $u \in \circ(\T)$, do the following:
\begin{itemize}
\item if $u$ is a leaf of $\T$, then draw an edge between $u$ and $pr(u)$;
\item if $u$ is an internal vertex, with $k_u \ge 1$ children, then draw edges between any two consecutive black children $u1$ and $u2$, $u2$ and $u3$, \dots, $u(k_u-1)$ and $uk_u$, draw also an edge between $u$ and $uk_u$;
\item if furthermore $u \ne \varnothing$, then draw an edge between its first child $u1$ and its parent $pr(u)$ in the first corner at the left of the edge between $u$ and $pr(u)$.
\end{itemize}
We root the new tree $T$ at the first child of the root of $\T$. See Figure \ref{fig:JS_2_1} for an illustration.

\begin{figure}[!ht] \centering
\begin{tikzpicture}[scale=.7]
\coordinate (0) at (0,-1);
\coordinate (1) at (0,0);
	\coordinate (2) at (-1.5,1);
		\coordinate (3) at (-2.25,2);
		\coordinate (4) at (-1.5,2);
		\coordinate (5) at (-.75,2);
			\coordinate (6) at (-.75,3);
				\coordinate (7) at (-.75,4);
					\coordinate (8) at (-1.25,5);
					\coordinate (9) at (-.25,5);
						\coordinate (10) at (-.75,6);
						\coordinate (11) at (-.25,6);
						\coordinate (12) at (.25,6);
	\coordinate (13) at (2,1);
		\coordinate (14) at (2,2);
			\coordinate (15) at (2,3);
				\coordinate (16) at (1.25,4);
				\coordinate (17) at (2.75,4);

\draw
	(1) -- (2)	(1) -- (13)
	(2) -- (3)	(2) -- (4)	(2) -- (5)
	(5) -- (6) -- (7)
	(7) -- (8)	(7) -- (9)
	(9) -- (10)	(9) -- (11)	(9) -- (12)
	(13) -- (14) -- (15)
	(15) -- (16)	(15) -- (17)
;

\draw[fill=white]
	(1) circle (3pt)
	(3) circle (3pt)
	(4) circle (3pt)
	(5) circle (3pt)
	(7) circle (3pt)
	(10) circle (3pt)
	(11) circle (3pt)
	(12) circle (3pt)
	(14) circle (3pt)
	(16) circle (3pt)
	(17) circle (3pt)
;
\draw[fill=black]
	(2) circle (3pt)
	(6) circle (3pt)
	(8) circle (3pt)
	(9) circle (3pt)
	(13) circle (3pt)
	(15) circle (3pt)
;
\end{tikzpicture}
\qquad
%
\begin{tikzpicture}[scale=.7]
\coordinate (0) at (0,-1);
\coordinate (1) at (0,0);
	\coordinate (2) at (-1.5,1);
		\coordinate (3) at (-2.25,2);
		\coordinate (4) at (-1.5,2);
		\coordinate (5) at (-.75,2);
			\coordinate (6) at (-.75,3);
				\coordinate (7) at (-.75,4);
					\coordinate (8) at (-1.25,5);
					\coordinate (9) at (-.25,5);
						\coordinate (10) at (-.75,6);
						\coordinate (11) at (-.25,6);
						\coordinate (12) at (.25,6);
	\coordinate (13) at (2,1);
		\coordinate (14) at (2,2);
			\coordinate (15) at (2,3);
				\coordinate (16) at (1.25,4);
				\coordinate (17) at (2.75,4);

\draw[dotted]
	(1) -- (2)	(1) -- (13)
	(2) -- (3)	(2) -- (4)	(2) -- (5)
	(5) -- (6) -- (7)
	(7) -- (8)	(7) -- (9)
	(9) -- (10)	(9) -- (11)	(9) -- (12)
	(13) -- (14) -- (15)
	(15) -- (16)	(15) -- (17)
;

\draw
	(2) to[bend left] (13) to[bend left] (1)
	(2) to[bend left] (3)
	(2) to[bend left] (4)
	(2) to[bend left=10] (6) to[bend left] (5)
	(6) to[bend left] (8) to[bend left] (9) to[bend left] (7)
	(9) to[bend left] (10)
	(9) to[bend left] (11)
	(9) to[bend left] (12)
	(13) to[bend left] (15) to[bend left] (14)
	(15) to[bend left] (16)
	(15) to[bend left] (17)	
;
\draw[thick, <-, >=stealth] (-1.75,.75) -- ++ (-.5,-.5);

\draw[fill=white]
	(1) circle (3pt)
	(3) circle (3pt)
	(4) circle (3pt)
	(5) circle (3pt)
	(7) circle (3pt)
	(10) circle (3pt)
	(11) circle (3pt)
	(12) circle (3pt)
	(14) circle (3pt)
	(16) circle (3pt)
	(17) circle (3pt)
;
\draw[fill=black]
	(2) circle (3pt)
	(6) circle (3pt)
	(8) circle (3pt)
	(9) circle (3pt)
	(13) circle (3pt)
	(15) circle (3pt)
;
\end{tikzpicture}
\qquad
%
\begin{tikzpicture}[scale=.7]
\coordinate (0) at (0,-1);
\coordinate (1) at (0,0);
	\coordinate (2) at (-2.25,1);
	\coordinate (3) at (-.75,1);
	\coordinate (4) at (.75,1);
		\coordinate (5) at (.25,2);
			\coordinate (6) at (.25,3);
				\coordinate (7) at (-.5,4);
				\coordinate (8) at (0,4);
				\coordinate (9) at (.5,4);
				\coordinate (10) at (1,4);
		\coordinate (11) at (1.25,2);
	\coordinate (12) at (2.25,1);
		\coordinate (13) at (1.75,2);
			\coordinate (14) at (1.25,3);
			\coordinate (15) at (1.75,3);
			\coordinate (16) at (2.25,3);
		\coordinate (17) at (2.75,2);

\draw
	(1) -- (2)	(1) -- (3)	(1) -- (4)	(1) -- (12)
	(4) -- (5)	(4) -- (11)
	(5) -- (6)
	(6) -- (7)	(6) -- (8)	(6) -- (9)	(6) -- (10)
	(12) -- (13)	(12) -- (17)
	(13) -- (14)	(13) -- (15)	(13) -- (16)
;

\draw[fill=white]
	(2) circle (3pt)
	(3) circle (3pt)
	(7) circle (3pt)
	(8) circle (3pt)
	(9) circle (3pt)
	(10) circle (3pt)
	(11) circle (3pt)
	(14) circle (3pt)
	(15) circle (3pt)
	(16) circle (3pt)
	(17) circle (3pt)
;
\draw[fill=black]
	(1) circle (3pt)
	(4) circle (3pt)
	(5) circle (3pt)
	(6) circle (3pt)
	(12) circle (3pt)
	(13) circle (3pt)
;
\end{tikzpicture}
\caption{The Janson--Stef{\'a}nsson bijection from two-type trees to one-type trees.}
\label{fig:JS_2_1}
\end{figure}
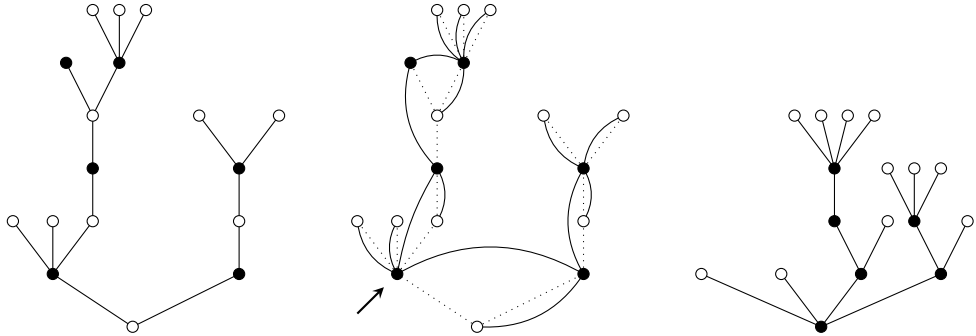

Conversely, given a one-type tree $T$, we construct a two-type tree $\T$ as follows. Again, set $\T = \{\varnothing\}$ whenever $T = \{\varnothing\}$; otherwise, for every leaf $u$ of $T$, denote by $u^\star$ its last ancestor whose last child is not an ancestor of $u$; formally set
\[u^\star = \sup\left\{w \in \llbracket \varnothing, u\llbracket : w k_w \notin \rrbracket \varnothing, u\rrbracket\right\}.\]
The set on the right may be empty, in which case $u^\star = \varnothing$ by convention. Then draw an edge between $u$ and every vertex $v \in \llbracket u^\star, u\llbracket$, in the first corner at the right of the edge between $v$ and its only child which belongs to $\rrbracket u^\star, u\rrbracket$. This yields a tree that we root at the last leaf of $T$. See Figure \ref{fig:JS_1_2} for an illustration. One can check that the two procedures are the inverse of one another.

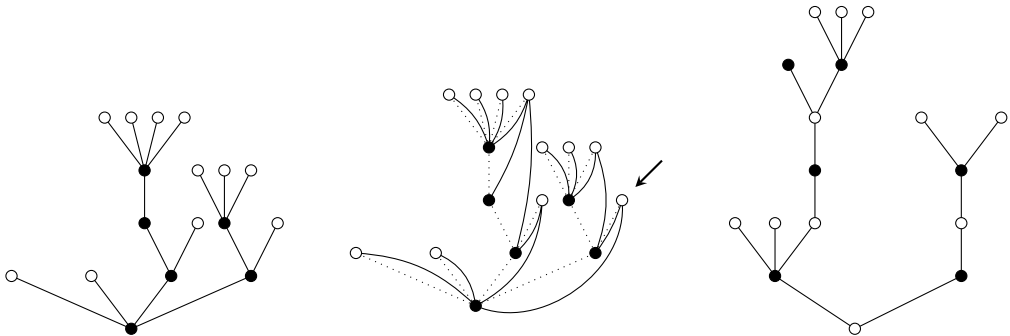
\begin{figure}[!ht] \centering
\begin{tikzpicture}[scale=.7]
\coordinate (0) at (0,-1);
\coordinate (1) at (0,0);
	\coordinate (2) at (-2.25,1);
	\coordinate (3) at (-.75,1);
	\coordinate (4) at (.75,1);
		\coordinate (5) at (.25,2);
			\coordinate (6) at (.25,3);
				\coordinate (7) at (-.5,4);
				\coordinate (8) at (0,4);
				\coordinate (9) at (.5,4);
				\coordinate (10) at (1,4);
		\coordinate (11) at (1.25,2);
	\coordinate (12) at (2.25,1);
		\coordinate (13) at (1.75,2);
			\coordinate (14) at (1.25,3);
			\coordinate (15) at (1.75,3);
			\coordinate (16) at (2.25,3);
		\coordinate (17) at (2.75,2);

\draw
	(1) -- (2)	(1) -- (3)	(1) -- (4)	(1) -- (12)
	(4) -- (5)	(4) -- (11)
	(5) -- (6)
	(6) -- (7)	(6) -- (8)	(6) -- (9)	(6) -- (10)
	(12) -- (13)	(12) -- (17)
	(13) -- (14)	(13) -- (15)	(13) -- (16)
;

\draw[fill=white]
	(2) circle (3pt)
	(3) circle (3pt)
	(7) circle (3pt)
	(8) circle (3pt)
	(9) circle (3pt)
	(10) circle (3pt)
	(11) circle (3pt)
	(14) circle (3pt)
	(16) circle (3pt)
	(17) circle (3pt)
	(15) circle (3pt)
;
\draw[fill=black]
	(1) circle (3pt)
	(4) circle (3pt)
	(5) circle (3pt)
	(6) circle (3pt)
	(12) circle (3pt)
	(13) circle (3pt)
;
\end{tikzpicture}
\qquad
%
\begin{tikzpicture}[scale=.7]
\coordinate (0) at (0,-1);
\coordinate (1) at (0,0);
	\coordinate (2) at (-2.25,1);
	\coordinate (3) at (-.75,1);
	\coordinate (4) at (.75,1);
		\coordinate (5) at (.25,2);
			\coordinate (6) at (.25,3);
				\coordinate (7) at (-.5,4);
				\coordinate (8) at (0,4);
				\coordinate (9) at (.5,4);
				\coordinate (10) at (1,4);
		\coordinate (11) at (1.25,2);
	\coordinate (12) at (2.25,1);
		\coordinate (13) at (1.75,2);
			\coordinate (14) at (1.25,3);
			\coordinate (15) at (1.75,3);
			\coordinate (16) at (2.25,3);
		\coordinate (17) at (2.75,2);

\draw[dotted]
	(1) -- (2)	(1) -- (3)	(1) -- (4)	(1) -- (12)
	(4) -- (5)	(4) -- (11)
	(5) -- (6)
	(6) -- (7)	(6) -- (8)	(6) -- (9)	(6) -- (10)
	(12) -- (13)	(12) -- (17)
	(13) -- (14)	(13) -- (15)	(13) -- (16)
;

\draw
	(2) to[bend left=20] (1)
	(3) to[bend left] (1)
	(7) to[bend left=20] (6)
	(8) to[bend left=20] (6)
	(9) to[bend left=20] (6)
	(10) to[bend left=20] (6)
	(10) to[bend left=10] (5)
	(10) to[bend left=10] (4)
	(11) to[bend left=20] (4)
	(11) to[bend left] (1)
	(14) to[bend left] (13)
	(15) to[bend left] (13)
	(16) to[bend left] (13)
	(16) to[bend left=20] (12)
	(17) to[bend left=10] (12)
	(17) to[bend left=60] (1)
;
\draw[thick, <-, >=stealth] (3, 2.25) -- ++ (.5,.5);

\draw[fill=white]
	(2) circle (3pt)
	(3) circle (3pt)
	(7) circle (3pt)
	(8) circle (3pt)
	(9) circle (3pt)
	(10) circle (3pt)
	(11) circle (3pt)
	(14) circle (3pt)
	(15) circle (3pt)
	(16) circle (3pt)
	(17) circle (3pt)
;
\draw[fill=black]
	(1) circle (3pt)
	(4) circle (3pt)
	(5) circle (3pt)
	(6) circle (3pt)
	(12) circle (3pt)
	(13) circle (3pt)
;
\end{tikzpicture}
\qquad
%
\begin{tikzpicture}[scale=.7]
\coordinate (0) at (0,-1);
\coordinate (1) at (0,0);
	\coordinate (2) at (-1.5,1);
		\coordinate (3) at (-2.25,2);
		\coordinate (4) at (-1.5,2);
		\coordinate (5) at (-.75,2);
			\coordinate (6) at (-.75,3);
				\coordinate (7) at (-.75,4);
					\coordinate (8) at (-1.25,5);
					\coordinate (9) at (-.25,5);
						\coordinate (10) at (-.75,6);
						\coordinate (11) at (-.25,6);
						\coordinate (12) at (.25,6);
	\coordinate (13) at (2,1);
		\coordinate (14) at (2,2);
			\coordinate (15) at (2,3);
				\coordinate (16) at (1.25,4);
				\coordinate (17) at (2.75,4);

\draw
	(1) -- (2)	(1) -- (13)
	(2) -- (3)	(2) -- (4)	(2) -- (5)
	(5) -- (6) -- (7)
	(7) -- (8)	(7) -- (9)
	(9) -- (10)	(9) -- (11)	(9) -- (12)
	(13) -- (14) -- (15)
	(15) -- (16)	(15) -- (17)
;
\draw[fill=white]
	(1) circle (3pt)
	(3) circle (3pt)
	(4) circle (3pt)
	(5) circle (3pt)
	(7) circle (3pt)
	(10) circle (3pt)
	(11) circle (3pt)
	(12) circle (3pt)
	(14) circle (3pt)
	(16) circle (3pt)
	(17) circle (3pt)
;
\draw[fill=black]
	(2) circle (3pt)
	(6) circle (3pt)
	(8) circle (3pt)
	(9) circle (3pt)
	(13) circle (3pt)
	(15) circle (3pt)
;
\end{tikzpicture}
\caption{The Janson--Stef{\'a}nsson bijection from one-type trees to two-type trees.}
\label{fig:JS_1_2}
\end{figure}

Let further $\bartree(\n)$ be the set of labelled one-type trees possessing $n_i$ vertices with $i$ children for every $i \ge 0$, the $\JS$ bijection extends to a bijection between $\bartree(\n)$ and $\bartree_{\circ, \bullet}(\n)$ if every black vertex of a two-type tree is given the label of its white parent. 
Let us explain how this bijection translates in terms of the processes encoding the labelled trees (one may look at Figures \ref{fig:arbre_deux_types} and \ref{fig:arbre_un_type} for an illustration). Fix $(\T, \ell)$ a two-type labelled tree and denote by $\mathcal{C}^\circ$ its white contour process and $\mathcal{L}^\circ$ its white label process (in contour order). Fix also $(T,l)$ a one-type labelled tree and denote by $H$ its height process and $L$ its label process (in lexicographical order). Finally, introduce a modified version of the height process: let $N$ be the number of edges of $T$ and $(u_0, \dots, u_N)$ be its vertices listed in lexicographical order; for each integer $j \in \{0, \dots, N\}$, we let $\widetilde{H}(j)$ denote the number of strict ancestors of $u_j$ whose last child is not an ancestor of $u_j$, i.e.
\[\widetilde{H}(j) = \#\left\{w \in \llbracket \varnothing, u_j\llbracket : w k_w \notin \rrbracket \varnothing, u_j\rrbracket\right\}.\]

\begin{lem}\label{lem:processus_bijection_JS}
If $(T, l)$ and $(\T, \ell)$ are related by the $\JS$ bijection, then \[\mathcal{L}^\circ = L
\qquad\text{and}\qquad
\mathcal{C}^\circ = \widetilde{H}.\]
\end{lem}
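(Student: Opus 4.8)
The plan is to reduce the whole statement to one structural property of the $\JS$ bijection. Fix $(T,l)\in\bartree(\n)$ and its image $(\T,\ell)\in\bartree_{\circ,\bullet}(\n)$, and recall the basic features: $T$ and $\T$ share the same vertex set; the leaves of $T$ are exactly the white vertices of $\T$; $T$ is rooted at the first child of the root $\rho:=\varnothing$ of $\T$, while dually $\rho$ is the last leaf of $T$ in lexicographic order; and the labelling $l$ of $T$ is obtained from $\ell$ by assigning to each black vertex of $\T$ the label of its white $\T$-parent. Let $u_0<\dots<u_N$ be the vertices of $T$ in lexicographic order and $c^\circ_0,\dots,c^\circ_N$ the white contour sequence of $\T$.

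The structural property I would establish is: for every $0\le k\le N$, (a) $c^\circ_k=u_k$ when $u_k$ is a leaf of $T$, while $c^\circ_k=pr(u_k)$, the $\T$-parent of $u_k$, when $u_k$ is internal in $T$; (b) $|c^\circ_k|/2=\widetilde{H}(k)$; and, connecting the two orderings, the $k$-th white corner of $\T$ in contour order is precisely the one to which $\JS$ attaches the vertex $u_k$ of $T$. Once this is granted the lemma is immediate. For the labels, (a) gives $\mathcal{L}^\circ(k)=\ell(c^\circ_k)=l(u_k)=L(k)$ for every $k$: directly when $u_k$ is a leaf of $T$ (there $\ell$ and $l$ agree on white vertices), and through the label-extension rule $l(u_k)=\ell(pr(u_k))$ when $u_k$ is internal; hence $\mathcal{L}^\circ=L$. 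For the contour process, (b) is exactly $\mathcal{C}^\circ(k)=|c^\circ_k|/2=\widetilde{H}(k)$, so $\mathcal{C}^\circ=\widetilde{H}$.

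To prove (a) and (b) I would induct on the number $N$ of edges of $\T$, following the recursive description of $\JS$. Peeling off $\rho$ together with its black children $b_1,\dots,b_p$ --- each bearing an ordered forest of white-rooted two-type subtrees --- the bijection turns the $b_j$ into a chain $b_1\to b_2\to\dots\to b_p\to\rho$ inside $T$ (which is rooted at $b_1$), onto which the images of the subforests are grafted in the order inherited from $\T$; one then checks that the contour exploration of the white corners of $\T$ runs through these pieces in exactly the same order, and with the same internal recursion, as the lexicographic exploration of $T$ runs through the corresponding subtrees, and that $\mathcal{C}^\circ$ and $\widetilde{H}$ evolve in tandem --- entering a black vertex of $\T$ along the contour matches the appearance, in the exploration of $T$, of a strict ancestor of the current vertex whose last child lies off the ancestral path, whereas entering a white vertex matches an ancestor that is not counted. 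The base case $\T=\{\rho\}$ and the boundary index $k=N$, where $c^\circ_N=u_N=\rho$ and both sides of (b) vanish, are checked by hand. A direct alternative is to compare the two explorations step by step: between $c^\circ_{k-1}$ and $c^\circ_k$ sits a single black vertex $c_{2k-1}$, and a short case analysis on whether $c_{2k-1}$ is a child or the parent of $c^\circ_{k-1}$, combined with the explicit $\JS$ rules, identifies the accompanying move $u_{k-1}\to u_k$ in $T$ as the lexicographic one and matches the increments of $\mathcal{C}^\circ$ and of $\widetilde{H}$. One also records the (routine) fact that the extended $\ell$ is a valid one-type labelling of $T$: its three defining constraints --- a vertex and its rightmost child carry the same label, and increments $\ge -1$ from a vertex to its first child and between consecutive children --- are jointly equivalent, through the correspondence in (a), to the single two-type constraint that around each black vertex of $\T$ the labels of the white neighbours have increments $\ge -1$ in clockwise order.

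The crux, and the step I expect to be the real obstacle, is establishing (a) and (b): one must identify exactly which vertex of $T$ the $\JS$ bijection attaches to a given white corner of $\T$, prove that this attachment intertwines the contour order on $\T$ with the lexicographic order on $T$, and follow the joint evolution of the $\T$-generation and of the $\widetilde{H}$-counter --- all while keeping careful track of the asymmetric rooting conventions and of the boundary index $k=N$. Everything else then reduces to local bookkeeping against the explicit rules of the bijection.
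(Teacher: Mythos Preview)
Your approach is correct and follows essentially the same route as the paper: isolate the structural fact that $u_k$ coincides with $c^\circ_k$ when $u_k$ is a leaf of $T$ and with a black child of $c^\circ_k$ otherwise, and then read off both identities from the label-extension rule and the definition of $\widetilde{H}$. The only difference is in how that structural fact is established --- the paper imports the order correspondence from \cite{Kortchemski-Marzouk:Simply_generated_non_crossing_partitions} and then runs a short induction on the $\T$-generation to identify $\widetilde{H}$, whereas you propose a self-contained induction on the tree size (or a step-by-step corner analysis); both routes are valid.
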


\begin{proof}
Let us first prove the equality of the label processes. We use the observation from \cite{Kortchemski-Marzouk:Simply_generated_non_crossing_partitions} that the lexicographical order on the vertices of $T$ corresponds to the contour order on the black corners of $\T$ which, by a shift, corresponds to the contour order on the white corners of $\T$. Specifically, let $N$ be the number of edges of both trees, fix $j \in \{0, \dots, N\}$ and consider the $j$-th white corner of $\T$: it is a sector around a white vertex delimited by two consecutive edges, whose other extremity is therefore black; consider the previous black corner in contour order, in the construction of the $\JS$ bijection, an edge of $T$ starts from this corner and we claim that the other extremity of this edge is $u_j$ the $j$-th vertex of $T$ in lexicographical order. We refer to the proof of Proposition 2.1 and Figure 4 in \cite{Kortchemski-Marzouk:Simply_generated_non_crossing_partitions}.

It follows that if $c^\circ_j \in \circ(\T)$ is the white vertex of $\T$ visited at the $j$-th step in the white contour sequence, then the image of $u_j$ by the $\JS$ bijection is
\begin{itemize}
\item either $c^\circ_j$: this is the case when $c^\circ_j$ is a leaf or when the white corner is the one between the last child of $c^\circ_j$ and its parent;
\item or a child of $c^\circ_j$: precisely, its first child if the white corner is the one between the parent of $c^\circ_j$ and its first child, and its $k$-th child if the corner is the one between the $k-1$st and $k$-th children of $c^\circ_j$.
\end{itemize}
Since a black vertex inherits the label of its white parent, we conclude that in both cases we have $L(j) = l(u_j) = \ell(c^\circ_j) = \mathcal{L}^\circ(j)$.

Next, for every $u \in T$, set
\[\widetilde{H}(u) = \#\left\{w \in \llbracket \varnothing, u\llbracket : w k_w \notin \rrbracket \varnothing, u\rrbracket\right\};\]
if $\widetilde{H}(u) \ne 0$, recall the definition
\[u^\star = \sup\left\{w \in \llbracket \varnothing, u\llbracket : w k_w \notin \rrbracket \varnothing, u\rrbracket\right\}.\]
Fix $v \in \circ(\T)$ a white vertex of $\T$ and $w \in \bullet(\T)$ one of its children, if it has any. Denote by $\JS(v), \JS(w) \in T$ their image by the $\JS$ bijection, we argue that $\widetilde{H}(\JS(v))$ and $\widetilde{H}(\JS(w))$ are both equal to half the generation of $v$ in $\T$. Denote by $u = \JS(v)$; from the construction of the $\JS$ bijection, if $v$ is different from the root of $\T$, then its parent in $\T$ is mapped onto $u^\star$ and its children onto $\rrbracket u^\star, u\llbracket$, thus
\[\widetilde{H}(\JS(w)) = \widetilde{H}(\JS(v)) = \widetilde{H}(u) = \widetilde{H}(u^\star)+1 = \widetilde{H}(\JS(pr(v))) + 1.\]
If $v$ is the root of $\T$, then $u$ is the right-most leaf of $T$ and $v$ and its children are mapped onto the vertices of $T$ for which $\widetilde{H} = 0$. We conclude after an induction on the generation of $v$ that indeed, $\widetilde{H}(\JS(w))$ and $\widetilde{H}(\JS(v))$ are equal, and their common value is given by half the generation of $v$ in $\T$.

Recall the notation $c^\circ_j \in \circ(\T)$ for the white vertex of $\T$ visited at the $j$-th step in the white contour sequence and $u_j$ for the $j$-th vertex of $T$ in lexicographical order. Since the image of $u_j$ by the $\JS$ bijection is either $c^\circ_j$ or one of its children (if it has any), we conclude in both cases that $\widetilde{H}(u_j)$ is half the generation of $c^\circ_j$ in $\T$, i.e. $\widetilde{H}(j) = \mathcal{C}^\circ(j)$. 
\end{proof}

Recall the well-known identity between the height process $H$ and the {\L}ukasiewicz path $W$ of a one-type tree (see e.g. Le Gall \& Le Jan\cite{Le_Gall-Le_Jan:Branching_processes_in_Levy_processes_the_exploration_process}):
\begin{equation}\label{eq:hauteur_juka}
H(j) = \#\left\{i \in \{0, \dots, j-1\} : W(i) \le \inf_{[i+1, j]} W\right\}
\quad\text{for each}\quad 0 \le j \le N.
\end{equation}
Indeed, for $i < j$, we have $W(i) \le \inf_{[i+1, j]} W$ if and only if $u_i$ is an ancestor of $u_j$; moreover, the inequality is an equality if and only if the last child of $u_i$ is also an ancestor of $u_j$. A consequence of Lemma \ref{lem:processus_bijection_JS} is therefore the identity
\begin{equation}\label{eq:contour_blanc_luka}
\mathcal{C}^\circ(j) = \#\left\{i \in \{0, \dots, i-1\} : W(i) < \inf_{[i+1, j]} W\right\}
\quad\text{for each}\quad 0 \le j \le N.
\end{equation}
The latter was already observed by Abraham \cite[Equation 5]{Abraham:Rescaled_bipartite_planar_maps_converge_to_the_Brownian_map} without the formalism of the $\JS$ bijection, where $W$ (which corresponds to $Y-1$ there) was defined directly from the two-type tree.

\section{The Brownian map}
\label{sec:serpent_et_enonces}

\subsection{The Brownian snake and the Brownian map}
\label{subsec:serpent_brownien}

Denote by $\exc = (\exc_t ; t \in [0,1])$ the standard Brownian excursion. For every $s, t \in [0,1]$, set
\[m_\exc(s,t) = \min_{r \in [s \wedge t, s \vee t]} \exc_r
\qquad\text{and}\qquad
d_\exc(s,t) = \exc_s + \exc_t - 2 m_\exc(s,t).\]
One easily checks that $d_\exc$ is a random pseudo-metric on $[0,1]$, we then define an equivalence relation on $[0,1]$ by setting $s \sim_\exc t$ whenever $d_\exc(s,t)=0$. Consider the quotient space $\CRT_\exc = [0,1] / \sim_\exc$, we let $\pi_\exc$ be the canonical projection $[0,1] \to \CRT_\exc$; $d_\exc$ induces a metric on $\CRT_\exc$ that we still denote by $d_\exc$. The space $(\CRT_\exc, d_\exc)$ is a so-called compact real-tree, naturally rooted at $\pi_\exc(0) = \pi_\exc(1)$, called the \emph{Brownian tree} coded by $\exc$, introduced by Aldous \cite{Aldous:The_continuum_random_tree_3}.

We construct next another process $Z = (Z_t ; t \in [0,1])$ on the same probability space as $\exc$ which, conditional on $\exc$, is a centred Gaussian process satisfying for every $s,t \in [0,1]$,
\[\Esc{|Z_s - Z_t|^2}{\exc} = d_\exc(s,t)
\qquad\text{or, equivalently,}\qquad
\Esc{Z_s Z_t}{\exc} = m_\exc(s,t).\]
It is known (see, e.g. Le Gall \cite[Chapter IV.4]{Le_Gall:Nachdiplomsvorlesung} on a more general path-valued process called the \emph{Brownian snake} whose $Z$ is only the ``tip'') that the pair $(\exc, Z)$ admits a continuous version and, without further notice, we shall work throughout this paper with this version. Observe that, almost surely, $Z_0=0$ and $Z_s = Z_t$ whenever $s \sim_\exc t$ so $Z$ can be seen as a Brownian motion indexed by $\CRT_\exc$ by setting $Z_{\pi_\exc(t)} = Z_t$ for every $t \in [0,1]$. We interpret $Z_x$ as the label of an element $x \in \CRT_\exc$; the pair $(\CRT_\exc, (Z_x; x \in \CRT_\exc))$ is a continuous analog of labelled plane trees and the construction of the Brownian map from this pair, that we next recall, is somewhat an analog of the $\BDG$ bijection presented above.

Let us follow Le Gall \cite{Le_Gall:The_topological_structure_of_scaling_limits_of_large_planar_maps} to which we refer for details. For every $s, t \in[0, 1]$, define
\[\check{Z}(s,t)=
\begin{cases}
\min\{Z_r ; r \in [s,t]\}  & \text{if }  s \le t,\\
\min\{Z_r ; r \in [s,1] \cup [0,t]\} & \text{otherwise,}
\end{cases}\]
and then
\[D_Z(s,t) = Z_s + Z_t - 2 \max\{\check{Z}(s,t); \check{Z}(t,s)\}.\]
For every $x, y \in \CRT_\exc$, set
\[D_Z(x,y) = \inf\left\{D_Z(s,t) ; s,t \in [0,1], x=\pi_\exc(s) \text{ and }  y=\pi_\exc(t)\right\},\]
and finally
\[\mathscr{D}(x,y) = \inf\left\{\sum_{i=1}^k D_Z(a_{i-1}, a_i) ; k \ge 1, (x=a_0, a_1, \dots, a_{k-1}, a_k=y) \in \CRT_\exc\right\}.\]
The function $\mathscr{D}$ is a pseudo-distance on $\CRT_\exc$, we define an equivalence relation by setting $x \approx y$ whenever $\mathscr{D}(x,y) = 0$ for $x,y \in \CRT_\exc$. The Brownian map is the quotient space $\mathscr{M} = \CRT_\exc / \approx$ equipped with the metric induced by $\mathscr{D}$, that we still denote by $\mathscr{D}$. Note that $\mathscr{D}$ can be seen as a pseudo-distance on $[0,1]$ by setting $\mathscr{D}(s,t) = \mathscr{D}(\pi_\exc(s),\pi_\exc(t))$ for every $s,t \in [0,1]$, thus $\mathscr{M}$ can be seen as a quotient space of $[0,1]$.

The following observation shall be used later on. As a function on $\CRT_\exc^2$, we clearly have $\mathscr{D} \le D_Z$ and in fact, $\mathscr{D}$ is the largest pseudo-distance on $\CRT_\exc$ satisfying this property. Indeed, if $D$ is another such pseudo-distance, then for every $x,y \in \CRT_\exc$, for every $k \ge 1$ and every $a_0, a_1, \dots, a_{k-1}, a_k \in \CRT_\exc$ with $a_0=x$ and $a_k=y$, by the triangle inequality $D(x, y) \le \sum_{i=1}^k D(a_{i-1}, a_i) \le \sum_{i=1}^k D_Z(a_{i-1}, a_i)$ and so $D(x, y) \le \mathscr{D}(x, y)$. Furthermore, if we view $\mathscr{D}$ as a function on $[0,1]^2$, then for all $s,t \in [0,1]$ such that $d_\exc(s,t) = 0$ we have $\pi_\exc(s) = \pi_\exc(t)$ and so $\mathscr{D}(\pi_\exc(s),\pi_\exc(t)) = 0$. We deduce from the previous maximality property that $\mathscr{D}$ is the largest pseudo-distance $D$ on $[0,1]$ satisfying the following two properties:
\[D \le D_Z
\qquad\text{and}\qquad
d_\exc(s,t) = 0 \quad\text{implies}\quad D(s,t) = 0.\]

\subsection{Functional invariance principles}
\label{subsec:enonces_convergences_fonctions}

Let $T_n \in \tree(\n)$ be a one-type tree; it has $N_\n = \sum_{i \ge 1} i n_i$ edges, we denote by $W_n$, $H_n$ and $C_n$ respectively its {\L}ukasiewicz path, its height process and its contour process. The main result of Broutin \& Marckert \cite{Broutin-Marckert:Asymptotics_of_trees_with_a_prescribed_degree_sequence_and_applications} is the following: under \eqref{eq:H}, if $T_n$ is sampled uniformly at random in $\tree(\n)$ for every $n \ge 1$, then the following convergence in distribution holds in $\mathscr{C}([0,1], \R^3)$:
\begin{equation}\label{eq:Broutin_Marckert}
\left(\frac{W_n(N_\n t)}{N_\n^{1/2}}, \frac{H_n(N_\n t)}{N_\n^{1/2}}, \frac{C_n(2N_\n t)}{N_\n^{1/2}}\right)_{t \in [0,1]}
\cvloi
\left(\sigma_p \exc, \frac{2}{\sigma_p} \exc, \frac{2}{\sigma_p} \exc\right)_{t \in [0,1]}.
\end{equation}

Denote by $L_n$ the label process (in lexicographical order) of a labelled tree $(T_n, l_n) \in \bartree(\n)$. Consider also a labelled two-type tree $(\mathcal{T}_n, \ell_n) \in \bartree_{\circ, \bullet}(\n)$; it has $N_\n$ edges as well, we denote by $\mathcal{C}^\circ_n$ its white contour function and by $\mathcal{L}^\circ_n$ its label function (in contour order).

\begin{thm}
\label{thm:cv_fonctions_serpent}
If $(T_n, l_n)$ and $(\T_n, \ell_n)$ are related by the $\JS$ bijection and have the uniform distribution in $\bartree(\n)$ and $\bartree_{\circ, \bullet}(\n)$ respectively for every $n \ge 1$, then, under \eqref{eq:H}, the following convergences in distribution 
hold jointly in $\mathscr{C}([0,1], \R^2)$:
\begin{equation}\label{eq:cv_hauteur_labels}
\left(\left(\frac{\sigma_p^2}{4} \frac{1}{N_\n}\right)^{1/2} H_n(N_\n t), \left(\frac{9}{4 \sigma_p^2} \frac{1}{N_\n}\right)^{1/4} L_n(N_\n t)\right)_{t \in [0,1]}
\cvloi
(\exc_t, Z_t)_{t \in [0,1]},
\end{equation}
and
\begin{equation}\label{eq:cv_contour_labels}
\left(\left(\frac{\sigma_p^2}{4p_0^2} \frac{1}{N_\n}\right)^{1/2} \mathcal{C}^\circ_n(N_\n t), \left(\frac{9}{4 \sigma_p^2} \frac{1}{N_\n}\right)^{1/4} \mathcal{L}^\circ_n(N_\n t)\right)_{t \in [0,1]}
\cvloi
(\exc_t, Z_t)_{t \in [0,1]}.
\end{equation}
\end{thm}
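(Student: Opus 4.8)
The plan is to prove the two convergences \eqref{eq:cv_hauteur_labels} and \eqref{eq:cv_contour_labels} together, leveraging three ingredients: the Broutin--Marckert invariance principle \eqref{eq:Broutin_Marckert} for the one-type tree geometry, the deterministic translation of the $\JS$ bijection in terms of processes (Lemma \ref{lem:processus_bijection_JS} and identity \eqref{eq:contour_blanc_luka}), and an invariance principle for the one-type \emph{label} process $L_n$ which must be established here. I will treat \eqref{eq:cv_hauteur_labels} first and then deduce \eqref{eq:cv_contour_labels} from it. Once we know that $(H_n, L_n)$, rescaled as in \eqref{eq:cv_hauteur_labels}, converges jointly to $(\exc, Z)$, Lemma \ref{lem:processus_bijection_JS} gives $\mathcal{L}^\circ_n = L_n$ exactly, so the label part of \eqref{eq:cv_contour_labels} is immediate; and the equality $\mathcal{C}^\circ_n = \widetilde H_n$ together with \eqref{eq:contour_blanc_luka} identifies $\mathcal{C}^\circ_n$ as a "thinned" version of $H_n$ --- one needs to show that replacing $W(i)\le \inf$ by $W(i)<\inf$ changes the count by a factor $p_0$ asymptotically (each ancestor $u_i$ whose last child is \emph{also} an ancestor is an internal vertex, and a uniform-in-$\n$ law-of-large-numbers argument shows a proportion $\to p_0$ of visited vertices are leaves along a typical branch). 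This should follow from \eqref{eq:Broutin_Marckert} plus a convergence of the occupation measures of $W_n$, exactly as in Abraham's treatment cited after \eqref{eq:contour_blanc_luka}; the different scaling constants $\sigma_p^2/4$ vs.\ $\sigma_p^2/(4p_0^2)$ are precisely this factor $p_0$, and the constant in front of $L_n$ is unchanged, consistent with $\mathcal{L}^\circ_n=L_n$.

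The core work is the label invariance principle. Conditionally on $T_n$, the labels $l_n$ are built by assigning to each internal vertex with $k$ children a sequence of $k$ label-increments along its children that is uniform among nondecreasing-by-$-1$ integer sequences with last increment forced so the rightmost child repeats the parent's label --- equivalently (after the standard reformulation) a uniform sequence of $k$ nonnegative integers summing appropriately, shifted by $-1$, i.e.\ the increments are i.i.d.-like draws with a uniform-on-a-simplex flavour. The label of $u_j$ is then the sum of the increments read along the ancestral line of $u_j$, so $L_n(j)$ is a sum of roughly $H_n(N_\n j/N_\n)\approx \mathrm{const}\cdot N_\n^{1/2}$ centred increments, each of bounded-by-$\Delta_\n$ size but with variance of order $1$ per step (the per-child increment of the BDG/JS labelling has mean $0$ and variance $\tfrac{1}{6}\cdot(\text{something})$; the relevant fact, already isolated in the $2\kappa$-angulation case, is that conditionally on the child-count $k$ the increment variance is $\tfrac{(k-1)(k+?)}{\cdots}$, averaging against $p$ to give the normalising constant $(9/(4\sigma_p^2))^{1/4}$ via the identity $\mathrm{Var}=\tfrac{2}{3}\cdot\tfrac{\sigma_p^2}{1-p_0}$-type bookkeeping). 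Thus, heuristically, $L_n$ is a "snake" driven by $H_n$ with diffusive per-unit-height variance, and one expects $L_n/N_\n^{1/4}\Rightarrow$ Brownian snake tip. To make this rigorous I would: (i) establish finite-dimensional convergence of $(H_n, L_n)$ by conditioning on the ancestral structure and applying a Lindeberg CLT to the label increments along the relevant ancestral lines (using $n^{-1/2}\Delta_\n\to 0$ to kill the Lindeberg remainder and $\sigma_\n^2\to\sigma_p^2$, $p_\n\Rightarrow p$ to get the variance), covariances being governed by $m_\exc$ because two vertices share their labels up to their last common ancestor, whose height is $\min$ of the two heights --- this is where \eqref{eq:Broutin_Marckert} enters; (ii) prove tightness of the rescaled $L_n$ in $\mathscr C([0,1],\R)$ via a Kolmogorov-type moment bound $\Es{|L_n(N_\n s)-L_n(N_\n t)|^{4}}\le C\,|s-t|^{1+\delta}N_\n^{?}$, which reduces to controlling moments of the label difference in terms of $d_{H_n}(s,t)$ (the height-process increment) using the backbone decomposition extended in Section \ref{sec:epine}, together with the known sub-Gaussian tail bounds of Addario-Berry for $H_n$ to handle the branch lengths.

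The main obstacle, and the reason the paper is long, is step (ii): tightness of the label process under the weak hypothesis \eqref{eq:H}. For size-conditioned Galton--Watson trees one has exponential moments or uniform boundedness of offspring to push through moment estimates; here we only have a second moment of $p$, a vanishing-but-not-bounded $\Delta_\n$, and --- worse --- the trees are conditioned to have a \emph{prescribed} degree sequence rather than merely a fixed size, so the usual "spine decomposition into independent subtrees" is unavailable and must be replaced by the Broutin--Marckert backbone decomposition, which is the subject of Section \ref{sec:epine} and Appendix \ref{sec:appendice_epine}. Concretely I would first prove a one-point estimate: for a uniformly chosen vertex, the ancestral label has a Gaussian-type tail at scale $N_\n^{1/4}$, uniformly in $\n$; then upgrade to a two-point (increment) estimate by decomposing the path between $\pi(s)$ and $\pi(t)$ through their last common ancestor and summing independent-ish contributions of the subtrees hanging off the backbone, controlling the number and sizes of those subtrees via \eqref{eq:Broutin_Marckert} and the width bounds. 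Given the finite-dimensional convergence and tightness, the limit is identified by the characterisation of the Brownian snake as the unique continuous process whose conditional law given $\exc$ is centred Gaussian with covariance $m_\exc$; the joint convergence with $H_n$ then follows because the finite-dimensional limits were computed jointly. Finally, as indicated above, \eqref{eq:cv_contour_labels} is obtained from \eqref{eq:cv_hauteur_labels} by Lemma \ref{lem:processus_bijection_JS}, the identity \eqref{eq:contour_blanc_luka}, and a law-of-large-numbers comparison of $\mathcal C^\circ_n=\widetilde H_n$ with $H_n$ yielding the extra $p_0^{-2}$ in the height normalisation and no change in the label normalisation, plus the elementary bound $\sup_t|\mathcal C(2N_\n t)-2\mathcal C^\circ(N_\n t)|=1$ to transfer between the two-type contour and its white skeleton.
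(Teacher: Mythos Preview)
Your plan is essentially the paper's own strategy: reduce to the one-type tree via Lemma~\ref{lem:processus_bijection_JS} (so $\mathcal{L}^\circ_n=L_n$ and $\mathcal{C}^\circ_n=\widetilde H_n\approx p_0 H_n$, which is Proposition~\ref{prop:cv_contour}), prove finite-dimensional convergence of $L_n$ by a CLT along ancestral lines using the spinal decomposition of Section~\ref{sec:epine} (Propositions~\ref{prop:cv_label_unif} and~\ref{prop:cv_label_unif_multi}, done at i.i.d.\ uniform random times rather than deterministic ones and then transferred via equicontinuity), and prove tightness by a Kolmogorov moment bound (Proposition~\ref{prop:tension_labels}). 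The one genuinely delicate point your outline does not anticipate is an asymmetry in the tightness argument: the right branch $\rrbracket\hat u,u\rrbracket$ is controlled directly by the \L ukasiewicz path via Proposition~\ref{prop:moments_marche_Luka}, but on the left branch $\rrbracket\hat v,v\rrbracket$ the identity $l_n(w1)=l_n(w)$ fails, and the paper handles the residual term $\#\{w:\chi_w=1\}$ by the separate combinatorial estimate of Corollary~\ref{cor:bon_evenement_tension_labels} (showing no long branch has too many ``first children''), conditioning on a high-probability event $\mathcal E_n$ before applying Kolmogorov.
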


\begin{rem}\label{rem:contour_contour_blanc}
Denote by $\mathcal{C}_n$ the contour function of $\mathcal{T}_n$. We have already observed in Section \ref{subsec:arbres_etiquetes} that $\sup_{t \in [0,1]} |\mathcal{C}_n(2N_\n t) - 2\mathcal{C}^\circ_n(N_\n t)| = 1$, so \eqref{eq:cv_contour_labels} implies
\[\left(\left(\frac{\sigma_p^2}{16p_0^2} \frac{1}{N_\n}\right)^{1/2} \mathcal{C}_n(2N_\n t)\right)_{t \in [0,1]}
\cvloi
(\exc_t)_{t \in [0,1]}.\]
Consequently, we have the joint convergences in the sense of Gromov--Hausdorff:
\[\left(\T_n, N_\n^{-1/2} \dgr\right)
\cvloi
\left(\CRT_\exc, \frac{4 p_0}{\sigma_p} d_\exc\right),
\qquad\text{and}\qquad
\left(T_n, N_\n^{-1/2} \dgr\right)
\cvloi
\left(\CRT_\exc, \frac{2}{\sigma_p} d_\exc\right).\]
\end{rem}

\begin{rem}
By definition, if $(T, l)$ is a labelled one-type tree and $u$ is a vertex of $T$ with $r \ge 1$ children, then the sequence $(0, l(u1)-l(u), \dots, l(ur)-l(u))$ belongs to the set of bridges
\begin{equation}\label{eq:def_pont_sans_saut_negatif}
\mathcal{B}_r^+ = \left\{(x_0, \dots, x_r): x_0=x_r=0 \text{ and } x_j-x_{j-1} \in \{-1, 0, 1, 2, \dots\} \text{ for } 1 \le j \le r\right\}.
\end{equation}
Since the cardinal of $\mathcal{B}_r^+$ is $\binom{2r-1}{r-1}$, it follows that a one-type tree $T$ possesses
\begin{equation}\label{eq:nombre_etiquetages_arbre}
\prod_{u \in T : k_u \ge 1} \binom{2k_u-1}{k_u-1}
\end{equation}
possible labellings. Observe that this quantity is constant over $\tree(\n)$ so if we first sample an unlabelled tree $T_n$ uniformly at random in $\tree(\n)$ and if we then add labels uniformly at random, in the sense that the sequences $(0, l(u1)-l(u), \dots, l(uk_u)-l(u))_{u \in T_n}$ are sampled independently and uniformly at random in $\mathcal{B}^+_{k_u}$ respectively, then the labelled tree has the uniform distribution in $\bartree(\n)$.
\end{rem}

Let us comment on the constants in Theorem \ref{thm:cv_fonctions_serpent}. The one in front of $H_n$ is taken from \eqref{eq:Broutin_Marckert}. Next, the label of a vertex $u \in T_n$ is the sum of the increments of the labels between consecutive ancestors; there are $|u|$ such terms, which are independent and distributed, when an ancestor has $i$ children and the one on the path to $u$ is the $j$-th one, as the $j$-th marginal of a uniform random bridge in $\mathcal{B}_i^+$, as defined in \eqref{eq:def_pont_sans_saut_negatif}; the latter is a centred random variable with variance $2j(i-j)/(i+1)$. As we will see, there is typically a proportion about $p_\n(i)$ of such ancestors so $L_n(u)$ has variance about
\[\sum_{i \ge 1} \sum_{j=1}^i |u| p_\n(i) \frac{2j(i-j)}{i+1}
= |u| \sum_{i \ge 1} p_\n(i) \frac{i(i-1)}{3}
\approx |u| \frac{\sigma_p^2}{3}.\]
If $u$ is the vertex visited at time $\lfloor N_\n t\rfloor$ in lexicographical order, then $|u| \approx (4N_\n/\sigma_p^2)^{1/2} \exc_t$ so we expect $L_n(N_\n t)$, once rescaled by $N_\n^{1/4}$, to be asymptotically Gaussian with variance
\[\left(\frac{4}{\sigma_p^2}\right)^{1/2} \exc_t \frac{\sigma_p^2}{3}
= \left(\frac{4\sigma_p^2}{9}\right)^{1/2} \exc_t.\]
Regarding the two-type tree, the proof of the convergence of $\mathcal{C}^\circ_n$ relies on showing that, as $n \to \infty$, it is close to $p_0 H_n$ when $\T_n$ and $T_n$ are related by the $\JS$ bijection. Finally, according to Lemma \ref{lem:processus_bijection_JS}, when $\T_n$ and $T_n$ are related by the $\JS$ bijection, then the processes $\mathcal{L}^\circ_n$ and $L_n$ are equal.

We next explain how Theorem \ref{thm:cv_fonctions_serpent} will follow from several results proved in Section \ref{sec:convergences_fonctions}.

\begin{proof}[Proof of Theorem \ref{thm:cv_fonctions_serpent}]
Recall from Lemma \ref{lem:processus_bijection_JS} that the processes $L_n$ and $\mathcal{L}^\circ_n$ are equal. Appealing to this lemma, we shall also obtain in Proposition \ref{prop:cv_contour} below the joint convergence
\[\left(\left(\frac{\sigma_p^2}{4} \frac{1}{N_\n}\right)^{1/2} H_n(N_\n t), \left(\frac{\sigma_p^2}{4p_0^2 N_\n}\right)^{1/2} \mathcal{C}^\circ_n(N_\n t)\right)_{t \in [0,1]}
\cvloi
(\exc_t, \exc_t)_{t \in [0,1]}.\]
In Proposition \ref{prop:cv_label_unif_multi}, we shall prove that, jointly with this convergence, for every $k \ge 1$, if $(U_1, \dots, U_k)$ are i.i.d. uniform random variables in $[0,1]$ independent of the trees, then the convergence
\begin{equation}\label{eq:cv_label_unif_multi}
\left(\frac{9}{4 \sigma_p^2} \frac{1}{N_\n}\right)^{1/4} \left(L_n(N_\n U_1), \dots, L_n(N_\n U_k)\right)
\cvloi
\left(Z_{U_1}, \dots, Z_{U_k}\right)
\end{equation}
holds in $\R^k$, where the process $Z$ is independent of $(U_1, \dots, U_k)$. Finally, in Proposition \ref{prop:tension_labels}, we shall prove that the sequence
\[\left(N_\n^{-1/4} L_n(N_\n t) ; t \in [0,1]\right)_{n \ge 1}\]
is tight in $\mathscr{C}([0,1], \R)$. This ensures that the sequences on the left-hand side of \eqref{eq:cv_hauteur_labels} and \eqref{eq:cv_contour_labels} are tight in $\mathscr{C}([0,1], \R^2)$. Using the equicontinuity given by this tightness, as well as the uniform continuity of the pair $(\exc, Z)$, one may transpose \eqref{eq:cv_label_unif_multi} to a convergence for deterministic times, by approximating them by i.i.d. uniform random times, see e.g. Addario-Berry \& Albenque \cite[proof of Proposition 6.1]{Addario_Berry-Albenque:The_scaling_limit_of_random_simple_triangulations_and_random_simple_triangulations} for a detailed argument; this characterises the sub-sequential limits of \eqref{eq:cv_hauteur_labels} and \eqref{eq:cv_contour_labels} in $\mathscr{C}([0,1], \R^2)$ as $(\exc, Z)$.
\end{proof}

The proofs of the above intermediate results are deferred to Section \ref{sec:convergences_fonctions}, they rely on a precise description of the branches from the root of $T_n$ to i.i.d. vertices which is the content of the next section.

\section{Spinal decompositions}
\label{sec:epine}

In this section, we describe the branches from the root to i.i.d. vertices in a tree $T_n$ sampled uniformly at random in $\tree(\n)$, extending results due to Broutin \& Marckert \cite{Broutin-Marckert:Asymptotics_of_trees_with_a_prescribed_degree_sequence_and_applications}. We only state the results, the proofs are technical and are deferred to Appendix \ref{sec:appendice_epine} for the sake of clarity.

\subsection{A one-point decomposition}
\label{sec:epine_1}

For a given vertex $u$ in a plane tree $T$, we denote by $A_i(u)$ its number of strict ancestors with $i$ children:
\[A_i(u) =\# \left\{v \in \llbracket\varnothing, u\llbracket : k_v = i\right\}.\]
We write $\mathbf{A}(u) = (A_i(u) ; i \ge 1)$; note that $|u| = |\mathbf{A}(u)| = \sum_{i \ge 1} A_i(u)$. The quantity $\mathbf{A}(u)$ is crucial in order to control the label $l_n(u)$ of the vertex $u \in T_n$ when $(T_n, l_n)$ is chosen uniformly at random in $\bartree(\n)$. Indeed, one can write
\[l_n(u) = \sum_{v \in \rrbracket \varnothing, u \rrbracket} l_n(v) - l_n(pr(v)),\]
and, conditional on $T_n$, the random variables $l_n(v) - l_n(pr(v))$ are independent and their law depends on the number of children of $pr(v)$.

If $\m = (m_i ; i \ge 1)$ is a sequence of non-negative integers, then we set
\[\LR(\m) = 1 + \sum_{i \ge 1} (i-1) m_i.\]
The notation comes from the fact that removal of the path $\llbracket \varnothing, u \llbracket$ produces a forest of $\LR(\mathbf{A}(u))$ trees, so, in other words, $\LR(\mathbf{A}(u))$ is the number of vertices lying directly on the \emph{left} or on the \emph{right} of this path (and the component ``above''). For every $x > 0$ define the following set of ``good'' sequences:
\[\Good(n,x) = \left\{\m \in \Z_+^\N : \LR(\m) \le x N_\n^{1/2} \enskip\text{and}\enskip |\m| \le x N_\n^{1/2}\right\}.\]
Consider also the more restrictive set
\[\Good^+(n,x) = \left\{\m \in \Z_+^\N : \LR(\m) \le x N_\n^{1/2} \enskip\text{and}\enskip x^{-1} N_\n^{1/2} \le |\m| \le x N_\n^{1/2}\right\}.\]

The following result has been obtained by Broutin \& Marckert \cite{Broutin-Marckert:Asymptotics_of_trees_with_a_prescribed_degree_sequence_and_applications}; it is not written explicitly there but the arguments that we recall in Appendix \ref{sec:appendice_epine} can be found in Sections 3 and 5.2 there.

\begin{lem}
\label{lem:lignee_multinomiale}
For every $n \ge 1$, sample $T_n$ uniformly at random in $\tree(\n)$ and then sample a vertex $u_n$ uniformly at random in $T_n$. For every $\varepsilon > 0$, there exists $x > 0$ such that, under \eqref{eq:H},
\[\liminf_{n \ge 1} \Pr{\mathbf{A}(u) \in \Good(n,x) \text{ for all } u \in T_n} \ge 1 - \varepsilon,\]
and
\[\liminf_{n \ge 1} \Pr{\mathbf{A}(u_n) \in \Good^+(n,x)} \ge 1 - \varepsilon.\]
Furthermore, there exists a constant $C > 0$ (which depends on $x$) such that for every sequence $\m \in \Good(n,x)$, setting $h = |\m|$, we have
\[\Pr{\mathbf{A}(u_n) = \m} \le C \cdot N_\n^{-1/2} \cdot \Pr{\Xi_\n^{(h)} = \m},\]
where $\Xi_\n^{(h)} = (\Xi_{\n,i}^{(h)}; i\ge 1)$ has the multinomial distribution with parameters $h$ and $(i n_i / N_\n; i\ge 1)$.
\end{lem}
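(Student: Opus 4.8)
The plan is to reduce everything to a combinatorial counting formula for the number of trees in $\tree(\n)$ through a prescribed ancestral line, and then to identify the resulting expression as (a constant times $N_\n^{-1/2}$ times) a multinomial probability. First I would recall/establish the cycle-lemma-type enumeration: the number of plane trees in $\tree(\n)$ is
\[
\# \tree(\n) = \frac{1}{N_\n+1}\binom{N_\n+1}{n_0, n_1, n_2, \dots},
\]
a classical fact (equivalently, the number of such trees with a marked vertex is $\binom{N_\n}{n_0-1, n_1, n_2, \dots}$, by a cyclic-shift argument on \L{}ukasiewicz paths). Then, fixing a sequence $\m=(m_i; i\ge1)$ with $|\m|=h$ and writing $\mathbf{n}-\mathbf{m}$ for the degree sequence with the $m_i$ internal vertices of out-degree $i$ removed (so it is a valid sequence for a forest with $\LR(\m)$ roots and $N_\n - \sum i m_i$ edges among $n - h$ remaining vertices), the key identity to prove is that the number of pairs $(T, u)$ with $T\in\tree(\n)$ and $u$ a vertex of $T$ with $\mathbf{A}(u)=\m$ equals the number of ways to (i) choose which of the $h$ ancestors, in order, has which out-degree — this is the multinomial coefficient $\binom{h}{m_1, m_2, \dots}$ times a product of small factors accounting for \emph{which child} continues the spine at each step — multiplied by (ii) the number of ordered forests with the residual degree sequence filling the $\LR(\m)$ slots to the left/right of the spine.

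With that identity in hand, the strategy is to divide by $\# \tree(\n)$ and by $n$ (the number of vertices, for the uniform choice of $u_n$) to get an exact formula for $\Pr{\mathbf{A}(u_n)=\m}$. The residual-forest count divided by $\# \tree(\n)$ simplifies — via the ratio of multinomial coefficients — to a product $\prod_i (n_i)_{m_i} / (N_\n)_{\sum m_i}$ of falling factorials, up to a bounded prefactor coming from the Catalan-type normalisations $1/(N_\n+1)$ versus $1/(\LR(\m)+\cdots)$ and from the "which child" factors (each bounded by the out-degree, but those are absorbed because the multinomial target distribution $\Xi_\n^{(h)}$ already carries the weights $(in_i/N_\n)$ with the $i$'s). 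Comparing $\prod_i (n_i)_{m_i}/(N_\n)_h$ against $\Pr{\Xi_\n^{(h)}=\m}=\binom{h}{m_1,\dots}\prod_i (in_i/N_\n)^{m_i}$, one sees the falling factorials $(n_i)_{m_i}$ are $\le n_i^{m_i}$ and $(N_\n)_h \ge (N_\n - h)^h$; since we restrict to $\m\in\Good(n,x)$, we have $h=|\m|\le x N_\n^{1/2}=o(N_\n)$, so all these ratios are $1+o(1)$ uniformly, and they contribute only a bounded constant. The leftover explicit $N_\n^{-1/2}$ comes from the mismatch between the $\frac{1}{N_\n+1}$ in the tree count and the corresponding normalisation in the forest count (roughly a factor $\LR(\m)/(N_\n+1)$, and $\LR(\m)\le x N_\n^{1/2}$ is exactly where the bound $C\cdot N_\n^{-1/2}$ is produced), together with the $1/n$ from sampling $u_n$ and $n \asymp N_\n$.

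The main obstacle will be bookkeeping the "which child of each spinal ancestor continues the spine" factors and the left/right allocation of the residual forest without double-counting: one must be careful that specifying $\mathbf{A}(u)=\m$ does \emph{not} fix the positions of the spine among the siblings, so there is a genuine combinatorial factor at each level (bounded by $i$ when the ancestor has $i$ children), and these $\prod i^{m_i}$-type factors must be matched exactly against the $\prod (i n_i/N_\n)^{m_i}$ weights in the multinomial law $\Xi_\n^{(h)}$ — this is the whole point of why the target is a \emph{weighted} multinomial rather than a plain one. A secondary subtlety is handling the very last slot: $u$ itself is the top of the spine and its own subtree is one of the $\LR(\m)$ residual components, which is automatically accounted for if one treats the residual forest as having exactly $\LR(\m)$ plane-tree components with a total of $n-h$ vertices and the complementary degree sequence. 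Once these matchings are verified, the inequality $\Pr{\mathbf{A}(u_n)=\m}\le C N_\n^{-1/2}\Pr{\Xi_\n^{(h)}=\m}$ follows by collecting the $1+o(1)$ factors and the explicit constants; the statements about $\Good(n,x)$ and $\Good^+(n,x)$ containing $\mathbf{A}(u_n)$ with high probability are then a matter of controlling the first two moments of $\LR(\mathbf{A}(u_n))$ and $|\mathbf{A}(u_n)|=|u_n|$ — the height bound is essentially the Broutin–Marckert height estimate (or Addario-Berry's sub-Gaussian tail bound cited in the excerpt), and the $\LR$ bound follows similarly since $\LR(\mathbf{A}(u_n)) = 1+\sum_i(i-1)A_i(u_n)$ is controlled by the \L{}ukasiewicz path evaluated along the spine, which by \eqref{eq:Broutin_Marckert} is of order $N_\n^{1/2}$.
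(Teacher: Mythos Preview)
Your proposal is correct and follows essentially the same route as the paper: the spine--plus--residual--forest decomposition, the exact count $\Pr{\mathbf{A}(u_n)=\m}=\#\Gamma(\m)\cdot\#\ensembles{F}(\n-\m)/\big((N_\n+1)\,\#\ensembles{F}(\n)\big)$ with $\#\Gamma(\m)=\binom{h}{(m_i)}\prod_i i^{m_i}$, the matching of the $\prod_i i^{m_i}$ ``which child'' factors against the size-biased weights $(in_i/N_\n)$ in $\Xi_\n^{(h)}$, and the bound $\prod_i(n_i)_{m_i}\le\prod_i n_i^{m_i}$ together with $(N_\n+1-h)!/(N_\n+1)!\le N_\n^{-h}\e^{h^2/N_\n}$ to produce the prefactor $\LR(\m)/(N_\n+1-h)\le C\,N_\n^{-1/2}$ on $\Good(n,x)$. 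For the uniform-in-$u$ good event the paper makes one concrete observation you left implicit: $\LR(\mathbf{A}(u))\le W_n(u)+W_n^-(u^-)+k_u$ where $W_n^-$ is the {\L}ukasiewicz path of the mirror tree, so the supremum over all $u$ is controlled directly by the functional convergence \eqref{eq:Broutin_Marckert}.
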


Observe that replacing $\mathbf{A}(u_n)$ by such a multinomial sequence means that the random variables $(k_{pr(v)}; v \in \rrbracket\varnothing, u_n\rrbracket)$ are independent and distributed according to the size-biased law $(i n_i / N_\n; i\ge 1)$. Also, clearly, conditional on $(k_{pr(v)}; v \in \rrbracket\varnothing, u_n\rrbracket)$, the random variables $(\chi_v; v \in \rrbracket\varnothing, u_n\rrbracket)$ are independent and each one has the uniform distribution in $\{1, \dots, k_{pr(v)}\}$ respectively.

The following corollary, which shall be used in Section \ref{subsec:tension_labels}, sheds some light on Lemma \ref{lem:lignee_multinomiale}. The argument used in the proof shall be used at several other occasions.

\begin{cor}\label{cor:bon_evenement_tension_labels}
Recall the notation $\chi_w \in \{1, \dots, k_{pr(w)}\}$ for the relative position of a vertex $w \in T_n$ among its siblings. Let $c = 1 - \frac{p_0}{2}$ and $h_\n = \frac{16}{p_0^2} \ln N_\n$ and consider the event
\[\mathcal{E}_n = 
\bigg\{\frac{\#\{w \in \rrbracket u, v\rrbracket : \chi_w = 1\}}{\#\rrbracket u, v\rrbracket}
\le c
\text{ for every } u, v \in T_n \text{ such that } u \in \llbracket \varnothing, v\llbracket \text{ and }\#\rrbracket u, v\rrbracket > h_\n\bigg\}.\]
If $T_n$ is sampled uniformly at random in $\tree(\n)$, then under \eqref{eq:H}, we have $\P(\mathcal{E}_n) \to 1$ as $n \to \infty$.
\end{cor}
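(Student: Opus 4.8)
The event $\mathcal{E}_n$ requires uniform control, over all pairs $u \in \llbracket \varnothing, v\llbracket$ with a sufficiently long intermediate segment, of the proportion of vertices on that segment that are first children of their parent. The plan is to bound the complementary probability by a union bound over all such pairs, and to estimate each term via the spinal decomposition of Lemma~\ref{lem:lignee_multinomiale}. The key point is that, along a branch, once we replace the ancestral data by the multinomial/size-biased description, the relative positions $\chi_w$ become independent with $\chi_w$ uniform on $\{1,\dots,k_{pr(w)}\}$; hence $\P(\chi_w = 1 \mid k_{pr(w)} = i) = 1/i$, and in particular $\P(\chi_w = 1) = \sum_i (i n_i/N_\n)(1/i) = \sum_i n_i/N_\n = (N_\n+1)p_0/N_\n \to p_0$. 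Thus the mean proportion of first children along a long branch is close to $p_0 < c = 1 - p_0/2$, and a Chernoff-type large deviation bound should make the probability that the proportion exceeds $c$ exponentially small in the segment length. Taking the segment length at least $h_\n = \tfrac{16}{p_0^2}\ln N_\n$ should beat the $O(N_\n^2)$ pairs in the union bound.

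First I would reduce to segments of the form $\llbracket \varnothing, v\llbracket$, i.e. $u = \varnothing$: if the proportion of first children exceeds $c$ on $\rrbracket u, v\rrbracket$ for some proper segment, then on one of the two prefixes $\rrbracket\varnothing, u\rrbracket$ or $\rrbracket\varnothing, v\rrbracket$ it exceeds $c$ on a segment of length at least $h_\n$ — more carefully, one uses that if $\#\{\chi_w=1 \text{ on } \rrbracket u,v\rrbracket\}/\#\rrbracket u,v\rrbracket > c$ then writing $a = \#\rrbracket\varnothing,u\rrbracket$, $b=\#\rrbracket\varnothing,v\rrbracket$, the count on $\rrbracket\varnothing,v\rrbracket$ is at least the count on $\rrbracket\varnothing,u\rrbracket$ plus $c(b-a)$, so either the proportion on $\rrbracket\varnothing,v\rrbracket$ exceeds $c$ or that on $\rrbracket\varnothing,u\rrbracket$ does. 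So it suffices to control, uniformly over leaves (or all vertices) $v$ with $|v| > h_\n$, the proportion of first children among the strict ancestors of $v$. Then I would bound $\P(\mathcal{E}_n^c) \le \sum_{v \in T_n} \P\big(|v| > h_\n,\ \#\{\chi_w = 1 : w \in \rrbracket\varnothing, v\rrbracket\} > c\,|v|\big)$. Conditioning on $T_n$ and summing over vertices is delicate because the vertices are not exchangeable and $T_n$ is random; the clean way is to pass through a uniformly chosen vertex $u_n$ as in Lemma~\ref{lem:lignee_multinomiale}: $\P(\mathcal{E}_n^c) \le (N_\n+1)\,\P(u_n$ has $> h_\n$ strict ancestors and more than a $c$-fraction of them are first children$)$, since $u_n$ uniform gives each vertex mass $1/(N_\n+1)$.

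Next, on the good event $\{\mathbf{A}(u_n) \in \Good(n,x)\}$ (which has probability $\ge 1-\varepsilon$ by Lemma~\ref{lem:lignee_multinomiale}, and on whose complement we simply bound the probability by $\varepsilon$), I would apply the comparison inequality $\P(\mathbf{A}(u_n) = \m) \le C N_\n^{-1/2}\P(\Xi_\n^{(h)} = \m)$ and the accompanying observation that conditionally on the children-counts along the branch, the $\chi_w$ are independent uniform. Concretely, this reduces the estimate to: take $h \in [h_\n, x N_\n^{1/2}]$, let $I_1, \dots, I_h$ be i.i.d.\ with law $(i n_i/N_\n)$ and, given these, $\chi_1, \dots, \chi_h$ independent uniform on $\{1,\dots,I_\ell\}$; bound $\P(\sum_\ell \ind{\chi_\ell = 1} > c h)$. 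The indicators $\ind{\chi_\ell = 1}$ are i.i.d.\ Bernoulli($p$) with $p = (N_\n+1)p_0/N_\n$, which for $n$ large is $< 1 - p_0/2 = c$ with a fixed gap (say $p < 1 - \tfrac{3}{4}p_0$ eventually); a standard Chernoff bound gives $\P(\sum \ind{\chi_\ell=1} > ch) \le e^{-\delta h}$ for a constant $\delta = \delta(p_0) > 0$. Summing the comparison bound over $\m$ with $|\m| = h$ and then over $h \ge h_\n$, the factor $N_\n^{-1/2}$ times the number of relevant $h$'s (at most $x N_\n^{1/2}$) contributes $O(1)$, and we are left with $C \cdot \sum_{h \ge h_\n} e^{-\delta h} \le C' e^{-\delta h_\n}$. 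Multiplying by the union-bound factor $N_\n + 1$: with $h_\n = \tfrac{16}{p_0^2}\ln N_\n$ we get $(N_\n+1)\,C' e^{-\delta h_\n} = C' (N_\n+1) N_\n^{-16\delta/p_0^2}$, and since $\delta$ is of order $p_0^2$ up to an absolute constant, the exponent $16\delta/p_0^2$ exceeds $1$ (this is precisely why the constant $16$ is chosen), so this tends to $0$. Combining with the $\varepsilon$ from the bad event and letting $\varepsilon \to 0$ gives $\P(\mathcal{E}_n) \to 1$.

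The main obstacle is the bookkeeping in the union bound: one must be careful that the reduction from general segments $\rrbracket u,v\rrbracket$ to ancestral segments $\rrbracket\varnothing,v\rrbracket$ does not lose too much, and that the substitution of a uniform vertex $u_n$ for a sum over all vertices is carried out with the right combinatorial factor; and one must track that the Chernoff exponent $\delta$, which a priori could be a messy function of the full size-biased law $(i n_i/N_\n)$, in fact depends only on the Bernoulli parameter $p \approx p_0$ — this is the simplification that makes the choice $h_\n \asymp p_0^{-2}\ln N_\n$ work — so one needs the elementary fact that $\ind{\chi_\ell = 1}$ is Bernoulli with parameter exactly $\sum_i (in_i/N_\n)(1/i)$ regardless of how the $I_\ell$ are distributed. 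Everything else is routine concentration and the already-established spinal estimates.
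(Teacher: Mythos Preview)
Your overall strategy---union bound over the offending branches, replacement of the ancestral data by i.i.d.\ size-biased draws via Lemma~\ref{lem:lignee_multinomiale}, observation that the indicators $\ind{\chi_w=1}$ become i.i.d.\ Bernoulli, and a Chernoff bound---is exactly the paper's. But there are two concrete errors.

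\textbf{The reduction to full prefixes is false.} You claim that if the proportion of first children on $\rrbracket u,v\rrbracket$ exceeds $c$, then it exceeds $c$ on $\rrbracket\varnothing,u\rrbracket$ or on $\rrbracket\varnothing,v\rrbracket$. With $A,B$ the first-child counts on the two prefixes and $a,b$ their lengths, you argue ``$B-A>c(b-a)$ implies $B>cb$ or $A>ca$''. This is not true: take $A=0$, $a$ large, and all $b-a$ vertices on $\rrbracket u,v\rrbracket$ first children; then the segment proportion is $1$, yet $B/b=(b-a)/b$ and $A/a=0$ can both be below $c$. The paper does not attempt this reduction. Instead it parametrises directly by the endpoint $v$ and the segment length $j$: for each $v$ and each $h_\n\le j\le |v|$ it bounds $\P(\sum_{i=1}^j X_i(v)>cj)$, where $X_i(v)$ records whether the ancestor at distance $i$ from $v$ is a first child. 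This adds one more sum (over $j\le xN_\n^{1/2}$) to the union bound, yielding a prefactor of order $N_\n^{3/2}$ rather than your $N_\n$; the exponential $\exp(-\tfrac{p_0^2}{8}h_\n)=N_\n^{-2}$ still kills it. Your proof becomes correct once you drop the reduction and carry this extra sum.

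\textbf{The Bernoulli parameter is $1-p_0$, not $p_0$.} You compute $\sum_{i\ge 1}(in_i/N_\n)(1/i)=\sum_{i\ge 1}n_i/N_\n$, which is $(N_\n+1-n_0)/N_\n\to 1-p_0$, not $(N_\n+1)p_0/N_\n$. Fortunately the inequality you need survives: $1-p_0<1-\tfrac{p_0}{2}=c$, with gap eventually at least $p_0/4$, so Hoeffding gives exponent $\delta=p_0^2/8$ and $16\delta/p_0^2=2$, enough to beat the (corrected) $N_\n^{3/2}$ prefactor. But the sentence ``$\delta$ is of order $p_0^2$'' is right for the wrong reason, and the displayed value of $p$ is wrong.
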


In words, this means that in $T_n$, there is no branch longer than some constant times $\ln n$ along which the proportion of individuals which are the left-most child of their parent is too large.

\begin{proof}
For every $v \in T_n$, for every $1 \le j \le |v|$, let us denote by $a_j(v)$ the unique element of $\llbracket \varnothing, v\rrbracket$ such that $\#\llbracket a_j(v), v\rrbracket = j$, then set $X_j(v) = 1$ if $\chi_{a_j(v)} = 1$ and $X_j(v) = 0$ otherwise so
\[\mathcal{E}_n
= \bigcap_{v \in T_n} \bigcap_{h_\n \le j \le |v|} \bigg\{\#\{1\le i \le j : X_i(v) = 1\} \le c \cdot j\bigg\}
= \bigcap_{v \in T_n} \bigcap_{h_\n \le j \le |v|} \bigg\{\sum_{i=1}^j X_i(v) \le c \cdot j\bigg\}.\]
Let $u_0, \dots, u_{N_\n}$ be the vertices of $T_n$ listed in lexicographical order. Sample $q_n$ uniformly at random in $\{1, \dots, N_\n\}$ and independently of $T_n$, let $v_n = u_{q_n}$ and let $\Xi_\n^{(h)}$ denote a random sequence with the multinomial distribution with parameters $h$ and $(i n_i / N_\n; i\ge 1)$. Fix $\varepsilon > 0$, and let $x > 0$ and $C > 0$ as in Lemma \ref{lem:lignee_multinomiale}. Then for $n$ large enough,
\begin{align*}
\Pr{\mathcal{E}_n^c}
&\le \varepsilon + \sum_{1 \le q \le N_\n} \sum_{h_\n \le j \le xN_\n^{1/2}} \sum_{j \le h \le x N_\n^{1/2}} \sum_{\substack{\m \in \Good(n,x) \\ |m|=h}} \Pr{\sum_{i=1}^j X_i(u_q) > c \cdot j \text{ and } \mathbf{A}(u_q) = \m}
\\
&\le \varepsilon + C x^2 N_\n^{3/2} \sup_{j \ge h_\n} \sup_{h \ge j} \sum_{\substack{\m \in \Good(n,x) \\ |m|=h}} \Prc{\sum_{i=1}^j X_i(v_n) > c \cdot j}{\mathbf{A}(v_n) = \m} \Pr{\Xi_\n^{(h)} = \m}.
\end{align*}
Observe that conditional on the offsprings $k_{a_i}(v_n)$'s of the ancestors $a_i(v_n)$'s, the $X_i(v_n)$'s are independent and have the Bernoulli distribution with parameter $1/k_{a_i}(v_n)$ respectively. We thus have
\[\sum_{\substack{\m \in \Good(n,x) \\ |m|=h}} \Prc{\sum_{i=1}^j X_i(v_n) > c \cdot j}{\mathbf{A}(v_n) = \m} \Pr{\Xi_\n^{(h)} = \m}
= \Pr{\sum_{i=1}^j Y_{\n,i} > c \cdot j},\]
where the $Y_{\n,i}$'s are independent and have the Bernoulli distribution with parameter
\[\sum_{r \ge 1} \frac{1}{r} \cdot \frac{r n_r}{N_\n} = 1 - \frac{n_0 - 1}{N_\n}.\]
Recall that $c = 1 - \frac{p_0}{2}$; fix $n$ large enough so that, according to \eqref{eq:H}, $\frac{n_0-1}{N_\n} > \frac{3 p_0}{4}$ and so $c - (1 - \frac{n_0-1}{N_\n}) = \frac{n_0-1}{N_\n} - \frac{p_0}{2} > \frac{p_0}{4}$. The Chernoff bound then reads
\[\Pr{\sum_{i=1}^j Y_{\n,i} > c \cdot j}
\le \Pr{\sum_{i=1}^j (Y_{\n,i} - \Es{Y_{\n,i}}) > \frac{p_0}{4} \cdot j}
\le \exp\left(-\frac{p_0^2}{8} \cdot j\right),\]
so finally, for $n$ large enough,
\[\Pr{\mathcal{E}_n^c} \le \varepsilon + C x^2 N_\n^{3/2} \exp\left(-\frac{p_0^2}{8} \cdot h_\n\right),\]
which converges to $\varepsilon$ as $n \to \infty$ from our choice of $h_\n$.
\end{proof}

\subsection{A multi-point decomposition}
\label{sec:epine_k}

We next extend the previous decomposition according to several i.i.d. uniform random vertices. Let us first introduce some notation. Fix a plane tree $T$ and $k$ distinct vertices $u_1, \dots, u_k$ of $T$ and denote by $T(u_1, \dots, u_k)$ the tree $T$ \emph{reduced} to its root and these vertices:
\[T(u_1, \dots, u_k) = \bigcup_{1 \le j \le k} \llbracket\varnothing, u_j\rrbracket,\]
which naturally inherits a plane tree structure from $T$. Denote by $k' \le k-1$ the number of branch-points of $T(u_1, \dots, u_k)$ and by $v_1, \dots, v_{k'}$ these branch-points. Let $F(u_1, \dots, u_k)$ be the forest obtained from $T(u_1, \dots, u_k)$ by removing the edges linking these branch-points to their children; note that $F(u_1, \dots, u_k)$ contains $k+k'$ connected components which are only single paths, i.e. each one contains one root and only one leaf and the latter is either one of the $u_i$'s or one of the $v_i$'s. Let us rank these connected components in increasing lexicographical order of their root and denote by $\varnothing_j$ and $\lambda_j$ respectively the root and the leaf of the $j$-th one. For every $1 \le j \le k+k'$ and every $i \ge 1$, we set
\[A_i^{(j)}(u_1, \dots, u_k) = \#\left\{z \in \llbracket\varnothing_j, \lambda_j\llbracket : k_z = i\right\},\]
where $k_z$ must be understood as the number of children \emph{in the original tree} $T$ of the vertex $z$. We set
\[\mathbf{A}(u_1, \dots, u_k)
= \left(\mathbf{A}^{(1)}(u_1, \dots, u_k), \dots, \mathbf{A}^{(k+k')}(u_1, \dots, u_k)\right).\]

Fix $n, k \ge 1$, sample $T_n$ uniformly at random in $\tree(\n)$ and then sample i.i.d. uniform random vertices $u_{n,1}, \dots, u_{n,k}$ in $T_n$; denote by $\Bin_k$ the following event:  the reduced tree $T_n(u_{n,1}, \dots, u_{n,k})$ is binary, has $k$ leaves and its root has only one child. Note that on this event, the $u_{n,i}$'s are distinct and the number of branch-points of the reduced tree is $k' = k-1$. Let us also denote by $\Bin_k^+ = \{\max_{a \in T_n} |a| \le N_\n^{3/4}\} \cap \Bin_k$.
The next result is proved in Appendix \ref{sec:appendice_epine}.

\begin{lem}\label{lem:lignees_multinomiales_k}
For every $n \ge 1$, sample $T_n$ uniformly at random in $\tree(\n)$ and then sample i.i.d. uniform random vertices $u_{n,1}, \dots, u_{n,k}$ in $T_n$. For every $\varepsilon > 0$, there exists $x > 0$ such that, under \eqref{eq:H},
\[\liminf_{n \ge 1} \Pr{\Bin_k^+ \cap \bigcap_{i=1}^{2k-1} \left\{\mathbf{A}^{(i)}(u_{n,1}, \dots, u_{n,k}) \in \Good^+(n,x)\right\}} \ge 1 - \varepsilon.\]
Furthermore, there exists $C > 0$ (which depends on $x$) such that for every sequences $\m^{(1)}, \dots, \m^{(2k-1)} \in \Good(n,x)$, setting $|\m^{(j)}|=h_j$ for each $1 \le j \le 2k-1$, we have
\[\Prc{\mathbf{A}(u_{n,1}, \dots, u_{n,k}) = (\m^{(1)}, \dots, \m^{(2k-1)})}{\Bin_k^+} \le C \cdot N_\n^{-(2k-1)/2} \cdot \prod_{j=1}^{2k-1} \Pr{\Xi_\n^{(h_j)} = \m},\]
where $\Xi_\n^{(h_j)} = (\Xi_{\n,i}^{(h_j)}; i\ge 1)$ has the multinomial distribution with parameters $h_j$ and $(i n_i / N_\n; i\ge 1)$.
\end{lem}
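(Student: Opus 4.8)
The plan is to bootstrap the $k$-point decomposition from the one-point decomposition of Lemma~\ref{lem:lignee_multinomiale}, by exposing the reduced forest $F(u_{n,1},\dots,u_{n,k})$ one spinal segment at a time while keeping track of the residual prescribed degree sequence, which will be seen to stay close to $\n$ throughout.

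I would first dispose of the high-probability statement. Lemma~\ref{lem:lignee_multinomiale} already asserts that, with probability tending to $1$, $\mathbf{A}(u)\in\Good(n,x)$ for \emph{every} vertex $u$ of $T_n$; since every spinal segment of $T_n(u_{n,1},\dots,u_{n,k})$ is a sub-path of $\llbracket\varnothing,u_{n,j}\rrbracket$ for some $j$, and both $\LR$ and $|\cdot|$ are monotone under coordinatewise domination of ancestor profiles, this immediately yields $\LR(\mathbf{A}^{(i)})\le xN_\n^{1/2}$ and $|\mathbf{A}^{(i)}|\le xN_\n^{1/2}$ for all $i$, with high probability. The remaining ingredients --- the lower bound $|\mathbf{A}^{(i)}|\ge x^{-1}N_\n^{1/2}$, the event $\Bin_k$ that the reduced tree is binary with $k$ leaves and a root of degree one, and the crude height bound $\max_a|a|\le N_\n^{3/4}$ --- all follow from the convergence \eqref{eq:Broutin_Marckert}: $k$ independent uniform points of the Brownian tree are almost surely in general position, its root $\pi_\exc(0)$ is almost surely a leaf, and the $2k-1$ edge-lengths of the limiting reduced tree are almost surely strictly positive, so the discrete reduced tree acquires this shape and these macroscopic segment lengths for $n$ large. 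This establishes the first display once $x$ is taken large enough.

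For the domination bound I would condition on $\Bin_k^+$ and on the combinatorial shape of the reduced tree (there are finitely many, uniformly in $n$) and then expose the $2k-1$ spinal segments in a fixed order. Conditionally on the part of $T_n$ revealed so far --- the already exposed segments together with the subtrees hanging directly off them --- the un-exposed part is a uniform plane forest with a prescribed number of trees and a \emph{residual} degree content $\n'$; since each segment has length $O(N_\n^{1/2})$ on $\Bin_k^+$, only a vanishing proportion of vertices of each offspring type has been consumed, so $\n'$ still obeys an \eqref{eq:H}-type estimate and the associated size-biased law $(i n'_i/N_{\n'};\,i\ge 1)$ stays within $o(1)$ of $(i n_i/N_\n;\,i\ge 1)$ in total variation. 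Applying the residual-tree version of Lemma~\ref{lem:lignee_multinomiale} to the next segment then contributes, for each $1\le j\le 2k-1$, a factor $C\,N_\n^{-1/2}\,\P(\Xi_{\n'}^{(h_j)}=\m^{(j)})$, and since $\m^{(j)}\in\Good(n,x)$ forces $h_j=O(N_\n^{1/2})$ and confines the profile to sequences of bounded $\LR$-weight, the discrepancy between $\Xi_{\n'}^{(h_j)}$ and $\Xi_\n^{(h_j)}$ can be absorbed into the constant. Multiplying the $2k-1$ bounds and summing over the bounded number of shapes gives the claimed estimate; an alternative, perhaps cleaner, route is an induction on $k$ in which the $k$-th marked vertex is attached to the already-built reduced $(k-1)$-tree, the additional branch-point location and the new segment being controlled by one further application of Lemma~\ref{lem:lignee_multinomiale}.

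The main obstacle, as is usual in this circle of ideas, is the conditioning bookkeeping: one must verify that conditioning on $\Bin_k^+$ and on the reduced-tree shape distorts each per-segment estimate by no more than a multiplicative constant, that exposing the segments sequentially genuinely leaves a \emph{uniform} residual forest with the asserted degree content --- which is what makes the decomposition Markovian, and is most transparent when everything is phrased in terms of the {\L}ukasiewicz path --- and that the $o(1)$ errors incurred by replacing $\n'$ by $\n$ at each of the $2k-1$ peeling steps do not accumulate so as to swamp the gain $N_\n^{-(2k-1)/2}$. This is precisely where the hypotheses $\sigma^2_\n\to\sigma^2_p$ and $n^{-1/2}\Delta_\n\to 0$ in \eqref{eq:H} and the restriction to $\Good(n,x)$ come in, exactly as in the one-point analysis of Broutin \& Marckert.
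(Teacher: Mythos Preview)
Your treatment of the high-probability event is fine and essentially matches the paper's: both appeal to \eqref{eq:Broutin_Marckert} for $\Bin_k^+$ and for the macroscopic segment lengths, and the uniform spine control from Lemma~\ref{lem:lignee_multinomiale} handles the upper bounds.

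For the domination bound, however, your approach is genuinely different from the paper's, and your sketch has a real gap. The paper does \emph{not} proceed iteratively: it removes the entire reduced tree $T_n(u_{n,1},\dots,u_{n,k})$ --- all $2k-1$ segments \emph{and} the $k-1$ branch-points --- in one stroke, counts the resulting forest in $\ensembles{F}(\n-\overline{\m})$ exactly as in the one-point case, and then bounds the ratio of multinomial coefficients directly. The crucial new ingredient beyond $k=1$ is the branch-points: each branch-point $v_{n,j}$ has some degree $r_j\ge 2$, contributes a factor $\binom{r_j}{2}$ for the choice of which two children continue the reduced tree, and one must \emph{sum over $r_j$}, which produces $\sum_{r\ge 2} r(r-1)n_r/N_\n \to \sigma_p^2$. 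This is precisely where the second-moment part of \eqref{eq:H} enters, and it yields one factor $\sigma_p^2/2$ per branch-point.

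Your iterative scheme misses this. You propose to apply Lemma~\ref{lem:lignee_multinomiale} once per segment, but that lemma gives the law of the spine to a \emph{uniform} vertex; a branch-point is the most recent common ancestor of two uniform vertices, whose distribution is entirely different, and the segment ending at it is not covered by the lemma. In your inductive variant the same issue reappears as the question of \emph{where} along an old segment the new branch-point falls: this is a size-biased choice governed by the subtree containing $u_{n,k}$, and accounting for it forces you back to a sum over the branch-point degree. Without this, neither the correct power $N_\n^{-(2k-1)/2}$ nor the finiteness of the constant is justified. A secondary issue is your replacement of $\Xi_{\n'}$ by $\Xi_\n$: this is a \emph{pointwise} ratio of multinomial probabilities over $h_j\asymp N_\n^{1/2}$ trials, and ``absorbed into the constant'' needs an actual estimate --- the paper sidesteps this entirely by keeping the factorials and using $n_i!\le n_i^{\overline m_i}(n_i-\overline m_i)!$ and $(N_\n{-}h)!N_\n^h/N_\n!\le e^{h^2/N_\n}$.
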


\section{Functional invariance principles}
\label{sec:convergences_fonctions}

We state and prove in this section the intermediate results used in the proof of Theorem \ref{thm:cv_fonctions_serpent}. Let $(T_n, l_n)$ be a uniform random labelled tree in $\bartree(\n)$ and let $H_n$ and $L_n$ denote its height and label processes. Let also $\T_n$ be its associated two-type tree, which has the uniform distribution in $\tree_{\circ, \bullet}(\n)$, with white contour process $\mathcal{C}^\circ_n$. Our aim is to show that, under \eqref{eq:H}, the three convergences
\begin{equation}\label{eq:cv_contour_blanc}
\left(\left(\frac{\sigma_p^2}{4p_0^2 N_\n}\right)^{1/2} \mathcal{C}^\circ_n(N_\n t) ; t \in [0,1]\right)
\cvloi
(\exc_t ; t \in [0,1])
\end{equation}
as well as
\begin{equation}\label{eq:cv_hauteur}
\left(\left(\frac{\sigma_p^2}{4} \frac{1}{N_\n}\right)^{1/2} H_n(N_\n t) ; t \in [0,1]\right)
\cvloi
(\exc_t ; t \in [0,1])
\end{equation}
and
\begin{equation}\label{eq:cv_labels}
\left(\left(\frac{9}{4 \sigma_p^2} \frac{1}{N_\n}\right)^{1/4} L_n(N_\n t) ; t \in [0,1]\right)
\cvloi
(Z_t ; t \in [0,1]),
\end{equation}
hold jointly in $\mathscr{C}([0,1], \R)$. The second one is the main result of \cite{Broutin-Marckert:Asymptotics_of_trees_with_a_prescribed_degree_sequence_and_applications} recalled in \eqref{eq:Broutin_Marckert}. We prove \eqref{eq:cv_contour_blanc} in the next subsection. Then we prove the convergence of random finite-dimensional marginals of $(N_\n^{-1/4} L_n(N_\n \cdot))_{n \ge 1}$ in Section \ref{subsec:marginales_labels} and the tightness of this sequence in Section \ref{subsec:tension_labels}.

\subsection{Convergence of the contour}
\label{sec:convergence_contour}

Let $T_n$ have the uniform distribution in $\tree(\n)$ and let $\T_n$ be its associated two-type tree, which has the uniform distribution in $\tree_{\circ, \bullet}(\n)$.

\begin{prop}\label{prop:cv_contour}
Under \eqref{eq:H}, we have the convergence in distribution in $\mathscr{C}([0,1], \R^2)$
\[\left(\left(\frac{\sigma_p^2}{4} \frac{1}{N_\n}\right)^{1/2} H_n(N_\n t), \left(\frac{\sigma_p^2}{4p_0^2 N_\n}\right)^{1/2} \mathcal{C}^\circ_n(N_\n t)\right)_{t \in [0,1]}
\cvloi
(\exc_t, \exc_t)_{t \in [0,1]}.\]
\end{prop}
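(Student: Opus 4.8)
The plan is to deduce the joint convergence from the Broutin--Marckert result \eqref{eq:Broutin_Marckert}, which already gives the marginal convergence of $H_n$, together with the identity $\mathcal{C}^\circ_n = \widetilde{H}_n$ from Lemma \ref{lem:processus_bijection_JS} and the {\L}ukasiewicz representation \eqref{eq:contour_blanc_luka}. Since \eqref{eq:Broutin_Marckert} already asserts joint convergence of $(W_n, H_n, C_n)$ (after rescaling) to $(\sigma_p \exc, \tfrac{2}{\sigma_p}\exc, \tfrac{2}{\sigma_p}\exc)$, it suffices to show that $\mathcal{C}^\circ_n$ is, after the appropriate rescaling, \emph{uniformly close} to $p_0 H_n$; the joint statement then follows because the limit is a deterministic functional of $\exc$. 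Concretely, I would prove
\[\sup_{0 \le j \le N_\n} \left| \mathcal{C}^\circ_n(j) - p_0 H_n(j) \right| = o_{\P}(N_\n^{1/2}),\]
which combined with \eqref{eq:cv_hauteur} (i.e.\ $(\sigma_p^2/4N_\n)^{1/2} H_n(N_\n\cdot) \to \exc$) gives $(\sigma_p^2/(4p_0^2 N_\n))^{1/2}\mathcal{C}^\circ_n(N_\n\cdot) \to \exc$ jointly, since convergence in probability of the difference to $0$ upgrades the joint convergence.

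To establish the uniform estimate, I would compare the two expressions \eqref{eq:hauteur_juka} and \eqref{eq:contour_blanc_luka} for $H_n(j)$ and $\mathcal{C}^\circ_n(j)$: both count indices $i \in \{0,\dots,j-1\}$ that are ancestors of $u_j$, but $H_n$ counts \emph{all} ancestors whereas $\mathcal{C}^\circ_n$ counts only those ancestors $u_i$ whose last child is \emph{not} an ancestor of $u_j$ — equivalently, $H_n(j) - \mathcal{C}^\circ_n(j)$ is the number of strict ancestors $w$ of $u_j$ with $w k_w \in \rrbracket \varnothing, u_j\rrbracket$, i.e.\ the number of ancestors reached from their parent as a \emph{last} child. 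Writing, for a vertex $u$, $D(u) = \#\{w \in \llbracket\varnothing,u\llbracket : w k_w \in \rrbracket\varnothing,u\rrbracket\}$, we have $H_n(j) = \mathcal{C}^\circ_n(j) + D(u_j)$, and I must show $D(u_j) \approx (1-p_0)H_n(j)$ uniformly, since $1-p_0$ is the asymptotic probability that a size-biased step lands on a non-leaf (a leaf being reached only as a unique last child is handled by the exact accounting with the $n_0$ white vertices). Here is where the spinal decomposition of Section~\ref{sec:epine} enters: by Lemma~\ref{lem:lignee_multinomiale}, along the branch $\rrbracket\varnothing,u_n\rrbracket$ to a uniform vertex the parents' offspring numbers behave like an i.i.d.\ size-biased sample $(i n_i/N_\n)$ and, given these, the relative positions $\chi_w$ are uniform on $\{1,\dots,k_{pr(w)}\}$; the event ``$w$ is the last child'' then has conditional probability $1/k_{pr(w)}$, and averaging over the size-biased law gives $\sum_r \tfrac{1}{r}\cdot\tfrac{r n_r}{N_\n} = 1 - \tfrac{n_0-1}{N_\n} \to p_0$. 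So along a typical branch $D$ is a sum of roughly independent Bernoulli variables with mean $\to p_0$, whence $D(u_j) \approx p_0 |u_j| = p_0 H_n(j)$ — wait, this gives $\mathcal{C}^\circ_n(j) = H_n(j) - D(u_j) \approx (1-p_0)H_n(j)$, so I should double-check the constant; in fact the correct reading (consistent with the target scaling) is that $\mathcal{C}^\circ_n \approx p_0 H_n$, which forces the complementary count, along the branch, to concentrate at $1-p_0$; I would reconcile this by carefully using that white vertices of $\T_n$ correspond to \emph{leaves} of $T_n$ and that $\mathcal{C}^\circ_n(j)$ equals the number of ancestors whose last child is \emph{not} on the path, whose size-biased frequency is $\sum_{r\ge 1} \tfrac{r-1}{r}\cdot\tfrac{r n_r}{N_\n} + (\text{leaf term}) \to p_0$ once the leaves are correctly incorporated.

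The concentration itself I would obtain by a union bound over the $N_\n$ vertices $u_j$ combined with a Chernoff/Azuma estimate for the branch sums, exactly in the style of the proof of Corollary~\ref{cor:bon_evenement_tension_labels}: conditionally on $\mathbf{A}(u_j) = \m$, the ``last child'' indicators along the branch are independent Bernoulli with parameters $1/k_{pr(w)}$, so for a branch of length $h \ge K\log N_\n$ the deviation $|D(u_j) - \E[D(u_j)\mid \text{branch}]| \le \varepsilon h$ fails with probability $\le e^{-c\varepsilon^2 h}$, summable against the $\le C N_\n^{-1/2}\Pr{\Xi_\n^{(h)}=\m}$ bound of Lemma~\ref{lem:lignee_multinomiale} and the $N_\n$ vertices; for the at most $O(\log N_\n)$ vertices on short branches the discrepancy $|D - p_0 h|$ is trivially $O(\log N_\n) = o(N_\n^{1/2})$. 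The main obstacle is exactly this bookkeeping: making the ``expected value of the branch count'' genuinely equal to $p_0 h + o(N_\n^{1/2})$ uniformly, which requires controlling the residual $\sum_r |\tfrac{r-1}{r}\cdot\tfrac{r n_r}{N_\n} - \text{(contribution to } p_0)|$ using the weak convergence $p_\n \Rightarrow p$ and $\sigma_\n^2 \to \sigma_p^2$ in \eqref{eq:H} — a standard but delicate truncation argument, since large-degree ancestors (bounded by $\Delta_\n = o(N_\n^{1/2})$) must be shown to contribute negligibly. Everything else is routine once this uniform branch estimate is in hand.
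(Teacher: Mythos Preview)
Your approach is essentially the paper's: reduce to showing $\sup_j |\mathcal{C}^\circ_n(j) - p_0 H_n(j)| = o_\P(N_\n^{1/2})$ via the spinal decomposition (Lemma~\ref{lem:lignee_multinomiale}), a union bound over vertices, and a Chernoff estimate on the branch indicators. The paper does exactly this, working directly with $\widetilde{H}_n = \mathcal{C}^\circ_n$ rather than with the complement $D = H_n - \mathcal{C}^\circ_n$.

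Your confusion about the constant comes from a single arithmetic slip. You correctly compute that the size-biased average of the ``last child'' probability is
\[
\sum_{r\ge 1}\frac{1}{r}\cdot\frac{r n_r}{N_\n} = 1 - \frac{n_0-1}{N_\n},
\]
but you then claim this tends to $p_0$. In fact $(n_0-1)/N_\n \to p_0$ under \eqref{eq:H}, so the above tends to $1-p_0$. Hence $D(u_j) \approx (1-p_0)|u_j|$ and $\mathcal{C}^\circ_n(j) = H_n(j) - D(u_j) \approx p_0 H_n(j)$, exactly the desired relation. Your subsequent attempt to ``reconcile'' via leaves is unnecessary and does not make sense as written.

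Two further simplifications relative to your sketch. First, no truncation or ``delicate'' argument is needed to match the mean: the paper compares $\widetilde{H}_n$ to $\tfrac{n_0-1}{N_\n} H_n$, and the identity $\sum_{i\ge 1}(1-\tfrac{1}{i})\tfrac{i n_i}{N_\n} = \tfrac{n_0-1}{N_\n}$ is exact for every $n$; only at the end does one invoke $(n_0-1)/N_\n \to p_0$. Second, the split into long/short branches is not needed: the Chernoff bound for a deviation of size $\delta N_\n^{1/2}$ gives $\exp(-2\delta^2 N_\n/h)$, and since $h \le x N_\n^{1/2}$ on the good event, this is at most $\exp(-2\delta^2 N_\n^{1/2}/x)$, which beats the $N_\n$ from the union bound uniformly in $h$.
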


The key observation is the identity from Lemma \ref{lem:processus_bijection_JS}:
\[\mathcal{C}^\circ_n = \widetilde{H}_n,\]
where $\widetilde{H}_n(j)$ is the number of strict ancestors of the $j$-th vertex of $T_n$ whose last child is not one of its ancestors. We have seen in the previous section that for a ``typical'' vertex $u$ of $T_n$, at generation $|u|$, the number of ancestors having $i$ children for $i \ge 1$ forms approximately a multinomial sequence with parameters $|u|$ and $(i n_i / N_\n; i\ge 1)$; further, for each such ancestor, there is a probability $1-1/i$ that its last child is not an ancestor of $u$ and therefore contributes to $\mathcal{C}^\circ_n$. Since $\sum_{i \ge 1} (1-1/i) (i n_i / N_\n) \to 1 - (1-p_0) = p_0$, we conclude that, at a ``typical'' time, $\mathcal{C}^\circ_n \approx p_0 H_n$.

\begin{proof}
The convergence of the first marginal comes from \eqref{eq:Broutin_Marckert}; since, under \eqref{eq:H}, we have $p_0 = \lim_{n \to \infty} (n_0-1)/N_\n$ it suffices then to prove that
\[N_\n^{-1/2} \sup_{0 \le t \le 1} \left|\widetilde{H}_n(N_\n t) - \frac{n_0-1}{N_\n} H_n(N_\n t)\right| \cvproba 0.\]
Note that we may restrict ourselves to times $t$ of the form $i/N_\n$ with $i \in \{1, \dots, N_\n\}$. We proceed as in the proof of Corollary \ref{cor:bon_evenement_tension_labels}. Let $i_n$ be a uniform random integer in $\{1, \dots, N_\n\}$ and $u_n$ the $i_n$-th vertex of $T_n$ in lexicographical order. Fix $\delta, \varepsilon > 0$ and choose $x > 0$ and $C > 0$ as in Lemma \ref{lem:lignee_multinomiale}. Then for $n$ large enough,
\begin{align*}
&\Pr{\sup_{1 \le i \le N_\n} \left|\widetilde{H}_n(i) - \frac{n_0-1}{N_\n} H_n(i)\right| > \delta N_\n^{1/2}}
\\
&\qquad\le \varepsilon + x N_\n^{3/2} \sup_{1 \le h \le x N_\n^{1/2}} \sum_{\substack{\m \in \Good(n,x) \\ |m|=h}} \Pr{\mathbf{A}(u_n) = \m} \Prc{\left|\widetilde{H}_n(i_n) - \frac{n_0-1}{N_\n} h\right| > \delta N_\n^{1/2}}{\mathbf{A}(u_n) = \m}.
\\
&\qquad\le \varepsilon + C x N_\n \sup_{1 \le h \le x N_\n^{1/2}} \sum_{\substack{\m \in \Good(n,x) \\ |m|=h}} \Pr{\Xi_\n^{(h)} = \m} \Prc{\left|\widetilde{H}_n(i_n) - \frac{n_0-1}{N_\n} h\right| > \delta N_\n^{1/2}}{\mathbf{A}(u_n) = \m}.
\end{align*}
Observe that conditional on the vector $(k_v ; v \in \llbracket\varnothing, u_n\llbracket)$, the random variable $\widetilde{H}_n(i_n)$ is a sum of independent Bernoulli random variables, with respective parameter $(1 - k_v^{-1} ; v \in \llbracket\varnothing, u_n\llbracket)$. Note that
\[\sum_{i \ge 1} \left(1 - \frac{1}{i}\right) \cdot \frac{i n_i}{N_\n} = \frac{n_0-1}{N_\n},\]
we let $(Y_{\n,i} ; 1 \le i \le h)$ be independent Bernoulli random variables with parameter $(n_0-1)/N_\n$. We then conclude, applying the Chernoff bound for the second inequality, that for every $n$ large enough,
\begin{align*}
\Pr{\sup_{1 \le i \le N_\n} \left|\widetilde{H}_n(i) - \frac{n_0-1}{N_\n} H_n(i)\right| > \delta N_\n^{1/2}}
&\le \varepsilon + C x N_\n \sup_{1 \le h \le x N_\n^{1/2}} \Pr{\left|\sum_{i=1}^h Y_{\n,i} - \frac{n_0-1}{N_\n} h\right| > \delta N_\n^{1/2}}
\\
&\le \varepsilon + C x N_\n \sup_{1 \le h \le x N_\n^{1/2}} 2 \e^{-2 \delta^2 N_\n / h},
\end{align*}
which converges to $\varepsilon$ as $n \to \infty$.
\end{proof}

\subsection{Maximal displacement at a branch-point}
\label{subsec:borne_pont}

Recall that for every vertex $u$, we denote by $k_u$ its number of children and these children by $u1, \dots, uk_u$.

\begin{prop}\label{prop:deplacement_max_autour_site}
For every $n \ge 1$, sample $(T_n, l_n)$ uniformly at random in $\bartree(\n)$. Under \eqref{eq:H}, we have the convergence in probability
\[N_\n^{-1/4} \max_{u \in T_n} \left|\max_{1 \le j \le k_u} l_n(uj) - \min_{1 \le j \le k_u} l_n(uj)\right| \cvproba 0.\]
\end{prop}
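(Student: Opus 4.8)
The plan is to exploit that, by the description of admissible labellings of a one-type tree, the label increments across siblings are governed \emph{only} by the bridge attached to their common parent. Conditionally on the unlabelled tree $T_n$, the sequences $(0, l_n(u1)-l_n(u),\dots, l_n(uk_u)-l_n(u))$ for internal $u \in T_n$ are independent, each uniform in $\mathcal{B}_{k_u}^+$, so $\max_{1\le j\le k_u}l_n(uj) - \min_{1\le j\le k_u}l_n(uj)$ is distributed as the range $R^{(k_u)}$ of a uniform element of $\mathcal{B}_{k_u}^+$. Since $T_n$ is uniform in $\tree(\n)$ there are exactly $n_r$ vertices with $r$ children, so by a union bound it suffices to prove that for every $\varepsilon>0$,
\[\sum_{r\ge1} n_r\,\Pr{R^{(r)} > \varepsilon N_\n^{1/4}}\ \cv\ 0.\]

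I would control $R^{(r)}$ by two estimates. First, writing a bridge in $\mathcal{B}_r^+$ as $x_j = \sum_{i\le j}\eta_i - j$ with $\eta_i\ge0$ and $\sum_{i\le r}\eta_i = r$, one has $|x_j|\le r$ for all $j$, hence $R^{(r)}\le 2r$ \emph{surely}; this disposes of all vertices with $r\le \varepsilon N_\n^{1/4}/4$. Second --- and this is the technical core --- a uniform bridge in $\mathcal{B}_r^+$ is the trajectory of a centred random walk with i.i.d.\ light-tailed steps conditioned to return to $0$ at time $r$; splitting it at its midpoint, each half has Radon--Nikodym density bounded by a universal constant with respect to the unconditioned walk (by the local central limit theorem, $\Pr{S_r=0}\ge c\, r^{-1/2}$ whereas $\Pr{S_{\lceil r/2\rceil}=-s}\le C\, r^{-1/2}$ uniformly in $s$), so a Chernoff bound for the unconditioned walk yields $\Pr{R^{(r)}>t}\le C\exp(-c\min(t^2/r,\,t))$ --- with \emph{no} polynomial prefactor.

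The last step is the summation over $\varepsilon N_\n^{1/4}/4 < r \le \Delta_\n$. The terms with $r\le \varepsilon N_\n^{1/4}$ contribute at most $C N_\n\, e^{-c\varepsilon N_\n^{1/4}}\to 0$. For $\varepsilon N_\n^{1/4} < r \le \Delta_\n$ one must not bound the number of vertices crudely by $N_\n$; instead I would invoke the second-moment control in \eqref{eq:H}: $\sigma^2_\n\to\sigma_p^2<\infty$ forces $\sum_{i\ge1}i^2 n_i \le C(N_\n+1)$ for $n$ large. Writing $n_r e^{-c\varepsilon^2 N_\n^{1/2}/r} = (r^2 n_r)\cdot(r^{-2}e^{-c\varepsilon^2 N_\n^{1/2}/r})$ and using that $r\mapsto r^{-2}e^{-c\varepsilon^2 N_\n^{1/2}/r}$ is increasing on $(0,\Delta_\n]$ (which holds for large $n$ since $\Delta_\n = o(N_\n^{1/2})$), one gets
\[\sum_{\varepsilon N_\n^{1/4}<r\le\Delta_\n} n_r\, C\,e^{-c\varepsilon^2 N_\n^{1/2}/r}\ \le\ \frac{C}{\Delta_\n^2}\,e^{-c\varepsilon^2 N_\n^{1/2}/\Delta_\n}\sum_{i\ge1}i^2 n_i\ \le\ C'\,\frac{N_\n}{\Delta_\n^2}\,e^{-c\varepsilon^2 N_\n^{1/2}/\Delta_\n}.\]
Setting $\omega_n = N_\n^{1/2}/\Delta_\n$, which tends to infinity since $n^{-1/2}\Delta_\n\to0$ and $N_\n/n$ is bounded away from $0$ (as $p(0)<1$), the right-hand side equals $C'\,\omega_n^2\,e^{-c\varepsilon^2\omega_n}\to 0$, which finishes the proof.

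The main obstacle --- and the reason $\Delta_\n = o(N_\n^{1/2})$ enters so sharply --- is that a naive union bound over the $\Theta(N_\n)$ vertices, or even one using only $n_r = O(N_\n/r^2)$ summed term by term, fails: one must combine the bounded second moment of the degree sequence with $\Delta_\n = o(N_\n^{1/2})$ via the identity $n_r e^{-cN_\n^{1/2}/r} = (r^2 n_r)\cdot(r^{-2}e^{-cN_\n^{1/2}/r})$, whose second factor is maximal at $r=\Delta_\n$. Moreover one genuinely needs the prefactor-free Gaussian tail for $R^{(r)}$ --- a stray factor of order $r^{1/2}$ from the plain bridge-to-walk comparison would already be fatal --- which is why the midpoint-splitting comparison, in the vein of Addario-Berry's sub-Gaussian height and width bounds \cite{Addario_Berry:Tail_bounds_for_the_height_and_width_of_a_random_tree_with_a_given_degree_sequence}, is used.
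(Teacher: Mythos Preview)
Your proposal is correct and follows essentially the same route as the paper. Both arguments hinge on (i) the prefactor-free sub-Gaussian tail $\Pr{R^{(r)}>t}\le C\exp(-ct^2/r)$ for the range of a uniform bridge in $\mathcal{B}_r^+$, obtained by the midpoint-splitting comparison with the unconditioned walk (this is the paper's Lemma~\ref{lem:borne_ecart_max_pont}), and (ii) the identical summation trick $n_r e^{-cN_\n^{1/2}/r} = (r^2 n_r)\cdot(r^{-2}e^{-cN_\n^{1/2}/r})$ combined with monotonicity to bound the second factor at $r=\Delta_\n$; your deterministic bound $R^{(r)}\le 2r$ and the split at $r\asymp N_\n^{1/4}$ are harmless but unnecessary, since the Gaussian tail alone already handles all $r$ uniformly, as the paper does.
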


To prove this result, we shall need the following sub-Gaussian tail bound for the maximal gap in a random walk bridge. The proof is easy, we refer to Appendix \ref{sec:appendice_pont_echangeable}.

\begin{lem}\label{lem:borne_ecart_max_pont}
Let $(S_k;k \ge 0)$ be a random walk such that $S_0 = 0$ and $(S_{k+1}-S_k; k \ge 0)$ are i.i.d. random variables, taking values in $\Z \cap [-b, \infty)$ for some $b \ge 0$, centred and with variance $\sigma^2 \in (0, \infty)$. There exists two constants $c, C > 0$ which only depend on $b$ and $\sigma$ such that for every $r \ge 1$ and $x \ge 0$, we have
\[\Prc{\max_{0 \le k \le r} S_k - \min_{0 \le k \le r} S_k \ge x}{S_r=0} \le C \e^{-cx^2/r}.\]
\end{lem}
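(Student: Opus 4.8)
The plan is to bound, under $\P(\,\cdot\mid S_r=0)$, each of the two one-sided events $\{\max_{0\le k\le r}S_k\ge x/2\}$ and $\{\min_{0\le k\le r}S_k\le -x/2\}$, whose union contains $\{R_r:=\max_{0\le k\le r}S_k-\min_{0\le k\le r}S_k\ge x\}$ since $S_0=0$ forces $\max_{0\le k\le r}S_k\ge 0\ge\min_{0\le k\le r}S_k$. First I would record two trivial reductions. The inequality $S_k\ge -bk$ gives $\min_{0\le k\le r}S_k\ge -br$, while if $\theta$ attains the maximum then $0=S_r\ge S_\theta-b(r-\theta)$ gives $\max_{0\le k\le r}S_k\le br$; hence $R_r\le 2br$ on $\{S_r=0\}$, so the statement has content only for $x\le 2br$ and is trivially true (with $C$ large) whenever $x^2/r$ lies below a fixed constant, because then the right-hand side exceeds $1$. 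Moreover, conditionally on $S_r=0$ the increment vector $(X_1,\dots,X_r)$ is exchangeable, hence equal in law to its reversal, and the walk built from the reversed increments is $k\mapsto S_r-S_{r-k}=-S_{r-k}$; consequently $\{\max_k S_k\ge x/2\}$ and $\{\min_k S_k\le -x/2\}$ have the same probability under $\P(\,\cdot\mid S_r=0)$. It therefore suffices to bound $\Prc{\max_{0\le k\le r}S_k\ge x/2}{S_r=0}$.

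Two inputs are needed. The first is that \emph{downward} deviations of $S$ are light-tailed: since $X_1\ge -b$ and $\E[X_1^2]=\sigma^2$, a second-order Taylor estimate gives $\E[\e^{-\lambda X_1}]\le \e^{C_b\lambda^2}$ for $0\le\lambda\le1$ (one may take $C_b=\frac12\e^{b}\sigma^2$). Tilting by $\e^{-\lambda\cdot}$ and invoking Gnedenko's local limit theorem \emph{uniformly} over $\lambda$ in a compact interval — all the tilted laws share the same non-degenerate support, so their variances stay bounded away from $0$ and $\infty$ — yields a local moderate-deviation bound $\Pr{S_m=-y}\le C_1(m+1)^{-1/2}\e^{-c_1 y^2/m}$ for all $m\ge1$ and $0\le y\le c_2 m$, together with $\Pr{S_m=-y}\le C_1(m+1)^{-1/2}\e^{-c_1' y}$ in the remaining range. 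The second input is the matching lower bound $\Pr{S_r=0}\ge c_0\,r^{-1/2}$, again by Gnedenko, valid for all large $r$ with $\Pr{S_r=0}>0$; the finitely many small such $r$ are absorbed into $C$ using $R_r\le 2br$.

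With these in hand I would run a first-passage decomposition. Let $\tau=\inf\{k:S_k\ge x/2\}$. On $\{\max_k S_k\ge x/2\}$ we have $\tau\le r$ and $S_\tau=z$ with $z\ge x/2$ (the overshoot need not be bounded, as the increments are unbounded above, but this is harmless). By the strong Markov property $\Prc{S_r=0}{\mathcal{F}_\tau}=\Pr{S'_{r-\tau}=-z}$ for an independent copy $S'$, which by the first input and $z\ge x/2$ is at most $C_1(r-\tau+1)^{-1/2}\e^{-c_1x^2/(4(r-\tau))}$ (the case $z>c_2(r-\tau)$ handled by the companion estimate, $x\le 2br$ keeping everything in a comparable range). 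Summing over the value of $\tau$ and using $\sum_j\Pr{\tau=j}\le1$,
\[\Pr{\max_{0\le k\le r}S_k\ge \frac{x}{2},\ S_r=0}\ \le\ C_1\max_{1\le m\le r}\left[\frac{1}{\sqrt{m+1}}\,\e^{-c_1x^2/(4m)}\right].\]
Dividing by $\Pr{S_r=0}\ge c_0r^{-1/2}$ and distinguishing two cases finishes the argument: if $c_1x^2\ge 2r$, the map $m\mapsto (m+1)^{-1/2}\e^{-c_1x^2/(4m)}$ is non-decreasing on $[1,r]$, so the maximum is at $m=r$ and one gets a bound $\frac{C_1}{c_0}\e^{-c_1x^2/(4r)}$ of the desired form; if $c_1x^2<2r$, then $x^2/r$ is bounded above and $C\e^{-cx^2/r}\ge1$ for $C$ large. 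Combining the two one-sided estimates gives the lemma.

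The main obstacle is the local moderate-deviation estimate $\Pr{S_m=-y}\le C_1(m+1)^{-1/2}\e^{-c_1y^2/m}$: the content is that a walk with merely a \emph{lower} bound on its increments and a finite variance still has Gaussian-type \emph{local} probabilities in the downward moderate range — it is precisely the bound $X_1\ge-b$ that makes $\E[\e^{-\lambda X_1}]$ finite and hence downward deviations light — and one must be slightly careful about the uniformity in the tilt parameter when applying the local CLT. This is the point deferred to Appendix~\ref{sec:appendice_pont_echangeable}; everything else (the reversal symmetry, the first-passage decomposition, and the elementary optimisation of $m\mapsto(m+1)^{-1/2}\e^{-c_1x^2/(4m)}$) is routine.
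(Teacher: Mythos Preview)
Your approach is correct but takes a genuinely different (and heavier) route than the paper. The paper does \emph{not} run a first-passage decomposition and does not need any local moderate-deviation estimate. Instead it proceeds in two elementary steps:
\begin{itemize}
\item A deterministic lemma (Lemma~\ref{lem:pont_echangeable}): if the range of a bridge exceeds $3x$, then at least one of the four half-bridge pieces $(B_k)_{k\le \lceil r/2\rceil}$, $(B_{\lceil r/2\rceil}-B_{\lceil r/2\rceil-k})$, $(B_{\lceil r/2\rceil+k}-B_{\lceil r/2\rceil})$, $(B_r-B_{r-k})$ must dip below $-x$. Under the exchangeable law $\P(\,\cdot\mid S_r=0)$ these four pieces have the same distribution as $(S_k)_{k\le\lceil r/2\rceil}$, so $\P(\text{range}\ge 3x\mid S_r=0)\le 4\,\P(\min_{k\le\lceil r/2\rceil}S_k\le -x\mid S_r=0)$.
\item A \emph{deterministic}-time absolute-continuity trick: applying the Markov property at $\lceil r/2\rceil$,
\[
\Prc{\min_{k\le\lceil r/2\rceil}S_k\le -x}{S_r=0}
=\Es{\ind{\min_{k\le\lceil r/2\rceil}S_k\le -x}\,\frac{\theta_{r-\lceil r/2\rceil}(S_{\lceil r/2\rceil})}{\theta_r(0)}},
\]
and the ratio $\theta_{r-\lceil r/2\rceil}(y)/\theta_r(0)$ is bounded uniformly in $r,y$ by the \emph{ordinary} local limit theorem (no tilting, no moderate deviations). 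One is then left with the unconditioned probability $\P(\min_{k\le\lceil r/2\rceil}S_k\le -x)$, to which a standard Bernstein/McDiarmid bound applies since $-S$ has increments bounded above by $b$.
\end{itemize}
The gain of the paper's split at the fixed midpoint is precisely that the remaining time is $\asymp r$, so the Radon--Nikodym density is bounded by a constant and the sub-Gaussian tail comes for free from the unconditioned walk. Your random-time split at $\tau$ forces you to control $\P(S_m=-z)$ for all $m\le r$ and all $z\ge x/2$, which is why you need the tilted local estimate; that estimate is true but its uniformity in the tilt and the handling of the regime $z>c_2 m$ (where you must combine $z\ge x/2$, $z\le bm$ and $x\le 2br$ to absorb the stray $\sqrt{r}$) deserve a couple more lines than you give them. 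Either way the lemma follows; the paper's argument is just shorter and uses lighter inputs.
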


\begin{proof}[Proof of Proposition \ref{prop:deplacement_max_autour_site}]
Recall that conditional on $T_n$, the sequences $(0, l_n(u1)-l_n(u), \dots, l_n(uk_u)-l_n(u))_{u \in T_n}$ are independent and distributed respectively uniformly at random in $\mathcal{B}_r^+$ defined in \eqref{eq:def_pont_sans_saut_negatif}, with $r = k_u$, and that there are $n_r$ such vertices in $T_n$. Consider the random walk $(S_i;i \ge 0)$ such that $S_0 = 0$ and $(S_{i+1}-S_i; i \ge 0)$ are i.i.d. random variables, distributed as a shifted geometric law: $\Pr{S_1 = k} = 2^{-(k+2)}$ for every $k \ge -1$. Then it is easy to check that for every $r \ge 1$, on the event $\{S_r = 0\}$, the path $(S_0, \dots, S_r)$ has the uniform distribution in $\mathcal{B}_r^+$. Therefore, according to Lemma \ref{lem:borne_ecart_max_pont}, there exists two universal constants $c, C > 0$ such that for every $\varepsilon > 0$, for every $n$ large enough,
\begin{align*}
\Pr{\max_{u \in T_n} \left|\max_{1 \le i \le k_u} l_n(ui) - \min_{1 \le i \le k_u} l_n(ui)\right| \le \varepsilon N_\n^{1/4}}
&= \prod_{r=1}^{\Delta_\n} \Prc{\max_{0 \le k \le r} S_k - \min_{0 \le k \le r} S_k \le \varepsilon N_\n^{1/4}}{S_r=0}^{n_r}
\\
&\ge \prod_{r=1}^{\Delta_\n} \left(1 - C \exp\left(-c \varepsilon^2 N_\n^{1/2}/r\right)\right)^{n_r}
\\
&\ge \exp\left(- \sum_{r=1}^{\Delta_\n} n_r \frac{C \exp\left(-c \varepsilon^2 N_\n^{1/2}/r\right)}{1-C \exp\left(-c \varepsilon^2 N_\n^{1/2}/r\right)}\right)
\\
&\ge \exp\left(- C \sum_{r=1}^{\Delta_\n} n_r \exp\left(-c \varepsilon^2 N_\n^{1/2}/r\right) (1+o(1))\right),
\end{align*}
where we have used the bound $\ln(1-x) \ge -\frac{x}{1-x}$ for $x < 1$, jointly with the fact that, under \eqref{eq:H}, we have $\sup_{1 \le r \le \Delta_\n} \exp(-c \varepsilon^2 N_\n^{1/2}/r) \to 0$ since $\Delta_\n = o(N_\n^{1/2})$. Recall furthermore that under \eqref{eq:H}, we have $\sum_{r=1}^{\Delta_\n} r^2 n_r/N_\n \to \sigma_p^2+1 < \infty$, we conclude that for every $n$ large enough, since $x \mapsto x^2\e^{-x}$ is decreasing on $[2, \infty)$,
\[\sum_{r=1}^{\Delta_\n} n_r \exp\left( - c \varepsilon^2 \frac{N_\n^{1/2}}{r}\right)
\le \sum_{r=1}^{\Delta_\n} \frac{r^2 n_r}{N_\n} \times \frac{N_\n}{\Delta_\n^2} \exp\left( - c \varepsilon^2 \frac{N_\n^{1/2}}{\Delta_\n}\right)
\cv 0,\]
and the claim follows.
\end{proof}

\subsection{Random finite-dimensional convergence}
\label{subsec:marginales_labels}

As in Section \ref{sec:epine}, in order to make the notation easier to follow, we first treat the one-dimensional case.

\begin{prop}\label{prop:cv_label_unif}
For every $n \ge 1$, sample independently $(T_n, l_n)$ uniformly at random in $\bartree(\n)$ and $U$ uniformly at random in $[0,1]$. Under \eqref{eq:H}, the convergence in distribution
\[\left(\frac{9}{4 \sigma_p^2} \frac{1}{N_\n}\right)^{1/4} L_n(N_\n U)
\cvloi
Z_U\]
holds jointly with \eqref{eq:cv_hauteur}, where the process $Z$ is independent of $U$.
\end{prop}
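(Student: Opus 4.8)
The statement is a one-dimensional marginal convergence for the label process, evaluated at a uniform random time, which should follow from a conditional central limit theorem given the ancestral structure of the selected vertex. First I would recall that, writing $u_n$ for the vertex visited at time $\lfloor N_\n U\rfloor$ (so $u_n$ is a uniform random vertex of $T_n$), we have the decomposition
\[
L_n(N_\n U) = l_n(u_n) = \sum_{v \in \rrbracket\varnothing, u_n\rrbracket} \big(l_n(v) - l_n(pr(v))\big),
\]
and that, conditional on the tree $T_n$, the increments $l_n(v)-l_n(pr(v))$ are independent, the one at a vertex $v$ whose parent has $k_{pr(v)}=i$ children and which is the $\chi_v$-th child being distributed as the $\chi_v$-th marginal of a uniform bridge in $\mathcal{B}_i^+$. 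Such a marginal is centred with variance $2\chi_v(i-\chi_v)/(i+1)$, and has sub-Gaussian-type tails uniformly in $i$ (this is exactly the kind of bound behind Lemma~\ref{lem:borne_ecart_max_pont}; in fact the geometric-walk representation used in the proof of Proposition~\ref{prop:deplacement_max_autour_site} shows each increment is dominated by a centred shifted-geometric variable).

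The heart of the argument is then the spinal decomposition of Lemma~\ref{lem:lignee_multinomiale}. Fix $\varepsilon>0$ and pick $x>0$ as in that lemma; on the event $\{\mathbf{A}(u_n)\in\Good^+(n,x)\}$, which has probability at least $1-\varepsilon$ eventually, the lemma lets us compare the law of $\mathbf{A}(u_n)$ to a multinomial sequence $\Xi_\n^{(h)}$ with parameters $h=|u_n|$ and $(in_i/N_\n)_{i\ge1}$, at the cost of a factor $C N_\n^{-1/2}$. Under this comparison the ancestral offspring sequence $(k_{pr(v)})_{v\in\rrbracket\varnothing,u_n\rrbracket}$ behaves like an i.i.d.\ size-biased sample, and each $\chi_v$ is then uniform on $\{1,\dots,k_{pr(v)}\}$; so, conditionally on $h$, $l_n(u_n)$ is approximately a sum of $h$ i.i.d.\ centred increments with common variance $\sum_{i\ge1}(in_i/N_\n)\,\mathbb{E}[2J(i-J)/(i+1)]$ where $J$ is uniform on $\{1,\dots,i\}$, i.e.\ $\sum_{i\ge1}(in_i/N_\n)\cdot i(i-1)/(3i) \cdot \tfrac{?}{}$ — carrying out this elementary average gives $\sum_{i\ge1}p_\n(i)\,i(i-1)/3 \to \sigma_p^2/3$ under \eqref{eq:H}. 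Meanwhile \eqref{eq:cv_hauteur} tells us $h=|u_n|=H_n(N_\n U)\approx (4N_\n/\sigma_p^2)^{1/2}\exc_U$. Applying the Lindeberg CLT conditionally on $h$ (the Lindeberg condition holds because $h\to\infty$ and the increments have uniformly bounded $2+\delta$ moments, using the sub-Gaussian tails and $\Delta_\n = o(N_\n^{1/2})$ to kill the contribution of atypically large offsprings), one gets that $N_\n^{-1/4}l_n(u_n)$ is asymptotically Gaussian with (random) variance $(4/\sigma_p^2)^{1/2}\exc_U\cdot\sigma_p^2/3 = (4\sigma_p^2/9)^{1/2}\exc_U$, which is exactly the conditional variance of $Z_U$ given $\exc_U$; rescaling by the constant $(9/(4\sigma_p^2))^{1/4}$ yields the claimed limit $Z_U$, jointly with \eqref{eq:cv_hauteur} since the CLT is run conditionally on the height.

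Technically I would organise this by computing the conditional characteristic function $\mathbb{E}[\exp(i\theta N_\n^{-1/4}l_n(u_n)) \mid \mathbf{A}(u_n)=\m]$ for $\m\in\Good^+(n,x)$, showing via the multinomial comparison and a standard CLT estimate that it is close to $\exp(-\tfrac12\theta^2 c_p\,|\m|/N_\n^{1/2})$ with $c_p = \tfrac13\sigma_p^2\cdot\tfrac{?}{}$ absorbed into the normalising constant, then summing against the law of $\mathbf{A}(u_n)$ and invoking \eqref{eq:cv_hauteur} together with the Portmanteau/continuous-mapping theorem to identify the limit as $\mathbb{E}[\exp(-\tfrac12\theta^2\cdot\tfrac23\sqrt{\sigma_p^2}\,\exc_U)]$, i.e.\ the characteristic function of $Z_U$ after the rescaling. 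The event $\Good^+(n,x)$ (rather than $\Good(n,x)$) is needed precisely to guarantee $|u_n|\gtrsim N_\n^{1/2}$, so that the conditional variance does not degenerate and the CLT genuinely applies.

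\textbf{Main obstacle.} The delicate point is not the CLT itself but controlling the joint behaviour: we need the convergence of $N_\n^{-1/4}l_n(u_n)$ to hold \emph{jointly} with \eqref{eq:cv_hauteur}, and the natural way to do this — conditioning on the height process and applying the CLT with the height of $u_n$ as the (asymptotically random) number of summands — requires care because $U$ is the \emph{same} randomness used to read off both $H_n(N_\n U)$ and $L_n(N_\n U)$, and the increments along the spine are not literally independent of $T_n$ (Lemma~\ref{lem:lignee_multinomiale} only gives a one-sided comparison of the ancestral law to a multinomial, valid on a good event). I expect the bookkeeping to make the comparison rigorous — transferring the conditional CLT through the $CN_\n^{-1/2}$-bounded Radon–Nikodym-type inequality of Lemma~\ref{lem:lignee_multinomiale}, and checking Lindeberg uniformly over $\m\in\Good^+(n,x)$ using the tail bound on bridge marginals together with $\Delta_\n = o(N_\n^{1/2})$ — to be the part demanding the most care, even though each individual estimate is routine.
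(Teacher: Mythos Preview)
Your proposal is correct and follows essentially the same route as the paper: reduce to showing $l_n(u_n)/\sqrt{|u_n|}\Rightarrow\mathcal{N}(0,\sigma_p^2/3)$ jointly with the height convergence, using the spinal decomposition of Lemma~\ref{lem:lignee_multinomiale} to compare the ancestral offspring sequence to an i.i.d.\ size-biased sample, and noting that the per-step variance averages to $\sum_i p_\n(i)\,i(i-1)/3\to\sigma_p^2/3$. The only organisational difference is that the paper does not run a single Lindeberg CLT but instead truncates at a level $K$: for each fixed $i\le K$ with $p(i)>0$ it first proves $A_{i,j}(u_n)/|u_n|\to p(i)$ in probability via the one-sided multinomial bound, then applies the ordinary CLT to each $(i,j)$-block of i.i.d.\ bridge marginals, and finally controls the remainder $i>K$ by a second-moment estimate (which is exactly where the assumption $\sigma_p^2<\infty$ enters). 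Your direct Lindeberg route and the paper's truncation are equivalent in content; the truncation is just one standard way to verify Lindeberg for a triangular array whose summands can have large support. Your worry about the one-sided nature of the comparison in Lemma~\ref{lem:lignee_multinomiale} is well placed, and the paper handles it exactly as you anticipate: it is used only to bound probabilities of ``bad'' events (deviations of $A_i(u_n)$ from its target, or the tail contribution being large), never as a genuine change of measure.
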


\begin{proof}
The approach of the proof was described in Section \ref{subsec:enonces_convergences_fonctions} when explaining the constant $(9/(4 \sigma_p^2))^{1/4}$. 
Note that the vertex $u_n$ visited at the time $\lceil N_\n U \rceil$ in lexicographical order has the uniform distribution in $T_n$;\footnote{Precisely $u_n$ has the uniform distribution in $T_n \setminus \{\varnothing\}$, but we omit this detail for the sake of clarity.} denote by $l_n(u_n) = L_n(\lceil N_\n U \rceil)$ its label and by $|u_n| = H_n(\lceil N_\n U \rceil)$ its height and observe that
\[\left(\frac{9}{4 \sigma_p^2} \frac{1}{N_\n}\right)^{1/4} l_n(u_n)
= \sqrt{\sqrt{\frac{\sigma_p^2}{4} \frac{1}{N_\n}} |u_n|} \cdot \sqrt{\frac{3}{\sigma_p^2}} \frac{1}{\sqrt{|u_n|}} l_n(u_n).\]
Since, according to \eqref{eq:cv_hauteur}, the first term on the right converges in distribution towards $\exc_U$, 
it is equivalent to show that, jointly with \eqref{eq:cv_hauteur}, we have
\begin{equation}\label{eq:cv_label_unif}
\frac{1}{\sqrt{|u_n|}} l_n(u_n) \quad\mathop{\Longrightarrow}_{n \to \infty}\quad \mathcal{N}\left(0, \frac{\sigma_p^2}{3}\right),
\end{equation}
where $\mathcal{N}(0, \sigma_p^2/3)$ denotes the centred Gaussian distribution with variance $\sigma_p^2/3$ and ``$\Rightarrow$'' is a slight abuse of notation to refer to the weak convergence of the law of the random variable.

Recall that we denote by $A_i(u_n)$ the number of strict ancestors of $u_n$ with $i$ children:
\[A_i(u_n) =\# \left\{v \in \llbracket\varnothing, u_n\llbracket : k_v = i\right\};\]
denote further by $A_{i,j}(u_n)$ the number of strict ancestors of $u_n$ with $i$ children, among which the $j$-th one is again an ancestor of $u_n$:
\[A_{i,j}(u_n) =\# \left\{v \in \llbracket\varnothing, u_n\llbracket : k_v = i \text{ and } vj \in \rrbracket\varnothing, u_n\rrbracket\right\}.\]
We have seen in Section \ref{sec:epine} that when $T_n$ is uniformly distributed in $\tree(\n)$ and $u_n$ is uniformly distributed in $T_n$, then $\mathbf{A}(u_n) = (A_i(u_n) ; i \ge 1)$ can be compared to a multinomial sequence with parameters $|u_n|$ and $(i n_i / N_\n; i\ge 1)$. Observe further that given the sequence $\mathbf{A}(u_n)$, the vectors $(A_{i,j}(u_n) ; 1 \le j\le i)_{i \ge 1}$ are independent and distributed respectively according to the multinomial distribution with parameters $A_i(u_n)$ and $(\frac{1}{i}, \dots, \frac{1}{i})$.

Let $(X_{i,j,k} ; 1 \le j \le i \le \Delta_\n, k \ge 1)$ be a collection of independent random variables which is also independent of $\mathbf{A}(u_n)$, and such that  $X_{i,j,k}$ has the law of the $j$-th marginal of a uniform random bridge in $\mathcal{B}_i^+$; note that the latter is centred and has variance, say, $\sigma_{i,j}^2$. Then let us write
\[l_n(u_n) = \sum_{i=1}^{\Delta_\n} \sum_{j=1}^i \sum_{k=1}^{A_{i,j}(u_n)} X_{i,j,k},
\qquad\text{and}\qquad
l_n^{K}(u_n) = \sum_{i=1}^K \sum_{j=1}^i \sum_{k=1}^{A_{i,j}(u_n)} X_{i,j,k},
\quad\text{for } K \ge 1.\]
The proof of \eqref{eq:cv_label_unif} is divided into two steps: we first show that for every $K \ge 1$, $l_n^{K}(u_n)/\sqrt{|u_n|}$ converges towards a limit which depends on $K$ and which in turn converges towards $\mathcal{N}(0, \sigma_p^2/3)$ as $K \to \infty$, and then we show that $|l_n(u_n) - l_n^{K}(u_n)|/\sqrt{|u_n|}$ can be made arbitrarily small uniformly for $n$ large enough by choosing $K$ large enough.

Let us first prove the convergence of $l_n^{K}(u_n)$ as $n \to \infty$. For every $h \ge 1$, let $\Xi^{(h)}_\n = (\Xi^{(h)}_{\n,i}; i\ge 1)$ denote a random sequence with the multinomial distribution with parameters $h$ and $(i n_i / N_\n; i\ge 1)$ and fix $\varepsilon > 0$, and let $x > 0$ and $C > 0$ as in Lemma \ref{lem:lignee_multinomiale}.

Fix $i \ge 1$ such that $p(i) \ne 0$. Since $\Xi^{(h)}_{\n,i}$ has the binomial distribution with parameters $h$ and $i n_i / N_\n$, Lemma \ref{lem:lignee_multinomiale} and Markov inequality yield for every $\delta > 0$ and every $n$ large enough,
\begin{align*}
\Pr{\left|\frac{N_\n}{|u_n| i n_i} A_i(u_n) - 1\right| > \delta}
&\le \varepsilon + C x \sup_{x^{-1} N_\n^{1/2} \le h \le x N_\n^{1/2}} \Pr{\left|\frac{N_\n}{h i n_i} \Xi^{(h)}_{\n,i} - 1\right| > \delta}
\\
&\le \varepsilon + C x \sup_{x^{-1} N_\n^{1/2} \le h \le x N_\n^{1/2}} h^{-1} \delta^{-2} \left(\frac{N_\n}{i n_i} - 1\right),
\end{align*}
which converges to $\varepsilon$ as $n \to \infty$ since $i n_i / N_\n \to ip(i) \in(0, 1)$. Given $A_i(u_n)$, the vector $(A_{i,j}(u_n) ; 1 \le j\le i)$ has the multinomial distribution with parameters $A_i(u_n)$ and $(\frac{1}{i}, \dots, \frac{1}{i})$ so for every $1 \le j \le i$, we further have
\[\frac{N_\n}{|u_n| n_i} A_{i,j}(u_n) \cvproba 1.\]
Since the random variables $X_{i,j,k}$ are independent, centred and have variance $\sigma_{i,j}^2$, the central limit theorem then reads, when $p(i) \ne 0$,
\begin{equation}\label{eq:cv_etiquette_pi_non_nul}
\frac{1}{\sqrt{|u_n|}} \sum_{k=1}^{A_{i,j}(u_n)} X_{i,j,k}
\quad\mathop{\Longrightarrow}_{n \to \infty}\quad
\mathcal{N}\left(0, p(i) \sigma_{i,j}^2\right).
\end{equation}

In the case $p(i)=0$, we claim that
\begin{equation}\label{eq:cv_etiquette_pi_nul}
\frac{1}{\sqrt{|u_n|}} \sum_{j=1}^i \sum_{k=1}^{A_{i,j}(u_n)} X_{i,j,k}
\cvproba 0.
\end{equation}
Indeed, with the same argument as above, it suffices to show that for every $\delta > 0$, we have
\[\lim_{n \to \infty} 
\sup_{x^{-1} N_\n^{1/2} \le h \le x N_\n^{1/2}} \sum_{|\m|=h}
\Pr{\Xi^{(h)}_\n = \m}
\Pr{\left|\sum_{j=1}^i \sum_{k=1}^{M_{i,j}} X_{i,j,k}\right| \ge \delta \sqrt{h}}
=0,\]
where the vector $(M_{i,j} ; 1 \le j\le i)$ has the multinomial distribution with parameters $m_i$ and $(\frac{1}{i}, \dots, \frac{1}{i})$ and is independent of the $X_{i,j,k}$'s. For every sequence $\m$, we have
\begin{align*}
\Pr{\left|\sum_{j=1}^i \sum_{k=1}^{M_{i,j}} X_{i,j,k}\right| \ge \delta \sqrt{h}}
\le \frac{1}{\delta^2 h} \sum_{j=1}^i \Es{M_{i,j}} \sigma_{i,j}^2
= \frac{1}{\delta^2 h} \frac{m_i}{i} \sum_{j=1}^i \sigma_{i,j}^2,
\end{align*}
whence
\begin{align*}
\sum_{|\m|=h}
\Pr{\Xi^{(h)}_\n = \m}
\Pr{\left|\sum_{j=1}^i \sum_{k=1}^{M_{i,j}} X_{i,j,k}\right| \ge \delta \sqrt{h}}
&\le \sum_{|\m|=h}
\Pr{\Xi^{(h)}_\n = \m}
\frac{1}{\delta^2 h} \frac{m_i}{i} \sum_{j=1}^i \sigma_{i,j}^2
\\
&\le \Es{\Xi^{(h)}_{\n,i}} \frac{1}{\delta^2 h} \frac{1}{i} \sum_{j=1}^i \sigma_{i,j}^2
\\
&\le \frac{n_i}{N_\n} \frac{1}{\delta^2} \sum_{j=1}^i \sigma_{i,j}^2.
\end{align*}
Under \eqref{eq:H}, we have $n_i/N_\n \to p(i) = 0$ as $n \to \infty$ and \eqref{eq:cv_etiquette_pi_nul} follows.

We conclude using \eqref{eq:cv_etiquette_pi_non_nul}, \eqref{eq:cv_etiquette_pi_nul} and the independence of the $X_{i,j,k}$'s as $i$ and $j$ vary that for every $K \ge 1$, the convergence 
\[\frac{1}{\sqrt{|u_n|}} l_n^{K}(u_n) \quad\mathop{\Longrightarrow}_{n \to \infty}\quad \mathcal{N}\left(0, \sum_{i=1}^K p(i) \sum_{j=1}^i \sigma_{i,j}^2\right)\]
holds. Marckert \& Miermont \cite[page 1664]{Marckert-Miermont:Invariance_principles_for_random_bipartite_planar_maps}\footnote{Note that they consider uniform random bridges in $\mathcal{B}_{i+1}^+$!} have calculated the variance of the random variables $X_{i,j,k}$:
\[\sigma_{i,j}^2 = \frac{2j(i-j)}{i+1}
\qquad\text{so}\qquad
\sum_{j=1}^i \sigma_{i,j}^2 = \frac{i(i-1)}{3}.\]
Consequently,
\[\sum_{i=1}^K p(i) \sum_{j=1}^i \sigma_{i,j}^2
\quad\mathop{\longrightarrow}^{}_{K \to \infty}\quad
\sum_{i=1}^\infty p(i) \frac{i(i-1)}{3}
= \frac{\sigma_p^2}{3},\]
which implies
\[\mathcal{N}\left(0, \sum_{i=1}^K p(i) \sum_{j=1}^i \sigma_{i,j}^2\right)
\quad\mathop{\Longrightarrow}_{K \to \infty}\quad
\mathcal{N}\left(0, \frac{\sigma_p^2}{3}\right).\]

It only remains to show that for every $\delta > 0$, we have
\begin{equation}\label{eq:borne_labels_reste_K}
\lim_{K \to \infty} \limsup_{n \to \infty} \Pr{\left|l_n(u_n) - l_n^{K}(u_n)\right| \ge \delta \sqrt{|u_n|}} = 0.
\end{equation}
Again, with the same notation as above, it is enough to show that for every $x>0$ and every $\delta > 0$, we have
\[\lim_{K \to \infty} \limsup_{n \to \infty} 
\sup_{x^{-1} N_\n^{1/2} \le h \le x N_\n^{1/2}} \sum_{|\m|=h}
\Pr{\Xi^{(h)}_\n = \m}
\Pr{\left|\sum_{i = K}^{\Delta_\n} \sum_{j=1}^i \sum_{k=1}^{M_{i,j}} X_{i,j,k}\right| \ge \delta \sqrt{h}}
=0.\]
By the same calculation as above,
\begin{align*}
\sum_{|\m|=h}
\Pr{\Xi^{(h)}_\n = \m}
\Pr{\left|\sum_{i = K}^{\Delta_\n} \sum_{j=1}^i \sum_{k=1}^{M_{i,j}} X_{i,j,k}\right| \ge \delta \sqrt{h}}
&\le \sum_{|\m|=h}
\Pr{\Xi^{(h)}_\n = \m}
\frac{1}{\delta^2 h} \sum_{i = K}^{\Delta_\n} \frac{m_i}{i} \sum_{j=1}^i \sigma_{i,j}^2
\\
&=\frac{1}{\delta^2 h} \sum_{i = K}^{\Delta_\n} \frac{1}{i} \Es{\Xi^{(h)}_{\n,i}} \sum_{j=1}^i \sigma_{i,j}^2
\\
&=\frac{1}{\delta^2} \sum_{i = K}^{\Delta_\n} \frac{n_i}{N_\n} \frac{i(i-1)}{3},
\end{align*}
Under \eqref{eq:H}, we have
\[\sum_{i = K}^{\Delta_\n} \frac{n_i}{N_\n} i(i-1)
\cv \sum_{i \ge K} p(i) i(i-1)
\cv[K] 0.\]
This concludes the proof of \eqref{eq:borne_labels_reste_K}.
\end{proof}

We next give a multi-dimensional extension of Proposition \ref{prop:cv_label_unif}. The proof of the latter relied on Lemma \ref{lem:lignee_multinomiale}, the proof of its extension appeals to Lemma \ref{lem:lignees_multinomiales_k}.

\begin{prop}\label{prop:cv_label_unif_multi}
For every $n \ge 1$, sample independently $(T_n, l_n)$ uniformly at random in $\bartree(\n)$ and $U_1, \dots, U_k$ uniformly at random in $[0,1]$. Under \eqref{eq:H}, the convergence in distribution
\[\left(\frac{9}{4 \sigma_p^2} \frac{1}{N_\n}\right)^{1/4} \left(L_n(N_\n U_1), \dots, L_n(N_\n U_k)\right)
\cvloi
\left(Z_{U_1}, \dots, Z_{U_k}\right)\]
holds jointly with \eqref{eq:cv_hauteur}, where the process $Z$ is independent of $(U_1, \dots, U_k)$.
\end{prop}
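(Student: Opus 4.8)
The plan is to run the proof of Proposition \ref{prop:cv_label_unif} one branch at a time, using the multi-point decomposition of Lemma \ref{lem:lignees_multinomiales_k} in place of the one-point decomposition of Lemma \ref{lem:lignee_multinomiale}. First I would let $u_{n,1}, \dots, u_{n,k}$ be the vertices visited at times $\lceil N_\n U_1\rceil, \dots, \lceil N_\n U_k\rceil$ in lexicographical order --- up to the usual harmless point about the root, these are i.i.d.\ uniform in $T_n$ --- and, since $\P(\Bin_k^+) \to 1$ by Lemma \ref{lem:lignees_multinomiales_k}, restrict to the event $\Bin_k^+$. There the reduced tree $T_n(u_{n,1}, \dots, u_{n,k})$ is binary with $k$ leaves and a root of out-degree one, so the reduced forest $F(u_{n,1}, \dots, u_{n,k})$ consists of exactly $2k-1$ single-path segments $S_1, \dots, S_{2k-1}$, with roots $\varnothing_j$, leaves $\lambda_j$ and generation-lengths $h_j$. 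Telescoping and grouping the label increments along $\llbracket \varnothing, u_{n,i}\rrbracket$ according to the segment they belong to gives
\[
l_n(u_{n,i}) = \sum_{j \,:\, S_j \subset \llbracket\varnothing, u_{n,i}\rrbracket} \Sigma_j + R_{n,i},
\qquad \Sigma_j := l_n(\lambda_j) - l_n(\varnothing_j),
\]
where $R_{n,i}$ collects the at most $k-1$ increments made when crossing a branch-point; since each such increment is bounded in absolute value by $\max_{u\in T_n}(\max_{c}l_n(uc)-\min_{c}l_n(uc))$, Proposition \ref{prop:deplacement_max_autour_site} shows that $N_\n^{-1/4}\max_i|R_{n,i}| \to 0$ in probability, so $R_{n,i}$ plays no role.

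Next I would note that, conditionally on $\Bin_k^+$ and on $\mathbf{A}(u_{n,1},\dots,u_{n,k})$, the $\Sigma_j$ are independent, and that Lemma \ref{lem:lignees_multinomiales_k} lets me replace, at bounded multiplicative cost, the profile $\mathbf{A}^{(j)}$ of each segment by an independent multinomial sequence with parameters $h_j$ and $(in_i/N_\n;i\ge 1)$ --- precisely the one-dimensional setup along each single branch. Applying, for each $j$ separately, the two-step argument of Proposition \ref{prop:cv_label_unif} (a central limit theorem for the level-$K$ truncation of $\Sigma_j$, then the uniform remainder estimate behind \eqref{eq:borne_labels_reste_K}) yields $h_j^{-1/2}\Sigma_j \Rightarrow \mathcal{N}(0,\sigma_p^2/3)$, and the conditional independence upgrades this to joint convergence towards $2k-1$ independent Gaussians $\mathcal{N}_1,\dots,\mathcal{N}_{2k-1}$. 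Jointly with this and with \eqref{eq:cv_hauteur}, I would also use the convergence of reduced trees: $(\sigma_p^2/(4N_\n))^{1/2}h_j \to \lambda_j$, where $(\lambda_j)$ are the edge-lengths of the real tree $R$ spanned in $\CRT_\exc$ by $\pi_\exc(0), \pi_\exc(U_1), \dots, \pi_\exc(U_k)$, each $\lambda_j$ being an explicit continuous functional of $(\exc, U_1, \dots, U_k)$ built from the values $\exc_{U_i}$ and the pairwise minima $m_\exc(U_i,U_{i'})$; the same functionals of $H_n$ and of the times $\lceil N_\n U_i\rceil$ give the rescaled discrete lengths, and the combinatorial type of the reduced tree (which segments lie below which marked vertex, and which are shared) likewise converges to that of $R$.

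To conclude I would write, on $\Bin_k^+$ and modulo the negligible $R_{n,i}$,
\[
\left(\frac{9}{4\sigma_p^2}\frac{1}{N_\n}\right)^{1/4} l_n(u_{n,i})
= \sum_{j \,:\, S_j\subset\llbracket\varnothing, u_{n,i}\rrbracket}
\sqrt{\left(\frac{\sigma_p^2}{4N_\n}\right)^{1/2} h_j}\cdot\sqrt{\frac{3}{\sigma_p^2}}\cdot\frac{\Sigma_j}{\sqrt{h_j}},
\]
and pass to the limit to obtain, jointly in $i \le k$, the vector $\big(\sum_{j}\sqrt{\lambda_j}\,\mathcal{N}_j\big)_{i\le k}$, the sum being over the edges of $R$ on the path from the root to the $i$-th marked point, with $(\mathcal{N}_j)$ i.i.d.\ standard normal and independent of $(\exc, U_1, \dots, U_k)$. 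Conditionally on $(\exc, U_1, \dots, U_k)$, this is a centred Gaussian vector whose $(i,i')$ covariance is the sum of the $\lambda_j$ over the edges common to the two root-to-leaf paths, i.e.\ the $\CRT_\exc$-height of the last common ancestor of $\pi_\exc(U_i)$ and $\pi_\exc(U_{i'})$, namely $m_\exc(U_i,U_{i'})$; by the defining property of the Brownian snake this is exactly the conditional law of $(Z_{U_1},\dots,Z_{U_k})$ given $\exc$. Hence the convergence in the statement holds, jointly with \eqref{eq:cv_hauteur}, with $Z$ independent of $(U_1,\dots,U_k)$.

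The main obstacle is, as in the one-dimensional case, the uniform-in-$n$ control of the truncation remainder, now carried out simultaneously over all $2k-1$ segments while conditioning on $\Bin_k^+$; a secondary but necessary point is that the degenerate configurations of the reduced tree (non-binary branch-points, a root with several children, coinciding $u_{n,i}$'s) have vanishing probability, which is exactly the content of the first assertion of Lemma \ref{lem:lignees_multinomiales_k}, and that the $O(k)$ branch-point increments are negligible, which is Proposition \ref{prop:deplacement_max_autour_site}.
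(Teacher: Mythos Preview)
Your proposal is correct and follows essentially the same route as the paper: decompose the reduced tree $T_n(u_{n,1},\dots,u_{n,k})$ into its $2k-1$ single-path segments on the high-probability event $\Bin_k^+$, discard the branch-point increments via Proposition~\ref{prop:deplacement_max_autour_site}, and then rerun the one-dimensional argument of Proposition~\ref{prop:cv_label_unif} independently on each segment using the multinomial comparison of Lemma~\ref{lem:lignees_multinomiales_k}. The paper spells out the case $k=2$ first (writing $l_n(u_n)=l_n(w_n)+(l_n(\hat u_n)-l_n(w_n))+(l_n(u_n)-l_n(\hat u_n))$ and proving \eqref{eq:cv_labels_trois_parties}) before remarking that the general case ``hides no difficulty''; your write-up goes directly to general $k$ and is a bit more explicit about the convergence of the combinatorial type of the reduced tree and the covariance identification with $(Z_{U_1},\dots,Z_{U_k})$, but the substance is the same.
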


\begin{proof}
As for Lemma \ref{lem:lignees_multinomiales_k}, we focus on the case $k=2$ and comment on the general case at the end. Let $u_n$ and $v_n$ be independent uniform random vertices of $T_n$ and $w_n$ be their most recent common ancestor, let further $\hat{u}_n$ and $\hat{v}_n$ be the children of $w_n$ which are respectively an ancestor of $u_n$ and $v_n$. We write:
\[l_n(u_n) = l_n(w_n) + (l_n(\hat{u}_n)-l_n(w_n)) + (l_n(u_n) - l_n(\hat{u}_n)),\]
and we have a similar decomposition for $v_n$. The point is that, conditional on $T_n$, $u_n$ and $v_n$, the random variables $l_n(w_n)$, $l_n(u_n) - l_n(\hat{u}_n)$ and $l_n(v_n) - l_n(\hat{v}_n)$ are independent. Moreover, according to Proposition \ref{prop:deplacement_max_autour_site}, with high probability, $l_n(\hat{u}_n)-l_n(w_n)$ and $l_n(\hat{v}_n)-l_n(w_n)$ are both small compared to $N_\n^{1/4}$.

According to \eqref{eq:cv_hauteur}, we have
\[\left(\frac{\sigma_p^2}{4} \frac{1}{N_\n}\right)^{1/2} \left(|w_n|, |u_n| - |\hat{u}_n|, |v_n| - |\hat{v}_n|\right)
\cvloi
\left(m_\exc(U,V), \exc_U - m_\exc(U,V), \exc_V - m_\exc(U,V)\right),\]
where $U$ and $V$ are i.i.d uniform random variables on $[0,1]$ independent of $\exc$. We shall prove that, jointly with \eqref{eq:cv_hauteur},
\begin{equation}\label{eq:cv_labels_trois_parties}
\sqrt{\frac{3}{\sigma_p^2}} \left(\frac{l_n(w_n)}{\sqrt{|w_n|}}, \frac{l_n(u_n) - l_n(\hat{u}_n)}{\sqrt{|u_n| - |\hat{u}_n|}}, \frac{l_n(v_n) - l_n(\hat{v}_n)}{\sqrt{|v_n| - |\hat{v}_n|}}\right)
\cvloi \left(G_1, G_2, G_3\right),
\end{equation}
where $G_1$, $G_2$, $G_3$ are i.i.d. standard Gaussian random variables. Proposition \ref{prop:deplacement_max_autour_site} and \eqref{eq:cv_labels_trois_parties} then imply that, jointly with \eqref{eq:cv_hauteur}, the pair
\[\left(\left(\frac{9}{4 \sigma_p^2} \frac{1}{N_\n}\right)^{1/4} (l_n(u_n), l_n(v_n))\right)_{n \ge 1}\]
converges in distribution towards
\[\left(\sqrt{m_\exc(U,V)} G_1 + \sqrt{\exc_U - m_\exc(U,V)} G_2, \sqrt{m_\exc(U,V)} G_1 + \sqrt{\exc_V - m_\exc(U,V)} G_3\right)
= (Z_{U_1}, Z_{U_2}).\]
The proof of \eqref{eq:cv_labels_trois_parties} is \emph{mutatis mutandis} the same as that of Proposition \ref{prop:cv_label_unif}: consider the three branches $\llbracket\varnothing, w_n\rrbracket$, $\llbracket \hat{u}_n, u_n\rrbracket$ and $\llbracket \hat{v}_n, v_n\rrbracket$, we use Lemma \ref{lem:lignees_multinomiales_k} to compare the number of elements in each branch which have $i$ children and among which the $j$-th one belongs to the branch to independent multinomial distributions; then we may use the arguments of the proof of Proposition \ref{prop:cv_label_unif} to each branch independently which yields \eqref{eq:cv_labels_trois_parties}.

The general case $k \ge 2$ hides no difficulty. Sample i.i.d. uniform random vertices $u_{n,1}, \dots, u_{n,k}$ of $T_n$; appealing to Proposition \ref{prop:deplacement_max_autour_site}, we neglect the contribution of the branch-points of the reduced tree $T_n(u_{n,1}, \dots, u_{n,k})$ and we decompose the labels of each vertex $u_{n,i}$ as the sum of the increments over all the branches of the forest $F_n(u_{n,1}, \dots, u_{n,k})$; Lemma \ref{lem:lignees_multinomiales_k} then yields the generalisation of \eqref{eq:cv_labels_trois_parties}.
\end{proof}

\subsection{Concentration results for discrete excursions}
\label{subsec:concentration}

In this subsection, we shall prove two concentration inequalities for the {\L}ukasiewicz path of $T_n$. The first one shall be used to derive the tightness of the label process in the next subsection, and the second one in Section \ref{sec:carte_brownienne} in the proof of Theorem \ref{thm:cv_carte}.

\begin{prop}\label{prop:moments_marche_Luka}
Assume that \eqref{eq:H} holds and let $W_n$ be the {\L}ukasiewicz path of a tree sampled uniformly at random in $\tree(\n)$. There exists a constant $C > 0$ such that, uniformly for $t \ge 0$, $n \in \N$ and $0 \le j < k \le N_\n+1$ with $k-j \le N_\n/2$,
\[\Pr{W_n(j) - \min_{j \le i \le k} W_n(i) > t} \le \exp\left(-\frac{t^2}{C \cdot (k-j)}\right).\]
Consequently, for every $r > 0$, if $C(r) = \Gamma(1+\frac{r}{2}) \cdot C^{r/2}$, then the bound
\[\Es{\left(W_n(j) - \min_{j \le i \le k} W_n(i)\right)^r} \le C(r) \cdot (k-j)^{r/2},\]
holds uniformly for $n \in \N$ and $0 \le j < k \le N_\n+1$ such that $k-j \le N_\n/2$.
\end{prop}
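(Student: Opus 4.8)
The plan is to exploit the fact that the {\L}ukasiewicz path $W_n$ of a uniform tree in $\tree(\n)$ is, up to a cyclic shift, the partial-sum process of an exchangeable sequence of increments with prescribed empirical law. Concretely: let $\bar W_n$ be the bridge of length $N_\n+1$ obtained by reading the $N_\n+1$ steps $k_{u_0}-1, \dots, k_{u_{N_\n}}-1$ in a uniformly random order; then $\bar W_n$ ends at $-1$, and by the cycle lemma (Vervaat-type transform for bridges conditioned to stay nonnegative until the end), $W_n$ has the law of $\bar W_n$ conditioned to first hit its minimum $-1$ at the final time, equivalently the law of $\bar W_n$ re-rooted at the location of its overall minimum. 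Since re-rooting a bridge at its minimum can only \emph{decrease} downward fluctuations of the form $W(j)-\min_{[j,k]}W$, it suffices to prove the tail bound for the \emph{unconditioned} exchangeable bridge $\bar W_n$, on intervals of length $k-j \le N_\n/2$, and then pay nothing for the conditioning.

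For the exchangeable bridge I would pass to sampling \emph{without replacement}: $W_n(j)-\min_{j\le i\le k}W_n(i) = \max_{j \le i \le k}\big(W_n(j)-W_n(i)\big) = \max_{0 \le \ell \le k-j}\sum_{m=1}^{\ell}(1-\xi_{j+m})$, where $(\xi_1,\dots,\xi_{N_\n+1})$ is a uniformly random permutation of the multiset $\{k_{u_t}\}_t$, which has mean $N_\n/(N_\n+1)$ (so $1-\xi$ has mean $1/(N_\n+1)$, a negligible positive drift) and second moment bounded by $\sum_{i\ge 0}i^2 p_\n(i) = \sigma_\n^2 + (N_\n/(N_\n+1))^2$, which is bounded under \eqref{eq:H}. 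The key point is that the increments $1-\xi_{j+m}$ are bounded above by $1$ and, because we sample without replacement from a population of size $N_\n+1$ while $\ell \le k-j \le N_\n/2$, the partial sums are stochastically dominated (Hoeffding's reduction theorem for sampling without replacement) by the corresponding \emph{with-replacement} partial sums, i.e.\ by an i.i.d.\ random walk with the same one-step law. For that i.i.d.\ walk, a one-sided martingale maximal inequality followed by a Bernstein/Chernoff bound—using the upward boundedness of the steps by $1$ and the uniform bound on the variance—gives $\Pr{\max_{0\le\ell\le k-j}\sum_{m=1}^{\ell}(1-\xi_{j+m}) > t} \le \exp(-t^2/(C(k-j)))$ for $t$ in the relevant range, say $t \le k-j$; for $t > k-j$ the bound is trivial since the maximum is at most $k-j$ pointwise. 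This yields the first displayed inequality.

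The second inequality is then a routine integration: writing the moment as $\int_0^\infty r t^{r-1}\Pr{\,\cdot > t\,}\,dt$, splitting into $t \le k-j$ and $t > k-j$, and using $\int_0^\infty r t^{r-1}e^{-t^2/(C(k-j))}\,dt = \Gamma(1+\tfrac r2)\,(C(k-j))^{r/2}$ for the first piece while the second piece contributes a lower-order term absorbed into the constant, gives the claimed bound with $C(r) = \Gamma(1+\tfrac r2)C^{r/2}$ (after possibly enlarging $C$). The main obstacle is the passage from the \emph{conditioned} path $W_n$ (which is what actually codes the tree) to a tractable unconditioned object: one must justify carefully, via the cycle lemma together with the monotonicity of downward oscillations under re-rooting at the minimum, that the conditioning does not hurt the upper tail; and one must invoke the correct without-replacement-to-with-replacement comparison, valid precisely because $k-j \le N_\n/2$ keeps us in the regime where Hoeffding's domination applies.
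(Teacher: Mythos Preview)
Your overall strategy---pass to the exchangeable bridge by the cycle lemma and then apply a sub-Gaussian bound---is also the paper's: it records the bridge bound as its Lemma~6 (extracted from Addario-Berry, via the martingale $(S_x(k)+1)/(M-k)$) and then identifies the excursion quantity with $-\min_{[0,k-j]}S_x$. Your justification for that transfer, however, is where the argument breaks. The claim ``re-rooting a bridge at its minimum can only decrease downward fluctuations of the form $W(j)-\min_{[j,k]}W$'' is false for a fixed window $(j,k)$: take increments $(-1,1,-1)$, so the bridge is $S_x=(0,-1,0,-1)$ with first minimum at $j_0=1$; its Vervaat transform is the excursion $S_{x^\ast}=(0,1,0,-1)$. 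At $(j,k)=(1,2)$ one gets $S_x(1)-\min_{[1,2]}S_x=0$ for the bridge but $S_{x^\ast}(1)-\min_{[1,2]}S_{x^\ast}=1$ for the excursion, so re-rooting \emph{increased} the fluctuation. What the Vervaat shift does preserve is the cyclic family of windows, hence $\max_j\bigl(W(j)-\min_{[j,j+\ell]}W\bigr)$; but turning that into a bound for each fixed $j$ via a union bound costs a factor $N_\n$, which is fatal for the moment estimate you need in the tightness proof. ``Paying nothing for the conditioning'' is thus exactly the step that requires a genuine argument, not a pathwise monotonicity.

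A secondary imprecision: Hoeffding's reduction theorem compares $\E[\phi(\sum_i X_i)]$ for convex $\phi$ under sampling with and without replacement; it gives neither stochastic domination of the partial-sum process nor, directly, control of the running maximum $\max_{0\le\ell\le k-j}\sum_{m\le\ell}(1-\xi_{j+m})$. To bound that maximum for the without-replacement bridge you need a maximal inequality built on a martingale for the bridge itself---precisely the role of $(S_x(k)+1)/(M-k)$ in the paper's Lemma~6. Once that martingale is available, a Freedman/Azuma-type bound gives the sub-Gaussian tail directly, and the detour through i.i.d.\ sums becomes unnecessary.
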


This result follows from Section 3 of Addario-Berry \cite{Addario_Berry:Tail_bounds_for_the_height_and_width_of_a_random_tree_with_a_given_degree_sequence}. Fix $\m = (m_0, m_1, m_2, \dots)$ a sequence of non-negative integers with finite sum satisfying
\[M = \sum_{i \ge 0} m_i,
\qquad
\sum_{i \ge 0} (i-1) m_i = -1
\qquad\text{and}\qquad
\varsigma^2 = \sum_{i \ge 0} (i-1)^2 m_i,\]
and define
\[\mathbf{B}(\m) = \left\{x = (x_1, \dots, x_M): \#\{j : x_j = i-1\} = m_i \text{ for every } i \ge 0\right\}.\]
Given $x \in \mathbf{B}(\m)$, we consider the walk $S_x$ defined by $S_x(0) = 0$ and $S_x(k) = x_1 + \dots + x_k$ for $1 \le k \le M$. A careful reading of \cite[Section 3]{Addario_Berry:Tail_bounds_for_the_height_and_width_of_a_random_tree_with_a_given_degree_sequence} which focuses on the case $k=\lfloor M/2\rfloor$, and which relies on a concentration inequality similar to Lemma \ref{lem:borne_ecart_max_pont} applied to the martingale $(S_x(k)+1)/(M-k)$, yields the following result.

\begin{lem}[Addario-Berry \cite{Addario_Berry:Tail_bounds_for_the_height_and_width_of_a_random_tree_with_a_given_degree_sequence}]
\label{lem:Addario_Berry_borne_sous_gaussienne}
If $x$ is sampled uniformly at random in $\mathbf{B}(\m)$, then
\[\Pr{- \min_{0 \le i \le k} S_x(i) \ge t}
\le \exp\left(-\frac{t^2}{(16 \frac{\varsigma^2}{M} + \frac{8}{3} (1 - \frac{1}{M})) k}\right)\]
for every $1 \le k \le \lfloor M/2\rfloor$ and every $t \ge 0$.
\end{lem}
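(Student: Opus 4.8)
The plan is to reduce the statement, in the spirit of the proof of Lemma~\ref{lem:borne_ecart_max_pont}, to a one-sided deviation estimate for a martingale obtained from $S_x$ by a time change; this is exactly Addario-Berry's argument \cite{Addario_Berry:Tail_bounds_for_the_height_and_width_of_a_random_tree_with_a_given_degree_sequence}. First a few harmless reductions. Since $S_x$ is $\Z$-valued with $S_x(0)=0$, the quantity $-\min_{0\le i\le k}S_x(i)$ is a non-negative integer; so for $t\in[0,1)$ the event $\{-\min_{0\le i\le k}S_x(i)\ge t\}$ coincides with the event for $t=1$, and the announced bound for such $t$ follows from the one for $t=1$. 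I may thus assume $t\ge1$. The steps $x_j$ lie in $\Z\cap[-1,\infty)$, so $S_x(i)\ge-i$ for every $i$; hence the event is empty unless $t\le k$, and also unless $t\le\lfloor M/2\rfloor$ (recall $k\le\lfloor M/2\rfloor$). I keep the inequalities $1\le t\le k$ and $t\le\lfloor M/2\rfloor$ for later use.

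Set $\mathcal F_i=\sigma(x_1,\dots,x_i)$. Because $x$ is a uniformly random arrangement of the fixed multiset encoded by $\m$, conditionally on $\mathcal F_i$ the step $x_{i+1}$ is uniform among the $M-i$ remaining values, whose sum equals $S_x(M)-S_x(i)=-1-S_x(i)$; therefore $\Esc{x_{i+1}}{\mathcal F_i}=-(S_x(i)+1)/(M-i)$. A short computation then shows that
\[G_i:=\frac{S_x(i)+1}{M-i},\qquad 0\le i\le M-1,\]
is an $(\mathcal F_i)$-martingale with deterministic initial value $G_0=1/M$, whose increments are $G_{i+1}-G_i=(x_{i+1}-\Esc{x_{i+1}}{\mathcal F_i})/(M-i-1)$. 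This is the martingale $(S_x(k)+1)/(M-k)$ referred to in the statement, and it plays here the role that the time-changed bridge plays in the proof of Lemma~\ref{lem:borne_ecart_max_pont}.

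It remains to translate a low excursion of $S_x$ into a downward fluctuation of $G$ and to apply a maximal concentration inequality. If $S_x(i)\le-t$ for some $0\le i\le k$, then $G_i=(S_x(i)+1)/(M-i)\le(1-t)/(M-i)\le(1-t)/M=G_0-t/M$, using $1-t\le0$ and $M-i\le M$; hence
\[\Pr{-\min_{0\le i\le k}S_x(i)\ge t}\le\Pr{\max_{0\le i\le k}(G_0-G_i)\ge t/M}.\]
To bound the right-hand side I would stop $G$ at the first time $S_x$ goes strictly below $-t$: before that time $S_x(j)\ge1-t$, so $\Esc{x_{j+1}}{\mathcal F_j}\le(t-1)/(M-j)\le1$ (using $t\le\lfloor M/2\rfloor$ and $j\le k-1$), and since $x_{j+1}\ge-1$ the downward increments of $G$ are bounded by a quantity of order $1/M$; moreover the conditional second moment of a uniform remaining step is at most $\frac{1}{M-j}\sum_{i\ge0}(i-1)^2m_i=\varsigma^2/(M-j)$, so the conditional quadratic variation of $G$ over $\{0,\dots,k\}$ is of order $\varsigma^2k/M^3$. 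Feeding these two bounds into a one-sided Bernstein--Freedman maximal inequality for martingales --- the type of tool behind Lemma~\ref{lem:borne_ecart_max_pont} --- with deviation level $t/M$, and using $t\le k$ to absorb the linear Bernstein correction term into the $k$-dependence, yields an estimate of the announced form. The main obstacle is the bookkeeping that turns ``of order'' into the exact constants $16$ and $\frac{8}{3}(1-\frac{1}{M})$: this requires the sharp increment and conditional-variance estimates together with the precise coarsenings $M-i,\ M-i-1\ge M-k$, carried out exactly as in Section~3 of~\cite{Addario_Berry:Tail_bounds_for_the_height_and_width_of_a_random_tree_with_a_given_degree_sequence}, which treats the case $k=\lfloor M/2\rfloor$; the general case $k\le\lfloor M/2\rfloor$ introduces no new difficulty.
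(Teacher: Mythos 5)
Your proposal is correct and follows essentially the same route as the paper, which itself only points to Addario-Berry's Section~3: you identify the same martingale $(S_x(i)+1)/(M-i)$ arising from sampling without replacement, translate the low excursion of $S_x$ into a downward fluctuation of this martingale, and invoke a Freedman/Bernstein-type maximal inequality, deferring the exact constants $16\varsigma^2/M$ and $\tfrac{8}{3}(1-\tfrac1M)$ to the bookkeeping carried out there. The preliminary reductions and the inequality $G_i\le G_0-t/M$ on the event $\{S_x(i)\le -t\}$ are sound, so no gap beyond the explicitly deferred constant-tracking.
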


Observe that $S_x(M) = -1$ for every $x \in \mathbf{B}(\m)$; we define further
\[\mathbf{E}(\m) = \left\{x \in \mathbf{B}(\m): S_x(k) \ge 0 \text{ for every } 1 \le k \le M-1\right\}.\]
The sets $\mathbf{E}(\m)$ and $\tree(\m)$ are in one-to-one correspondence: each path $S_x$ with $x$ in $\mathbf{E}(\m)$ is the {\L}ukasiewicz path of a tree in $\tree(\m)$. For $x \in \mathbf{B}(\m)$ and $j \in \{1, \dots, M\}$, denote by $x^{(j)} \in \mathbf{B}(\m)$ the $j$-th cyclic shift of $x$ defined by
\[x^{(j)}_k = x_{k+j \text{ mod } M},
\qquad
1 \le k \le M.\]
It is well-known that, given $x \in \mathbf{B}(\m)$, we have $x^{(j)} \in \mathbf{E}(\m)$ if and only if $j$ is the least time at which the walk $S_x$ achieves its minimum overall value:
\begin{equation}\label{eq:argim_pont_echangeable}
j = \inf\left\{1 \le k \le M : S_x(k) = \inf_{1 \le i \le M} S_x(i)\right\}.
\end{equation}
Given $x \in \mathbf{B}(\m)$, we let $x^\ast$ be the unique cyclic shift of $x$ in $\mathbf{E}(\m)$. It is a standard fact that if $x$ has the uniform distribution in $\mathbf{B}(\m)$, then the time $j$ satisfying \eqref{eq:argim_pont_echangeable} has the uniform distribution on $\{1, \dots, M\}$ and furthermore $x^\ast = x^{(j)}$ is uniformly distributed in $\mathbf{E}(\m)$ and is independent of $j$.

\begin{proof}[Proof of Proposition \ref{prop:moments_marche_Luka}]
According to the previous remark, we know that $W_n$ is distributed as $S_{x^\ast}$ where $x$ has the uniform distribution in $\mathbf{B}(\n)$. With the previous notation, $M=N_\n+1$ and
\[\varsigma^2
= (N_\n+1) \sigma^2_\n + \frac{N_\n^2}{N_\n+1} - N_\n + 1
= (N_\n+1) \sigma^2_\n + \frac{1}{N_\n+1}.\]
We then apply Lemma \ref{lem:Addario_Berry_borne_sous_gaussienne} to $S_{x^\ast}$: for every $t \ge 1$, for every $1 \le k-j \le \lfloor N_\n/2\rfloor$,
\begin{align*}
\Pr{S_{x^\ast}(j) - \min_{j \le i \le k} S_{x^\ast}(i) \ge t}
&= \Pr{-\min_{0 \le i \le k-j} S_x(i) \ge t}
\\
&\le \exp\left(-\frac{t^2}{(16 (\sigma^2_\n + \frac{1}{N_\n+1}) + \frac{8}{3}(1 - \frac{1}{N_\n+1}) (k-j)}\right),
\end{align*}
which corresponds to the first claim, with $C = \sup_{n \ge 1} \{16 (\sigma^2_\n + \frac{1}{N_\n+1}) + \frac{8}{3}(1 - \frac{1}{N_\n+1})\} < \infty$; the second claim follows by integrating this tail bound applied to $t^{1/r}$.
\end{proof}

We next show that the vertices of $T_n$ with a given offspring are in some sense uniformly distributed for large $n$. If $T \in \tree$ is a tree and $u_0, \dots, u_N$ are its vertices listed in lexicographical order, then for every set $A \subset \Z_+$ and every integer $1 \le i \le N+1$, we let
\[\Lambda_{T,i}(A) = \#\left\{0 \le j \le i-1 : k_{u_j} \in A\right\}\]
be the number of vertices of $T$ amongst the first $i$ which have a number of children in $A$. The next result shows that this quantity grows roughly linearly with $i$.

\begin{prop}\label{prop:repartition_feuilles}
Assume that \eqref{eq:H} holds and sample $T_n$ uniformly at random in $\tree(\n)$ for every $n \ge 1$. Then for every $A \subset \Z_+$,
\[\Pr{\max_{1 \le i \le N_\n+1} \left|\Lambda_{T_n,i}(A) - p_\n(A) i\right| > N_\n^{3/4}} \cv 0.\]
\end{prop}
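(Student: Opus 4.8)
The plan is to use the exchangeable-bridge (cyclic-shift) representation of $T_n$ already exploited in the proof of Proposition~\ref{prop:moments_marche_Luka}. Recall that if $x$ is sampled uniformly in $\mathbf{B}(\n)$ and $x^\ast = x^{(j^\ast)} \in \mathbf{E}(\n)$ is its unique cyclic shift into $\mathbf{E}(\n)$, with $j^\ast$ the first time $S_x$ reaches its overall minimum (independent of $x^\ast$), then $S_{x^\ast}$ is distributed as the {\L}ukasiewicz path of $T_n$, and its increments record the offspring numbers in lexicographical order: the $l$-th vertex of $T_n$ has $x^\ast_l + 1$ children. Writing $A - 1 = \{a-1 : a \in A\}$ and $\phi_m = \ind{x_m \in A-1}$ for $1 \le m \le M := N_\n+1$, extended cyclically, this gives $\Lambda_{T_n,i}(A) = \#\{1 \le l \le i : x^\ast_l \in A-1\} = \sum_{l=1}^{i} \phi_{(j^\ast+l)\bmod M}$; that is, $\Lambda_{T_n,i}(A)$ counts the letters of $x$ lying in $A-1$ inside the cyclic window of length $i$ starting just after position $j^\ast$.

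Next I would introduce the centred partial-sum process $\widehat\Phi(k) = \sum_{m=1}^k \phi_m - k\, p_\n(A)$ for $0 \le k \le M$. Since $T_n$ has exactly $n_i$ vertices with $i$ children, $\sum_{m=1}^M \phi_m = \sum_{i\in A} n_i = M\, p_\n(A)$, so $\widehat\Phi$ is a bridge: $\widehat\Phi(0) = \widehat\Phi(M) = 0$. A short case analysis according to whether the window wraps past index $M$ then shows $\Lambda_{T_n,i}(A) - i\, p_\n(A) = \widehat\Phi(k_2) - \widehat\Phi(k_1)$ for suitable $k_1,k_2 \in \{0,\dots,M\}$ (namely $k_1=j^\ast$ and $k_2 = j^\ast + i$ or $k_2 = j^\ast + i - M$), whence $\max_{1\le i\le M}|\Lambda_{T_n,i}(A) - p_\n(A)\, i| \le 2\max_{0\le k\le M}|\widehat\Phi(k)|$. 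The point of routing through $x$ is that this bound no longer depends on the random shift $j^\ast$ nor on the conditioning defining $\mathbf{E}(\n)$: $\max_k|\widehat\Phi(k)|$ is a functional of a \emph{uniformly random arrangement} of $M\, p_\n(A)$ ones and $M(1-p_\n(A))$ zeros.

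Finally I would control $\max_{0\le k\le M}|\widehat\Phi(k)|$. For each fixed $k$, $\sum_{m=1}^k \phi_m$ is a hypergeometric count (drawing $k$ items without replacement from that population) with mean $k\, p_\n(A)$, so Hoeffding's inequality for sampling without replacement gives $\Pr{|\widehat\Phi(k)| \ge s} \le 2\exp(-2s^2/k) \le 2\exp(-2s^2/M)$; a union bound over the $M+1$ values of $k$ yields $\Pr{\max_k|\widehat\Phi(k)| \ge s} \le 2(M+1)\exp(-2s^2/M)$. Choosing $s = \frac{1}{2} N_\n^{3/4}$ makes the exponent of order $N_\n^{1/2}$, so this probability tends to $0$, and combined with the bound of the previous paragraph this proves the proposition. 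I do not expect a serious obstacle: the only delicate point is the reduction in the first two paragraphs, whereas the concentration step has ample room, since the tolerated deviation $N_\n^{3/4}$ is far above the $N_\n^{1/2}$ fluctuation scale of $\widehat\Phi$ and a union bound over $O(N_\n)$ times costs nothing — alternatively one may run the classical argument through the martingale $k \mapsto \widehat\Phi(k)/(M-k)$ and a maximal inequality, exactly as behind Lemma~\ref{lem:Addario_Berry_borne_sous_gaussienne}.
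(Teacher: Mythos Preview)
Your proof is correct and uses the same ingredients as the paper: the cyclic-shift coupling between $\mathbf{B}(\n)$ and $\mathbf{E}(\n)$, Hoeffding's inequality for sampling without replacement, and a union bound over the $O(N_\n)$ time points.

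The one place where you and the paper diverge is the reduction from the excursion $x^\ast$ back to the bridge $x$. The paper first proves the concentration for $\lambda_{y,i}(A)$ with $y$ uniform in $\mathbf{B}(\n)$, obtaining a bound of order $o(N_\n^{-1})$; it then removes the conditioning by a somewhat indirect route: using the independence of $j^\ast$ and $x^\ast$, it writes $\Pr{\max_i |\lambda_{x^\ast,i}(A)-p_\n(A)i|>N_\n^{3/4}} \le N_\n\,\Pr{|\lambda_{x^\ast,N_\n+1-j^\ast}(A)-p_\n(A)(N_\n+1-j^\ast)|>N_\n^{3/4}}$ and then invokes a time-reversal identity to relate the latter to $\lambda_{y,j'}$. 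Your observation that a cyclic window count minus its mean is always a difference $\widehat\Phi(k_2)-\widehat\Phi(k_1)$ of the centred bridge partial sums, whence $\max_i|\Lambda_{T_n,i}(A)-p_\n(A)i|\le 2\max_k|\widehat\Phi(k)|$, bypasses this entirely: the bound is deterministic and depends only on $x$, so the shift $j^\ast$ and the excursion conditioning disappear for free. This is cleaner and slightly more robust (you do not need the $y$-bound to beat $N_\n^{-1}$, only $o(1)$), though both arguments have ample room given the gap between $N_\n^{3/4}$ and the true fluctuation scale $N_\n^{1/2}$.
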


\begin{proof}
For every $y \in \mathbf{B}(\n)$, every $A \subset \Z_+$ and every $1 \le i \le N_\n+1$, set
\[\lambda_{y,i}(A) = \#\{1 \le k \le i : y_k+1 \in A\}.\]
Note that $\lambda_{y, N_\n+1}(A) = (N_\n+1) p_\n(A)$. As previously discussed, the {\L}ukasiewicz path of $T_n$ has the law of $S_x$ where $x$ is uniformly distributed in $\mathbf{E}(\n)$, so
\[\Pr{\max_{1 \le i \le N_\n+1} \left|\Lambda_{T_n,i}(A) - p_\n(A) i\right| > N_\n^{3/4}} 
= \Pr{\max_{1 \le i \le N_\n} \left|\lambda_{x, i}(A) - p_\n(A) i\right| > N_\n^{3/4}}.\]

Let us first consider $y$ uniformly distributed in $\mathbf{B}(\n)$. For each $1 \le i \le N_\n+1$ fixed, $\lambda_{y,i}(A) = \sum_{k=1}^i \ind{y_k+1 \in A}$ is the sum of $i$ dependent Bernoulli random variables, which arise from a sampling without replacement in an urn with initial configuration of $\sum_{i \in A} n_i$ ``good'' balls and $N_\n+1-\sum_{i \in A} n_i$ ``bad'' balls. It is well-known that the expected value of any continuous convex function of $\lambda_{y,i}(A)$ is bounded above by the corresponding quantity for the sum of $i$ i.i.d. Bernoulli random variables with parameter $p_\n(A)$, which arise from sampling with replacement, see e.g. Hoeffding's seminal paper \cite[Theorem 4]{Hoeffding:Probability_inequalities_for_sums_of_bounded_random_variables}. In particular, the Chernoff bound for binomial random variables still holds and yields
\begin{align*}
\Pr{\max_{1 \le i \le N_\n} \left|\lambda_{y, i}(A) - p_\n(A) i\right| > N_\n^{3/4}}
&\le N_\n \max_{1 \le i \le N_\n} \Pr{\left|\lambda_{y, i}(A) - p_\n(A) i\right| > N_\n^{3/4}}
\\
&\le 2 N_\n \max_{1 \le i \le N_\n} \exp\left(- 2 N_\n^{3/2}/i\right)
\\
&= o(N_\n^{-1}).
\end{align*}

Next, let $j$ be as in \eqref{eq:argim_pont_echangeable} and recall that $j$ is uniformly distributed in $\{1, \dots, N_\n+1\}$ and that $x = y^\ast = y^{(j)}$ is uniformly distributed in $\mathbf{E}(\n)$ and independent of $j$. If $j = N_\n+1$, then $x=y$ and our claim follows from the above bound. We then implicitly condition $j$ to be less than $N_\n+1$, in which case it has the uniform distribution in $\{1, \dots, N_\n\}$ and it is independent of $x$. Observe that $N_\n+1-j$ also has the uniform distribution in $\{1, \dots, N_\n\}$ and is independent of $x$, so
\[\Pr{\max_{1 \le i \le N_\n} \left|\lambda_{x, i}(A) - p_\n(A) i\right| > N_\n^{3/4}}
\le N_\n \Pr{\left|\lambda_{x, N_\n+1-j}(A) - p_\n(A) (N_\n+1-j)\right| > N_\n^{3/4}}.\]
Furthermore, in our coupling, $\lambda_{x, N_\n+1-j}(A) = \#\{1 \le k \le N_\n+1-j : x_k+1 \in A\}$ is also equal to $\#\{1 \le k \le N_\n+1-j : y_{N_\n+2-k}+1 \in A\}$. By time-reversal, we have the identity
\[\left((y_{N_\n+2-k} ; 1 \le k \le N_\n+1) ; N_\n+1-j\right)
\eqloi
\left((y_k ; 1 \le k \le N_\n+1) ; j'\right),\]
where $j' = \sup\{0 \le k \le N_\n : S_y(k) = \max_{1 \le l \le N_\n+1} S_x(l)\}$. We conclude that
\[\Pr{\max_{1 \le i \le N_\n+1} \left|\Lambda_{T_n, i}(A) - p_\n(A) i\right| > N_\n^{3/4}}
\le N_\n \Pr{\left|\lambda_{y, j'}(A) - p_\n(A) j'\right| > N_\n^{3/4}} + \Pr{j=N_\n+1},\]
which converges to $0$ as $n \to \infty$.
\end{proof}

\subsection{Tightness of the label process}
\label{subsec:tension_labels}

Let us prove the tightness of the label process; jointly with Proposition \ref{prop:cv_label_unif_multi}, this will end the proof of Theorem \ref{thm:cv_fonctions_serpent}.

\begin{prop}\label{prop:tension_labels}
For every $n \ge 1$, sample $(T_n, l_n)$ uniformly at random in $\bartree(\n)$. Under \eqref{eq:H}, the sequence
\[\left(N_\n^{-1/4} L_n(N_\n t) ; t \in [0,1]\right)_{n \ge 1}\]
is tight in $\mathscr{C}([0,1], \R)$.
\end{prop}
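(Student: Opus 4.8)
The plan is to combine the random finite-dimensional convergence of Proposition~\ref{prop:cv_label_unif_multi} with a chaining (Kolmogorov--Chentsov type) argument. Since $L_n$ is piecewise linear it suffices to control the increments $L_n(j)-L_n(i)$ between integer times, and what I would aim for is a uniform estimate of the order of magnitude predicted by $\Es{|Z_s-Z_t|^\gamma}\asymp|s-t|^{\gamma/4}$ for a large even exponent $\gamma$; this then yields the equicontinuity needed for tightness in $\mathscr{C}([0,1],\R)$ and, together with Proposition~\ref{prop:cv_label_unif_multi}, the claim. Concretely one wants, for some $\beta>0$, $\Es{|L_n(j)-L_n(i)|^\gamma}$ to be of size roughly $N_\n^{\gamma/4-1-\beta}(j-i)^{1+\beta}$ uniformly in $n$ and $0\le i<j\le N_\n$.

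To estimate one increment, fix $i<j$, let $w$ be the last common ancestor of $u_i$ and $u_j$ in $T_n$, and write $L_n(j)-L_n(i)=\sum_{v\in\mathcal P}\xi_v$ with $\mathcal P=\rrbracket w,u_i\rrbracket\cup\rrbracket w,u_j\rrbracket$ and $\xi_v=l_n(v)-l_n(pr(v))$. Conditionally on $T_n$ the $\xi_v$ are independent, centred, with variance $s_v^2=\sigma^2_{k_{pr(v)},\chi_v}=2\chi_v(k_{pr(v)}-\chi_v)/(k_{pr(v)}+1)$, and $|\xi_v|\le\omega_n:=\max_{u\in T_n}(\max_j l_n(uj)-\min_j l_n(uj))$. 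A Rosenthal-type inequality conditionally on $T_n$ reduces the problem to controlling the moments of the conditional variance $V=\sum_{v\in\mathcal P}s_v^2$ and of $\sum_{v\in\mathcal P}\Es{|\xi_v|^\gamma\mid T_n}$. For this I would use: (a) the elementary bounds $s_v^2\le k_{pr(v)}-1$ and $\Es{|\xi_v|^\gamma\mid T_n}\le C_\gamma(1+s_v^2)^{\gamma/2}$, the latter because $\xi_v$ is a marginal of a random-walk bridge and hence sub-Gaussian at its own scale $\sqrt{s_v^2}$ (in the spirit of Lemma~\ref{lem:borne_ecart_max_pont}); (b) a ``cost'' counting argument, namely that each $v$ on the branch to $u_j$ (resp.\ $u_i$) which is not the last child of $pr(v)$ forces at least $\chi_v-1$ (resp.\ $k_{pr(v)}-\chi_v$) lexicographic positions to lie in $(i,j)$, so that $\sum_{v\in\mathcal P}(k_{pr(v)}-1)$ is dominated --- apart from the $O(1)$ exceptional vertices sitting at children of $w$, each contributing at most $\Delta_\n/2=o(N_\n^{1/2})$ --- by a {\L}ukasiewicz increment $W_n(j)-\min_{[i',j]}W_n$ over an interval of length $O(j-i)$, to which Proposition~\ref{prop:moments_marche_Luka} applies; (c) on the event $\mathcal E_n$ of Corollary~\ref{cor:bon_evenement_tension_labels} and on the good modulus-of-continuity event for $H_n$ coming from \eqref{eq:Broutin_Marckert}, the left-most children of $\mathcal P$ --- the only vertices still unaccounted for by (b) --- number at most $c(\ell_i+\ell_j)+2h_\n$, with branch lengths $\ell_i+\ell_j$ of order $N_\n^{o(1)}(j-i)^{1/2-o(1)}$; and (d) $\omega_n=o(N_\n^{1/4})$ from Proposition~\ref{prop:deplacement_max_autour_site}, to absorb the ``large-increment'' term in Rosenthal's inequality. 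Assembling these bounds and integrating the resulting tail estimates gives the required control of $\Es{|L_n(j)-L_n(i)|^\gamma}$.

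The main obstacle is exactly steps (b)--(c). Assumption \eqref{eq:H} grants only a second moment for $p$, so the individual label increments at high-degree vertices are genuinely heavy-tailed --- a single one can be as large as $\sqrt{\Delta_\n}$, which is $o(N_\n^{1/4})$ but need not be polynomially smaller --- and one therefore cannot simply bound a $\gamma$-th branch-moment by (number of vertices)$\times$(per-vertex moment). The whole point of having isolated Corollary~\ref{cor:bon_evenement_tension_labels} (no long branch carries too many left-most children) and Proposition~\ref{prop:moments_marche_Luka} (sub-Gaussian control of {\L}ukasiewicz excursions) beforehand is to make the two resulting contributions summable; the delicate step is to reconcile the crude deterministic bound $V\lesssim j-i$ with the typical behaviour $V\asymp\sqrt{j-i}$ dictated by the height-process regularity, quantitatively enough to survive the chaining over dyadic blocks. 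I expect the fussiest part to be the bookkeeping in (b): keeping track of which side-subtrees fall into which lexicographic interval, and of the few exceptional vertices hanging directly at $w$.
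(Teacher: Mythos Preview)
Your overall strategy matches the paper's: establish a Kolmogorov moment bound by decomposing $L_n(j)-L_n(i)$ along the two branches above the common ancestor, connect to {\L}ukasiewicz increments through Proposition~\ref{prop:moments_marche_Luka}, and use Corollary~\ref{cor:bon_evenement_tension_labels} for the leftover left-most-child count. However, step~(b) as written does not go through. The bound $s_v^2\le k_{pr(v)}-1$ is too coarse: your counting argument only shows that the \emph{inward} contributions are bounded by $j-i$, namely $\sum(k_{pr(v)}-\chi_v)$ on the $u_i$-branch and $\sum(\chi_v-1)$ on the $u_j$-branch. The \emph{outward} halves---$\sum(\chi_v-1)$ on the $u_i$-branch and $\sum(k_{pr(v)}-\chi_v)$ on the $u_j$-branch---count subtrees lying entirely outside $(i,j)$ and are typically of order $N_\n^{1/2}$, not $j-i$. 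Hence $\sum_{v\in\mathcal P}(k_{pr(v)}-1)$ is \emph{not} dominated by a {\L}ukasiewicz fluctuation over an interval of length $O(j-i)$, and you lose the factor $|(j-i)/N_\n|^{1+\beta}$. Step~(c) does not repair this either: it addresses only the cardinality $\#\{v:\chi_v=1\}$, and the quantitative H\"older regularity of $H_n$ you invoke is not available from~\eqref{eq:Broutin_Marckert}, which gives only functional convergence.

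The missing idea is the identity $l_n(pr(w))=l_n(pr(w)k_{pr(w)})$ built into the labelling. On the branch to $u=u_i$ one views $l_n(w)-l_n(pr(w))$ as a bridge increment over $k_{pr(w)}-\chi_w$ steps, so the Le~Gall--Miermont bound~\eqref{eq:borne_moments_pont_uniforme} gives $\E[|l_n(w)-l_n(pr(w))|^q\mid T_n]\le C(q)(k_{pr(w)}-\chi_w)^{q/2}$ directly, not $C(q)k_{pr(w)}^{q/2}$. Feeding this into the Marcinkiewicz--Zygmund inequality~\eqref{eq:Marcinkiewicz_Zygmund} yields a bound by $\bigl(\sum(k_{pr(w)}-\chi_w)+(\chi_{\hat v}-\chi_{\hat u})\bigr)^{q/2}$, and \emph{this} sum equals $W_n(u)-\inf_{[u,v]}W_n$ exactly (Lemma~\ref{lem:codage_marche_Luka}), to which Proposition~\ref{prop:moments_marche_Luka} applies. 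The $u_j$-branch is treated by the mirror image, producing $\sum(\chi_w-1)$; the residual $\#\{w:\chi_w=1\}$ is then controlled on $\mathcal E_n$ by bounding it by a constant times $\sum(\chi_w-1)$ itself (since on $\mathcal E_n$ at most a fixed proportion $c<1$ of any long branch consists of first children). Neither Proposition~\ref{prop:deplacement_max_autour_site} nor any regularity of $H_n$ is needed.
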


In the remainder of this section, we shall use the notation $C(q)$ for a positive constant which depends only on a real number $q$ and, implicitly, on the sequences $\n$, and which will often differ from one line to another.

We shall prove that, for some sequence of events $\mathcal{E}_n$ satisfying $\P(\mathcal{E}_n) \to 1$ as $n \to \infty$ (those from Corollary \ref{cor:bon_evenement_tension_labels}), for every $q > 4$, for every $\beta \in (0, q/4-1)$, for every $n$ large enough, for every $i,j \in \{0, \dots, N_\n\}$,
\begin{equation}\label{eq:tension_labels_Kolmogorov}
\Esc{\left|L_n(i) - L_n(j)\right|^q}{\mathcal{E}_n} \le C(q) \cdot N_\n^{q/4} \cdot \left|\frac{i-j}{N_\n}\right|^{1+\beta}.
\end{equation}
Set $L_{(n)}(t) = N_\n^{-1/4} L_n(N_\n t)$ for $n \in \N$ and $t \in [0,1]$, then the previous display reads
\[\Esc{\left|L_{(n)}(s) - L_{(n)}(t)\right|^q}{\mathcal{E}_n} \le C(q) \cdot |s-t|^{1+\beta},\]
whenever $s, t \in [0, 1]$ are such that $N_\n s$ and $N_\n t$ are both integers. Since $L_{(n)}$ is defined by linear interpolation between such times, this bound then holds for every $s, t \in [0,1]$ (possibly with a different constant $C(q)$). Since $q$ can be chosen arbitrarily large, the standard Kolmogorov criterion then implies the following bound for the H\"{o}lder norm of $L_{(n)}$: for every $\alpha \in (0,1/4)$,
\[\lim_{K \to \infty} \limsup_{n \to \infty} \Prc{\sup_{0 \le s \ne t \le 1} \frac{|L_{(n)}(s) - L_{(n)}(t)|}{|s-t|^\alpha} > K}{\mathcal{E}_n} = 0;\]
since $\P(\mathcal{E}_n) \to 1$ as $n \to \infty$, we obtain
\[\lim_{K \to \infty} \limsup_{n \to \infty} \Pr{\sup_{0 \le s \ne t \le 1} \frac{|L_{(n)}(s) - L_{(n)}(t)|}{|s-t|^\alpha} > K} = 0,\]
and the sequence $(L_{(n)} ; n \ge 1)$ is tight in $\mathscr{C}([0,1], \R)$.

The proof of \eqref{eq:tension_labels_Kolmogorov} relies on the coding of $T_n$ by its {\L}ukasiewicz path. The next lemma, whose proof is left as an exercise, gathers some deterministic results that we shall need (we refer to e.g. Le Gall \cite{Le_Gall:Random_trees_and_applications} for a thorough discussion of such results). In order to simplify the notation, we identify for the remainder of this section the vertices of a one-type tree with their index in the lexicographic order: if $u$ and $u'$ are the $i$-th and $i'$-th vertices of $T_n$, we write $u \le K$ if $i \le K$, $W_n(u)$ for $W_n(i)$ and $|u-u'|$ for $|i-i'|$, the lexicographic distance between $u$ and $u'$. Recall also that $uj$ is the $j$-th child of a vertex $u$.

\begin{lem}\label{lem:codage_marche_Luka}
Let $T$ be a one-type plane tree and $W$ be its {\L}ukasiewicz path. Fix a vertex $u \in T$, then
\[W(u k_u) = W(u),
\qquad
W(uj') = \inf_{[uj,uj']} W
\qquad\text{and}\qquad
j' - j = W(uj) - W(uj')\]
for every $1 \le j \le j' \le k_u$.
\end{lem}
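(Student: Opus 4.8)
The plan is to deduce all three identities from a single computation of the values that $W$ takes at the children of $u$. Throughout I identify vertices of $T$ with their index in lexicographic order, as in the surrounding text; I write $i$ for the index of $u$, $k = k_u$ (which is $\ge 1$ since $uj$ is assumed to exist), and $p_j$ for the index of the $j$-th child $uj$, so that $W(u) = W(i)$ and $W(uj) = W(p_j)$.

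First I would record the elementary fact that in lexicographic (equivalently depth-first) order the vertex $u$ is immediately followed by its first child $u1$, and then by the subtrees rooted at $u1, u2, \dots, uk$ explored one after another. Hence $p_1 = i+1$ and, for each $1 \le j \le k$, the index block $[p_l, p_{l+1}-1]$ is exactly the vertex set of the subtree $T_{ul}$ rooted at $ul$, so that the indices lying in $[p_1, p_j)$ are precisely the vertices of the forest $T_{u1} \cup \cdots \cup T_{u(j-1)}$ (disjoint union), a forest with $j-1$ tree components. Since in any finite forest the total number of children, summed over all vertices, equals the number of non-root vertices, i.e. (number of vertices) minus (number of components), we get $\sum_{p_1 \le r < p_j} k_{u_r} = (p_j - p_1) - (j-1)$.

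Next I would telescope the defining relation $W(r+1) = W(r) + k_{u_r} - 1$ over $r = p_1, \dots, p_j - 1$:
\[W(p_j) = W(p_1) + \sum_{r=p_1}^{p_j-1}\big(k_{u_r}-1\big) = W(p_1) + \big[(p_j-p_1)-(j-1)\big] - (p_j-p_1) = W(p_1) - (j-1),\]
while $W(p_1) = W(i+1) = W(i) + k - 1 = W(u) + k - 1$. This yields $W(uj) = W(u) + k - j$ for all $1 \le j \le k$, from which the first and third identities are immediate: $j = k$ gives $W(uk_u) = W(u)$, and for $1 \le j \le j' \le k$ one has $W(uj) - W(uj') = (k-j) - (k-j') = j' - j$. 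For the infimum identity it remains only to check that $W$ attains its minimum over the index interval $[uj, uj'] = [p_j, p_{j'}]$ at the right endpoint. I would argue that for each $l$ with $j \le l \le j'-1$ and each index $r$ in the block $[p_l, p_{l+1}-1]$ of $T_{ul}$, the vertex $u_r$ lies in $T_{ul}$, so $W(r) \ge W(p_l)$ by the ancestor characterisation recalled just after \eqref{eq:hauteur_juka} (trivially if $r = p_l$); since $l \le j'-1$, this gives $W(r) \ge W(u)+k-l \ge W(u)+k-j'+1 = W(p_{j'})+1$. As $[p_j, p_{j'}] = \{p_{j'}\} \cup \bigcup_{l=j}^{j'-1} [p_l, p_{l+1}-1]$, we conclude $\inf_{[uj, uj']} W = W(p_{j'}) = W(uj')$.

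I do not expect a genuine obstacle: the content is the telescoping identity above together with two bookkeeping points — that the blocks $[p_l, p_{l+1})$ coincide with the subtrees $T_{ul}$, which is just the definition of depth-first exploration, and the forest identity $\sum_v k_v = \#\{\text{non-root vertices}\}$. The only care needed is to invoke the ancestor–infimum correspondence of \eqref{eq:hauteur_juka} with the correct (right) endpoints.
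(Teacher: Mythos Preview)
Your proof is correct. The paper does not actually prove this lemma; it explicitly states that ``the proof is left as an exercise'', so there is nothing to compare against. Your argument supplies exactly the details one would expect: the formula $W(uj) = W(u) + k_u - j$ obtained by telescoping over the subtrees $T_{u1},\dots,T_{u(j-1)}$ gives the first and third identities immediately, and the infimum identity then follows from the ancestor characterisation recalled after \eqref{eq:hauteur_juka}.
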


In the course of the proof of \eqref{eq:tension_labels_Kolmogorov}, we shall need the following two ingredients. First, a consequence of the so-called Marcinkiewicz--Zygmund inequality, see e.g. Gut \cite[Theorem 8.1]{Gut:Probability_a_graduate_course}: fix $q \ge 2$ and consider independent and centred random variables $Y_1, \dots, Y_m$ which admit a finite $q$-th moment, then there exists $C(q) \in (0, \infty)$ such that
\[\frac{1}{C(q)} \cdot \Es{\left(\sum_{i=1}^m \left|Y_i\right|^2\right)^{q/2}}
\le \Es{\left|\sum_{i=1}^m Y_i\right|^q}
\le C(q) \cdot \Es{\left(\sum_{i=1}^m \left|Y_i\right|^2\right)^{q/2}}.\]
Consider the right-most term, and raise it temporarily to the power $2/q$ in order to apply the triangle inequality for the $L^{q/2}$-norm, the second inequality thus yields the following bound:
\begin{equation}\label{eq:Marcinkiewicz_Zygmund}
\Es{\left|\sum_{i=1}^m Y_i\right|^q}
\le C(q) \cdot \left(\sum_{i=1}^m \Es{\left|Y_i\right|^q}^{2/q}\right)^{q/2}.
\end{equation}

Second, for every $r \ge 1$, consider $X^{(r)}$ a uniform random bridge in $\mathcal{B}^+_r$, defined in \eqref{eq:def_pont_sans_saut_negatif}; Le Gall \& Miermont \cite[Lemma 1]{Le_Gall-Miermont:Scaling_limits_of_random_planar_maps_with_large_faces} have shown that for every $q \ge 2$ and every $i,j \in \{0, \dots, r\}$,
\begin{equation}\label{eq:borne_moments_pont_uniforme}
\Es{\left|X^{(r)}_i - X^{(r)}_j\right|^q} \le C(q) \cdot |i-j|^{q/2}.
\end{equation}

\begin{proof}
[Proof of Proposition \ref{prop:tension_labels}]

Recall that we identify the vertices of $T_n$ with their index in the lexicographic order. Fix $q > 4$, $\beta \in (0, q/4-1)$, $n$ large enough so that $\mathcal{E}_n$ defined in Corollary \ref{cor:bon_evenement_tension_labels} has probability larger than $1/2$, and two integers $0 \le u < v \le N_\n+1$ with $v-u \le \lfloor N_\n / 2\rfloor$; we aim at showing
\[\Esc{|l_n(u) - l_n(v)|^q}{\mathcal{E}_n} \le C(q) \cdot N_\n^{q/4} \cdot \left|\frac{u-v}{N_\n}\right|^{1+\beta}.\]
Let $u \wedge v$, be the most recent common ancestor of $u$ and $v$ in $T_n$ and further $\hat{u}$ and $\hat{v}$ be the children of $u \wedge v$ which are respectively ancestor of $u$ and $v$. We stress that $u$ and $v$ are deterministic times, whereas $u \wedge v$, $\hat{u}$ and $\hat{v}$ are random and measurable with respect to $T_n$. We write:
\[l_n(u) - l_n(v) = \left(\sum_{w \in \rrbracket \hat{u}, u\rrbracket} l_n(w) - l_n(pr(w))\right) + (l_n(\hat{u}) - l_n(\hat{v})) + \left(\sum_{w \in \rrbracket \hat{v}, v\rrbracket} l_n(pr(w)) - l_n(w)\right).\]

Recall the notation $1 \le \chi_{\hat{u}} \le \chi_{\hat{v}} \le k_{u \wedge v}$ for the relative position of $\hat{u}$ and $\hat{v}$ among the children of $u \wedge v$. By construction of the labels on $T_n$, the bound \eqref{eq:borne_moments_pont_uniforme} reads in our context:
\[\Esc{\left|l_n(\hat{u}) - l_n(\hat{v})\right|^q}{T_n} \le C(q) \cdot (\chi_{\hat{v}} - \chi_{\hat{u}})^{q/2}.\]
Next, fix $w \in \rrbracket \hat{u}, u\rrbracket$, since $l_n(pr(w)) = l_n(pr(w) k_{pr(w)})$, as previously, the bound \eqref{eq:borne_moments_pont_uniforme} gives:
\[\Esc{|l_n(w) - l_n(pr(w))|^q}{T_n}
\le C(q) \cdot (k_{pr(w)} - \chi_w)^{q/2}.\]
Similarly, for every $w \in \rrbracket \hat{v}, v\rrbracket$, we have
\[\Esc{|l_n(pr(w)) - l_n(w)|^q}{T_n}
\le C(q) \cdot \chi_w^{q/2}.\]
According to the inequality \eqref{eq:Marcinkiewicz_Zygmund}, we thus have
\begin{align}\label{eq:tension_branches_gauche_droite}
\Esc{|l_n(u) - l_n(v)|^q}{T_n}
&\le C(q) \cdot 
\left(
\sum_{w \in \rrbracket \hat{u}, u\rrbracket} (k_{pr(w)} - \chi_w)
+ (\chi_{\hat{v}} - \chi_{\hat{u}})
+ \sum_{w \in \rrbracket \hat{v}, v\rrbracket} \chi_w
\right)^{q/2}\nonumber
\\
&\le C(q) \cdot 
\left(
\left(\sum_{w \in \rrbracket \hat{u}, u\rrbracket} (k_{pr(w)} - \chi_w) + (\chi_{\hat{v}} - \chi_{\hat{u}})\right)^{q/2}
+ \left(\sum_{w \in \rrbracket \hat{v}, v\rrbracket} \chi_w\right)^{q/2}
\right).
\end{align}

Let us first consider the first term in \eqref{eq:tension_branches_gauche_droite}. Appealing to Lemma \ref{lem:codage_marche_Luka}, we have
\[\chi_{\hat{v}} - \chi_{\hat{u}}
= W_n(\hat{u}) - W_n(\hat{v}),\]
and similarly, for every $w \in \rrbracket \hat{u}, u\rrbracket$,
\[k_{pr(w)} - \chi_w
= W_n(w) - W_n(pr(w) k_{pr(w)})
= W_n(w k_w) - W_n(pr(w) k_{pr(w)}),\]
so
\[\sum_{w \in \rrbracket \hat{u}, u\rrbracket} (k_{pr(w)} - \chi_w) + (\chi_{\hat{v}} - \chi_{\hat{u}})
= W_n(u) - W_n(\hat{v})
= W_n(u) - \inf_{[u, v]} W_n.\]
Proposition \ref{prop:moments_marche_Luka} then yields
\[\Esc{\left(\sum_{w \in \rrbracket \hat{u}, u\rrbracket} (k_{pr(w)} - \chi_w) + (\chi_{\hat{v}} - \chi_{\hat{u}})\right)^{q/2}}{\mathcal{E}_n}
\le C(q) \cdot |u - v|^{q/4}
\le C(q) \cdot N_\n^{q/4} \cdot \left|\frac{u-v}{N_\n}\right|^{1+\beta}.\]

We next focus on the second term in \eqref{eq:tension_branches_gauche_droite}. We would like to proceed symmetrically but there is a technical issue: on the branch $\rrbracket \hat{u}, u\rrbracket$, we strongly used the fact that $l_n(wk_w) = l_n(w)$ and this does no hold on $\rrbracket \hat{v}, v\rrbracket$: we do not have $l_n(w1)=l_n(w)$ in general. Let $T_n^-$ be the ``mirror image'' of $T_n$, i.e. the tree obtained from $T_n$ by flipping the order of the children of every vertex; let us write $w^- \in T_n^-$ for the mirror image of a vertex $w \in T_n$; make the following observations:
\begin{itemize}
\item $T_n^-$ has the same law as $T_n$, so in particular, its {\L}ukasiewicz path has the same law as that of $T_n$;
\item for every $w \in \rrbracket \hat{v}, v\rrbracket$, the quantity $\chi_w-1$ in $T_n$ corresponds to the quantity $k_{pr(w^-)} - \chi_{w^-}$ in $T_n^-$;
\item the lexicographical distance between the last descendant in $T_n^-$ of respectively $\hat{v}^-$ and $v^-$ is smaller than the lexicographical distance between $\hat{v}$ and $v$ in $T_n$ (the elements of $\rrbracket \hat{v}, v\rrbracket = \rrbracket \hat{v}^-, v^-\rrbracket$ are missing). 
\end{itemize}
With theses observations, the previous argument used to control the branch $\rrbracket \hat{u}, u\rrbracket$ shows that
\[\Esc{\left(\sum_{w \in \rrbracket \hat{v}, v\rrbracket} (\chi_w-1)\right)^{q/2}}{\mathcal{E}_n}
\le C(q) \cdot |u-v|^{q/4}
\le C(q) \cdot N_\n^{q/4} \cdot \left|\frac{u-v}{N_\n}\right|^{1+\beta}.\]
Since $\chi_w \le 2 (\chi_w-1)$ whenever $\chi_w \ge 2$, it only remains to show that 
\[\Esc{\#\{w \in \rrbracket \hat{v}, v\rrbracket : \chi_w = 1\}^{q/2}}{\mathcal{E}_n} \le C(q) \cdot N_\n^{q/4} \cdot \left|\frac{u-v}{N_\n}\right|^{1+\beta}.\]
Let $C$ and $h_\n$ be as in Corollary \ref{cor:bon_evenement_tension_labels}. On the one hand, since $h_\n$ is small compared to any positive power of $N_\n$, we have for $n$ large enough,
\[\Es{\#\{w \in \rrbracket \hat{v}, v\rrbracket : \chi_w = 1\}^{q/2} \ind{\#\rrbracket \hat{v}, v\rrbracket \le h_\n}}
\le h_\n^{q/2}
\le N_\n^{q/4} \cdot \left|\frac{u-v}{N_\n}\right|^{1+\beta}.\]
On the other hand, if $\#\rrbracket \hat{v}, v\rrbracket > h_\n$, then on the event $\mathcal{E}_n$, we know that 
\[\#\{w \in \rrbracket \hat{v}, v\rrbracket : \chi_w = 1\}
\le C \cdot \#\{w \in \rrbracket \hat{v}, v\rrbracket : \chi_w \ge 2\}
\le C \sum_{w \in \rrbracket \hat{v}, v\rrbracket} (\chi_w-1).\]
We then conclude from the previous bound.
\end{proof}

\begin{rem}\label{rem:moments_records_pont_echangeable}
It is possible that the following stronger bound than \eqref{eq:tension_labels_Kolmogorov} holds: for every $q > 4$ and every $0 \le u < v \le N_\n+1$,
\begin{equation}\label{eq:tension_labels_Kolmogorov_optimal}
\Es{\left|L_n(u) - L_n(v)\right|^q} \le C(q) \cdot |u-v|^{q/4}.
\end{equation}
Indeed, the only missing point in the previous proof is the last bound on the moments of $\#\{w \in \rrbracket \hat{v}, v\rrbracket : \chi_w = 1 \text{ and } k_{pr(w)} \ge 2\}$.\footnote{Note that we did not include the condition $k_{pr(w)} \ge 2$ in the previous proof but the increment of label is zero if $k_{pr(w)} = 1$.} Observe that
\begin{align*}
\#\{w \in \rrbracket \hat{v}, v\rrbracket : \chi_w = 1 \text{ and } k_{pr(w)} \ge 2\}
&\le \#\left\{w \in [u, v[ : W_n(w) < \inf_{]w, v]} W_n\right\}
\\
&\mathop{=}^{(d)} \#\left\{w \in ]0, v-u] : S_n(w) > \sup_{[0, w[} S_n\right\}
\\
&\le \sup_{0 \le w \le v-u} S_n(w),
\end{align*}
where $S_n$ is a uniform random bridge in $\mathbf{B}(\n)$, as defined in Section \ref{subsec:concentration}; it is obtained by first taking the $v$-th cyclic shift of $W_n$ and then going backward in time and space.

Under the stronger assumption that $\Delta_\n$ is uniformly bounded (which is the case for e.g. uniform random $2\kappa$-angulations), Proposition \ref{prop:moments_marche_Luka} shows that for every $r > 0$,
\[\Es{\left(\sup_{0 \le w \le v-u} S_n(w)\right)^r} \le C(r) \cdot |u-v|^{r/2},\]
uniformly for $n \in \N$ and $0 \le u < v \le N_\n+1$ such that $|u-v| \le \lfloor N_\n/2\rfloor$, which yields \eqref{eq:tension_labels_Kolmogorov_optimal}.

On another model, Miermont \cite[Proof of Proposition 8]{Miermont:Invariance_principles_for_spatial_multitype_Galton_Watson_trees}, obtained the bound
\[\Es{\left(\#\left\{w \in ]0, v-u] : S(w) = \sup_{[0, w]} S\right\}\right)^r} \le C(r) \cdot |u-v|^{r/2},\]
where $S$ is a centred random walk with finite variance. The argument used in the proof of Lemma \ref{lem:borne_ecart_max_pont} enables us to extend it to such a walk conditioned to be at $-1$ at time $N_\n+1$. This case corresponds to Boltzmann random maps (with generic critical weight sequence) studied in Section \ref{sec:Boltzmann}, for which \eqref{eq:tension_labels_Kolmogorov_optimal} therefore holds.
\end{rem}

\section{Convergence of random maps}
\label{sec:carte_brownienne}

In this short section we deduce Theorem \ref{thm:cv_carte} from Theorem \ref{thm:cv_fonctions_serpent}, following the argument of Le Gall \cite[Section 8.3]{Le_Gall:Uniqueness_and_universality_of_the_Brownian_map} and \cite[Section 3]{Le_Gall:The_topological_structure_of_scaling_limits_of_large_planar_maps}. First, observe that every map in $\Map(\n)$ has $n_0+1$ vertices so, if $\map_n$ has the uniform distribution in $\Map(\n)$ and $\map^\star_n$ is a pointed map obtained by distinguishing a vertex of $\map_n$ uniformly at random, then $\map^\star_n$ has the uniform distribution in $\barMap(\n)$. It is therefore sufficient to prove Theorem \ref{thm:cv_carte} with $\map_n$ replaced by $\map^\star_n$.

Let $\map^\star_n$ be a (deterministic) pointed and rooted planar map in $\barMap(\n)$ and denote by $\star$ its origin; let $(\T_n, \ell_n)$ be its associated two-type labelled tree via the $\BDG$ bijection and let $(c^\circ_0, \dots, c^\circ_{N_\n})$ be the white contour sequence of the latter. Recall that the vertices $c^\circ_i$ are identified to the vertices of $\map_n$ different from $\star$. For every $i,j \in \{0, \dots, N_\n\}$, we set
\[d_n(i,j) = \dgr(c^\circ_i, c^\circ_j),\]
where $\dgr$ is the graph distance of $\map_n$. We then extend $d_n$ to a continuous function on $[0,N_\n]^2$ by ``bilinear interpolation'' on each square of the form $[i,i+1] \times [j,j+1]$ as in \cite[Section 2.5]{Le_Gall:Uniqueness_and_universality_of_the_Brownian_map}. Recall the convention $c^\circ_{N_\n+i}=c^\circ_i$ for every $0 \le i \le N_\n$ and the interpretation, at the very end of Section \ref{subsec:BDG}, of the labels as distances from $\star$ in $\map_n$: for every $0 \le i \le N_\n$,
\begin{equation}\label{eq:distance_origine_carte}
\dgr(\star, c^\circ_i) = \mathcal{L}^\circ_n(i) - \min_{0 \le j \le N_\n} \mathcal{L}^\circ_n(j) + 1.
\end{equation}
Then, using the triangle inequality at a point where a geodesic from $c^\circ_i$ to $\star$ and a geodesic from $c^\circ_j$ to $\star$ merge, Le Gall \cite[Equation 4]{Le_Gall:Uniqueness_and_universality_of_the_Brownian_map} obtains the bound
\begin{equation}\label{eq:bornes_distances_carte}
d_n(i,j) \le \mathcal{L}^\circ_n(i) + \mathcal{L}^\circ_n(j) - 2 \max\left\{\min_{i \le k \le j} \mathcal{L}^\circ_n(k) ; \min_{j \le k \le N_\n+i} \mathcal{L}^\circ_n(k)\right\} + 2.
\end{equation}
See also Lemma 3.1 in \cite{Le_Gall:The_topological_structure_of_scaling_limits_of_large_planar_maps} for a detailed proof in a slightly different context.

Define for every $t \in [0,1]$:
\[\mathcal{C}_{(n)}(t)= \left(\frac{\sigma_p^2}{16p_0^2} \frac{1}{N_\n}\right)^{1/2} \mathcal{C}_n(2N_\n t),
\qquad\text{and}\qquad
\mathcal{L}^\circ_{(n)}(t) = \left(\frac{9}{4 \sigma_p^2} \frac{1}{N_\n}\right)^{1/4} \mathcal{L}^\circ_n(N_\n t),\]
and for every $s,t \in [0,1]$:
\begin{align*}
d_{(n)}(s, t) &= \left(\frac{9}{4 \sigma_p^2} \frac{1}{N_\n}\right)^{1/4} d_n(N_\n s, N_\n t),
\\
D_{\mathcal{L}^\circ_{(n)}}(s, t)
&= \mathcal{L}^\circ_{(n)}(s) + \mathcal{L}^\circ_{(n)}(t) - 2 \max\left\{\check{\mathcal{L}}^\circ_{(n)}(s); \check{\mathcal{L}}^\circ_{(n)}(t)\right\},
\end{align*}
where $\check{\mathcal{L}}^\circ_{(n)}$ is defined in a similar way as $\check{Z}$ in Section \ref{subsec:serpent_brownien}.

\begin{prop}\label{prop:convergence_distances_carte}
Let $(\T_n, \ell_n)$ have the uniform distribution in $\bartree_{\circ, \bullet}(\n)$ for every $n \ge 1$. Under \eqref{eq:H}, the convergence in distribution of continuous paths
\[\left(\mathcal{C}_{(n)}(t), \mathcal{L}^\circ_{(n)}(t), d_{(n)}(s, t)\right)_{s,t \in [0,1]}
\cvloi
(\exc_t, Z_t, \mathscr{D}(s,t))_{s,t \in [0,1]},\]
holds, where $\mathscr{D}$ is defined in Section \ref{subsec:serpent_brownien}.
\end{prop}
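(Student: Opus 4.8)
The plan is to follow Le Gall's strategy closely, exploiting the fact that the excerpt has already established (in Theorem~\ref{thm:cv_fonctions_serpent} and Remark~\ref{rem:contour_contour_blanc}) the joint convergence of the rescaled white contour and label processes $(\mathcal{C}_{(n)}, \mathcal{L}^\circ_{(n)}) \to (\exc, Z)$ in $\mathscr{C}([0,1],\R^2)$. First I would argue tightness of the triple. The processes $\mathcal{C}_{(n)}$ and $\mathcal{L}^\circ_{(n)}$ are tight by Theorem~\ref{thm:cv_fonctions_serpent}. For $d_{(n)}$, the upper bound \eqref{eq:bornes_distances_carte} gives $d_{(n)}(s,t) \le D_{\mathcal{L}^\circ_{(n)}}(s,t) + o(1)$, and since $D_{\mathcal{L}^\circ_{(n)}}$ is a continuous functional of $\mathcal{L}^\circ_{(n)}$, the right-hand side is tight; combined with the triangle inequality $|d_{(n)}(s,t) - d_{(n)}(s',t')| \le d_{(n)}(s,s') + d_{(n)}(t,t')$ this yields equicontinuity of the family $d_{(n)}$, hence tightness in $\mathscr{C}([0,1]^2,\R_+)$. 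So along any subsequence we may extract a joint limit $(\exc, Z, D)$ where $D$ is some random continuous pseudo-distance on $[0,1]$, and the task reduces to identifying $D = \mathscr{D}$.

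The identification has two halves, the easy upper bound and the hard lower bound. For the upper bound: passing to the limit in \eqref{eq:bornes_distances_carte} gives $D \le D_Z$ on $[0,1]^2$, and moreover passing to the limit in the obvious fact that $d_n(i,j)=0$ when $c^\circ_i$ and $c^\circ_j$ coincide as vertices of $\map_n$ — together with $\sup_t|\mathcal{C}_n(2N_\n t) - 2\mathcal{C}^\circ_n(N_\n t)| = 1$ and the convergence of $\mathcal{C}_{(n)}$ — shows that $d_\exc(s,t)=0$ forces $D(s,t)=0$ (two times identified in the Brownian tree correspond to vertices at lexicographic-distance $o(N_\n)$, hence at graph-distance that one must also check is $o(N_\n^{1/4})$ using the label bound \eqref{eq:bornes_distances_carte} once more). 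By the maximality property of $\mathscr{D}$ recalled at the end of Section~\ref{subsec:serpent_brownien}, these two properties give $D \le \mathscr{D}$.

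The hard part is the reverse inequality $D \ge \mathscr{D}$, and it is here that one genuinely needs the invariance-under-rerooting input rather than just the functional convergence. The standard route, due to Le Gall, is: (i) identify the distance to the distinguished point exactly, namely by passing to the limit in \eqref{eq:distance_origine_carte} one gets $D(s_\ast, t) = Z_t - \min Z$ where $s_\ast$ is (the limit of) the a.s.-unique time at which $\mathcal{L}^\circ_{(n)}$ attains its minimum — this requires knowing the minimum of $Z$ is attained at a unique point, which is classical; (ii) observe that the root edge of $\map_n$ is, conditionally on $\map_n$, essentially a uniform edge, so the law of $d_n$ is invariant (asymptotically) under shifting the time origin, which transfers the exact formula from $s_\ast$ to a uniformly random time and then, by a Fubini/absolute-continuity argument between the Brownian snake re-rooted at a uniform point and the original snake, forces $D(s,t) = \mathscr{D}(s,t)$ for Lebesgue-a.e.\ pair $(s,t)$, hence everywhere by continuity. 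I expect step (ii), the re-rooting invariance and the measurability bookkeeping that turns the a.e.\ equality into an everywhere equality, to be the main obstacle; the map-specific ingredient is that a uniform rooted-and-pointed map in $\barMap(\n)$ has a uniformly chosen root among its $2N_\n$ oriented edges, which translates under the $\BDG$ bijection into the required shift-invariance of $(\mathcal{C}^\circ_n,\mathcal{L}^\circ_n,d_n)$ up to a $\{-1,+1\}$ orientation that is irrelevant in the limit. Once $D = \mathscr{D}$ is identified, the limit does not depend on the subsequence, so the full convergence $\left(\mathcal{C}_{(n)},\mathcal{L}^\circ_{(n)},d_{(n)}\right) \to (\exc, Z, \mathscr{D})$ holds.
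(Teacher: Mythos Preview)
Your overall architecture matches the paper's exactly: tightness of $d_{(n)}$ from the label bound~\eqref{eq:bornes_distances_carte}, extraction of a subsequential limit $(\exc,Z,D)$, the inequality $D\le\mathscr{D}$ via the maximality property of $\mathscr{D}$, and then the identification $D=\mathscr{D}$ by combining a discrete re-rooting invariance with Le~Gall's result that $\mathscr{D}(X,Y)\eqloi Z_Y-Z_{s_\star}$.

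The one substantive difference is the re-rooting mechanism in the last step. You invoke invariance of the law of $\map_n^\star$ under re-rooting at a uniform oriented edge, which under $\BDG$ becomes a cyclic shift of $(\mathcal{C}^\circ_n,\mathcal{L}^\circ_n,d_n)$; this is Le~Gall's original route and would work here since $\barMap(\n)$ is closed under re-rooting. The paper instead follows the Bettinelli--Miermont shortcut and uses that the \emph{distinguished vertex} $\star$ is uniform among vertices: if $x,y$ are two further i.i.d.\ uniform vertices then $\dgr(x,y)\eqloi\dgr(\star,y)$ is immediate from exchangeability of $(\star,x,y)$, and passing to the limit yields $D(X,Y)\eqloi Z_Y-Z_{s_\star}$ directly. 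This avoids having to track how $\BDG$ behaves under edge re-rooting and the subsequent absolute-continuity argument for the re-rooted snake, but in exchange one must check that a uniform random \emph{vertex} corresponds in the limit to a uniform random \emph{time} in $[0,1]$; the paper handles this via Proposition~\ref{prop:repartition_feuilles} (uniform spread of leaves in the one-type tree $T_n$). Both routes ultimately appeal to Le~Gall's Corollary~7.3 for the Brownian-map side of the identity, and both yield the conclusion; the paper's is arguably more self-contained given the tools already developed in Section~\ref{subsec:concentration}.
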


\begin{proof}
The convergence \eqref{eq:cv_contour_labels}, jointly with Remark \ref{rem:contour_contour_blanc} yields the convergence in distribution
\[\left(\mathcal{C}_{(n)}(t), \mathcal{L}^\circ_{(n)}(t), D_{\mathcal{L}^\circ_{(n)}}(s, t)\right)_{s,t \in [0,1]}
\cvloi
(\exc_t, Z_t, D_Z(s,t))_{s,t \in [0,1]}.\]
The bound \eqref{eq:bornes_distances_carte} implies further the tightness of $(d_{(n)} ; n \ge 1)$, see Proposition 3.2 in \cite{Le_Gall:The_topological_structure_of_scaling_limits_of_large_planar_maps} for a proof in a similar context. Therefore, from every sequence of integers converging to $\infty$, we can extract a subsequence along which we have
\begin{equation}\label{eq:convergence_distances_sous_suite}
\left(\mathcal{C}_{(n)}(t), \mathcal{L}^\circ_{(n)}(t), d_{(n)}(s, t)\right)_{s,t \in [0,1]}
\cvloi
(\exc_t, Z_t, D(s,t))_{s,t \in [0,1]},
\end{equation}
where $(D(s,t) ; 0 \le s,t \le 1)$ depends a priori on the subsequence. We claim that
\[D = \mathscr{D} \qquad\text{almost surely}.\]
From the bound \eqref{eq:bornes_distances_carte}, $D$ is bounded above by $D_Z$, also (see Proposition 3.3 in \cite{Le_Gall:The_topological_structure_of_scaling_limits_of_large_planar_maps}), one can check that $D$ is a pseudo-metric on $[0,1]$ which satisfies $D(s,t) = 0$ as soon as $d_\exc(s,t) = 0$. It thus follows from the maximality property discussed in section \ref{subsec:serpent_brownien} that $D \le \mathscr{D}$ almost surely. Our aim is to show the following: let $X, Y$ be i.i.d. uniform random variables on $[0,1]$ such that the pair $(X, Y)$ is independent of everything else, then
\begin{equation}\label{eq:identite_distances_carte_brownienne_repointee}
D(X,Y) \eqloi D(s_\star, Y) \enskip = \enskip Z_Y - Z_{s_\star},
\end{equation}
where $s_\star$ is the (a.s. unique \cite{Le_Gall-Weill:Conditioned_Brownian_trees}) point at which $Z$ attains its minimum. The second equality is a continuous analog of \eqref{eq:distance_origine_carte} which can be obtained from the latter by letting $n \to \infty$ along the same subsequence as in \eqref{eq:convergence_distances_sous_suite}. Le Gall \cite[Corollary 7.3]{Le_Gall:Uniqueness_and_universality_of_the_Brownian_map} has proved that \eqref{eq:identite_distances_carte_brownienne_repointee} holds true when $D$ is replaced by $\mathscr{D}$. In particular, if \eqref{eq:identite_distances_carte_brownienne_repointee} holds, then $D(X, Y)$ is distributed as $\mathscr{D}(X,Y)$. Since we know that $D \le \mathscr{D}$ almost surely, this implies $D(X, Y) = \mathscr{D}(X,Y)$ almost surely which, by a density argument, implies $D = \mathscr{D}$ almost surely.

Let us prove \eqref{eq:identite_distances_carte_brownienne_repointee}. We adapt the argument of Bettinelli \& Miermont \cite[Lemma 32]{Bettinelli-Miermont:Compact_Brownian_surfaces_I_Brownian_disks}. Recall that the white contour sequence of $\T_n$ is denoted by $(c^\circ_0, \dots, c^\circ_{N_\n})$ and let $v_1, \dots, v_{n_0}$ be its white vertices listed in the order of their last visit in the contour sequence; for example the root is $v_{n_0}$. For $1 \le i \le n_0$, let $g(i) \in \{1, \dots, N_\n\}$ be the index such that $c^\circ_{g(i)}$ is the last visit of $v_i$. Observe that $(c^\circ_{g(1)}, \dots, c^\circ_{g(n_0)}) = (v_1, \dots, v_{n_0})$ is an enumeration of the white vertices of $\T_n$ without redundancies. We then set $g(0) = 0$ and extend $g$ linearly to a continuous function on $[0,n_0]$. Let us prove that
\begin{equation}\label{eq:approximation_sites_aretes_carte}
\left(\frac{g(n_0 t)}{N_\n} ; t \in [0,1]\right) \cvproba (t ; t \in [0,1]).
\end{equation}
Let $\Lambda(0)=0$ and for every $1 \le j \le N_\n$, let
\[\Lambda(j) = \#\left\{1 \le i \le n_0 : v_i \in \{c^\circ_0, \dots, c^\circ_j\} \text{ and } v_i \notin \{c^\circ_{j+1}, \dots, c^\circ_{N_\n}\}\right\},\]
denote the number of vertices fully explored at time $j$ in the white contour exploration. Then \eqref{eq:approximation_sites_aretes_carte} is equivalent to
\[\left(\frac{\Lambda(N_\n t)}{n_0} ; t \in [0,1]\right) \cvproba (t ; t \in [0,1]).\]
Let $T_n$ be the image of $\T_n$ by the $\JS$ bijection; it can be checked along the same line as the proof of Lemma \ref{lem:processus_bijection_JS} that for every $1 \le j \le N_\n$, $\Lambda(j)$ denotes the number $\Lambda_{T_n, j}(0)$ of leaves among the first $j$ vertices of $T_n$ in lexicographical order. The above convergence of $\Lambda$ thus follows from Proposition \ref{prop:repartition_feuilles}.

Fix $X, Y$ i.i.d. uniform random variables on $[0,1]$ such that the pair $(X, Y)$ is independent of everything else, and set $x = c^\circ_{g(\lceil n_0 X\rceil)}$ and $y = c^\circ_{g(\lceil n_0 Y\rceil)}$. Note that $x$ and $y$ are uniform random white vertices of $\T_n$, they can therefore be coupled with two independent uniform random vertices $x'$ and $y'$ of $\map^\star_n$ in such a way that the conditional probability given $\map^\star_n$ that $(x,y) \ne (x', y')$ is at most $2(n_0+1)^{-1} \to 0$ as $n \to \infty$; we implicitly assume in the sequel that $(x,y) = (x', y')$. Since $\star$ is also a uniform random vertex of $\map^\star_n$, we obtain that
\begin{equation}\label{eq:identite_distances_carte_discrete_repointee}
\dgr(x,y) \eqloi \dgr(\star, y).
\end{equation}
By definition,
\[\dgr(x,y) = d_n(g(\lceil n_0 X\rceil), g(\lceil n_0 Y\rceil)),\]
and, according to \eqref{eq:distance_origine_carte},
\[\dgr(\star, y) = \mathcal{L}^\circ_n(g(\lceil n_0 Y\rceil)) - \min_{0 \le j \le N_\n} \mathcal{L}^\circ_n(j) + 1.\]
We obtain \eqref{eq:identite_distances_carte_brownienne_repointee} by letting $n \to \infty$ in \eqref{eq:identite_distances_carte_discrete_repointee} along the same subsequence as in \eqref{eq:convergence_distances_sous_suite}, appealing also to \eqref{eq:approximation_sites_aretes_carte}.
\end{proof}

The proof of Theorem \ref{thm:cv_carte} is then routine.

\begin{proof}[Proof of Theorem \ref{thm:cv_carte}]
We aim at showing the convergence of metric spaces
\begin{equation}\label{eq:convergence_cartes}
\left(\map_n^\star, \left(\frac{9}{4 \sigma_p^2} \frac{1}{N_\n}\right)^{1/4} \dgr\right)
\cvloi
(\mathscr{M}, \mathscr{D}),
\end{equation}
for the Gromov--Hausdorff topology. Recall (see e.g. \cite[Chapter 7.3]{Burago-Burago-Ivanov:A_course_in_metric_geometry}) that a \emph{correspondence} between two metric spaces $(X, d_X)$ and $(Y, d_Y)$ is a set $R \subset X \times Y$ such that for every $x \in X$, there exists $y \in Y$ such that $(x,y) \in R$ and vice-versa. The \emph{distortion} of $R$ is defined as
\[\mathrm{dis}(R) = \sup\left\{\left|d_X(x,x') - d_Y(y,y')\right| ; (x,y), (x', y') \in R\right\}.\]
Finally, the Gromov--Hausdorff distance between $(X, d_X)$ and $(Y, d_Y)$ is given by (\cite[Theorem 7.3.25]{Burago-Burago-Ivanov:A_course_in_metric_geometry})
\[\frac{1}{2} \cdot \inf_R \mathrm{dis}(R),\]
where the infimum is taken over all correspondences $R$ between $(X, d_X)$ and $(Y, d_Y)$.

The proof is deterministic: we show that the convergence \eqref{eq:convergence_cartes} holds whenever that in Proposition \ref{prop:convergence_distances_carte} does. Indeed, let $(\map_n^\star\setminus\{\star\}, \dgr)$ be the metric space given by the vertices of $\map_n^\star$ different from $\star$ and their graph distance in $\map_n^\star$ and observe that the Gromov--Hausdorff distance between $(\map_n^\star, \dgr)$ and $(\map_n^\star\setminus\{\star\}, \dgr)$ is bounded by one. Recall that the vertices of $\map_n^\star$ different from $\star$ are in bijection with the white vertices of its associated two-type tree $\T_n$, which are given (with redundancies) by the white contour sequence $(c^\circ_0, \dots, c^\circ_{N_\n})$. Let $\Pi$ be the canonical projection $\CRT_\exc \to \mathscr{M} = \CRT_\exc / \approx$, then the set
\[\mathcal{R}_n = \left\{\left(c^\circ_{\lfloor N_\n t\rfloor}, \Pi(\pi_\exc(t))\right) ; t \in [0,1]\right\}.\]
is a correspondence between $(\map_n^\star\setminus\{\star\}, (\frac{9}{4 \sigma_p^2} \frac{1}{N_\n})^{1/4} \dgr)$ and $(\mathscr{M}, \mathscr{D})$ and its distortion is given by
\[\sup_{s,t \in [0,1]} \left|d_{(n)}(\lfloor N_\n s\rfloor/N_\n, \lfloor N_\n t\rfloor/N_\n) - \mathscr{D}(s,t)\right|,\]
which tends to $0$ whenever the convergence in Proposition \ref{prop:convergence_distances_carte} holds. This concludes the proof.
\end{proof}

\section{Boltzmann random maps}
\label{sec:Boltzmann}

In this last section, we state and prove the results alluded in Section \ref{subsec:intro_Boltzmann} on Boltzmann random maps. Let us make a preliminary remark: we shall divide by real numbers which depend on an integer $n$, and consider conditional probabilities with respect to events which depend on $n$; we shall therefore, if necessary, implicitly restrict ourselves to those values of $n$ for which such quantities are well-defined and statements such as ``as $n \to \infty$'' should be understood along the appropriate sequence of integers. Let us fix a sequence of non-negative real numbers $\q = (q_i ; i \ge 0)$ which, in order to avoid trivialities, satisfies $q_i > 0$ for at least one $i \ge 2$.

\subsection{Rooted and pointed Boltzmann maps}

Let $\barMap$ be the set of all rooted and pointed bipartite maps, that we shall view as pairs $(\map, \star)$, where $\map \in \Map$ is a rooted bipartite map, and $\star$ is a vertex of $\map$. We adapt the distributions described in Section \ref{subsec:intro_Boltzmann} to such maps by setting
\[W^{\q, \star}((\map, \star)) = W^\q(\map) = \prod_{f \in \mathrm{Faces}(\map)} q_{\mathrm{deg}(f)/2},
\qquad (\map, \star) \in \barMap,\]
where $\mathrm{Faces}(\map)$ is the set of faces of $\map$ and $\mathrm{deg}(f)$ is the degree of such a face $f$. We set $Z_\q^\star = W^{\q, \star}(\barMap)$.

\begin{defi}
The sequence $\q$ is called \emph{admissible} when $Z_\q^\star$ is finite.\footnote{In Section \ref{subsec:intro_Boltzmann}, we considered unpointed maps and denoted the total mass by $Z_\q$. Clearly, if $Z_\q^\star$ is finite, then so is $Z_\q$. It can be shown that the converse implication holds, see e.g. \cite{Bernardi-Curien-Miermont:A_Boltzmann_approach_to_percolation_on_random_triangulations}, so the notion of admissibility is the same for pointed and unpointed maps.}
\end{defi}

If $\q$ is admissible, we set
\[\P^{\q, \star}(\cdot) = \frac{1}{Z_\q^\star} W^{\q, \star}(\cdot).\]
For every integer $n \ge 2$, let $\barMap_{E=n}$, $\barMap_{V=n}$ and $\barMap_{F=n}$ be the subsets of $\barMap$ of those maps with respectively $n-1$ edges, $n+1$ vertices (these shifts by one will simplify the statements) and $n$ faces. More generally, for every $A \subset \N$, let $\barMap_{F,A=n}$ be the subset of $\barMap$ of those maps with $n$ faces whose degree belongs to $2A$ (and possibly other faces, but with a degree in $2\N \setminus 2A$). For every $S = \{E, V, F\} \cup \bigcup_{A \subset \N} \{F,A\}$ and every $n \ge 2$, we define
\[\P^{\q, \star}_{S=n}((\map, \star)) = \P^{\q, \star}((\map, \star) \mid (\map, \star) \in \barMap_{S=n}),
\qquad (\map, \star) \in \barMap_{S=n},\]
the law of a rooted and pointed Boltzmann map conditioned to have size $n$.

Given the sequence $\q$, set
\begin{equation}\label{eq:poids_Galton_Watson_cartes_Boltzmann}
\overline{q}_0 = 1
\qquad\text{and}\qquad
\overline{q}_k = \binom{2k-1}{k-1} q_k
\quad\text{for}\quad k \ge 1,
\end{equation}
and define the power series
\begin{equation}\label{eq:series_entiere_cartes_Boltzmann}
g_\q(x) = \sum_{k \ge 0} x^k \overline{q}_k, \qquad x \ge 0.
\end{equation}
Denote by $R_\q$ its radius of convergence, note that $g_\q$ is convex, strictly increasing and continuous on $[0, R_\q]$ and $g_\q(0)=1$. In particular, it has at most two fixed points, necessarily in $(1, R_\q]$; in fact, we have the following exclusive four cases:
\begin{enumerate}
\item There are no fixed points.
\item There are two fixed points $1 < x_1 < x_2 \le R_\q$, moreover $g_\q'(x_1) < 1$ and $g_\q'(x_2) > 1$.
\item There is a unique fixed point $1 < x \le R_\q$, with $g_\q'(x) < 1$.
\item There is a unique fixed point $1 < x \le R_\q$, with $g_\q'(x) = 1$.
\end{enumerate}

Marckert \& Miermont \cite{Marckert-Miermont:Invariance_principles_for_random_bipartite_planar_maps} have defined another power series $f_\q$, such that $g_\q(x) = 1 + x f_\q(x)$ for every $x \ge 0$. Proposition 1 in \cite{Marckert-Miermont:Invariance_principles_for_random_bipartite_planar_maps} reads as follows with our notation.

\begin{prop}[Marckert \& Miermont \cite{Marckert-Miermont:Invariance_principles_for_random_bipartite_planar_maps}]
\label{prop:condition_admissibilite}
The sequence $\q$ is admissible if and only if $g_\q$ has at least one fixed point. In this case, $Z_\q^\star$ is the fixed point satisfying $g_\q'(Z_\q^\star) \le 1$.
\end{prop}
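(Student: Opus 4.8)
The plan is to establish a generating-function identity linking $Z_\q^\star$ to the pointed partition function of Boltzmann maps and then to read off the stated characterisation from the analytic behaviour of $g_\q$. First I would use the Bouttier--Di Francesco--Guitter bijection in the \emph{pointed} form: a rooted and pointed bipartite map with a distinguished vertex $\star$ corresponds (up to the sign $\epsilon \in \{-1,+1\}$) to a labelled two-type mobile rooted at a white vertex, where the labels record the graph distance to $\star$. Summing the Boltzmann weight $W^{\q,\star}$ over all such maps, the bijection turns the sum over maps into a sum over mobiles; the weight of a mobile factorises over its black vertices (each black vertex of degree $k$ contributes $q_k$) and over the label increments around each black vertex. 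The number of admissible label configurations around a black vertex of degree $k$ is exactly $\binom{2k-1}{k-1}$ (the cardinality of $\mathcal B_k^+$ from \eqref{eq:def_pont_sans_saut_negatif}), which is precisely why one defines $\overline q_k = \binom{2k-1}{k-1} q_k$. Hence the generating function of labelled mobiles rooted at a white vertex, counted with weight $\prod_{\text{black }v} q_{k_v}$ times the number of labellings, is governed by the two-type branching structure whose white-offspring generating function is $g_\q$.

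The core step is then to translate "the sum over mobiles converges" into "$g_\q$ has a fixed point". Encode the mobile as a two-type Galton--Watson-like object: a white vertex has some number of black children, and a black vertex of degree $k$ has $k-1$ white children (its remaining $k$ neighbours after the parent), with the combinatorial factor $\binom{2k-1}{k-1}$ absorbed into $\overline q_k$. Collapsing the black generation, a white vertex directly produces white children according to a weight sequence whose generating function is exactly $g_\q$: a white vertex with "collapsed offspring generating function" $\phi$ must satisfy the fixed-point-type relation inherited from $g_\q$. Writing $Z = Z_\q^\star$ for the total weight of the (white-rooted) mobile forest attached below a single white corner, a standard one-step decomposition of the two-type tree gives $Z = g_\q(Z)$: the white root has a collection of black children, each black child of degree $k$ contributes $q_k$ and carries $k-1$ further white subtrees each of weight $Z$, and the $\binom{2k-1}{k-1}$ label choices are folded in, so the total is $\sum_{k\ge 0}\overline q_k Z^k = g_\q(Z)$. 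Finiteness of $Z_\q^\star$ therefore forces $g_\q$ to admit a fixed point, and conversely if $g_\q$ has a fixed point one checks (by a monotone/contraction argument on the truncated partition functions, using $g_\q$ convex increasing with $g_\q(0)=1$) that the series converges and equals the \emph{smallest} fixed point $x_1$, at which necessarily $g_\q'(x_1)\le 1$ by convexity. Reviewing the four cases of the fixed-point structure listed just before the statement: in case (i) there is no fixed point and $Z_\q^\star=\infty$, i.e. $\q$ is not admissible; in cases (ii), (iii), (iv) the smallest fixed point $x$ satisfies $g_\q'(x)\le 1$, and this is the value of $Z_\q^\star$.

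The precise argument for "$Z_\q^\star$ is the fixed point with $g_\q'(Z_\q^\star)\le 1$" goes through finite approximations: let $Z^{(N)}$ be the weight of mobiles of height at most $N$; then $Z^{(0)}=1$ and $Z^{(N+1)}=g_\q(Z^{(N)})$, so $(Z^{(N)})$ is the orbit of $0$ under $g_\q$. Since $g_\q$ is increasing and $g_\q(0)=1>0$, this orbit is nondecreasing, hence converges to the smallest fixed point if one exists and to $+\infty$ otherwise; and at the smallest fixed point the graph of $g_\q$ crosses the diagonal from above, forcing $g_\q'\le 1$ there (with equality exactly in the tangency case (iv)). This is exactly the content of the proposition.

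I expect the main obstacle to be the bookkeeping in the pointed BDG bijection: one must carefully verify that (a) rooted-and-pointed bipartite maps correspond to white-rooted labelled mobiles with the sign factor $\epsilon$, so that $Z_\q^\star$ is unambiguously the white-rooted mobile partition function (rather than, say, a black-rooted or corner-rooted variant, which would shift the generating-function identity), and (b) the combinatorial weight $\binom{2k-1}{k-1}$ counting label sequences around a degree-$k$ black vertex matches the definition \eqref{eq:poids_Galton_Watson_cartes_Boltzmann} of $\overline q_k$ and feeds into $g_\q$ rather than into the (omitted here) series $f_\q$. Once the identity $Z_\q^\star = g_\q(Z_\q^\star)$ is secured, together with the iteration $Z^{(N+1)} = g_\q(Z^{(N)})$, $Z^{(0)}=1$, the rest is the elementary convexity analysis sketched above and amounts to reading off which of the four listed cases can occur; the selection of the fixed point with derivative $\le 1$ is then automatic.
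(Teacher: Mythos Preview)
Your approach is correct and close in spirit to the paper's, but with a notable difference in how you select the fixed point. Both arguments establish the fixed-point identity $Z_\q^\star = g_\q(Z_\q^\star)$ by a root decomposition of the underlying tree structure; the paper does this via the composition of the $\BDG$ and $\JS$ bijections, which sends a pointed map directly to a one-type simply generated tree with weight sequence $\overline{\q}$ (so $Z_\q^\star=\Upsilon_{\overline{\q}}$), while you work with the two-type mobile and collapse the black generation by hand---these are essentially equivalent, since the $\JS$ bijection formalises exactly that collapse. The real divergence is in identifying \emph{which} fixed point equals $Z_\q^\star$. The paper rescales $\overline{\q}$ to a probability $p_\q(k)=(Z_\q^\star)^{k-1}\overline{q}_k$ and observes that the associated Galton--Watson measure has total mass $1$ on finite trees, forcing subcriticality and hence $g_\q'(Z_\q^\star)\le 1$; conversely, given a fixed point $x$ with $g_\q'(x)\le 1$, it builds a subcritical offspring law and reads off $Z_\q^\star=x$ from the fact that a subcritical Galton--Watson tree is almost surely finite. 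You instead run the monotone iteration $Z^{(N+1)}=g_\q(Z^{(N)})$, $Z^{(0)}=1$, interpret $Z^{(N)}$ as the partition function of height-truncated mobiles, and conclude that it increases to the \emph{smallest} fixed point, which by convexity has $g_\q'\le 1$ there. Your route is more elementary (no probability needed) and handles both directions at once; the paper's route ties in cleanly with the Galton--Watson framework used throughout Section~\ref{sec:Boltzmann}. One small slip: you write ``the orbit of $0$'' but start the iteration at $Z^{(0)}=1$; since $g_\q(0)=1$ these differ only by the first term and the limit is unchanged.
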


The proof in \cite{Marckert-Miermont:Invariance_principles_for_random_bipartite_planar_maps} is based on the $\BDG$ bijection, we shall present a short adaption in Section \ref{subsec:convergence_Boltzmann} using the composition of the $\BDG$ and the $\JS$ bijections. 
Following \cite{Marckert-Miermont:Invariance_principles_for_random_bipartite_planar_maps} let us introduce more terminology. 

\begin{defi}
An admissible sequence $\q$ is called \emph{critical} when $Z_\q^\star$ is the unique fixed point of $g_\q$ and satisfies moreover $g_\q'(Z_\q^\star) = 1$. It is called \emph{generic critical} when it is admissible, critical, and $g_\q''(Z_\q^\star) < \infty$, and \emph{regular critical} when moreover $Z_\q^\star < R_\q$.
\end{defi}

Note that an admissible sequence $\q$ induces a probability measure on $\Z_+$ with mean smaller than or equal to one:
\begin{equation}\label{eq:loi_GW_carte_Boltzmann}
p_\q(k) = (Z_\q^\star)^{k-1} \binom{2k-1}{k-1} q_k,
\qquad k \ge 0.
\end{equation}
Indeed,
\[\sum_{k \ge 0} p_\q(k) = \frac{g_\q(Z_\q^\star)}{Z_\q^\star} = 1,
\qquad\text{and}\qquad
\sum_{k \ge 0} k p_\q(k) = g_\q'(Z_\q^\star) \le 1.\]
This distribution has mean $1$ if and only if $\q$ is critical, and in this case, its variance is \begin{equation}\label{eq:constante_cartes_Boltzmann}
\Sigma_\q^2 = \left(\sum_{k \ge 0} k^2 p_\q(k)\right) - 1
= \left.\left(\frac{\d}{\d x} x g_\q'(x)\right)\right|_{x=Z_\q^\star} - 1
= Z_\q^\star g_\q''(Z_\q^\star),
\end{equation}
which is finite if and only if $\q$ is generic critical. In terms of the function $f_\q$ from \cite{Marckert-Miermont:Invariance_principles_for_random_bipartite_planar_maps}, we have $\Sigma_\q^2 = (2 + (Z_\q^\star)^3 f_\q''(Z_\q^\star))/Z_\q^\star$. The argument of \cite[Proposition 7]{Marckert-Miermont:Invariance_principles_for_random_bipartite_planar_maps} show that if $\q$ is regular critical, then $p_\q$ admits small exponential moments.

\begin{thm}\label{thm:cartes_Boltzmann}
Suppose $\q$ is generic critical, define $p_\q$ by \eqref{eq:loi_GW_carte_Boltzmann} and $\Sigma_\q^2$ by \eqref{eq:constante_cartes_Boltzmann} and for every subset $A \subset \N$, define
\[C^\q_E = 1,
\qquad
C^\q_V = p_\q(0) = \frac{1}{Z_\q^\star},
\qquad
C^\q_F = 1-p_\q(0) = 1-\frac{1}{Z_\q^\star},
\qquad
C^\q_{F,A} = p_\q(A).\]
Fix $S \in \{E, V, F\} \cup \bigcup_{A \subset \N} \{F,A\}$ and for every $n \ge 2$, sample $\map_n$ from $\P^\q_{S=n}$, then the convergence in distribution
\[\left(\map_n, \left(\frac{9}{4} \frac{C^\q_S}{\Sigma_\q^2} \frac{1}{n}\right)^{1/4} \dgr\right)
\cvloi
(\mathscr{M}, \mathscr{D}),\]
holds in the sense of Gromov--Hausdorff.
\end{thm}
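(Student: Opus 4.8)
The strategy is exactly the one announced in Section \ref{subsec:intro_Boltzmann}: reduce to Theorem \ref{thm:cv_carte} by showing that, under $\P^\q_{S=n}$, the (random) face-degree sequence $\nu_{\map_n}$ satisfies \eqref{eq:H} in probability for the deterministic limit law $p_\q$ and the constant $\sigma_{p_\q}^2 = \Sigma_\q^2$ given in \eqref{eq:constante_cartes_Boltzmann}, with the correct normalisation. The first step is to make precise the statement that, conditionally on $\nu_{\map_n}$, the map $\map_n$ is uniform in $\Map(\nu_{\map_n})$: this follows from the product form of $W^\q$, since $W^\q$ is constant on each $\Map(\n)$. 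Consequently, by Skorohod's representation theorem, once one knows $\nu_{\map_n}$ satisfies \eqref{eq:H} in probability, one may pass to a probability space where \eqref{eq:H} holds a.s., apply Theorem \ref{thm:cv_carte} conditionally, and conclude. The only substantive work is therefore the asymptotic analysis of $\nu_{\map_n}$ under each conditioning.

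The second step identifies the law of $\nu_{\map_n}$ via the composition of the $\BDG$ and $\JS$ bijections. A rooted pointed Boltzmann map corresponds (up to the sign $\epsilon \in \{-1,+1\}$ and a labelling with $\prod_u \binom{2k_u-1}{k_u-1}$ choices) to a labelled two-type tree, hence via $\JS$ to a labelled one-type tree; forgetting the labels and the sign, the weight $W^{\q,\star}(\map)$ translates, after multiplying by the number $\prod_u \binom{2k_u-1}{k_u-1}$ of labellings, into $\prod_{u\in T}\overline q_{k_u}$ with $\overline q_k$ as in \eqref{eq:poids_Galton_Watson_cartes_Boltzmann}. Thus the underlying unlabelled one-type tree $T_n$ is a \emph{Galton--Watson tree with offspring weights $(\overline q_k)$} conditioned on its size, where the relevant size is: the number of edges $N=n-1$ when $S=E$ (so $T_n$ has $n$ vertices), the number of leaves $n+1$ when $S=V$, the number of internal vertices $n$ (equivalently $\#\{u: k_u\ge 1\}=n$) when $S=F$, and $\#\{u: k_u\in A\}=n$ when $S=(F,A)$. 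Criticality and genericity of $\q$ (Proposition \ref{prop:condition_admissibilite} and the Definition) say precisely that the tilted weights $p_\q(k)=(Z_\q^\star)^{k-1}\overline q_k$ form a critical probability law with finite variance $\Sigma_\q^2$; since tilting does not change the \emph{conditional} law of a GW tree given any of these sizes, $T_n$ is a critical finite-variance GW($p_\q$) tree conditioned to be large. This is the point where the hypothesis ``generic critical'' is used in full, and where one must be careful that $Z_\q^\star<R_\q$ is \emph{not} assumed, so $p_\q$ need not have exponential moments — only a second moment.

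The third step is the law of large numbers for $\nu_{\map_n}$. Faces of $\map_n$ of degree $2i$ correspond to black vertices of degree $i$ in the two-type tree, i.e. to internal vertices of $T_n$ with exactly $i$ children; so $\nu_{\map_n}(i)=\#\{u\in T_n: k_u=i\}$ and the empirical offspring distribution $p_{\nu_{\map_n}}$ in the sense of Section \ref{sec:resultat} is read off $T_n$. For a GW($p_\q$) tree conditioned on a size functional of the type above, the standard theory of size-conditioned GW trees (e.g.\ via the local limit theorem for the associated random walk bridge, or directly from Proposition \ref{prop:repartition_feuilles} applied to $T_n$) gives, in probability, $\nu_{\map_n}(i)/n \to c_S^{-1}p_\q(i)$ for every $i$ with the appropriate $c_S\in\{1,p_\q(0),1-p_\q(0),p_\q(A)\}$, convergence of the second moment $\sigma^2_{\nu_{\map_n}}\to\Sigma_\q^2$ (this is where finite variance enters, and also where one must rule out escape of mass to infinity — the delicate point), and $n^{-1/2}\Delta_{\nu_{\map_n}}\to 0$ in probability (the maximal degree of a conditioned finite-variance GW tree is $o(\sqrt n)$, cf.\ Addario-Berry's tail bounds \cite{Addario_Berry:Tail_bounds_for_the_height_and_width_of_a_random_tree_with_a_given_degree_sequence}). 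The identity $N_{\nu_{\map_n}}\sim c_S^{-1}n\sum_i i\,p_\q(i)=c_S^{-1}n$ together with the reformulation of Theorem \ref{thm:cv_carte} in terms of $N_\n$ then yields the scaling constant $\frac94\frac{C^\q_S}{\Sigma_\q^2}\frac1n$ after bookkeeping: $\big(\frac{9}{4\Sigma_\q^2}\frac1{N_{\nu_{\map_n}}}\big)^{1/4}\sim\big(\frac9{4\Sigma_\q^2}\frac{c_S}{n}\big)^{1/4}$ with $c_S=C^\q_S$.

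The main obstacle is the uniform integrability needed for $\sigma^2_{\nu_{\map_n}}\to\Sigma_\q^2$ (and implicitly for $p_{\nu_{\map_n}}\Rightarrow p_\q$ rather than merely $p_{\nu_{\map_n}}(i)\to p_\q(i)$ pointwise): a priori, conditioning a heavy-tailed-variance but not exponentially integrable GW law on size could push mass into large degrees. Under a \emph{second moment} assumption this does not happen — the number of vertices with $k_u\ge K$ in a size-$n$ conditioned tree is, in expectation, of order $n\sum_{k\ge K}p_\q(k)$ uniformly in $n$, which one obtains by an absolute-continuity comparison with the unconditioned forest (the conditioned tree is the unconditioned tree biased by an event of probability $\Theta(n^{-1/2})$, so expectations inflate by at most $O(\sqrt n)$, while one instead works with the cyclic-shift / random-walk-bridge representation of Section \ref{subsec:concentration} where expectations are controlled exactly). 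This is routine but is the technical heart; I would isolate it as a lemma on the empirical degree distribution of large conditioned GW trees with finite variance. Everything else — the passage from unpointed to pointed maps as at the start of Section \ref{sec:carte_brownienne}, and the Skorohod argument — is immediate. The $S=E$ case moreover requires the separate remark (as in Section \ref{subsec:intro_Boltzmann}) that $\barMap_{E=n}$ is finite so the conditioned law makes sense even when $\q$ is only admissible, but since we assume generic criticality this is automatic; for a general-weight statement one would invoke Theorem \ref{thm:cartes_Boltzmann_n_aretes} instead, outside the scope of this theorem.
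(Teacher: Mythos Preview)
Your overall strategy matches the paper's: reduce to Theorem \ref{thm:cv_carte} by identifying the one-type tree associated with a pointed Boltzmann map as a critical finite-variance Galton--Watson tree conditioned on $n_T(B_S)=n$, then verify \eqref{eq:H} for its empirical offspring distribution (this is Proposition \ref{prop:Galton_Watson_hypothese_H_general} in the paper, proved via Kortchemski's results; your reference to Proposition \ref{prop:repartition_feuilles} is a misattribution, as that result concerns the spatial distribution of leaves along the lexicographic order, not the empirical degree law). On that core part your sketch is correct.

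There is, however, a genuine gap in your last sentence. The passage from unpointed to pointed maps is \emph{not} immediate here, and the analogy with the start of Section \ref{sec:carte_brownienne} is misleading. In Section \ref{sec:carte_brownienne} the degree sequence is fixed, so every map has the same number of vertices and pointing uniformly at random preserves the uniform law. Under $\P^\q_{S=n}$ with $S\in\{E,F\}\cup\bigcup_A\{F,A\}$, the number of vertices $V(\map)$ is \emph{random}, so pointing $\map_n$ uniformly yields a law on $\barMap_{S=n}$ proportional to $W^\q(\map)/V(\map)$, which is \emph{not} $\P^{\q,\star}_{S=n}$. Equivalently, the one-type tree you obtain has the law $\GW^{p_\q}(\,\cdot\mid n_T(B_S)=n)$ biased by $1/(n_T(0)+1)$, not the plain conditioned Galton--Watson law. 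Your ``second step'' therefore computes the law of $\nu_{\map_n}$ under $\P^{\q,\star}_{S=n}$, not under $\P^\q_{S=n}$, and the two differ by this size-bias.

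The paper closes this gap with Proposition \ref{prop:biais_cartes_Boltzmann_pointees}, showing $\|\P^\q_{S=n}-\phi_*\P^{\q,\star}_{S=n}\|_{TV}\to 0$, which in turn rests on Lemma \ref{lem:biais_sites_GW}: under $\GW^{p_\q}(\,\cdot\mid n_T(B_S)=n)$, the quantity $n_T(0)^{-1}/\GW^{p_\q}[n_T(0)^{-1}\mid n_T(B_S)=n]$ converges to $1$ in $L^1$. This requires a concentration estimate for $n_T(0)$ that survives multiplication by $n$ (since the event $\{n_T(B_S)=n\}$ has probability of order $n^{-3/2}$), proved in the paper via sub-exponential bounds from \cite{Kortchemski:Invariance_principles_for_Galton_Watson_trees_conditioned_on_the_number_of_leaves}. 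Only for $S=V$ is the passage truly trivial, since then $V(\map)=n+1$ is deterministic. You should either insert this TV comparison, or equivalently show directly that convergence in probability under the conditioned GW law transfers to the $(n_T(0)+1)^{-1}$-biased law.
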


Note that the Boltzmann laws in this statement are \emph{not} the pointed versions. We shall prove first that it holds under the pointed version $\P^{\q, \star}_{S=n}$, relying on the composition of the $\BDG$ and $\JS$ bijections to check that \eqref{eq:H} is fulfilled with the probability $p_\q$ given by \eqref{eq:loi_GW_carte_Boltzmann}. Then we will show that $\P^{\q, \star}_{S=n}$ and $\P^\q_{S=n}$ are close as $n \to \infty$; the argument of the latter will closely follow that of Bettinelli \& Miermont \cite[Section 7.2]{Bettinelli-Miermont:Compact_Brownian_surfaces_I_Brownian_disks}, see also Abraham \cite[Section 6]{Abraham:Rescaled_bipartite_planar_maps_converge_to_the_Brownian_map}, and Bettinelli, Jacob \& Miermont \cite[Section 3]{Bettinelli-Jacob-Miermont:The_scaling_limit_of_uniform_random_plane_maps_via_the_Ambjorn_Budd_bijection}.

\begin{rem}
Le Gall \cite[Theorem 9.1]{Le_Gall:Uniqueness_and_universality_of_the_Brownian_map} obtained this result in the case $S=V$, when $\q$ is supposed to be regular critical, not only generic critical. Bettinelli \& Miermont \cite[Theorem 5]{Bettinelli-Miermont:Compact_Brownian_surfaces_I_Brownian_disks} also obtained similar convergences in the three cases $S = E, V, F$ for Boltzmann maps \emph{with a boundary}, associated with regular critical weights. Theorem \ref{thm:cartes_Boltzmann} completes (and improves since we only assume $\q$ to be generic critical) their Remark 2.
\end{rem}

Note that $\Map_{E=n}$ is finite for every $n \ge 2$ so the Boltzmann distribution $\P^\q_{E=n}$ makes sense even if $Z_\q = \infty$. The proof of Theorem \ref{thm:cartes_Boltzmann} shows that we do not need $\q$ to be admissible in this case.

\begin{thm}\label{thm:cartes_Boltzmann_n_aretes}
Suppose there exists $x > 0$ (necessarily unique) such that
\[g_\q(x) < \infty,
\qquad
x g_\q'(x) = g_\q(x),
\qquad\text{and}\qquad
x g_\q''(x) < \infty.\]
Then if $\map_n$ is sampled from $\P^\q_{E=n}$ for every $n \ge 2$, the convergence in distribution
\[\left(\map_n, \left(\frac{9}{4} \frac{g_\q(x)}{x^2 g_\q''(x)} \frac{1}{n}\right)^{1/4} \dgr\right)
\cvloi
(\mathscr{M}, \mathscr{D}),\]
holds in the sense of Gromov--Hausdorff.
\end{thm}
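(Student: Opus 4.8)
The plan is to follow the scheme sketched at the end of Section~\ref{subsec:intro_Boltzmann} and reduce the statement to Theorem~\ref{thm:cv_carte}. Note first that $\Map_{E=n}$ is finite whatever $\q$ is, which is why no admissibility assumption is needed here; the parameter $x$ of the statement plays the role that $Z_\q^\star$ plays in the other conditionings (and its uniqueness is forced by the strict monotonicity of $x\mapsto xg_\q'(x)-g_\q(x)$ on the interior of convergence of $g_\q$, using that $q_i>0$ for some $i\ge2$, so that $g_\q''>0$ there). As with Theorem~\ref{thm:cartes_Boltzmann}, I would first establish the result for the rooted and pointed version of the law. Composing the $\BDG$ and $\JS$ bijections identifies rooted and pointed bipartite maps with $n$ edges with triples $(\epsilon, T, l)$, where $\epsilon\in\{-1,+1\}$ and $(T,l)$ is a labelled one-type tree with $n$ edges; since $W^{\q,\star}$ depends only on the face-degree sequence, summing over the $\prod_{i\ge1}\binom{2i-1}{i-1}^{n_i}$ labellings of an unlabelled tree $T\in\tree(\n)$ and over the two orientations shows that, under the pointed Boltzmann law conditioned on $n$ edges, the underlying one-type tree $T_n$ is a simply generated tree with weight sequence $(\overline q_k)_{k\ge0}$ conditioned to have $n$ edges, and that, given $T_n$, the labelling is uniform and $\epsilon$ an independent fair sign --- precisely the input of Theorems~\ref{thm:cv_fonctions_serpent} and~\ref{thm:cv_carte}.

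The second step is to read the hypothesis of the theorem as a criticality-plus-second-moment condition after an exponential tilt. For any $x>0$ with $g_\q(x)<\infty$, replacing the weights $(\overline q_k)$ by $(x^k\overline q_k)$ multiplies the weight of every tree with $n$ edges by the same global factor $x^{n}$, hence leaves the conditioned law unchanged; normalising, $T_n$ has the law of a Galton--Watson tree with offspring distribution $p := \bigl(x^k\overline q_k/g_\q(x)\bigr)_{k\ge0}$ conditioned to have $n$ edges. The relation $xg_\q'(x)=g_\q(x)$ says exactly that $p$ has mean $1$, and $xg_\q''(x)<\infty$ that it has finite variance $\sigma_p^2=x^2g_\q''(x)/g_\q(x)\in(0,\infty)$. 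I would then verify that the child-number profile $\n$ of this size-conditioned critical Galton--Watson tree satisfies \eqref{eq:H} in probability with limit law $p$: the weak convergence $p_\n\Rightarrow p$ and the convergence $\sigma^2_\n\to\sigma_p^2$ follow from the law of large numbers for the empirical offspring distribution and its second moment along a size-conditioned critical finite-variance Galton--Watson tree (most conveniently read off the conditioned {\L}ukasiewicz bridge, whose increments are the out-degrees), while $n^{-1/2}\Delta_\n\to 0$ follows from the fact that the largest increment of such a bridge is $o_{\mathbb P}(n^{1/2})$, which I would get from a direct computation bounding the contribution of one exceptionally large step times a local central limit theorem estimate for the sum of the remaining steps. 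Then, by Skorohod's representation theorem, one may realise the maps on a probability space where \eqref{eq:H} holds almost surely and apply Theorem~\ref{thm:cv_carte}, in its form rescaled by $N_\n^{-1/4}$ with $N_\n=n$, to obtain
\[
\left(\map_n^\star, \left(\frac{9}{4\sigma_p^2}\frac{1}{n}\right)^{1/4}\dgr\right)
\cvloi (\mathscr{M},\mathscr{D})
\]
under the pointed law; since $\sigma_p^2=x^2g_\q''(x)/g_\q(x)$, this is exactly the announced constant.

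Finally, I would remove the distinguished vertex. Forgetting the marked point, the push-forward of the pointed Boltzmann law conditioned on $n$ edges is $\P^\q_{E=n}$ biased by the number of vertices $V$, and here $V=n_0+1$, where $n_0$ is the number of leaves of $T_n$; since $n_0/n\to p(0)=1/g_\q(x)\in(0,1)$ in probability while $0<V\le n+2$, this Radon--Nikodym density is bounded and converges in probability to a positive constant. Hence $\P^\q_{E=n}$ and the push-forward of the pointed law are mutually contiguous, so the Gromov--Hausdorff convergence of the rescaled maps transfers from one to the other; this de-pointing step follows the argument of Bettinelli \& Miermont~\cite[Section~7.2]{Bettinelli-Miermont:Compact_Brownian_surfaces_I_Brownian_disks} (see also Abraham~\cite[Section~6]{Abraham:Rescaled_bipartite_planar_maps_converge_to_the_Brownian_map} and Bettinelli, Jacob \& Miermont~\cite[Section~3]{Bettinelli-Jacob-Miermont:The_scaling_limit_of_uniform_random_plane_maps_via_the_Ambjorn_Budd_bijection}). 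The main obstacle is the middle step: establishing \eqref{eq:H} in probability --- and in particular the bound $n^{-1/2}\Delta_\n\to0$, which is not available off the shelf under a mere second-moment assumption and requires a dedicated tail estimate for the conditioned {\L}ukasiewicz bridge --- after which the de-pointing argument is routine though somewhat technical.
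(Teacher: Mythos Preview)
Your approach matches the paper's: prove the pointed version by identifying the underlying one-type tree as a size-conditioned critical Galton--Watson tree via $\BDG\circ\JS$ and an exponential tilt, verify \eqref{eq:H}, apply Theorem~\ref{thm:cv_carte}, then de-point. Two points deserve comment.

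First, what you flag as the ``main obstacle'' is not one here. For the edge conditioning the associated tree is conditioned on its total number of vertices ($B_E=\Z_+$ in the paper's notation), and the full statement of \eqref{eq:H} in probability --- including $n^{-1/2}\Delta_\n\to0$ under a bare second-moment assumption --- is exactly Lemma~11 of Broutin \& Marckert~\cite{Broutin-Marckert:Asymptotics_of_trees_with_a_prescribed_degree_sequence_and_applications}, recorded in the paper as the case $A=\Z_+$ of Proposition~\ref{prop:Galton_Watson_hypothese_H_general}. No bespoke tail estimate for the conditioned {\L}ukasiewicz bridge is required.

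Second, your de-pointing step has a genuine gap. The Radon--Nikodym density of $\P^\q_{E=n}$ with respect to $\phi_*\P^{\q,\star}_{E=n}$ is $D_n=V^{-1}\big/\E^{\q,\star}_{E=n}[V^{-1}]$; since the normaliser is of order $1/n$ while $V\ge1$, one only obtains $D_n=O(n)$, so $D_n$ is \emph{not} bounded as you claim. Moreover, convergence in probability of $V/n$ to $p(0)$ does not by itself yield $D_n\to1$ in probability (let alone contiguity strong enough to transfer convergence in distribution): one must also show $n\,\E^{\q,\star}_{E=n}[V^{-1}]\to1/p(0)$, and bounding $\E\big[(n/V)\,\ind{V<\varepsilon n}\big]$ requires $n\,\P(V<\varepsilon n)\to0$, a quantitative concentration input beyond a law of large numbers. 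The paper supplies precisely this via a sub-exponential bound on $\P\big(|n_T(0)/n-p(0)|>\varepsilon\big)$ (Lemma~\ref{lem:concentration_feuilles_GW}), from which $D_n\to1$ in $L^1$ (Lemma~\ref{lem:biais_sites_GW}) and hence $\|\P^\q_{E=n}-\phi_*\P^{\q,\star}_{E=n}\|_{TV}\to0$ (Proposition~\ref{prop:biais_cartes_Boltzmann_pointees}) follow. This is the actual content of the references you cite; once it is in place, your outline is complete.
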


If $\q$ is generic critical, then the assumptions are fulfilled by $x = Z_\q^\star$: we have $g_\q(Z_\q^\star) = Z_\q^\star$ so $x g_\q'(x) = g_\q(x)$ is equivalent to $g_\q'(Z_\q^\star) = 1$ and then
\[\frac{g_\q(x)}{x^2 g_\q''(x)} = \frac{1}{Z_\q^\star g_\q''(Z_\q^\star)} = \frac{1}{\Sigma_\q^2} = \frac{C^\q_E}{\Sigma_\q^2},\]
so Theorem \ref{thm:cartes_Boltzmann_n_aretes} recovers Theorem \ref{thm:cartes_Boltzmann}.

As an application of Theorem \ref{thm:cartes_Boltzmann_n_aretes}, consider the case $q_k = 1$ for every $k \ge 1$, then $\P^\q_{E=n}$ is the uniform distribution in $\Mapb_{E=n}$. In this case, $g_\q$ has a radius of convergence equal to $1/4$ and is given by
\[g_\q(x)
= 1 + \sum_{k \ge 1} x^k \binom{2k-1}{k-1}
= \frac{1+\sqrt{1-4x}}{2 \sqrt{1-4x}},
\qquad 0 < x  < 1/4.\]
Furthermore,
\[x g_\q'(x) = g_\q(x) \quad\text{if and only if}\quad x = \frac{3}{16},
\qquad\text{and then}\qquad
\frac{g_\q(3/16)}{(3/16)^2 g_\q''(3/16)} = \frac{9}{2},\]
so Theorem \ref{thm:cartes_Boltzmann_n_aretes} yields Corollary \ref{cor:cv_cartes_biparties}.

The proofs of Theorems \ref{thm:cartes_Boltzmann} and \ref{thm:cartes_Boltzmann_n_aretes} use the notion of \emph{simply generated trees} that we next recall.

\subsection{Simply generated trees}
\label{subsec:arbres_SG}

Let us define a measure on the set of finite one-type tree $\tree$ by
\[\Theta^\q(T) = \prod_{u \in T} w(k_u), \qquad T \in \tree.\]
Let $\Upsilon_\q = \Theta^\q(\tree)$, if the latter is finite, we define a probability measure on $\tree$ by
\[\SG^\q(\cdot)= \frac{1}{\Upsilon_\q} \Theta^\q(\cdot).\]
A random tree sampled according to $\SG^\q$ is called a \emph{simply generated tree}. Such distributions have been introduced by Meir \& Moon \cite{Meir_Moon-On_the_altitude_of_nodes_in_random_trees} and studied in great detail by Janson \cite{Janson:Simply_generated_trees_conditioned_Galton_Watson_trees_random_allocations_and_condensation} on the set of trees with a given number of vertices. A particular case is when the weight sequence $\q$ is a probability measure on $\Z_+$ with mean less than or equal to one: in this case, $\Upsilon_\q = 1$ and $\SG^\q = \Theta^\q$ is the law of a \emph{subcritical Galton--Watson tree} with offspring distribution $\q$; we denote it by $\GW^\q$. When the expectation of $\q$ is exactly equal to one, we say that $\q$ (as well as any random tree sampled from $\GW^\q$) is \emph{critical}.

Note that we may define simply generated trees with $n$ vertices even if $\Upsilon_\q$ is infinite by rescaling the measure $\Theta^\q$ restricted to this finite set by its total mass.

\begin{lem}\label{lem:arbres_SG_et_GW}
Let us denote by $\#T$ the number of vertices of a tree $T \in \tree$.
\begin{enumerate}[ref={\thelem(\roman*)}]
\item\label{lem:arbres_SG_et_GW_1}
 Fix $c > 0$ and set $\tilde{q}_k = c^{k-1} q_k$ for every $k \ge 0$. Then $\Upsilon_{\tilde{\q}} < \infty$ if and only if $\Upsilon_\q < \infty$ and in this case, the laws $\SG^{\tilde{\q}}$ and $\SG^\q$ coincide.

\item\label{lem:arbres_SG_et_GW_2} Fix $a,b > 0$ and set $\hat{q}_k = a b^k q_k$ for every $k \ge 0$. Then the conditional laws $\SG^{\hat{\q}}(\, \cdot \mid \#T = n)$ and $\SG^\q(\, \cdot \mid \#T = n)$ coincide for all $n \ge 1$.
\end{enumerate}
\end{lem}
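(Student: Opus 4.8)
The plan is to prove both statements directly from the definition $\Theta^\q(T) = \prod_{u \in T} w(k_u)$, tracking how a pointwise rescaling of the weight sequence affects the product over the vertices of a fixed tree $T$. The key combinatorial identity is the following: for any finite plane tree $T$, if $\#T$ denotes the number of vertices, then the number of edges is $\#T - 1$, hence $\sum_{u \in T} k_u = \#T - 1$. This single bookkeeping fact is what makes the geometric factors telescope.

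First I would treat (i). Set $\tilde q_k = c^{k-1} q_k$. For a fixed tree $T$ with $\#T = m$ vertices, compute
\[
\Theta^{\tilde\q}(T) = \prod_{u \in T} \tilde q_{k_u} = \prod_{u \in T} c^{k_u - 1} q_{k_u} = c^{\sum_{u} (k_u - 1)} \prod_{u} q_{k_u} = c^{(m-1) - m} \Theta^\q(T) = c^{-1} \Theta^\q(T),
\]
using $\sum_u k_u = m - 1$ and $\sum_u 1 = m$. So $\Theta^{\tilde\q} = c^{-1} \Theta^\q$ as measures on $\tree$ (the exponent $-1$ is independent of $T$). Summing over all finite trees, $\Upsilon_{\tilde\q} = c^{-1} \Upsilon_\q$, so one is finite iff the other is; and when finite, the normalisation cancels the factor $c^{-1}$ exactly, giving $\SG^{\tilde\q}(\cdot) = \Theta^{\tilde\q}(\cdot)/\Upsilon_{\tilde\q} = \Theta^\q(\cdot)/\Upsilon_\q = \SG^\q(\cdot)$.

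Next I would treat (ii). Set $\hat q_k = a b^k q_k$. For a fixed tree $T$ with $\#T = m$,
\[
\Theta^{\hat\q}(T) = \prod_{u \in T} a\, b^{k_u} q_{k_u} = a^{m} b^{\sum_u k_u} \Theta^\q(T) = a^{m} b^{m-1} \Theta^\q(T).
\]
Now restrict to the finite set $\tree_n = \{T : \#T = n\}$. On this set the factor $a^n b^{n-1}$ is a constant not depending on $T$, so $\Theta^{\hat\q}\restriction_{\tree_n} = a^n b^{n-1}\, \Theta^\q\restriction_{\tree_n}$, and renormalising each side by its total mass over $\tree_n$ (which is finite, being a finite sum, regardless of whether $\Upsilon_\q$ or $\Upsilon_{\hat\q}$ is finite) kills the constant: $\SG^{\hat\q}(\cdot \mid \#T = n) = \SG^\q(\cdot \mid \#T = n)$. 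This also covers the remark made just before the lemma that conditioned simply generated trees are well-defined even when $\Upsilon_\q = \infty$.

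There is no real obstacle here; the only point requiring a line of care is to make sure the two exponents ($c^{k_u-1}$ versus $c^{k_u}$, and whether the ``$a$'' attaches per vertex or globally) are handled so that the $T$-dependence of $\Theta^{\tilde\q}(T)/\Theta^\q(T)$, resp.\ $\Theta^{\hat\q}(T)/\Theta^\q(T)$, reduces to a function of $\#T$ alone — in (i) even a constant, in (ii) a constant on each $\tree_n$. I expect the ``hard part'' to be nothing more than stating clearly that in part (i) the ratio is globally constant (so normalisation over \emph{all} trees works and preserves finiteness), whereas in part (ii) it is only constant on each $\tree_n$, which is precisely why (ii) is phrased for the conditioned laws; I would flag this distinction explicitly in the write-up.
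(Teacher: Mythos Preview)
Your proof is correct and follows essentially the same approach as the paper: both use the identity $\sum_{u\in T} k_u = \#T - 1$ to show $\Theta^{\tilde\q}(T) = c^{-1}\Theta^\q(T)$ and $\Theta^{\hat\q}(T) = a^{\#T}b^{\#T-1}\Theta^\q(T)$, from which the claims follow immediately. Your added remark distinguishing why (i) works unconditionally while (ii) requires conditioning on $\#T=n$ is a nice touch not made explicit in the paper.
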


\begin{proof}
Note that for every tree $T\in \tree$, one has $\sum_{u \in T} k_u = \#T-1$ and so $\sum_{u \in T} (k_u - 1) = -1$; it follows that
\[\Theta^{\tilde{\q}}(T) = \prod_{u \in T} c^{k_u-1} q_{k_u} = c^{-1} \Theta^\q(T),\]
so $\Upsilon_{\tilde{\q}} = c^{-1} \Upsilon_\q$ and the first claim follows. Similarly,
\[\Theta^{\hat{\q}}(T) = \prod_{u \in T} a b^{k_u} q_{k_u} = a^{\#T} b^{\#T-1} \Theta^\q(T),\]
so $\Theta^{\hat{\q}}(\{T \in \tree : \#T=n\}) = a^n b^{n-1} \Theta^\q(\{T \in \tree : \#T=n\})$ and the second claim follows.
\end{proof}

We shall use Lemma \ref{lem:arbres_SG_et_GW} with sequences $\tilde{\q}$ or $\hat{\q}$ which are probability measures with mean $1$ so, in the first case, $\SG^{\tilde{\q}} = \GW^{\tilde{\q}}$ is the law of a critical Galton--Watson tree, and in the second case, $\SG^{\hat{\q}}(\, \cdot \mid \#T = n) = \GW^{\hat{\q}}(\, \cdot \mid \#T = n)$ is the law of such a tree conditioned to have $n$ vertices.

We close this section with two results on size-conditioned critical Galton--Watson; the proofs are deferred to Section \ref{sec:appendice_GW}. We first claim that the empirical degree sequence of a Galton--Watson tree conditioned to be large satisfies \eqref{eq:H}. For a plane tree $T$ and an integer $i \ge 0$, let us denote by $n_T(i) = \#\{u \in T : k_u = i\}$ the number of vertices of $T$ with $i$ children. For any subset $A \subset \Z_+$, set $n_T(A) = \sum_{i \in A} n_i(T)$; note that $n_T(\Z_+)$ is the total number of vertices of $T$, $n_T(0)$ is its number of leaves and $n_T(\N)$ its number of internal vertices. Consider the empirical offspring distribution of $T$ and its variance, given by
\[p_T(i) = \frac{n_T(i)}{n_T(\Z_+)}
\quad\text{for}\quad i \ge 0
\qquad\text{and}\qquad
\sigma^2_T = \sum_{i \ge 0} i^2 p_T(i) - \left(\frac{n_T(\Z_+)-1}{n_T(\Z_+)}\right)^2,\]
and finally set $\Delta_T = \max\{i \ge 0 : n_T(i) > 0\}$.

\begin{prop}\label{prop:Galton_Watson_hypothese_H_general}
Let $\mu$ be a critical distribution in $\Z_+$ with variance $\sigma^2 \in (0,\infty)$ and fix $A \subset \Z_+$; under $\GW^\mu(\, \cdot \mid n_T(A) = n)$, the convergence
\[\left(p_T, \sigma^2_T, n_T(\Z_+)^{-1/2} \Delta_T\right)
\cvproba
(\mu, \sigma^2, 0),\]
holds in probability.
\end{prop}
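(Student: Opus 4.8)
The plan is to reduce everything to the case $A = \Z_+$ — that is, conditioning on the total size $n_T(\Z_+) = n$ — for which the statement is essentially classical, and then to transfer the result to a general $A$ by a size-biasing / law-of-large-numbers argument. First I would recall that a critical Galton--Watson tree conditioned on its total number of vertices is coded by a uniform random element of $\tree(\n)$ for an appropriate random sequence $\n$; more precisely, under $\GW^\mu(\,\cdot \mid n_T(\Z_+) = n)$, conditionally on the degree sequence $\n_T = (n_T(i);i\ge 0)$, the tree is uniform in $\tree(\n_T)$. So the core analytic input is the convergence of $\n_T$ itself. For the total-size conditioning this is well known: the {\L}ukasiewicz path of such a tree is a random walk with step law $\mu(\cdot+1)$ conditioned to first hit $-1$ at time $n$ (equivalently, a uniform cyclic shift of a bridge, exactly as exploited in Section \ref{subsec:concentration}), and the empirical step distribution of a long conditioned walk converges to $\mu$. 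Concretely, $n_T(i)$ is the number of steps equal to $i-1$ along the {\L}ukasiewicz path, and a second-moment computation — using that $\mu$ has finite variance and that cyclic shifts are exchangeable, so that Proposition \ref{prop:repartition_feuilles}-type concentration applies — gives $n_T(i)/n \to \mu(i)$ in probability for each $i$, hence $p_T \Rightarrow \mu$. Summing $i^2 p_T(i)$ against a truncation and controlling the tail via the finite second moment of $\mu$ gives $\sigma^2_T \to \sigma^2$. The bound $n^{-1/2}\Delta_T \to 0$ follows from the fact that the maximal step of a walk of length $n$ with i.i.d.\ steps having finite variance is $o_{\P}(n^{1/2})$ (indeed $o_{\P}(n^{1/2-\epsilon})$ is too strong in general, but $\E[\xi^2]<\infty$ gives $\max_{k\le n}|\xi_k| = o_{\P}(n^{1/2})$ by a standard truncation/Borel--Cantelli-free argument), and this transfers to the conditioned walk by absolute continuity on the bridge together with the cyclic-shift representation.

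Next I would handle a general subset $A \subset \Z_+$. The point is that conditioning on $n_T(A) = n$ is, up to the randomness of $n_T(\Z_+)$, the same as conditioning on the total size being some $N$ with $N \approx n/\mu(A)$. Precisely, under $\GW^\mu(\,\cdot \mid n_T(A) = n)$ I would show that $n_T(\Z_+)/n \to 1/\mu(A)$ in probability: indeed, conditionally on $n_T(\Z_+) = N$ the number $n_T(A)$ is a sum over the $N$ steps of the (exchangeable) {\L}ukasiewicz increments of indicators of lying in $A-1$, which concentrates at $\mu(A) N$ by the same argument as in the first paragraph; a routine inversion of this concentration (or a direct generating-function / local-limit estimate for the joint law of $(n_T(\Z_+), n_T(A))$) then yields $n_T(\Z_+) = n/\mu(A) + o_\P(n)$. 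Once one knows that $n_T(\Z_+)$ is of order $n$ and concentrated, the conditional law $\GW^\mu(\,\cdot \mid n_T(A)=n, n_T(\Z_+)=N)$ is just $\GW^\mu(\,\cdot \mid n_T(\Z_+)=N)$ restricted further to $\{n_T(A)=n\}$, and since $\{n_T(A)=n\}$ has probability bounded below polynomially in $N$ under $\GW^\mu(\,\cdot\mid n_T(\Z_+)=N)$ (by the local CLT for $n_T(A)$, a sum of exchangeable $0/1$ variables), every $o_\P(1)$ statement under total-size conditioning survives. So $p_T \Rightarrow \mu$, $\sigma^2_T \to \sigma^2$, and $n_T(\Z_+)^{-1/2}\Delta_T \to 0$ all follow from the $A=\Z_+$ case.

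The main obstacle, I expect, is making the transfer in the second paragraph fully rigorous: one needs a uniform-in-$N$ lower bound of the form $\GW^\mu(n_T(A) = m \mid n_T(\Z_+) = N) \ge c N^{-1/2}$ for $m$ in the relevant range, which requires a local limit theorem for the (non-i.i.d., but exchangeable and in fact obtained from a conditioned random walk) sequence counting degrees in $A$. This is where the finite-variance hypothesis is used in an essential way and where a careful argument — rather than a one-line invocation — is needed; an alternative, possibly cleaner, route is to go through the analytic/generating-function description of the joint law of $(n_T(\Z_+), n_T(A))$ and apply a bivariate local limit theorem directly, or to use the ``Otter--Dwass''/cycle lemma to write $\GW^\mu(n_T(A)=n, n_T(\Z_+)=N)$ explicitly as an $(N-1)$-th coefficient and estimate it by saddle-point. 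Everything else (the truncation controlling $\sigma^2_T$, the maximal-step estimate for $\Delta_T$, the exchangeability-based concentration for $p_T$) is routine and parallels arguments already present in Sections \ref{subsec:concentration} and \ref{subsec:tension_labels}.
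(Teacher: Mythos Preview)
Your approach differs from the paper's and has a real gap. The claim that ``every $o_\P(1)$ statement under total-size conditioning survives'' the further conditioning on $\{n_T(A)=n\}$ is not justified: since $\GW^\mu(n_T(A)=n \mid n_T(\Z_+)=N)$ is of order $N^{-1/2}$ by the local CLT you invoke, the crude inequality
\[
\GW^\mu(E \mid n_T(A)=n,\ n_T(\Z_+)=N)
\;\le\;
\frac{\GW^\mu(E \mid n_T(\Z_+)=N)}{\GW^\mu(n_T(A)=n \mid n_T(\Z_+)=N)}
\]
is useful only when the numerator is $o(N^{-1/2})$, not merely $o(1)$. For the pointwise convergences $p_T(i)\to\mu(i)$ you can indeed obtain exponential rates via Hoeffding-type concentration, but for $N^{-1/2}\Delta_T\to 0$ and for the tail control needed in $\sigma^2_T\to\sigma^2$, a finite second moment on $\mu$ only gives $o(1)$ with no explicit rate: one has $N\,\mu(\{i:i>\varepsilon\sqrt{N}\}) = o(1)$ and nothing better in general. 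So the transfer, as written, fails. You correctly sense there is an obstacle, but you locate it at the \emph{lower} bound on the conditioning probability; the actual problem is the absence of a matching \emph{upper} bound on the bad events. Your fallback suggestions (bivariate local limit, cycle-lemma asymptotics) are plausible but undeveloped and amount to a different proof.

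The paper avoids this by not passing through $A=\Z_+$ at all. It uses Kortchemski's representation of the {\L}ukasiewicz path under $\GW^\mu(\,\cdot\mid n_T(A)=n)$ as the cyclic shift of a random walk stopped at $\zeta_n(A)$ (the first time $n$ of its increments lie in $A-1$), conditioned on $S(\zeta_n(A))=-1$. After noting that the bad event is cyclic-shift invariant, one splits at the half-time $\zeta_{n/2}(A)$: by the Markov property together with a local limit theorem for $S(\zeta_n(A))$, the conditioning is removed on the first half at the cost of a \emph{bounded} multiplicative constant. This reduces everything to the unconditioned statement $\P(F(m,\varepsilon))\to 0$ for $m$ near $n/\mu(A)$, which is Broutin--Marckert's result, and crucially no rate faster than $o(1)$ is needed. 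The half-time splitting under the direct $A$-conditioning is exactly the device that bypasses the rate mismatch your two-step reduction runs into.
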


This result was obtained by Broutin \& Marckert \cite[Lemma 11]{Broutin-Marckert:Asymptotics_of_trees_with_a_prescribed_degree_sequence_and_applications} in the case $A=\Z_+$. Their proof extends to the general case using arguments due to Kortchemski \cite{Kortchemski:Invariance_principles_for_Galton_Watson_trees_conditioned_on_the_number_of_leaves}.

Finally, we claim that the inverse of the number of leaves, normalised to have expectation $1$, converges to $1$ in $L^1$.

\begin{lem}\label{lem:biais_sites_GW}
Let $\mu$ be a critical distribution in $\Z_+$ with variance $\sigma^2 \in (0,\infty)$. For every $A \subset \Z_+$, we have
\[\lim_{n \to \infty} \GW^\mu\left[\left|\frac{1}{n_T(0)} \frac{1}{\GW^\mu[\frac{1}{n_T(0)} \mid n_T(A) = n]} - 1\right| \;\middle|\; n_T(A) = n\right] = 0.\]
\end{lem}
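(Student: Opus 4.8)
The quantity of interest is a ratio of $1/n_T(0)$ to its own conditional expectation, and I want to show it concentrates at $1$ in $L^1$. The natural strategy is a uniform integrability argument: since $1/n_T(0) \le 1$ always, and since under $\GW^\mu(\,\cdot \mid n_T(A)=n)$ the number of leaves $n_T(0)$ is of order $n$ (if $A=\{0\}$ this is exact, and for general $A$ it follows from Proposition \ref{prop:Galton_Watson_hypothese_H_general}, which gives $n_T(0)/n_T(\Z_+) \to \mu(0)$ and $n_T(\Z_+) \sim n/\mu(A)$ in probability), we have $n \cdot \frac{1}{n_T(0)} \to \mu(A)/\mu(0)$ in probability. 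The plan is therefore: (i) show $n/n_T(0)$ converges in probability to the constant $\mu(A)/\mu(0) > 0$; (ii) upgrade this to convergence in $L^1$, which gives in particular $n \cdot \GW^\mu[\frac{1}{n_T(0)} \mid n_T(A)=n] \to \mu(A)/\mu(0)$; (iii) combine (i) and (ii) to conclude that the ratio in the statement tends to $1$ in $L^1$.

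For step (i): by Proposition \ref{prop:Galton_Watson_hypothese_H_general}, under $\GW^\mu(\,\cdot \mid n_T(A)=n)$ we have $p_T(0) = n_T(0)/n_T(\Z_+) \to \mu(0)$ in probability; moreover $n_T(A) = p_T(A) \cdot n_T(\Z_+) = n$ forces $n_T(\Z_+) = n/p_T(A)$, and since $p_T(A) \to \mu(A) > 0$ this gives $n_T(\Z_+)/n \to 1/\mu(A)$ in probability, hence $n_T(0)/n = p_T(0) \cdot n_T(\Z_+)/n \to \mu(0)/\mu(A)$ in probability; since this limit is a strictly positive constant, $n/n_T(0) \to \mu(A)/\mu(0)$ in probability.

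For step (ii), which I expect to be the crux: I need uniform integrability of the family $(n/n_T(0))_{n}$. The bound $n/n_T(0) \le n$ is far too weak, so one must control the lower tail of $n_T(0)$, i.e. bound $\GW^\mu(n_T(0) \le \varepsilon n \mid n_T(A)=n)$ and show it decays fast enough (say, is $o(1/n)$, or better, summable after multiplication by $n$) so that $\GW^\mu[\,(n/n_T(0))\,\mathbf{1}_{\{n_T(0)\le \varepsilon n\}} \mid n_T(A)=n] \to 0$. Such lower-deviation estimates for the number of leaves of a size-conditioned Galton--Watson tree are standard — they follow, for instance, from the local limit theorem / Łukasiewicz-path encoding combined with a ballot-type argument, or can be quoted from Kortchemski \cite{Kortchemski:Invariance_principles_for_Galton_Watson_trees_conditioned_on_the_number_of_leaves} (who studies precisely Galton--Watson trees conditioned on the number of leaves) together with the observation that conditioning on $n_T(A)=n$ is comparable to conditioning on the total size up to the concentration of $p_T(A)$. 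Concretely, one writes $n_T(0) = \sum_{u} \mathbf{1}_{\{k_u=0\}}$ and uses that, conditionally on the degree sequence being ``typical'', the leaf positions are exchangeable, so $n_T(0)$ is exponentially concentrated; an alternative is to use the cyclic-lemma / Vervaat representation as in Section \ref{subsec:concentration} of this paper, which reduces the problem to i.i.d. steps and then to a Chernoff bound. Either way one obtains $\GW^\mu(n_T(0) \le \varepsilon n \mid n_T(A)=n) \le C\e^{-cn}$ for $\varepsilon$ small enough, which immediately gives the required uniform integrability and hence $L^1$ convergence in step (i), and thus the convergence of the expectations in step (ii).

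For step (iii): writing $R_n = n/n_T(0)$ and $m_n = \GW^\mu[R_n \mid n_T(A)=n]$, steps (i)--(ii) give $R_n \to \mu(A)/\mu(0)$ in $L^1$ and $m_n \to \mu(A)/\mu(0)$. The quantity in the lemma is $\GW^\mu[\,|R_n/m_n - 1|\mid n_T(A)=n] = m_n^{-1}\,\GW^\mu[\,|R_n - m_n|\mid n_T(A)=n] \le m_n^{-1}\big(\GW^\mu[\,|R_n - \mu(A)/\mu(0)|\mid n_T(A)=n] + |m_n - \mu(A)/\mu(0)|\big)$, and since $m_n$ is bounded away from $0$ for $n$ large, both terms in the parenthesis tend to $0$, which finishes the proof. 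The only genuinely non-routine input is the exponential lower-tail bound on $n_T(0)$ under the conditioned law, and that is where I would spend the effort (or cite the appropriate statement from the literature).
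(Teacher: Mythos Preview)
Your proposal is correct and follows essentially the same approach as the paper: the paper also reduces everything to an exponential-type lower-tail bound on $n_T(0)$, which it isolates as Lemma~\ref{lem:concentration_feuilles_GW} (proved via Kortchemski's results, yielding a stretched-exponential bound $\mathrm{oe}_\delta(n)$ rather than your $\e^{-cn}$, but either suffices), and then concludes exactly as in your step~(iii). The only cosmetic difference is that the paper bounds the $L^1$ norm directly using $n_T(0)^{-1}\le 1$ on the bad event rather than phrasing it as uniform integrability, but the underlying argument is the same.
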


\subsection{Convergence of Boltzmann random maps}
\label{subsec:convergence_Boltzmann}

We first prove the convergence of rooted and pointed Boltzmann maps, using the $\BDG$ and the $\JS$ bijections, and next compare the pointed and non pointed Boltzmann laws to deduce Theorems \ref{thm:cartes_Boltzmann} and \ref{thm:cartes_Boltzmann_n_aretes}.

\begin{prop}\label{prop:cartes_Boltzmann_pointees}
Theorems \ref{thm:cartes_Boltzmann} and \ref{thm:cartes_Boltzmann_n_aretes} hold under their respective assumptions when the measures $\P^\q_{S=n}$ are replaced by their pointed version $\P^{\q, \star}_{S=n}$.
\end{prop}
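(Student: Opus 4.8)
The plan is to reduce Proposition~\ref{prop:cartes_Boltzmann_pointees} to Theorem~\ref{thm:cv_carte} by using the composition of the $\BDG$ and $\JS$ bijections to transfer a rooted and pointed Boltzmann map to a simply generated one-type tree, and then to verify assumption \eqref{eq:H} for the empirical offspring distribution of that tree. First, I would observe that via $\BDG$, a map in $\barMap$ corresponds (up to the factor $\{-1,+1\}$ accounting for the root orientation, which is harmless) to a labelled two-type tree in $\bartree_{\circ,\bullet}$, and the Boltzmann weight $\prod_f q_{\deg(f)/2}$ becomes $\prod_{v \in \bullet(\T)} q_{k_v}$ since faces of $\map$ of degree $2i$ correspond to black vertices of degree $i$. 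Applying the $\JS$ bijection then turns $\T$ into a one-type tree $T$ in which white vertices become leaves and black vertices of degree $k$ become internal vertices with $k$ children; since each labelled one-type tree corresponds to $\prod_{u\in T:k_u\ge1}\binom{2k_u-1}{k_u-1}$ labellings, summing out the labels shows that the image of $W^{\q,\star}$ on unlabelled one-type trees is exactly $\prod_{u\in T} \overline{q}_{k_u}$ with $\overline q_k$ as in \eqref{eq:poids_Galton_Watson_cartes_Boltzmann}. In other words, under $\P^{\q,\star}$ the underlying one-type tree is a simply generated tree $\SG^{\overline\q}$ (this also yields Proposition~\ref{prop:condition_admissibilite}, as admissibility is precisely $\Upsilon_{\overline\q}<\infty$, equivalently $g_\q$ having a fixed point).

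Next I would translate the conditionings. The number of edges of $\map$ equals the number of edges $N$ of $T$; the number of vertices of $\map$ equals the number of leaves $n_T(0)$ plus one (the extra pointed vertex $\star$); the number of faces equals the number of internal vertices $n_T(\N)$; and, more generally, the number of faces with half-degree in $A$ equals $n_T(A)$ for $A\subset\N$. Thus $\P^{\q,\star}_{S=n}$ becomes, on the tree side, $\SG^{\overline\q}$ conditioned on $N_\n=n$ (for $S=E$), on $n_T(0)=n$ (for $S=V$), on $n_T(\N)=n$ (for $S=F$), or on $n_T(A)=n$ (for $S=(F,A)$). When $\q$ is generic critical, using Lemma~\ref{lem:arbres_SG_et_GW_1} with $c=Z_\q^\star$ and then Lemma~\ref{lem:arbres_SG_et_GW_2} I can exhibit $\SG^{\overline\q}$, after the conditioning, as the law of a \emph{critical} Galton--Watson tree with offspring distribution $p_\q$ from \eqref{eq:loi_GW_carte_Boltzmann} (mean $1$, variance $\Sigma_\q^2<\infty$), conditioned on the appropriate functional of the tree. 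For $S=E$ in the non-admissible setting of Theorem~\ref{thm:cartes_Boltzmann_n_aretes}, the hypothesis on $x$ plays the role of $Z_\q^\star$ and the same reductions apply (only the edge-conditioning is used, so $Z_\q=\infty$ is not an obstacle, as the set $\Mapb_{E=n}$ is finite).

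Now I would invoke Proposition~\ref{prop:Galton_Watson_hypothese_H_general}: under $\GW^{p_\q}(\,\cdot\mid n_T(A)=n)$ (with $A=\{0\}$, $A=\N$, $A=\Z_+$ — noting $n_T(\Z_+)$ and $N_\n$ differ by one — or general $A$), the empirical offspring distribution, its variance and $n_T(\Z_+)^{-1/2}\Delta_T$ converge in probability to $(p_\q,\Sigma_\q^2,0)$. By Skorokhod's representation theorem there is a space on which the random degree sequence $\nu_n$ of $\map_n$ satisfies \eqref{eq:H} almost surely with limit $p_\q$; conditional on $\nu_n$, $\map_n$ is uniform in $\Map(\nu_n)$, so Theorem~\ref{thm:cv_carte} applies almost surely and, integrating, gives the convergence in law of the rescaled maps. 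It remains to identify the constant: Theorem~\ref{thm:cv_carte} gives scaling $(\tfrac{9}{4}\tfrac{1-p_\q(0)}{\Sigma_\q^2}\tfrac1{N_{\nu_n}})^{1/4}$, and one converts $N_{\nu_n}$, $n_{\nu_n}(0)$, $n_{\nu_n}(\N)$ or $n_{\nu_n}(A)$ to the conditioning parameter $n$ using the law of large numbers $n_T(A)/n_T(\Z_+)\to p_\q(A)$ in probability (and $n_T(\Z_+)/N_T\to 1-p_\q(0)$ via $N_T=\sum i n_T(i)$ and $\sum i p_\q(i)=1$), which yields precisely the factors $C^\q_E,C^\q_V,C^\q_F,C^\q_{F,A}$. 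The main obstacle I anticipate is purely bookkeeping: carefully matching the two bijections' conventions (roots, the pointed vertex, the shift by one in the vertex count, the $\{-1,+1\}$ factor) so that the weight computation is exact, together with tracking the normalising constants through the changes of variables between $n$ and the various size parameters. The probabilistic content reduces cleanly to the already-stated Proposition~\ref{prop:Galton_Watson_hypothese_H_general} and Theorem~\ref{thm:cv_carte}.
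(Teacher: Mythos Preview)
Your proposal is correct and follows essentially the same route as the paper: push the pointed Boltzmann measure through $\BDG\circ\JS$ to obtain a simply generated tree with weights $\overline{\q}$, use Lemma~\ref{lem:arbres_SG_et_GW} to identify the conditioned law as that of a critical Galton--Watson tree with offspring law $p_\q$ (or $\mu_\q$ in the edge-conditioned non-admissible case), then apply Proposition~\ref{prop:Galton_Watson_hypothese_H_general} together with Skorokhod and Theorem~\ref{thm:cv_carte}. The only cosmetic differences are that the paper invokes part~(i) of Lemma~\ref{lem:arbres_SG_et_GW} alone for the generic critical case (since $p_\q(k)=(Z_\q^\star)^{k-1}\overline q_k$ already has the form $c^{k-1}\overline q_k$) and part~(ii) alone for Theorem~\ref{thm:cartes_Boltzmann_n_aretes}, and that the constant identification is left implicit rather than spelled out via the law of large numbers as you do.
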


The main idea is to observe that for every $n \ge 2$ and $S \in \{E, V, F\} \cup \bigcup_{A \subset \N} \{F,A\}$, the composition of the $\BDG$ and the $\JS$ bijections maps the set $\barMap_{S=n}$ onto the subset of $\tree$ of those trees $T$ satisfying $n_T(B_S) = n$, where for every $A \subset \N$,
\begin{equation}\label{eq:conditionnement_arbres_cartes}
B_E = \Z_+,
\qquad
B_V = \{0\},
\qquad
B_F = \N
\qquad\text{and}\qquad
B_{F,A} = A.
\end{equation}

\begin{proof}
Fix a rooted and pointed map $(\map, \star) \in \barMap$ and let $(T, l)$ be its associated labelled one-type tree after the $\BDG$ and then the $\JS$ bijections. Recall that the faces of $\map$ are in bijection with the internal vertices of $T$, whereas the vertices of $\map$ different from $\star$ are in bijection with the leaves of $T$; in particular, with the notation of the previous subsection, for every $i \ge 1$, the number of faces of $\map$ of degree $2i$ is given by $n_T(i)$, and its number of vertices minus one by $n_T(0)$. Thereby,
\[W^{\q, \star}((\map, \star))
= \prod_{f \in \mathrm{Faces}(\map)} q_{\mathrm{deg}(f)/2}
= \prod_{u \in T : k_u \ge 1} q_{k_u}.\]
Recall also from \eqref{eq:nombre_etiquetages_arbre} the number of possible labellings of a given plane tree. The measure $W^{\q, \star}$ on $\barMap$ thus induces a measure on $\tree$, where each $T \in \tree$ is given the weight
\[\prod_{u \in T : k_u \ge 1} \binom{2k_u-1}{k_u-1} q_{k_u}
= \Theta^{\overline{\q}}(T),\]
where $\overline{\q}$ is given by \eqref{eq:poids_Galton_Watson_cartes_Boltzmann}. This shows that if $(\map, \star)$ has the law $\P^{\q, \star}$ and $(T, l)$ its associated labelled one-type tree after the $\BDG$ and then the $\JS$ bijections, then $T$ has the law $\SG^{\overline{\q}}$. Similarly, for every $n \ge 2$ and $S \in \{E, V, F\} \cup \bigcup_{A \subset \N} \{F,A\}$, if $(\map, \star)$ has the law $\P^{\q, \star}_{S=n}$, then $T$ has the law $\SG^{\overline{\q}}(\, \cdot \mid n_T(B_S) = n)$, where $B_S$ is given by \eqref{eq:conditionnement_arbres_cartes}. Furthermore, in both cases, conditional on the tree $T$, the labelling $l$ is uniformly distributed amongst all possibilities.

Let us now prove that Theorem \ref{thm:cartes_Boltzmann_n_aretes} holds for the pointed maps sampled from $\P^{\q, \star}_{E=n}$. Suppose that $x > 0$ is such that
\[g_\q(x) < \infty,
\qquad
x g_\q'(x) = g_\q(x),
\qquad\text{and}\qquad
x g_\q''(x) < \infty.\]
Define a probability measure on $\Z_+$ similar to \eqref{eq:loi_GW_carte_Boltzmann} where $Z_\q^\star$ is replaced by $x$:
\begin{equation}\label{eq:loi_GW_carte_Boltzmann_n_aretes}
\mu_\q(k) = \frac{x^k \overline{q}_k}{g_\q(x)},
\qquad k \ge 0.
\end{equation}
Note that
 $\mu_\q$ has expectation
\[\sum_{k \ge 0} k \mu_\q(k)
= \frac{x g_\q'(x)}{g_\q(x)}
= 1,\]
and variance
\[\sum_{k \ge 0} k^2 \mu_\q(k) - 1
= \frac{x g_\q'(x) + x^2 g_\q''(x)}{g_\q(x)} - 1
= \frac{x^2 g_\q''(x)}{g_\q(x)} \in (0, \infty).\]
According to Lemma \ref{lem:arbres_SG_et_GW_2}, the tree $T$ has the law $\GW^{\mu_\q}(\, \cdot \mid n_T(\Z_+) = n)$, Proposition \ref{prop:Galton_Watson_hypothese_H_general} and Skorohod's representation Theorem ensure then that, on some probability space, \eqref{eq:H} is fulfilled almost surely with $p = \mu_\q$ and we conclude from Theorem \ref{thm:cv_carte}.

The proof of the fact that Theorem \ref{thm:cartes_Boltzmann} holds for the pointed maps sampled from $\P^{\q, \star}_{S=n}$ is similar. If $\q$ is generic critical, then $Z_\q^\star$ satisfies the above assumptions on $x$ and furthermore $g_\q(Z_\q^\star) = Z_\q^\star$ so $\mu_\q$ is the probability $p_\q$ given by \eqref{eq:loi_GW_carte_Boltzmann}:
\[\mu_\q(k) = p_\q(k) = (Z_\q^\star)^{k-1} \overline{q}_k,
\qquad k \ge 0.\]
According to Lemma \ref{lem:arbres_SG_et_GW_1}, the tree $T$ has the law $\GW^{p_\q}(\, \cdot \mid n_T(B_S) = n)$. Again, Proposition \ref{prop:Galton_Watson_hypothese_H_general} ensures then that \eqref{eq:H} is fulfilled with $p = p_\q$ and the claim follows.
\end{proof}

We have seen all the ingredients to prove Proposition \ref{prop:condition_admissibilite}. The proof is inspired from \cite{Marckert-Miermont:Invariance_principles_for_random_bipartite_planar_maps}.

\begin{proof}[Proof of Proposition \ref{prop:condition_admissibilite}]
Let $\overline{\q}$ be given by \eqref{eq:poids_Galton_Watson_cartes_Boltzmann}. According to the previous proof, we have
\[Z^\star_\q = \sum_{(\map, \star) \in \barMap} W^{\q, \star}((\map, \star)) = \sum_{T \in \tree} \Theta^{\overline{\q}}(T) = \Upsilon_{\overline{\q}},\]
Suppose that this quantity is finite, we next decompose the second sum according to the degree of the root of $T$. If the latter is $k$, then $T$ is made of $k$ trees, say $T_1, \dots, T_k$, attached to a common root; this leads to the following equation:
\[\sum_{T \in \tree} \Theta^{\overline{\q}}(T)
= \sum_{k \ge 0} \overline{q}_k \sum_{T_1, \dots, T_k \in \tree} \prod_{i=1}^k \Theta^{\overline{\q}}(T_i)
= \sum_{k \ge 0} \overline{q}_k \left(\sum_{T \in \tree} \Theta^{\overline{\q}}(T)\right)^k,\]
in other words $Z^\star_\q = g_\q(Z^\star_\q)$. Let us prove furthermore that $g_\q'(Z^\star_\q) \le 1$. Since $Z^\star_\q = g_\q(Z^\star_\q)$, the sequence $p_\q$ defined by $p_\q(k) = (Z^\star_\q)^{k-1} \overline{q}_k$ for every $k \ge 0$ is a probability and $g_\q'(Z^\star_\q)$ is its mean. According to Lemma \ref{lem:arbres_SG_et_GW_1}, the law $\SG^{\overline{\q}}$ coincides with $\SG^{p_\q}$ so
\[\sum_{T \in \tree} \SG^{p_\q}(T)
= \frac{1}{\Upsilon_{\overline{\q}}} \sum_{T \in \tree} \Theta^{\overline{\q}}(T)
= 1.\]
We conclude that $\SG^{p_\q} = \GW^{p_\q}$ is the law of a sub-critical Galton--Watson tree with offspring distribution $p_\q$, which has therefore mean $g_\q'(Z^\star_\q) \le 1$.

Conversely, suppose that $g_\q$ has at least one fixed point and let us prove that $Z^\star_\q$ is finite. Recall that one of the fixed points, say, $x > 0$, must satisfy $g_\q'(x) \le 1$; we set $\mu_\q(k) = x^{k-1} \overline{q}_k$ for every $k \ge 0$, the previous calculations show that $\mu_\q$ is a probability measure with mean $g_\q'(x) \le 1$. According to (the proof of) Lemma \ref{lem:arbres_SG_et_GW_1}, we have
\[\frac{1}{x} Z^\star_\q
= \frac{1}{x} \sum_{(\map, \star) \in \barMap} W^{\q, \star}((\map, \star))
= \frac{1}{x} \sum_{T \in \tree} \Theta^{\overline{\q}}(T)
= \sum_{T \in \tree} \Theta^{\mu_\q}(T)
=1.\]
We conclude that $Z^\star_\q = x$ is indeed finite.
\end{proof}

Finally, we show that the pointed and non pointed Boltzmann laws are close to each other, following arguments from \cite{Abraham:Rescaled_bipartite_planar_maps_converge_to_the_Brownian_map, Bettinelli-Jacob-Miermont:The_scaling_limit_of_uniform_random_plane_maps_via_the_Ambjorn_Budd_bijection, Bettinelli-Miermont:Compact_Brownian_surfaces_I_Brownian_disks}. Theorems \ref{thm:cartes_Boltzmann} and \ref{thm:cartes_Boltzmann_n_aretes} follow from Propositions \ref{prop:cartes_Boltzmann_pointees} and \ref{prop:biais_cartes_Boltzmann_pointees}.

\begin{prop}\label{prop:biais_cartes_Boltzmann_pointees}
Fix $S \in \{E, V, F\} \cup \bigcup_{A \subset \N} \{F,A\}$ and let $\q$ satisfy the assumptions of Theorem \ref{thm:cartes_Boltzmann} or of Theorem \ref{thm:cartes_Boltzmann_n_aretes} if $S=E$. Let $\phi : \barMap \to \Map : (M, \star) \mapsto M$ and let $\phi_* \P^{\q, \star}_{S=n}$ be the push-forward measure induced on $\Map$ by $\P^{\q, \star}_{S=n}$, then
\[\left\|\P^\q_{S=n} - \phi_* \P^{\q, \star}_{S=n}\right\|_{TV} \cv 0,\]
where $\|\cdot\|_{TV}$ refers to the total variation norm.
\end{prop}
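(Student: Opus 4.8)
The plan is to compare $\P^\q_{S=n}$ and $\phi_* \P^{\q, \star}_{S=n}$ directly by writing out their Radon--Nikodym derivative. For a rooted map $\map \in \Map_{S=n}$ the push-forward $\phi_* \P^{\q, \star}_{S=n}(\{\map\})$ is obtained by summing $\P^{\q, \star}_{S=n}$ over all ways of pointing $\map$, i.e. it is proportional to $W^\q(\map) \cdot v(\map)$, where $v(\map)$ denotes the number of vertices of $\map$; on the other hand $\P^\q_{S=n}(\{\map\})$ is proportional to $W^\q(\map)$. Hence
\[
\frac{\d (\phi_* \P^{\q, \star}_{S=n})}{\d \P^\q_{S=n}}(\map)
= \frac{v(\map)}{\E^\q_{S=n}[v(\map)]},
\]
and consequently the total variation distance equals $\tfrac12 \E^\q_{S=n}\big[\,\big|\,v(\map)/\E^\q_{S=n}[v(\map)] - 1\,\big|\,\big]$. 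So the whole statement reduces to showing that $v(\map)$, normalised by its mean under $\P^\q_{S=n}$, converges to $1$ in $L^1$.

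First I would transfer this $L^1$ statement to the tree side via the composition of the $\BDG$ and $\JS$ bijections, exactly as in the proof of Proposition~\ref{prop:cartes_Boltzmann_pointees}: under $\P^{\q,\star}_{S=n}$ the associated one-type tree $T$ has law $\GW^{p_\q}(\,\cdot \mid n_T(B_S)=n)$ in the generic critical case (respectively $\GW^{\mu_\q}(\,\cdot\mid n_T(\Z_+)=n)$ when $S=E$ under the assumptions of Theorem~\ref{thm:cartes_Boltzmann_n_aretes}), and the number of vertices of $\map$ minus one is the number of leaves $n_T(0)$. The subtlety is that here we are sampling the \emph{unpointed} map, so the correspondence is size-biased: $v(\map)=n_T(0)+1$ but the law of $T$ under $\phi_*\P^{\q,\star}_{S=n}$ is $\GW^{p_\q}(\,\cdot\mid n_T(B_S)=n)$ biased by $n_T(0)+1$, equivalently the law of $T$ under the \emph{non}-size-biased conditioned Galton--Watson measure is $\GW^{p_\q}(\,\cdot\mid n_T(B_S)=n)$ re-weighted by $1/(n_T(0)+1)$ and renormalised. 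Writing everything in terms of the conditioned Galton--Watson law, the quantity $\E^\q_{S=n}\big[\big|v(\map)/\E^\q_{S=n}[v(\map)]-1\big|\big]$ becomes precisely
\[
\GW^{p_\q}\!\left[\left|\frac{1}{n_T(0)}\cdot\frac{1}{\GW^{p_\q}[\,1/n_T(0)\mid n_T(B_S)=n\,]}-1\right|\;\middle|\;n_T(B_S)=n\right]
\]
up to the harmless additive constant $1$ inside $n_T(0)+1$, which is negligible since $n_T(0)\to\infty$.

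This last display is exactly Lemma~\ref{lem:biais_sites_GW} applied with $\mu = p_\q$ (or $\mu=\mu_\q$ when $S=E$) and $A = B_S$ as defined in \eqref{eq:conditionnement_arbres_cartes}; since $p_\q$ is critical with finite positive variance under the hypotheses of Theorem~\ref{thm:cartes_Boltzmann} (and likewise $\mu_\q$ under those of Theorem~\ref{thm:cartes_Boltzmann_n_aretes}), the lemma gives that this expectation tends to $0$, which is what we wanted. The main obstacle is the bookkeeping around the size-biasing: one must be careful that $\phi_*\P^{\q,\star}_{S=n}$ is the \emph{$v$-size-biased} version of $\P^\q_{S=n}$ (not the other way around), handle the constant shifts between $v(\map)$, $n_T(0)$ and $n_T(0)+1$, and check that when $S=E$ the conditioning event $n_T(\Z_+)=n$ on the tree side indeed corresponds to $n-1$ edges of the map, so that the number of leaves $n_T(0)$ still tends to infinity and Lemma~\ref{lem:biais_sites_GW} applies with $A=\Z_+$. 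Once the identification with the statement of Lemma~\ref{lem:biais_sites_GW} is made, there is nothing left to prove.
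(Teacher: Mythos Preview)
Your proposal is correct and follows essentially the same route as the paper: both identify the total variation distance with an $L^1$ deviation of the (inverse) vertex count from its mean and reduce to Lemma~\ref{lem:biais_sites_GW} via the correspondence between pointed Boltzmann maps and conditioned Galton--Watson trees. The paper is marginally more direct in that it writes the Radon--Nikodym derivative of $\P^\q_{S=n}$ with respect to $\phi_*\P^{\q,\star}_{S=n}$ (density proportional to $V(\map)^{-1}$) and takes the expectation under the pointed law, which corresponds immediately to the Galton--Watson measure without the extra un-size-biasing step.
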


\begin{proof}
For each pointed map $(\map, \star) \in \barMap$, let $V(\map)$ be the number of vertices of $\map$. If $T$ is the one-type tree associated with $(\map, \star)$, then $V(\map) = n_T(0)-1$. Notice that $\P^{\q, \star}_{S=n}$ is absolutely continuous with respect to $\P^\q_{S=n}$: for every measurable and bounded function $f : \Map \to \R$, we have
\[\E^\q_{S=n}\left[f(\map)\right]
= \E^{\q, \star}_{S=n}\left[V(\map)^{-1}\right]^{-1} \E^{\q, \star}_{S=n}\left[V(\map)^{-1} f \circ \phi ((\map, \star))\right].\]
Let $p_\q$ be given by \eqref{eq:loi_GW_carte_Boltzmann} or \eqref{eq:loi_GW_carte_Boltzmann_n_aretes} in the case $S=E$ and let $B_S$ be given by \eqref{eq:conditionnement_arbres_cartes}. We have
\begin{align*}
\left\|\P^\q_{S=n} - \phi_* \P^{\q, \star}_{S=n}\right\|_{TV}
&= \frac{1}{2} \sup_{-1 \le f\le 1} \left|\E^\q_{S=n}\left[f(\map)\right] - \E^{\q, \star}_{S=n}\left[f \circ \phi ((\map, \star))\right]\right|
\\
&\le \frac{1}{2} \sup_{-1 \le f\le 1} \E^{\q, \star}_{S=n}\left[\left|\left(\E^{\q, \star}_{S=n}\left[V(\map)^{-1}\right]^{-1} V(\map)^{-1} - 1\right) f \circ \phi ((\map, \star))\right|\right]
\\
&\le \E^{\q, \star}_{S=n}\left[\left|\E^{\q, \star}_{S=n}\left[V(\map)^{-1}\right]^{-1} V(\map)^{-1} - 1\right|\right]
\\
&= \GW^{p_\q}\left[\big|\GW^{p_\q}[(n_T(0)-1)^{-1} \mid n_T(B_S)=n]^{-1} (n_T(0)-1)^{-1} - 1\big| \;\middle|\; n_T(B_S)=n\right].
\end{align*}
Lemma \ref{lem:biais_sites_GW} states that the last quantity above tends to zero as $n \to \infty$, which concludes the proof.
\end{proof}

\subsection{On Galton--Watson trees conditioned to be large}
\label{sec:appendice_GW}

It remains to prove Proposition \ref{prop:Galton_Watson_hypothese_H_general} and Lemma \ref{lem:biais_sites_GW}. The proof of the former result relies on the coding of a tree by its {\L}ukasiewicz path which, in the case of Galton--Watson trees is an excursion of a certain random walk. Our proofs use many results from \cite{Kortchemski:Invariance_principles_for_Galton_Watson_trees_conditioned_on_the_number_of_leaves} (see in particular sections 6 and 7 there), written explicitly for $A = \{0\}$ but which hold true in general, \emph{mutatis mutandis}, as explained in Section 8 there.

\begin{proof}[Proof of Proposition \ref{prop:Galton_Watson_hypothese_H_general}]
Fix $\varepsilon > 0$ and consider the event
\[E(\varepsilon) =
\left\{d\left(
\left(\frac{n_T(\cdot)}{n_T(\Z_+)}, \sum_{i \ge 0} (i-1)^2 \frac{n_T(i)}{n_T(\Z_+)}, \frac{\Delta_T}{n_T(\Z_+)^{1/2}}\right),
\left(\mu, \sigma^2, 0\right)
\right) > \varepsilon\right\},\]
where $d$ is a metric on the product space of probability measures on $\Z_+$ and $\R^2$, compatible with the product topology. We aim at showing
\[\GW^\mu(E(\varepsilon) \mid n_T(A) = n) \cv 0.\]

Let us denote by $(X_k; k \ge 1)$ a sequence of i.i.d. random variables with distribution $(\mu(i+1) ; i \ge -1)$ and $K_n(i) = \#\{1 \le k \le n : X_k = i-1\}$ for every $n \ge 1$ and $i \ge 0$. Consider the event
\[F(n,\varepsilon) =
\left\{d\left(
\left(\frac{K_n(\cdot)}{n}, \sum_{i \ge 0} (i-1)^2 \frac{K_n(i)}{n}, \frac{\max\{i \ge 0 : K_n(i) > 0\}}{n^{1/2}}\right),
\left(\mu, \sigma^2, 0\right)
\right) > \varepsilon\right\},\]
Broutin \& Marckert \cite{Broutin-Marckert:Asymptotics_of_trees_with_a_prescribed_degree_sequence_and_applications} have shown that
\[\P(F(n,\varepsilon)) \cv 0.\]

As in Section \ref{subsec:concentration}, given a path $x = (x_1, \dots, x_n) \in \Z^n$ such that $x_1 + \dots + x_n = -1$, we denote by $S_x(k) = x_1 + \dots + x_k$ for every $1 \le k \le n$ and by $x^* = (x_1^*, \dots, x_n^*)$ the unique cyclic shift of $x$ satisfying furthermore $S_{x^*}(k) \ge 0$ for every $1 \le k \le n-1$. Let $\zeta_r(A) = \inf\{k \ge 1 : K_k(A) = \lfloor r\rfloor\}$ for every $r \ge 1$. Kortchemski \cite[Proposition 6.5]{Kortchemski:Invariance_principles_for_Galton_Watson_trees_conditioned_on_the_number_of_leaves} shows that for every integer $n \ge 1$, 
the path $(S_{X^*}(k) ; 0 \le k \le \zeta_n(A))$ under $\P(\, \cdot \mid S_X(\zeta_n(A)) = -1)$ has the law of the {\L}ukasiewicz path of a tree $T$ under $\GW^\mu(\, \cdot \mid n_T(A) = n)$. 
Since $F(n,\varepsilon)$ is invariant under cyclic shift, it follows that
\[\GW^\mu(E(\varepsilon) \mid n_T(A) = n) = \P(F(\zeta_n(A),\varepsilon) \mid S_X(\zeta_n(A)) = -1).\]
Using a time-reversibility property of $(X_1, \dots, X_{\zeta_n(A)})$ under $\P(\, \cdot \mid S_X(\zeta_n(A)) = -1)$, see
\cite[Proposition 6.8]{Kortchemski:Invariance_principles_for_Galton_Watson_trees_conditioned_on_the_number_of_leaves}, it suffices to show that
\[\Prc{F(\zeta_{n/2}(A),\varepsilon)}{S_X(\zeta_n(A)) = -1} \cv 0.\]
As in the proof of \cite[Theorem 7.1]{Kortchemski:Invariance_principles_for_Galton_Watson_trees_conditioned_on_the_number_of_leaves}, for any $\alpha > 0$, the event $F(\zeta_{n/2}(A),\varepsilon)$ is included in the union of the following three events:
\begin{enumerate}
\item $F(\zeta_{n/2}(A),\varepsilon)
\cap \{|S_X(\zeta_{n/2}(A))| \le \alpha \sqrt{\sigma^2 n / (2\mu(A))}\}
\cap \{|\zeta_{n/2}(A) - \frac{n}{\mu(A)}| \le n^{3/4}\}$,
\item $\{|S_X(\zeta_{n/2}(A))| > \alpha \sqrt{\sigma^2 n / (2\mu(A))}\}$,
\item $\{|\zeta_{n/2}(A) - \frac{n}{\mu(A)}| > n^{3/4}\}$.
\end{enumerate}
By \cite[Lemmas 6.10 \& 6.11]{Kortchemski:Invariance_principles_for_Galton_Watson_trees_conditioned_on_the_number_of_leaves} (argument similar to the one we use in the proof of Lemma \ref{lem:borne_ecart_max_pont}, based on a local limit theorem), there exists a constant $C > 0$ independent of $\alpha$ such that for every $n$ large enough, the conditional probability $\P(\,\cdot \mid S_X(\zeta_n(A)) = -1)$ of the first event is bounded above by
\[C \cdot \Pr{F(\zeta_{n/2}(A),\varepsilon) \text{ and } \left|\zeta_{n/2}(A) - \frac{n}{\mu(A)}\right| \le n^{3/4}}.\]
Next, according to \cite[Equation 44]{Kortchemski:Invariance_principles_for_Galton_Watson_trees_conditioned_on_the_number_of_leaves},
\[\lim_{\alpha \to \infty} \lim_{n \to \infty} 
\Prc{\left|S_X(\zeta_{n/2}(A))\right| > \alpha \sqrt{\sigma^2 n / (2\mu(A))}}{S_X(\zeta_n(A)) = -1} = 0,\]
and, by \cite[Lemma 6.2(i)]{Kortchemski:Invariance_principles_for_Galton_Watson_trees_conditioned_on_the_number_of_leaves},
\[\lim_{n \to \infty} 
\Prc{\left|\zeta_{n/2}(A) - \frac{n}{\mu(A)}\right| > n^{3/4}}{S_X(\zeta_n(A)) = -1} = 0.\]
We conclude that there exists a constant $C > 0$ such that
\[\limsup_{n \to \infty} \Prc{F(\zeta_{n/2}(A),\varepsilon)}{S_X(\zeta_n(A)) = -1}
\le C \limsup_{n \to \infty} \Pr{F(\zeta_{n/2}(A),\varepsilon) \text{ and } \left|\zeta_{n/2}(A) - \frac{n}{\mu(A)}\right| \le n^{3/4}}.\]

On the event $|\zeta_{n/2}(A) - \frac{n}{\mu(A)}| \le n^{3/4}$, we have for every $i \ge 0$,
\[\frac{K_{n/\mu(A) - n^{3/4}}(i)}{n/\mu(A) + n^{3/4}} \le \frac{K_{\zeta_{n/2}(A)}(i)}{\zeta_{n/2}(A)} \le \frac{K_{n/\mu(A) + n^{3/4}}(i)}{n/\mu(A) - n^{3/4}},\]
and the claim from the fact that $\P(F(n,\varepsilon)) \to 0$ as $n \to \infty$.
\end{proof}

We next turn to the proof of Lemma \ref{lem:biais_sites_GW}. We shall need the following concentration result. For a sequence $(x_n; n \ge 1)$ of non-negative real numbers and $\delta > 0$, we write $x_n = \mathrm{oe}_\delta(n)$ if there exist $c_1, c_2 > 0$ such that $x_n \le c_1 \exp(-c_2 n^\delta)$ for every $n \ge 1$.

\begin{lem}\label{lem:concentration_feuilles_GW}
Let $\mu$ be a critical distribution in $\Z_+$ with variance $\sigma^2 \in (0,\infty)$ and fix $A \subset \Z_+$; there exists $\delta > 0$ such that
\[\GW^\mu\left(\left|\frac{n_T(0)}{n} - \frac{\mu(0)}{\mu(A)}\right| > \varepsilon \;\middle|\; n_T(A) = n\right) = \mathrm{oe}_\delta(n).\]
\end{lem}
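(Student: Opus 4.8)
The plan is to reuse the {\L}ukasiewicz coding of Proposition~\ref{prop:Galton_Watson_hypothese_H_general} and then combine it with two Chernoff-type estimates. Keep the notation there: let $(X_k; k \ge 1)$ be i.i.d.\ with law $(\mu(i+1); i \ge -1)$, let $S_X$ be the associated random walk, set $K_m(i) = \#\{1 \le k \le m : X_k = i-1\}$ and $\zeta_r(A) = \inf\{k \ge 1 : K_k(A) = \lfloor r\rfloor\}$. We may assume $\mu(0) > 0$ and $\mu(A) > 0$, since otherwise $\GW^\mu(n_T(A) = n) = 0$ for every $n \ge 1$ and there is nothing to prove. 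By Kortchemski \cite[Proposition~6.5]{Kortchemski:Invariance_principles_for_Galton_Watson_trees_conditioned_on_the_number_of_leaves}, under $\P(\,\cdot \mid S_X(\zeta_n(A)) = -1)$ the path $(S_{X^*}(k); 0 \le k \le \zeta_n(A))$ has the law of the {\L}ukasiewicz path of a tree with law $\GW^\mu(\,\cdot \mid n_T(A) = n)$; since a cyclic shift does not change the number of $(-1)$-steps, the number of leaves of that tree equals $K_{\zeta_n(A)}(0)$. Hence it suffices to bound
\[\Prc{\left|\frac{K_{\zeta_n(A)}(0)}{n} - \frac{\mu(0)}{\mu(A)}\right| > \varepsilon}{S_X(\zeta_n(A)) = -1}.\]

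First I would remove the conditioning. The estimates of \cite[Section~6]{Kortchemski:Invariance_principles_for_Galton_Watson_trees_conditioned_on_the_number_of_leaves} already used in the proof of Proposition~\ref{prop:Galton_Watson_hypothese_H_general} (a local limit theorem, together with the fact that $\zeta_n(A)$ is close to $n/\mu(A)$) provide a constant $c > 0$ with $\P(S_X(\zeta_n(A)) = -1) \ge c\,n^{-1/2}$ for all large $n$ in the relevant sequence, so that
\[\Prc{\left|\tfrac{K_{\zeta_n(A)}(0)}{n} - \tfrac{\mu(0)}{\mu(A)}\right| > \varepsilon}{S_X(\zeta_n(A)) = -1}
\le \frac{\sqrt n}{c}\; \Pr{\left|\tfrac{K_{\zeta_n(A)}(0)}{n} - \tfrac{\mu(0)}{\mu(A)}\right| > \varepsilon}.\]
Since $\sqrt n\,\e^{-c'n} = \mathrm{oe}_1(n)$, it is enough to prove that the \emph{unconditioned} probability on the right is at most $c_1\,\e^{-c_2 n}$, and then any $\delta \in (0,1]$ works.

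For that I would split according to whether $\zeta_n(A)$ is close to its typical value $n/\mu(A)$. Writing $\tau_j$ for the $j$-th index $k$ with $X_k \in A-1$, the variable $\zeta_n(A) = \tau_{\lfloor n\rfloor}$ is a sum of $\lfloor n\rfloor$ i.i.d.\ geometric random variables with parameter $\mu(A)$ (mean $1/\mu(A)$), which have exponential moments; Cram\'er's inequality thus gives $\P(|\zeta_n(A) - n/\mu(A)| > \eta n) \le 2\,\e^{-c(\eta)n}$ for every $\eta > 0$. On the complementary event $\zeta_n(A)$ lies between $m^- = \lfloor n/\mu(A) - \eta n\rfloor$ and $m^+ = \lceil n/\mu(A) + \eta n\rceil$, and as $m \mapsto K_m(0)$ is non-decreasing we have $K_{m^-}(0) \le K_{\zeta_n(A)}(0) \le K_{m^+}(0)$; each $K_{m^\pm}(0)$ is a sum of $m^\pm$ i.i.d.\ Bernoulli$(\mu(0))$ variables, so Hoeffding's inequality bounds $\P(|K_{m^\pm}(0) - m^\pm\mu(0)| > \eta n)$ by $2\,\e^{-c''\eta^2 n}$. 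On the intersection of these three good events one checks the deterministic inequality $|K_{\zeta_n(A)}(0) - n\mu(0)/\mu(A)| \le (\mu(0)+1)\eta n + \mu(0)$, so choosing $\eta$ small enough (depending only on $\varepsilon$ and $\mu$) and $n$ large enough forces $|K_{\zeta_n(A)}(0)/n - \mu(0)/\mu(A)| < \varepsilon$; a union bound over the three bad events then yields $\Pr{|K_{\zeta_n(A)}(0)/n - \mu(0)/\mu(A)| > \varepsilon} \le c_1\,\e^{-c_2 n}$, as required.

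The argument is routine once the coding of Proposition~\ref{prop:Galton_Watson_hypothese_H_general} is in place; the only point requiring care is the local-limit lower bound $\P(S_X(\zeta_n(A)) = -1) \ge c n^{-1/2}$, which must handle possible periodicity of the walk and the randomness of $\zeta_n(A)$, but this is exactly what the cited lemmas of \cite{Kortchemski:Invariance_principles_for_Galton_Watson_trees_conditioned_on_the_number_of_leaves} supply and what was already invoked in the proof of Proposition~\ref{prop:Galton_Watson_hypothese_H_general}, so no new difficulty arises.
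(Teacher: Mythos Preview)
Your argument is correct, but it is organised differently from the paper's. The paper does not stay with the random-walk coding: it bounds the conditional probability by
\[
\frac{\GW^\mu\!\left(\bigl|\tfrac{n_T(0)\mu(A)}{n_T(A)\mu(0)} - 1\bigr| > \tfrac{\mu(A)}{\mu(0)}\varepsilon \,\middle|\, n_T(\Z_+) \ge n\right)}{\GW^\mu(n_T(A)=n)},
\]
uses the polynomial asymptotic $\GW^\mu(n_T(A)=n)\sim C\,n^{-3/2}$ from \cite[Theorem 8.1]{Kortchemski:Invariance_principles_for_Galton_Watson_trees_conditioned_on_the_number_of_leaves} for the denominator, and invokes a ready-made concentration bound \cite[Corollary 2.6]{Kortchemski:Invariance_principles_for_Galton_Watson_trees_conditioned_on_the_number_of_leaves} (applied to $A=\{0\}$ and then to $A$) for the numerator, which yields $\mathrm{oe}_{1/2}(n)$.

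Your route keeps the coding $n_T(0)=K_{\zeta_n(A)}(0)$, removes the conditioning at the cost of the local-limit lower bound $\P(S_X(\zeta_n(A))=-1)\ge c\,n^{-1/2}$, and then controls the \emph{unconditioned} event by elementary Cram\'er and Hoeffding bounds for the geometric sum $\zeta_n(A)$ and the Bernoulli sums $K_{m^\pm}(0)$. This is slightly more self-contained (no black-box call to Corollary~2.6) and in fact gives the stronger rate $\delta=1$, whereas the paper's argument gives $\delta$ close to $1/2$. The one ingredient you still borrow is the $n^{-1/2}$ lower bound; note that this is not literally one of the lemmas cited in the proof of Proposition~\ref{prop:Galton_Watson_hypothese_H_general}, but it follows at once from the local limit theorem for the i.i.d.\ block sums $S_X(\tau_j)-S_X(\tau_{j-1})$ (centred, finite variance by Wald's identities), which underlies those lemmas.
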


\begin{proof}
We bound
\[\GW^\mu\left(\left|\frac{n_T(0)}{n} - \frac{\mu(0)}{\mu(A)}\right| > \varepsilon \;\middle|\; n_T(A) = n\right)
\le \frac{\GW^\mu\left(\left|\frac{n_T(0) \mu(A)}{n_T(A) \mu(0)} - 1\right| > \frac{\mu(A)}{\mu(0)} \varepsilon \;\middle|\; n_T(\Z_+) \ge n\right)}{\GW^\mu(n_T(A) = n)}.\]
According to \cite[Theorem 8.1]{Kortchemski:Invariance_principles_for_Galton_Watson_trees_conditioned_on_the_number_of_leaves}, there exists an explicit constant $C >0$ which depends only on $\mu$ and $A$ (see \cite[Theorem 3.1]{Kortchemski:Invariance_principles_for_Galton_Watson_trees_conditioned_on_the_number_of_leaves}) such that 
$\GW^\mu(n_T(A) = n) \sim C \cdot n^{-3/2}$ as $n \to \infty$. 
Moreover, from \cite[Corollary 2.6]{Kortchemski:Invariance_principles_for_Galton_Watson_trees_conditioned_on_the_number_of_leaves}, 
\[\GW^\mu\left(\left|\frac{n_T(0)}{\mu(0) n_T(\Z_+)} - 1\right| > n^{-1/4} \;\middle|\; n_T(\Z_+) \ge n\right)
= \mathrm{oe}_{1/2}(n).\]
Indeed, taking $t=1$ in \cite[Corollary 2.6]{Kortchemski:Invariance_principles_for_Galton_Watson_trees_conditioned_on_the_number_of_leaves}, we read $n_T(0) = \Lambda_T(\zeta(T))$. This result holds also when $0$ is replaced by $A$; it follows that
\[\GW^\mu\left(\left|\frac{n_T(0) \mu(A)}{n_T(A) \mu(0)} - 1\right| > \frac{\mu(A)}{\mu(0)} \varepsilon \;\middle|\; n_T(\Z_+) \ge n\right)
= \mathrm{oe}_{1/2}(n),\]
and the proof is complete.
\end{proof}

\begin{proof}[Proof of Lemma \ref{lem:biais_sites_GW}]
Fix $\varepsilon \in (0, 1)$ and observe that, since $n_T(0)^{-1} \le 1$,
\[\GW^\mu\left[\left|\frac{\mu(0) n}{\mu(A) n_T(0)} - 1\right| \;\middle|\; n_T(A) = n\right]
\le \varepsilon + \left(\frac{\mu(0) n}{\mu(A)} + 1\right) \GW^\mu\left(\left|\frac{\mu(0) n}{\mu(A) n_T(0)} - 1\right| > \varepsilon \;\middle|\; n_T(A) = n\right).\]
Next, the probability on the right-hand side is bounded above by
\[\GW^\mu\left(\frac{n_T(0)}{n} < \frac{1}{2} \frac{\mu(0)}{\mu(A)} \;\middle|\; n_T(A) = n\right)
+ \GW^\mu\left(\left|\frac{\mu(0)}{\mu(A)} - \frac{n_T(0)}{n}\right| > \frac{\varepsilon}{2} \frac{\mu(0)}{\mu(A)} \;\middle|\; n_T(A) = n\right),\]
which is $\mathrm{oe}_\delta(n)$ for some $\delta > 0$ according to Lemma \ref{lem:concentration_feuilles_GW}. This yields
\[\lim_{n \to \infty} \GW^\mu\left[\left|\frac{\mu(0) n}{\mu(A) n_T(0)} - 1\right| \;\middle|\; n_T(A) = n\right] = 0,
\quad\text{and so}\quad
\lim_{n \to \infty} \frac{\mu(0) n}{\mu(A)} \GW^\mu\left[\frac{1}{n_T(0)} \;\middle|\; n_T(A) = n\right] = 1.\]
The claim now follows from these two limits.
\end{proof}

\appendix

\section{Proof of the spinal decompositions}
\label{sec:appendice_epine}

In this section, we prove Lemma \ref{lem:lignee_multinomiale} and its extension Lemma \ref{lem:lignees_multinomiales_k}.

\subsection{The one-point decomposition}

\begin{proof}[Proof of Lemma \ref{lem:lignee_multinomiale}]
First, concerning the first good event, consider the ``mirror image'' $T_n^-$ of $T_n$, i.e. the tree obtained from $T_n$ by flipping the order of the children of every vertex. Denote by $W_n^-$ the {\L}ukasiewicz path of $T_n^-$. Observe that $T_n^-$ and $T_n$ have the same law therefore $W_n^-$ and $W_n$ as well. Furthermore, from Lemma \ref{lem:codage_marche_Luka}, we have for all $i \in \{0, \dots, N_\n\}$,
\[\LR(\mathbf{A}(u(i))) \le W_n(i) + W_n^-(i^-) + k_{u(i)},\]
where $i^-$ is the index in $T_n^-$ of the image of the $i$-th vertex of $T_n$. The convergence of $W_n$ and $H_n$ in \eqref{eq:Broutin_Marckert} then yields
\[\lim_{x \to \infty} \limsup_{n \ge 1} \Pr{\max_{u \in T_n} |u| \ge x N_\n^{1/2}}
= \lim_{x \to \infty} \limsup_{n \ge 1} \Pr{\max_{u \in T_n} \LR(\mathbf{A}(u)) \ge x N_\n^{1/2}}
= 0.\]
Regarding the second good event, let $U$ be uniformly distributed in $[0,1]$ and independent of $\exc$, then \eqref{eq:Broutin_Marckert} implies similarly that for every $x > 0$, we have
\[\limsup_{n \to \infty} \Pr{N_\n^{-1/2} |u_n| \le 1/x}
\le \Pr{2 \exc_U/\sigma_p \le 1/x},\]
which then converges to $0$ as $x \to \infty$.

Let us next turn to the comparison between $\mathbf{A}(u_n)$ conditioned on being in $\Good(n,x)$ and a multinomial sequence. Recall that we denote by $\chi_{u}$ the relative position of a vertex $u$ among its siblings. Define next for every vertex $u$ the \emph{content} of the branch $\llbracket\varnothing, u\llbracket$ as
\begin{equation}\label{eq:content}
\Cont(u) = \left(\left(k_{pr(v)}, \chi_v\right); v \in \rrbracket\varnothing, u\rrbracket\right),
\end{equation}
where the elements $v \in \rrbracket\varnothing, u\rrbracket$ are sorted in increasing order of their height. For any sequence $\m \in \Z_+^\N$, denote by $\Gamma(\m)$ the set of possible vectors $\Cont(u)$ when $\mathbf{A}(u)=\m$ and note that
\[\# \Gamma(\m) = \binom{|\m|}{(m_i ; i\ge 1)} \prod_{i \ge 1} i^{m_i}.\]
The removal of the branch $\llbracket\varnothing, u\llbracket$ from $T$ produces a plane forest of $\LR(\mathbf{A}(u))$ trees and there is a one-to-one correspondence between the pair $(T, u)$ on the one hand and this forest and $\Cont(u)$ on the other hand. For any sequence $\mathbf{q} = (q_i;i \ge 0)$ of non-negative integers with finite sum, let $\ensembles{F}(\mathbf{q})$ be the set of plane forests having exactly $q_i$ vertices with $i$ children for every $i \ge 0$; such a forest possesses $r= \sum_{i \ge 0} (1-i) q_i$ roots and it is well-known that
\[\# \ensembles{F}(\mathbf{q}) = \frac{r}{|\mathbf{q}|} \binom{|\mathbf{q}|}{(q_i ; i \ge 0)}.\]

Sample $T_n$ uniformly at random in $\tree(\n) = \ensembles{F}(\n)$ and $u_n$ uniformly at random in $T_n$, the previous  bijection readily implies that for any sequence $\m$ satisfying $m_0=0$ and $m_i \le n_i$ for every $i \ge 1$ and for any vector $C \in \Gamma(\m)$, we have
\[\Pr{\Cont(u_n) = C} = \frac{\#\ensembles{F}(\n-\m)}{(N_\n+1) \#\ensembles{F}(\n)},
\quad\text{and so}\quad
\Pr{\mathbf{A}(u_n) = \m} = \#\Gamma(\m) \cdot \frac{\#\ensembles{F}(\n-\m)}{(N_\n+1) \#\ensembles{F}(\n)}.\]
Consequently, if we set $h = |\m|$, we have
\begin{align*}
\Pr{\mathbf{A}(u_n) = \m}
&= \binom{h}{(m_i ; i\ge 1)} \prod_{i \ge 1} i^{m_i} \cdot \frac{\frac{\LR(\m)}{N_\n+1-h} \binom{N_\n+1-h}{(n_i-m_i ; i \ge 0)}}{(N_\n+1) \frac{1}{N_\n+1} \binom{N_\n+1}{(n_i ; i \ge 0)}}
\\
&= \frac{\LR(\m)}{N_\n+1-h} \cdot \frac{h!}{\prod_{i \ge 1} m_i!} \prod_{i \ge 1} \left(\frac{i n_i}{N_\n}\right)^{m_i} \cdot \prod_{i \ge 1} \frac{n_i!}{n_i^{m_i} (n_i-m_i)!} \cdot \frac{(N_\n+1-h)! N_\n^{h}}{(N_\n+1)!}.
\end{align*}
Note that
\[\Pr{\Xi_\n^{(h)} = \m} = \frac{h!}{\prod_{i \ge 1} m_i!} \prod_{i \ge 1} \left(\frac{i n_i}{N_\n}\right)^{m_i}.\]
Next, observe that $n_i! \le n_i^{m_i} (n_i-m_i)!$ for every $i \ge 1$; finally, using the inequality $(1-x)^{-1} \le \exp(2x)$ for $|x| \le 1/2$, we have as soon as $h \le N_\n/2$,
\[\frac{(N_\n+1-h)! N_\n^{h}}{(N_\n+1)!}
\le \prod_{i=0}^{h-1} \frac{1}{1 - i/(N_\n+1)}
\le \e^{h^2/N_\n}.\]
Putting things together, we obtain that if $h \le N_\n/2$, then
\[\Pr{\mathbf{A}(u_n) = \m}
\le \frac{\LR(\m)}{N_\n+1-h} \cdot \e^{h^2/N_\n} \cdot \Pr{\Xi_\n^{(h)} = \m}.\]
If $\m \in \Good(n,x)$, then $\LR(\m)$ and $h$ are both bounded above by $x N_\n^{1/2}$, so the proof is complete.
\end{proof}

\subsection{The multi-point decomposition}

We next extend the previous decomposition according to several i.i.d. uniform random vertices.

\begin{proof}[Proof of Lemma \ref{lem:lignees_multinomiales_k}]
First, the fact that the probability of $\Bin_k^+$ tends to $1$ can be seen as a consequence of \eqref{eq:Broutin_Marckert} and the fact that such a property holds almost surely for the Brownian tree. The rest of the event is similar to the previous proof and we omit the details to focus on the bound on the law of $\mathbf{A}(u_{n,1}, \dots, u_{n,k})$. Precisely, we shall prove that for every sequences $\m^{(1)}, \dots, \m^{(2k-1)} \in \Good(n,x)$, if $h_j = |\m^{(j)}|$ for each $1 \le j \le 2k-1$ and $h = h_1 + \dots + h_{2k-1}$, then
\begin{multline*}
\Prc{\mathbf{A}(u_{n,1}, \dots, u_{n,k}) = (\m^{(1)}, \dots, \m^{(2k-1)})}{\Bin_k^+}
\\
\le 2\left(\frac{\sigma^2_p}{2}\right)^{k-1} \frac{(k-1) \Delta_\n + \sum_{j = 1}^{2k-1} \LR(\m^{(j)})}{N_\n^{k-1} (N_\n-h-k+2)} 
\exp\left(\frac{h^2 + 2h(k-2)}{N_\n}\right)
\prod_{j=1}^{2k-1} \Pr{\Xi_\n^{(h_j)} = \m^{(j)}}
(1+o(1)).
\end{multline*}
Since $\Delta_\n$, each $h_j$ and each $\LR(\m^{(j)})$ is at most of order $N_\n^{1/2}$, the claim follows.

We treat in detail the case $k=2$ and comment on the general case at the end. Fix $r \ge 2$ and three sequences of non-negative integers $\m^{(1)}$, $\m^{(2)}$, $\m^{(3)}$ with $m_0^{(1)} = m_0^{(2)} = m_0^{(3)}=0$ and set $|\m_i^{(j)}| = h_j$ for each $j \in \{1, 2, 3\}$. For every $i \ge 0$, set
\[\underline{m}_i = m_i^{(1)} + m_i^{(2)} + m_i^{(3)}
\qquad\text{and}\qquad
\overline{m}_i = \underline{m}_i + \ind{i=r}.\]
Given $T_n$, we say that a pair of vertices $(u,v)$ is ``good'' if the reduced tree $T_n(u,v)$ satisfies $\Bin_2$. Observe that on the event $\{\max_{a \in T_n} |a| \le N_\n^{3/4}\}$, there are more than $N_\n^2 - o(N_\n^2) \ge N_\n^2/2$ good pairs. If $u_n$ and $v_n$ are independent uniform random vertices of $T_n$, then the conditional probability given $\{\max_{a \in T_n} |a| \le N_\n^{3/4}\}$ that this pair is good tends to $1$, and then on this event, $(u_n, v_n)$ has the uniform distribution in the set of good pairs. In the remainder of this proof, we thus assume that $(u_n, v_n)$ is a good pair sampled uniformly at random. Let $w_n$ be the most recent common ancestor of $u_n$ and $v_n$. Let $\hat{u}_n$ be the child of $w_n$ which is an ancestor of $u_n$ and define similarly $\hat{v}_n$ so this distribution. Let $w_n$ be the most recent common ancestor of $u_n$ and $v_n$. Let $\hat{u}_n$ be the child of $w_n$ which is an ancestor of $u_n$ and define similarly $\hat{v}_n$ so
\[F_n(u_n, v_n) =(\llbracket\varnothing, w_n\rrbracket, \llbracket \hat{u}_n, u_n\rrbracket, \llbracket \hat{v}_n, v_n\rrbracket).\]
Let $\Cont(u_n, v_n)$ be the triplet of contents of these branches, defined in a similar way as in \eqref{eq:content}. Let $\Gamma(\m^{(1)}, \m^{(2)}, \m^{(3)})$ be the set of possible such triplets when $\mathbf{A}(u_n, v_n) = (\m^{(1)}, \m^{(2)}, \m^{(3)})$; as previously,
\begin{align*}
\# \Gamma(\m^{(1)}, \m^{(2)}, \m^{(3)})
&= \prod_{j=1}^3 \binom{h_j}{(m_i^{(j)};i \ge 1)} \prod_{i \ge 1} i^{m_i^{(j)}}
\\
&= n_r \cdot \frac{N_\n^{h_1+h_2+h_3}}{\prod_{i \ge 1} n_i^{\overline{m}_i}} \cdot
\prod_{j=1}^3 \binom{h_j}{(m_i^{(j)};i \ge 1)} \prod_{i \ge 1} \left(\frac{i n_i}{N_\n}\right)^{m_i^{(j)}}.
\end{align*}

Observe that $\LR(\overline{\m}) = 1 + \sum_{i \ge 1} (i-1) \overline{m}_i = 2 + (r-2) + \sum_{i \ge 1} (i-1) \underline{m}_i$ denotes the number of trees in the forest obtained from $T_n$ by removing the reduced tree $T_n(u_n, v_n)$ when $\mathbf{A}(u_n, v_n) = (\m^{(1)}, \m^{(2)}, \m^{(3)})$ and $k_{w_n} = r$: there are $i-1$ components for each of the $\underline{m}_i$ elements of $\llbracket\varnothing, w_n\llbracket \cup \llbracket \hat{u}_n, u_n\llbracket \cup \llbracket \hat{v}_n, v_n\llbracket$ with $i$ children, as well as $r-2$ components corresponding to the children of $w_n$ different from $\hat{u}_n$ and $\hat{v}_n$, and the two components above $u_n$ and $v_n$. As previously, the triplet $(T_n, u_n, v_n)$ is characterised by the forest obtained by removing the reduced tree $T_n(u_n, v_n)$ and the content of the latter, which is $\Cont(u_n, v_n)$ plus the information $(k_{w_n}, \chi_{\hat{u}_n}, \chi_{\hat{v}_n})$ about the branch-point. We therefore have for every $C \in \Gamma(\m^{(1)}, \m^{(2)}, \m^{(3)})$ and every $B \in \{(r, i,j) ; 1 \le i < j \le r\}$,
\begin{align*}
\Prc{\Cont(u_n, v_n) = C \text{ and } (k_{w_n}, \chi_{\hat{u}_n}, \chi_{\hat{v}_n}) = B}{\Bin_2^+}
&\le \frac{2 \cdot \#\ensembles{F}(\n-\overline{\m})}{N_\n^2 \cdot \#\ensembles{F}(\n)}
\\
&= \frac
{2\frac{\LR(\overline{\m})}{|\n-\overline{\m}|} \binom{|\n-\overline{\m}|}{(n_i-\overline{m}_i;i \ge 1)}}
{N_\n^2 \frac{1}{N_\n+1} \binom{N_\n+1}{(n_i;i \ge 1)}}
\\
&= \frac{2}{N_\n} \frac{\LR(\overline{\m})}{N_\n |\n-\overline{\m}|} \frac{(|\n-\overline{\m}|)!}{N_\n!} \prod_{i \ge 1} \frac{n_i!}{(n_i-\overline{m}_i)!}.
\end{align*}
Since $|\n| = N_\n+1$ and $|\overline{\m}| = h_1+h_2+h_3+1 = h+1$, it follows that
\begin{align*}
&\Prc{\mathbf{A}(u_n, v_n) = (\m^{(1)}, \m^{(2)}, \m^{(3)}) \text{ and } k_{w_n} = r}{\Bin_2^+}
\\
&\le \frac{r(r-1)}{2} \cdot \# \Gamma(\m^{(1)}, \m^{(2)}, \m^{(3)}) \cdot \frac{2}{N_\n} \frac{\LR(\overline{\m})}{N_\n |\n-\overline{\m}|} \frac{(|\n-\overline{\m}|)!}{N_\n!} \prod_{i \ge 1} \frac{n_i!}{(n_i-\overline{m}_i)!}
\\
&=\frac{r(r-1) n_r}{N_\n} \cdot \frac{\LR(\overline{\m})}{N_\n (N_\n-h)} 
\cdot \frac{(N_\n-h)! N_\n^{h}}{N_\n!}
\cdot \prod_{i \ge 1} \frac{n_i!}{n_i^{\overline{m}_i} (n_i-\overline{m}_i)!} \cdot \prod_{j=1}^3 \binom{h_j}{(m_i^{(j)};i \ge 1)} \prod_{i \ge 1} \left(\frac{i n_i}{N_\n}\right)^{m_i^{(j)}}.
\end{align*}
First, under \eqref{eq:H},
\[\sum_{r \ge 2} \frac{r(r-1) n_r}{N_\n} \cv \sigma^2_p.\]
Also, note that we must have $r \le \Delta_\n$ and so
\[\LR(\overline{\m})
= r + \sum_{i \ge 1} (i-1) \underline{m}_i
= (r-3) + \sum_{j = 1}^3 \LR(\m^{(j)})
\le \Delta_\n + \sum_{j = 1}^3 \LR(\m^{(j)}).\]
Then, as previously, we have
\[\prod_{j=1}^3 \binom{h_j}{(m_i^{(j)};i \ge 1)} \prod_{i \ge 1} \left(\frac{i n_i}{N_\n}\right)^{m_i^{(j)}} = \prod_{j=1}^3 \Pr{\Xi^{(h_j)}_n = \m^{(j)}},
\qquad\text{and}\qquad
\prod_{i\ge 1}\frac{n_i!}{n_i^{\overline{m}_i} (n_i-\overline{m}_i)!} \le 1,\]
as well as, as soon as $h \le N_\n/2$,
\[\frac{(N_\n-h)! N_\n^{h}}{N_\n!}
= \prod_{i=0}^{h-1} \frac{1}{1 - i/N_\n}
\le \exp\left(h^2/N_\n\right).\]
This concludes the case $k=2$.

In the general case, the same argument applies. First, on the event $\{\max_{a \in T_n} |a| \le N_\n^{3/4}\}$, for every $n$ large enough, the number of $k$-tuples of vertices such that the associated reduced tree satisfies $\Bin_k$ is larger than $N_\n^k (1-o(1)) \ge N_\n^k/2$. Next, if $u_{n,1}, \dots, u_{n,k}$ is such a $k$-tuple sampled uniformly at random, then we may still decompose the tree according to the reduced tree $T_n(u_{n,1}, \dots, u_{n,k})$ to obtain an explicit expression of the joint law of $\mathbf{A}(u_{n,1}, \dots, u_{n,k})$ and the number of children of all the branch-points of $T_n(u_{n,1}, \dots, u_{n,k})$. Specifically, denote by $v_{n,1}, \dots, v_{n,k-1}$ these branch-points, fix $\m^{(1)}, \dots, \m^{(2k-1)}$ and $r_1, \dots, r_{k-1} \le \Delta_\n$, set $h_j = |\m^{(j)}|$ for $1 \le j \le 2k-1$ and $h= h_1 + \dots + h_{2k-1}$, as well as $\overline{m}_i = \sum_{j=1}^{2k-1} m_i^{(j)} + \sum_{j=1}^{k-1} \ind{i=r_j}$ for $i \ge 1$, so $|\overline{\m}| = h+k-1$. 
Then, we have
I\begin{align*}
&\Prc{\mathbf{A}(u_{n,1}, \dots, u_{n,k}) = (\m^{(1)}, \dots, \m^{(2k-1)}) \text{ and } k_{v_{n,j}} = r_j \text{ for every } 1 \le j \le k-1}{\Bin_k^+}
\\
&\le 2\prod_{j=1}^{k-1} \frac{r_j(r_j-1) n_{r_j}}{2 N_\n} 
\cdot \frac{\LR(\overline{\m})}{N_\n (N_\n+1-(h+k-1))} 
\cdot \frac{(N_\n+1-(h+k-1))! N_\n^h}{N_\n!}
\\
&\qquad\times \prod_{i \ge 1} \frac{n_i!}{n_i^{\overline{m}_i} (n_i-\overline{m}_i)!} \cdot \prod_{j=1}^{2k-1} \binom{h_j}{(m_i^{(j)};i \ge 1)} \prod_{i \ge 1} \left(\frac{i n_i}{N_\n}\right)^{m_i^{(j)}}.
\end{align*}
Nota that
\[\sum_{r_1, \dots, r_{k-1} \ge 2} \prod_{j=1}^{k-1} \frac{r_j(r_j-1) n_{r_j}}{2 N_\n}
= \left(\sum_{r \ge 2} \frac{r(r-1) n_r}{2 N_\n}\right)^{k-1}
\cv \left(\frac{\sigma^2_p}{2}\right)^{k-1},\]
as well as, for $h \le N_\n/2$,
\[\frac{(N_\n+1-(h+k-1))! N_\n^h}{N_\n!}
= \prod_{i=0}^{k-3} \frac{1}{N_\n - i} \cdot \prod_{i=0}^{h-1} \frac{1}{1 - (i+k-2)/N_\n}
\le \frac{1+o(1)}{N_\n^{k-2}} \cdot \exp\left(\frac{h^2 + 2h(k-2)}{N_\n}\right).\]
The rest of the proof is adapted verbatim.
\end{proof}

\section{On the maximal gap in a random walk bridge}
\label{sec:appendice_pont_echangeable}

Our aim in this section is to prove Lemma \ref{lem:borne_ecart_max_pont}. Recall that for $r \ge 1$, a discrete bridge of length $r$ is a vector $(B_0, \dots, B_r)$ satisfying $B_0 = B_r = 0$ and $B_{k+1}-B_k \in \Z$ for every $0 \le k \le r-1$. A random bridge is said to be \emph{exchangeable} if the law of its increments $(B_1, B_2-B_1, \dots, B_r-B_{r-1})$ is invariant under permutation.

\begin{lem}\label{lem:pont_echangeable}
Fix $r \ge 1$ and let $B = (B_0, \dots, B_r)$ be a discrete bridge. For every $x \ge 0$ fixed, if
\[\max_{0 \le k \le r} B_k - \min_{0 \le k \le r} B_k \ge 3x,\]
then at least one of the following quantities must be smaller than or equal to $-x$:
\[\min_{0 \le k \le \lceil r/2\rceil} B_k,
\qquad
\min_{0 \le k \le \lceil r/2\rceil} \left(B_{\lceil r/2\rceil} - B_{\lceil r/2\rceil-k}\right),\]
\[\min_{0 \le k \le \lceil r/2\rceil} \left(B_{\lceil r/2\rceil+k} - B_{\lceil r/2\rceil}\right),
\qquad
\min_{0 \le k \le \lceil r/2\rceil} \left(B_r - B_{r-k}\right).\]
Consequently, if $B$ is a random exchangeable bridge, then for every $x \ge 0$, we have
\[\Pr{\max_{0 \le k \le r} B_k - \min_{0 \le k \le r} B_k \ge 3x}
\le 4 \cdot \Pr{\min_{0 \le k \le \lceil r/2\rceil} B_k \le -x}.\]
\end{lem}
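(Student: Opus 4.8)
The deterministic statement is the crux, and I would prove it by contraposition: assume all four of the listed minima are strictly greater than $-x$, and deduce that the total oscillation $\max_{0 \le k \le r} B_k - \min_{0 \le k \le r} B_k$ is strictly less than $3x$. The point of the four quantities is that they control the four "quarter-walks" obtained by splitting the bridge at its midpoint $\lceil r/2 \rceil$ and then looking at each half both forwards and backwards. Precisely: the first quantity says the walk started from $B_0 = 0$ never drops below $-x$ on $[0, \lceil r/2 \rceil]$; the second says the walk run backwards from $B_{\lceil r/2 \rceil}$ never drops below $B_{\lceil r/2 \rceil} - x$ on that same interval, i.e. the \emph{forward} walk on $[0, \lceil r/2\rceil]$ never rises above $B_{\lceil r/2\rceil} + x$; similarly the third and fourth quantities, using $B_r = 0$, control the walk on $[\lceil r/2\rceil, r]$ from below (never below $B_{\lceil r/2\rceil} - x$) and from above (never above $x$). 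Combining, on the first half the walk stays within $[\min(0, B_{\lceil r/2\rceil}) - x, \max(0, B_{\lceil r/2\rceil}) + x]$ — wait, more carefully: on $[0,\lceil r/2\rceil]$ it is bounded below by $-x$ and above by $B_{\lceil r/2\rceil}+x$, and on $[\lceil r/2\rceil, r]$ it is bounded below by $B_{\lceil r/2\rceil} - x$ and above by $x$. Hence every value $B_k$ lies in $[\,\min(-x, B_{\lceil r/2\rceil}-x),\ \max(x, B_{\lceil r/2\rceil}+x)\,]$, an interval of length $x + |B_{\lceil r/2\rceil}| + x$. The subtlety is that $|B_{\lceil r/2 \rceil}|$ need not be small; but taking $k = \lceil r/2\rceil$ in the first two bounds gives $B_{\lceil r/2\rceil} > -x$ and $B_{\lceil r/2\rceil} - B_0 < x$ hmm that last one needs re-derivation — actually $\min_{0\le k\le \lceil r/2\rceil}(B_{\lceil r/2\rceil} - B_{\lceil r/2\rceil - k}) > -x$ at $k = \lceil r/2\rceil$ reads $B_{\lceil r/2\rceil} - B_0 = B_{\lceil r/2\rceil} > -x$, and at $k=0$ is trivial; so I should instead extract $|B_{\lceil r/2\rceil}| < x$ by pairing: from the third quantity at $k = \lceil r/2\rceil$ roughly or directly noting $-B_{\lceil r/2\rceil}$ and $B_{\lceil r/2\rceil}$ are each $< x$. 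I would lay this out cleanly: each of $B_{\lceil r/2\rceil}$ and $-B_{\lceil r/2\rceil}$ is bounded above by $x$ (from two of the four inequalities evaluated at the endpoint), so $|B_{\lceil r/2\rceil}| < x$, and then the oscillation is at most $(x + x) + (x + x)$... I need to be careful the constant comes out to $3x$ and not $4x$; the correct bookkeeping is that the walk lives in $[-x - \text{(something)}, \ldots]$ and the something is itself $<x$, giving total range $<3x$. Getting this constant exactly right is the one place requiring genuine care.

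Once the deterministic claim is established, the probabilistic consequence is immediate. If $B$ is an exchangeable random bridge, then each of the four displayed random variables has the \emph{same distribution}: reversing the increment sequence, or cyclically shifting by $\lceil r/2\rceil$ and reversing, are measure-preserving operations on an exchangeable increment sequence that produce again a bridge, and under these operations the four quantities are permuted among themselves. Concretely, $\min_{0\le k\le\lceil r/2\rceil}(B_{\lceil r/2\rceil} - B_{\lceil r/2\rceil-k})$ is the same functional of the time-reversed bridge as $\min_{0\le k\le\lceil r/2\rceil} B_k$ is of the original; and the two "second-half" quantities are the first-half quantities of the cyclically shifted bridge $(B_{\lceil r/2\rceil + k} - B_{\lceil r/2\rceil})_{k}$, which is again an exchangeable bridge of the same length (for $r$ even; for $r$ odd one takes a shift by $\lfloor r/2\rfloor$ or $\lceil r/2\rceil$ and checks the two half-lengths still satisfy the inclusion — a harmless indexing nuisance). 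Therefore a union bound gives
\[
\Pr{\max_{0\le k\le r} B_k - \min_{0\le k\le r} B_k \ge 3x}
\le \sum_{\text{4 terms}} \Pr{\text{that term} \le -x}
= 4\,\Pr{\min_{0\le k\le\lceil r/2\rceil} B_k \le -x},
\]
which is the assertion.

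The main obstacle I anticipate is purely one of careful indexing and constant-tracking: making sure the decomposition at the midpoint is set up so that (a) the four quarter-walks genuinely cover all increments of $B$, (b) each of the four quantities is literally the minimum of one quarter-walk (run in the appropriate direction), (c) the exchangeability argument correctly identifies all four as equidistributed — this is cleanest for $r$ even and needs a one-line comment for $r$ odd, where $\lceil r/2\rceil$ and $r - \lceil r/2\rceil$ differ by one and one must note the shorter half's minimum is controlled by the longer half's anyway (or just absorb the difference since we only need an inequality). I would present the deterministic step as the bulk of the proof, state the three symmetry operations (reversal; cyclic shift; cyclic shift composed with reversal) that realise the four permutations, and finish with the union bound. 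No deep input is needed — only the definition of exchangeability and elementary bookkeeping — so the write-up should be short.
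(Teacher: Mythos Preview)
Your approach is essentially the same as the paper's: contraposition for the deterministic part, then exchangeability plus a union bound for the probabilistic consequence. The paper organises the deterministic step slightly differently---it defines $M_1,m_1$ (max/min on the first half) and $M_2,m_2$ (on the second half), translates the four hypotheses into $m_1>-x$, $B_{\lceil r/2\rceil}-M_1>-x$, $m_2-B_{\lceil r/2\rceil}>-x$, $-M_2>-x$, and then checks the four differences $M_i-m_j$ are each $<3x$---but your route via $|B_{\lceil r/2\rceil}|<x$ and the interval $[\min(-x,B_{\lceil r/2\rceil}-x),\max(x,B_{\lceil r/2\rceil}+x)]$ gives the same constant after a two-case split on the sign of $B_{\lceil r/2\rceil}$, so your worry about ``$3x$ versus $4x$'' is unfounded. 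For the probabilistic part, the paper simply notes that each of the three other processes is a partial-sum process built from some $\lceil r/2\rceil$ of the exchangeable increments (possibly reversed), hence equidistributed with $(B_k)_{0\le k\le\lceil r/2\rceil}$; your ``cyclic shift'' phrasing is a valid alternative viewpoint but is not needed---plain exchangeability of the increment sequence suffices.
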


\begin{proof}
Let us write $r/2$ instead of $\lceil r/2\rceil$ and set
\[M_1 = \max_{0 \le k \le r/2} B_k,
\qquad
m_1 = \min_{0 \le k \le r/2} B_k,
\qquad
M_2 = \max_{r/2 \le k \le r} B_k,
\qquad
m_2 = \min_{r/2 \le k \le r} B_k.\]
Suppose that the four minima in the statement are (strictly) larger than $-x$, then, since $B_r=0$,
\[m_1 > -x,
\qquad
B_{r/2} - M_1 > -x,
\qquad
m_2 - B_{r/2} > -x,
\qquad
-M_2 > -x.\]
It follows that
\begin{align*}
M_1 - m_1 &< (B_{r/2} + x) + x < m_2 + 3x \le 3x,
\\
M_1 - m_2 &< (B_{r/2} + x) - (B_{r/2} - x) = 2x,
\\
M_2 - m_1 &< 2x,
\\
M_2 - m_2 &< x - (B_{r/2} - x) \le 2x - m_1 < 3x,
\end{align*}
We conclude that $\max_{0 \le k \le r} B_k - \min_{0 \le k \le r} B_k = \sup\{M_1, M_2\} - \inf\{m_1, m_2\} < 3x$.

The last claim follows after observing that if $B$ is exchangeable, then the three processes
\[
\left(B_{r/2} - B_{r/2-k} ; 0 \le k \le r/2\right),
\quad
\left(B_{r/2+k} - B_{r/2} ; 0 \le k \le r/2\right),
\quad\left(B_r - B_{r-k} ; 0 \le k \le r/2\right)
\]
are distributed as $(B_k ; 0 \le k \le r/2)$.
\end{proof}

\begin{proof}[Proof of Lemma \ref{lem:borne_ecart_max_pont}]
First note that on the event $\{S_r=0\}$, $\max_{0 \le k \le r} S_k - \min_{0 \le k \le r} S_k$ cannot exceed $br$. Moreover, on the event $\{S_r=0\}$, the path $(S_0, \dots, S_r)$ is an exchangeable bridge so, according to Lemma \ref{lem:pont_echangeable}, it suffices to show that there exists two constants $c, C > 0$ which only depend on $b$ and $\sigma$ such that for every $r \ge 1$ and $0 \le x \le br$,
\[\Prc{\min_{0 \le k \le \lceil r/2 \rceil} S_k \le -x}{S_r=0} \le C \e^{-cx^2/r}.\]
For every $k \ge 1$ and every $x \in \Z$, let us set $\theta_k(x) = \Pr{S_k=-x}$. According to the local limit theorem, for every $k \ge 1$ and $x \in \Z$,
\[\sqrt{k}\theta_k(x) = g(x/\sqrt{k}) + \varepsilon_k(x),\]
where $g(x) = (2\pi\sigma^2)^{-1/2} \e^{-x^2/(2\sigma^2)}$ and $\lim_{k \to \infty} \sup_{x \in \Z} |\varepsilon_k(x)| = 0$. It follows that
\[C \coloneq \sup_{r \ge 1, x \in \Z} \frac{\theta_{r-\lceil r/2 \rceil}(x)}{\theta_r(0)}
= \sup_{r \ge 1, x \in \Z} \sqrt{\frac{r}{r-\lceil r/2 \rceil}} \frac{g(-x/\sqrt{r-\lceil r/2 \rceil}) + \varepsilon_{r-\lceil r/2 \rceil}(x)}{g(0) + \varepsilon_r(0)}
< \infty.\]
Using the Markov property at time $\lceil r/2 \rceil$, we have thereby
\begin{align*}
\Prc{\min_{0\le k\le \lceil r/2 \rceil} S_k \le -x}{S_r = 0}
&= \frac{\Pr{\min_{0\le k\le \lceil r/2 \rceil} S_k \le -x \text{ and } S_r = 0}}{\Pr{S_r = 0}}
\\
&= \Es{\ind{\min_{0\le k\le \lceil r/2 \rceil} S_k \le -x} \frac{\theta_{r - \lceil r/2 \rceil}(S_{\lceil r/2 \rceil})}{\theta_r(0)}}
\\
&\le C \cdot \Pr{\min_{0\le k\le \lceil r/2 \rceil} S_k \le -x}.
\end{align*}
Finally, since $-S$ is a random walk with step distribution bounded above by $b$, centred and with variance $\sigma^2$, we have the following concentration inequality (see e.g. Mc Diarmid \cite{McDiarmid:Concentration}, Theorem 2.7 and the remark at the end of Section 2 there): for every $n \ge 1$ and every $x \ge 0$,
\[\Pr{\max_{0 \le k\le n} -S_k \ge x} \le \exp\left(-\frac{x^2}{2\sigma^2n + 2bx/3}\right).\]
We conclude that for every $r \ge 1$ and every $0 \le x \le br$, we have
\[\Prc{\min_{0 \le k \le \lceil r/2 \rceil} S_k \le -x}{S_r=0}
\le C \exp\left(-\frac{x^2}{2\sigma^2\lceil r/2 \rceil + 2bx/3}\right)
\le C \exp\left(-\frac{x^2}{(2\sigma^2 + 2b^2/3)r}\right),\]
and the proof is complete.
\end{proof}


{\small

}

\end{document}